\def\final{1}
\newcommand{\mynote}[1]{\marginpar{\tiny\sf #1}}
\newcommand{\mynote}[1]{}
\declaretheorem[within=section]{theorem}
\declaretheorem[sibling=theorem,name=Corollary]{corollary}
\declaretheorem[sibling=theorem]{lemma}
\declaretheorem[sibling=theorem, name=Proposition]{prop}
\declaretheorem[sibling=theorem, name=Remark]{remarknum}
\newtheorem*{conjecture}{Conjecture}
\newtheorem*{remark}{Remark}
\renewcommand{\eqref}[1]{(\ref{eq:#1})}
\newcommand{\JMat}[1]{ \left( \begin{array}{rr} #1 \end{array} \right)}
\newcommand{\Id}{\operatorname{Id}}
\newcommand{\eop}{\hfill$\qed$}
\newcommand{\mc}{\mathcal}
\newcommand{\R}{\mathbb{R}}
\newcommand{\Q}{\mathbb{Q}}
\newcommand{\Z}{\mathbb{Z}}
\newcommand{\C}{\mathbb{C}}
\newcommand{\N}{\mathbb{N}}
\newcommand{\bbH}{\mathbb{H}}
\DeclareMathOperator{\Dih}{Dih}
\DeclareMathOperator{\Dic}{Dic}
\DeclareMathOperator{\Sym}{Sym}
\DeclareMathOperator{\Map}{Map}
\DeclareMathOperator{\Mod}{Mod}
\DeclareMathOperator{\HH}{H}
\DeclareMathOperator{\Mat}{Mat}
\DeclareMathOperator{\Out}{Out}
\DeclareMathOperator{\Aut}{Aut}
\DeclareMathOperator{\End}{End}
\DeclareMathOperator{\Hom}{Hom}
\DeclareMathOperator{\tr}{tr}
\DeclareMathOperator{\Tr}{Tr}
\DeclareMathOperator{\trd}{trd}
\DeclareMathOperator{\Sp}{Sp}
\DeclareMathOperator{\SO}{SO}
\DeclareMathOperator{\SL}{SL}
\DeclareMathOperator{\GL}{GL}
\DeclareMathOperator{\OO}{O}
\DeclareMathOperator{\Homeo}{Homeo}
\DeclareMathOperator{\nrd}{nrd}
\DeclareMathOperator{\inn}{inn}
\DeclareMathOperator{\Epi}{Epi}
\DeclareMathOperator{\Cyc}{Cyc}
\DeclareMathOperator{\Alt}{Alt}
\DeclareMathOperator{\SU}{SU}
\DeclareMathOperator{\Diff}{Diff}
\author[1]{Fritz Grunewald\footnote{Paper written posthumously}}
\author[2]{Michael Larsen\thanks{mjlarsen@indiana.edu}}
\author[3]{Alexander Lubotzky\thanks{alex.lubotzky@mail.huji.ac.il}}
\author[4]{Justin Malestein\thanks{justinm@math.uni-bonn.de}}
\affil[1]{ Mathematisches Institut der Heinrich-Heine-Universit\"at D\"usseldorf, Universit\"atsstr. 1, 40225 D\"usseldorf, Germany}
\affil[2]{Department of Mathematics, Indiana University, Bloomington, IN, 47405, USA}
\affil[3]{Einstein Institute of Mathematics, The Hebrew University of Jerusalem, Jerusalem, 91904, Israel}
\affil[4]{Mathematisches Institut, Universit\"at Bonn, Endenicher Allee 60, 53115 Bonn, Germany}
\title{Arithmetic quotients of the mapping class group}
\date{}
\begin{document}
\maketitle
\begin{abstract}
\begin{normalsize}
%!TEX TS-program = ./Scripts/wrapper
To every $\Q$-irreducible representation $r$ of a finite group $H$, there 
corresponds a simple factor $A$ of $\mathbb{Q}[H]$ with an involution $\tau$. To this pair 
$(A,\tau)$, we associate an arithmetic group $\Omega$ consisting of all $(2g-2)\times (2g-2)$ 
matrices over a natural order $\mathfrak{O}^{op}$ of $A^{op}$ which preserve a natural skew-Hermitian 
sesquilinear form on $A^{2g-2}$.
We show that if $H$ is
generated by less than $g$ elements, then $\Omega$ is a virtual quotient of the mapping class group
$\Mod(\Sigma_g)$, i.e. a finite index subgroup of $\Omega$ is a quotient of a finite index subgroup
of $\Mod(\Sigma_g)$. This shows that $\Mod(\Sigma_g)$ has a rich family of arithmetic quotients
(and ``Torelli subgroups'') for which the classical quotient $\Sp(2g, \Z)$ is just a first case in a list, 
the case corresponding to the trivial group $H$ and the trivial representation. Other pairs of $H$
and $r$ give rise to many new arithmetic quotients of $\Mod(\Sigma_g)$ which are defined over
various (subfields of) cyclotomic fields and are of type $\Sp(2m), \SO(2m,2m),$
and $\SU(m,m)$ for arbitrarily large $m$.

\end{normalsize}
\end{abstract}

\section{Introduction}
\label{section:intro}
%!TEX TS-program = ./Scripts/wrapper
Let $\Sigma = \Sigma_g$ be a closed surface of genus $g \geq 2$ and $\Mod(\Sigma)$ the mapping class group of 
$\Sigma$, which is the group of orientation-preserving homeomorphisms of $\Sigma$ modulo those isotopic
to the identity. It is well-known that there is an epimorphism from $\Mod(\Sigma)$ onto the arithmetic
group $\Sp(2g, \Z)$.
The purpose of this paper is to show that many more arithmetic groups are quotients of $\Mod(\Sigma)$.
Specifically, for every pair $(H, r)$, where $H$ is a finite group minimally generated by $d(H) < g$
elements and $r$ is a nontrivial irreducible $\Q$-representation of $H$, we associate a finite index subgroup
$\Gamma_H$ of $\Mod(\Sigma)$, an arithmetic group $\Omega_{H, r}$ (whose exact structure
will be described below) and a homomorphism $\rho_{H, r} : \Gamma_H \to \Omega_{H, r}$
whose image is of finite index in $\Omega_{H, r}$. We call such a homomorphism a
{\it virtual epimorphism} of $\Mod(\Sigma)$ onto $\Omega_{H,r}$. (Of course, via the
induced representation, every virtual homomorphism also gives a representation of the
full group $\Mod(\Sigma)$, but it is more natural to study the original representation.)
The homomorphism
$\Mod(\Sigma) \to \Sp(2g, \Z)$ is the special case where $H$ is the trivial group. The
homomorphism from a finite index subgroup of $\Mod(\Sigma_g)$ to $\Sp( 2(g-1), \Z)$
studied in \cite{LuboMeir} and \cite{MaleSout} is the one corresponding to the case $H = \Z/2\Z$
and $r$ the nontrivial one-dimensional representation. The first to show that $\Mod(\Sigma)$ has
a large collection of virtual arithmetic quotients beside $\Sp(2g, \Z)$ was Looijenga \cite{Looi}
who analyzed the case where $H$ is abelian. In  \cite{Looi}, \cite{LuboMeir}, \cite{MaleSout}, all representations
have degree bounded by $2g$ and the target arithmetic groups are limited, while we obtain irreducible representations
of arbitrarily large degree and a wider range of target arithmetic groups.

The homomorphism $\rho$ is obtained from the action of $\Mod(\Sigma)$ (or more precisely, of a finite index
subgroup of $\Mod(\Sigma)$) on the first integral homology group of a finite Galois cover of $\Sigma$
corresponding to $H$;
equivalently, on $\overline{R} \overset{\text{def}}{=} R/[R, R]$ where $R$ is a finite index normal subgroup 
of $T_g = \pi_1(\Sigma_g)$ with $T_g/R \cong H$. This action on the homology of finite covers was also studied
by Koberda, but with the goal of showing that one can determine whether a mapping class is periodic,
reducible or pseudo-Anosov by considering enough finite covers \cite{Kobe}.

Our work is also related to that of Grunewald--Lubotzky \cite{GrunLubo} whose results we will use in our proofs.
Recall that by the Dehn-Nielsen-Baer Theorem, $\Mod(\Sigma_g)$
is naturally isomorphic to an index $2$ subgroup of $\Out(T_g)$, the outer
automorphism group of $T_g = \pi_1(\Sigma_g)$. (Allowing orientation-reversing homeomorphisms would give all of $\Out(T_g)$.) In
\cite{GrunLubo}, Grunewald and Lubotzky studied the analogous problem where $\Out(T_g)$ is replaced by $\Aut(F_n)$ the
automorphism group of $F_n$, the free group of rank $n$. While there are a few similarities between the papers, 
the theory developed in the current paper requires 
several advances (which we will discuss), and the main techniques of proof in \cite{GrunLubo}, as far 
as we can determine, are unusable in this case. The difference is nicely illustrated already in the
easiest case; while the fact that $\pi: \Aut(F_n) \to \GL(n, \Z)$ is surjective is easy to establish,
the fact that $\Mod(\Sigma_g) \to \Sp(2g, \Z)$ is a classical but nontrivial result due to
Burkhardt and Clebsch-Gordan \cite{Burk}

We begin by describing the general procedure for obtaining representations to arithmetic groups
from $\Mod(\Sigma_g, \ast)$, the mapping class group
fixing the point $\ast \in \Sigma_g$ up to isotopy fixing $\ast$. We will denote these representations
also by $\rho$. 
A short argument (Section \ref{section:forgetmarkedpt}) shows that the representations descend (virtually) to $\Mod(\Sigma_g)$.
Via the natural map $\Mod(\Sigma_g, \ast) \to \Aut(T_g)$, we can identify $\Mod(\Sigma, \ast)$ with its image,
which is of index $2$ and which we denote by $\Aut(T_g)^+$.  
Let $p: T_g \to H$ be a surjective homomorphism with kernel $R$ onto a finite group $H$, and let 
$\Gamma_{H,p} = \{\gamma \in \Aut(T_g)^+ \;\; | \;\; p \circ \gamma = p \}$. Then,
$\Gamma_{H,p}$ is a finite index subgroup of $\Aut(T_g)^+$ which preserves $R$ and acts on $\overline{R}$
as a $\Z[H]$-module. Thus, $\Gamma_{H,p}$ acts by $\Q[H]$-automorphisms on $\hat{R} = \overline{R} \otimes_{\Z} \Q$
and we obtain a representation $\rho_{H,p}: \Gamma_{H, p} \to \Aut_{\Q[H]}(\hat{R})$.
(See Section \ref{section:liftrep} for a discussion of these and some other properties of $\rho_{H,p}$.)

By an analogue of a classical result of Gasch\"utz, the module $\hat{R}$ can be identified precisely.
Note that here, and elsewhere unless stated otherwise, $\Q$ is considered to be the trivial $\Q[H]$-module.
\begin{restatable}{prop}{surfgasc}
\label{prop:surfacegaschutz}
Let $T = T_g$ be the fundamental group of a surface $\Sigma$ of genus $g \geq 2$. Let $R$ be a finite index
normal subgroup of $T$ and $H = T/R$. Then $\hat{R} = \Q \otimes_\Z (R/[R,R])$ is isomorphic as a 
$\Q[H]$-module to $\Q^2 \oplus \Q[H]^{2g-2}$.
\end{restatable}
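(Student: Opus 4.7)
The plan is to interpret $\hat R$ topologically as the first rational homology of the covering surface and then pin down the $\Q[H]$-module structure by an equivariant Euler characteristic argument, using that $\Q[H]$ is semisimple.

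First, let $\pi\colon \tilde\Sigma \to \Sigma$ be the regular covering corresponding to $R\trianglelefteq T$, so $\pi_1(\tilde\Sigma) = R$ and $H = T/R$ acts freely on $\tilde\Sigma$ by deck transformations. Then
\[
\hat R = \Q\otimes_\Z (R/[R,R]) = H_1(\tilde\Sigma,\Q),
\]
and this is an isomorphism of $\Q[H]$-modules, where the action on the right is induced from the deck action. Since $H$ acts freely and $\Sigma$ is orientable with connected cover, $H$ preserves orientation on $\tilde\Sigma$, and in particular $H^0(\tilde\Sigma,\Q)$ and $H^2(\tilde\Sigma,\Q)$ are both the trivial one-dimensional $\Q[H]$-module $\Q$.

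Next, I would put the standard CW structure on $\Sigma$ with one $0$-cell, $2g$ one-cells, and one $2$-cell, and lift it to an $H$-equivariant CW structure on $\tilde\Sigma$. Then the cellular chain complex $C_*(\tilde\Sigma,\Q)$ is a complex of free $\Q[H]$-modules of ranks $1$, $2g$, and $1$ in degrees $0$, $1$, and $2$. Computing the Euler characteristic in the Grothendieck group $K_0(\Q[H])$ gives
\[
[H^0] - [H^1] + [H^2] \;=\; [\Q[H]] - 2g\,[\Q[H]] + [\Q[H]] \;=\; (2-2g)\,[\Q[H]].
\]
Combining with $[H^0]=[H^2]=[\Q]$ and Poincar\'e duality (which identifies $[H^1]$ with $[H_1]$, as the intersection pairing is $H$-equivariant up to the trivial orientation character), one gets
\[
[\hat R] \;=\; 2[\Q] + (2g-2)\,[\Q[H]]
\]
in $K_0(\Q[H])$.

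Finally, because $H$ is finite, $\Q[H]$ is semisimple by Maschke's theorem, so finitely generated $\Q[H]$-modules are determined up to isomorphism by their class in $K_0(\Q[H])$. Hence the equality of classes above upgrades to an isomorphism $\hat R \cong \Q^2 \oplus \Q[H]^{2g-2}$ of $\Q[H]$-modules, as desired. The only subtle step is the verification that $H^2(\tilde\Sigma,\Q)$ really is the trivial representation; this is where the fact that $\Sigma$ is a closed oriented surface (rather than, say, the free-group case considered by Gasch\"utz, where the top cohomology vanishes and one instead gets $\hat R \cong \Q \oplus \Q[H]^{n-1}$) is essential and is what produces the extra copy of $\Q$ in the conclusion.
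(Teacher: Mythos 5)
Your proof is correct and follows essentially the same route as the paper: lift the one-vertex CW structure to an $H$-equivariant one on $\tilde\Sigma$, observe that the cellular chain groups are free $\Q[H]$-modules of ranks $1,2g,1$, use orientation-preservation to identify $H_2$ as the trivial module, and invoke semisimplicity of $\Q[H]$ to upgrade the resulting numerical/$K_0$ identity to an isomorphism. Phrasing this as an Euler-characteristic identity in $K_0(\Q[H])$ is a mild streamlining of the paper's explicit splitting of the chain complex, and you could avoid the detour through cohomology and Poincar\'e duality entirely by writing the alternating sum as $[H_0]-[H_1]+[H_2]$, since the cellular chain complex already computes homology and $H_0\cong\Q$ is visibly the trivial module.
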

This result was proved in \cite{ChevWeil}. We give an alternate proof in Section \ref{section:gaschutz}.
The group ring $\Q[H]$ decomposes
as a direct sum of simple algebras $\Q[H] = \Q \oplus \bigoplus_{i=1}^\ell A_i$  where each $A_i$
is the ring of $m_i \times m_i$ matrices over a division algebra $D_i$ with a number field
$L_i/\Q$ as its center. Consequently, $\hat{R} \cong \Q^{2g} \oplus \bigoplus_{i=1}^\ell A_i^{2g-2}$,
and so we can project to representations $\rho_{H,p,i}: \Gamma_{H,p} \to \Aut_{A_i}(A_i^{2g-2})$.
The action on $\Q^{2g}$ is the standard symplectic representation.

Up to now, the above procedure follows that of \cite{GrunLubo} with the important exception that 
Grunewald and Lubotzky use the actual theorem of Gasch\"utz in place of Proposition \ref{prop:surfacegaschutz}.
Via such representations,  they show that $\Aut(F_n)$ virtually surjects
onto a rich class of arithmetic groups, including $\SL_{\ell(n-1)}(\Z)$ and $\SL_{\ell(n-1)}(\mc O_m)$
where $\ell$ ranges over all positive integers, $m$ ranges over all integers $\geq 3$, and $\mc O_m$ is the ring of integers in the number
field generated by $\Q$ and a primitive $m$th root of unity.

What makes the surface case much more difficult (and interesting) is the fact that 
in our case $\hat{R}$ is equipped with a $\Q[H]$-valued skew-Hermitian sesquilinear form $\langle-,-\rangle$
on $\hat{R}$ with respect to the standard involution $\tau$ on $\Q[H]$ which is defined by $\tau(h) = h^{-1}$. This form can be defined in terms
of the group action $H$ and the natural symplectic structure on $\hat{R}$ coming from its identification with the rational first homology 
of the covering surface.
(See Section \ref{section:symplecticgaschutz} for a detailed discussion of this form.) 
As we will see in Section \ref{section:liftrep}, $\rho_{H,p}(\Gamma_{H,p})$ preserves this form.
The sesquilinear form descends to a nondegenerate $A_i$-valued form on each factor $M_i = A_i^{2g-2}$,
and the image of $\rho_{H,p, i}$ lies in $\Aut_{A_i}(A_i^{2g-2}, \langle-,- \rangle)$. 

On account of this, we
obtain an even richer class of arithmetic quotients. We illustrate it here by some simple to state examples
obtained by appropriate choices of $H$ and $p$. See Section \ref{section:finitegroups} for more. 
We denote by $\zeta_n$ the primitive $n$th root of unity, and by $\Q(\zeta_n)^+$
we denote the index $2$ subfield of $\Q(\zeta_n)$ which is fixed by the order $2$ Galois
automorphism of $\Q(\zeta_n)$ mapping $\zeta_n$ to $\zeta_n^{-1}$. By $\SU(m, m, \mc O)$, we denote
the subgroup of $\SL_{2m}(\mc O)$ preserving the Hermitian form of signature $(m,m)$, namely
$$ \langle (a_1, \dots, a_m, b_1, \dots, b_m) , (a_1', \dots, a_m', b_1', \dots, b_m') \rangle
 = \sum_{i=1}^m (a_i \overline{a_i'} - b_i \overline{b_i'})$$
where $\mc O$ is the ring of integers in $\Q(\zeta_n)$ and $\overline{\phantom{a}}$ is
the order $2$ automorphism of $\Q(\zeta_n)$ just described.

\begin{theorem} \label{theorem:examplesofquotients}
For a fixed $g \geq 2$, there are virtual epimorphisms of $\Mod(\Sigma_g)$ onto the following arithmetic groups:
\begin{itemize}
\item[(a)] $\Sp(2m(g-1), \Z)$ for all $m \in \N$,
\item[(b)] for all $m \in \N$ and $n \geq 3$, the group $\Sp(4m(g-1), \mc O)$ where $\mc O$ is the ring of integers in $\Q(\zeta_n)^+$,
\item[(c)] for all $m \in \N$ and $n \geq 3$, the group $\SU(m(g-1), m(g-1), \mc O)$
where $\mc O$ is the ring of integers in $\Q(\zeta_n)$,
\item[(d)] for all $m \in \N$ and $n \geq 3$, an arithmetic group of type $\SO(2m(g-1), 2m(g-1))$
(whose precise description will be given in Section \ref{section:finitegroups}).
\end{itemize}
\end{theorem}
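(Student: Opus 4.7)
The plan is to invoke the general construction previewed in the introduction: to each pair $(H, r)$ with $H$ a finite group satisfying $d(H) < g$ and $r$ a nontrivial $\Q$-irreducible representation of $H$, the paper associates a virtual epimorphism $\rho_{H,r} \colon \Gamma_H \to \Omega_{H,r}$, where $\Omega_{H,r}$ is the isometry group of the induced skew-Hermitian form on $A^{2g-2}$ with $(A, \tau)$ the simple factor of $(\Q[H], h \mapsto h^{-1})$ corresponding to $r$. Proving \theoref{examplesofquotients} therefore reduces to exhibiting, for each case (a)--(d), a pair $(H, r)$ whose associated $\Omega_{H,r}$ is the claimed arithmetic group, up to the finite-index ambiguity implicit in "virtual epimorphism".

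The $\R$-isomorphism type of $\Omega_{H,r}$ is determined by three pieces of data extracted from $(A, \tau)$: the center $L$ of $A$, the Brauer class of the division algebra $D$ with $A \cong \Mat_m(D)$, and whether $\tau$ is of the first kind (trivial on $L$, forcing $L$ totally real, corresponding to real or quaternionic Frobenius--Schur type) or of the second kind (nontrivial on $L$, forcing $L$ CM, corresponding to complex type). In each case Morita equivalence converts the skew-Hermitian form on $A^{2g-2}$ into a standard bilinear or Hermitian form over $D$ on a space of dimension $m(2g-2)$; its signature at each archimedean place of $L$ is dictated by the fact that $\hat R$ carries a symplectic form whose restriction to each isotypic component contributes the signature type $(g-1, g-1)$.

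For case (a), take $H = \Sym_{m+1}$ and let $r$ be the standard $m$-dimensional representation, which is absolutely $\Q$-irreducible with rational character and Frobenius--Schur indicator $+1$; then $A = \Mat_m(\Q)$, $\tau$ is of orthogonal type, and Morita equivalence turns the skew-Hermitian form on $A^{2g-2}$ into a skew-symmetric form on $\Q^{2m(g-1)}$, yielding $\Omega_{H,r} = \Sp(2m(g-1), \Z)$. For case (c), one needs $A = \Mat_m(\Q(\zeta_n))$ with $\tau$ restricting to $\zeta_n \mapsto \zeta_n^{-1}$; the cyclic group $\Z/n\Z$ realises $m = 1$, and more generally a product such as $\Sym_{m+1} \times \Z/n\Z$ with a tensor-product representation realises arbitrary $m$, producing $\SU(m(g-1), m(g-1), \mc O_{\Q(\zeta_n)})$ after computing the signature $(m(g-1), m(g-1))$. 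For (b) and (d), take $H$ to be a dicyclic (generalized quaternion) extension of $\Z/n\Z$, optionally tensored with $\Sym_{m+1}$ to rescale $m$, so that $A = \Mat_m(D)$ with $D$ a quaternion algebra over $\Q(\zeta_n)^+$ and $\tau$ is of the first kind restricting to the canonical (symplectic) involution of $D$; whether the Morita-reduced form is symmetric or alternating then selects between the orthogonal group of case (d) and the symplectic group $\Sp(4m(g-1), \mc O_{\Q(\zeta_n)^+})$ of case (b).

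The main obstacle is the careful case-by-case verification that the Morita-reduced form has exactly the rank, signature, and ring of definition asserted. This decomposes into (i) standard but intricate Frobenius--Schur and Brauer-class computations for the finite groups listed above, (ii) ensuring that each $H$ can indeed be generated by fewer than $g$ elements, which forces $g$ to grow with $m$ in cases where $H$ is nonabelian, and (iii) computing the archimedean signature of the form from the symplectic structure on $\hat R$ provided by \propref{surfacegaschutz}. These verifications, together with the construction of all the finite groups $H$ involved, are carried out in \secref{finitegroups}.
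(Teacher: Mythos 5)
Your overall strategy -- reduce to the general machine of \theoref{maintheoremprocedural} and then exhibit pairs $(H,r)$ realizing each target -- is indeed the paper's approach, and your choices for cases (a) and (c) agree with the paper's. However, there are two genuine gaps.

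First, your treatment of case (b) does not work. You propose using a dicyclic (generalized quaternion) group for \emph{both} (b) and (d), claiming that ``whether the Morita-reduced form is symmetric or alternating then selects between'' the two. But this choice is not free: once $A \cong \Mat_m(D)$ with $D$ a quaternion \emph{division} algebra over $L=\Q(\zeta_n)^+$ carrying the canonical symplectic involution, \lemref{oppositetype} forces the adjoint involution $\sigma$ on $\End_A(A^{2g-2})$ to be of orthogonal type, and \theoref{Gtype} then pins down $\mc G$ as being of type $\SO(2m(g-1),2m(g-1))$ -- that is, case (d) only. Moreover, the integral points then live over an order in the quaternion algebra $D$, not in $\mc O_L$, so you cannot possibly recover $\Sp(4m(g-1),\mc O)$ from this data. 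To get case (b), one needs $A$ to be a full matrix algebra over $L$ (no Brauer obstruction) with $\tau$ of \emph{orthogonal} type. This is exactly why the paper takes $H = \Alt(m+1)\times \Dih(2n)$, the \emph{dihedral} group: the relevant simple component of $\Q[\Dih(2n)]$ is $\Mat_2(\Q(\zeta_n)^+)$ rather than a quaternion division algebra, and the induced involution there is of orthogonal type, giving $\sigma$ of symplectic type and hence $\Sp$ over $\mc O$. Dihedral and dicyclic are not interchangeable here. (Incidentally, the paper uses $\Alt(m+1)$ rather than $\Sym(m+1)$ in (b) and (d) precisely so that $d(H)=2$ works even when $n$ is even, allowing $g=3$.)

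Second, you do not address $g=2$. The statement claims the conclusion for all $g\geq 2$, but the general procedure rests on \theoref{mainresult}, whose hypotheses require $g\geq 3$. The paper handles $g=2$ separately: it shows directly (from the explicit unipotents of \lemref{unipotents}) that $\Mod(\Sigma_2)$ virtually surjects onto $\Sp(2,\Z)=\SL(2,\Z)$, hence onto a nonabelian free group, hence onto any finitely generated group -- in particular onto all the groups listed in (a)--(d). Your reduction as written silently assumes $g\geq 3$ and would leave $g=2$ unproved.

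Finally, a minor point: for small $m$ (namely $m=1,2$ in cases (b), (c), (d)) the factor $\Alt(m+1)$ or $\Sym(m+1)$ is trivial or too small to give the desired matrix size, and the paper has to use ad hoc replacement groups $H$ in those cases; your ``optionally tensored with $\Sym_{m+1}$'' glosses over this.
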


\begin{theorem} \label{theorem:examplesofquotients2}
Let $L$ be an arbitrary subfield of a finite cyclotomic extension of $\Q$ and $\mc O$ the ring of integers of $L$. 
Then there is some $s_L$ and $N_L$ such that
for every $g \geq N_L$ and every $m \in \N$, there is a virtual epimorphism of $\Mod(\Sigma_g)$ onto
\begin{itemize}
\item $\Sp(2 m s_L (g-1), \mc O)$ if $L$ is a totally real field
\item $\SU(m s_L (g-1), m s_L (g-1), \mc O)$ if $L$ is a totally imaginary field.
\end{itemize}
\end{theorem}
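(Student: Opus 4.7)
The plan is to apply the general machinery of the paper---the virtual epimorphism $\rho_{H,r}\colon \Gamma_H \to \Omega_{H,r}$ attached to each pair $(H,r)$ with $d(H) < g$---to pairs chosen explicitly so that $\Omega_{H,r}$ is the desired arithmetic group. The first step is to construct, for a given subfield $L \subseteq \Q(\zeta_n)$, a finite group $H_0$ possessing a nontrivial irreducible $\Q$-representation $r_0$ whose associated simple factor $A_0$ of $\Q[H_0]$ has center $L$. The natural choice is $H_0 = \Z/n\Z \rtimes G$, where $G = \Gal(\Q(\zeta_n)/L)$ acts via its inclusion in $(\Z/n\Z)^{\times} = \Aut(\Z/n\Z)$, together with the $\Q$-irreducible representation $r_0$ obtained as the $\Q$-form of $\mathrm{Ind}_{\Z/n\Z}^{H_0}\chi$ for a faithful character $\chi$ of $\Z/n\Z$. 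Standard Schur-algebra computations in the cyclotomic setting yield $A_0 \cong \Mat_{s_L}(L)$ for an integer $s_L$ depending only on $L$.

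The second step is to identify the involution type. The standard involution $\tau(h) = h^{-1}$ on $\Q[H_0]$ restricts to an involution of $A_0$; its action on $Z(A_0) = L$ is trivial exactly when complex conjugation lies in $G$, i.e., when $L$ is totally real, and is the complex conjugation of $L$ when $L$ is totally imaginary. A Frobenius--Schur calculation shows that $r_0$ can be arranged so that $\tau$ is of symplectic type on $A_0$ in the totally real case, and hence, via Morita equivalence, the skew-Hermitian form on $A_0^{2g-2}$ transfers to a nondegenerate alternating $L$-bilinear form on $L^{2s_L(g-1)}$, yielding $\Omega_{H_0,r_0} \cong \Sp(2s_L(g-1),\mc O)$. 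In the totally imaginary case the form transfers to a Hermitian form with respect to $L/L^{+}$, and its archimedean signature---forced to be split by the rational symplectic origin of the form---yields $\Omega_{H_0,r_0} \cong \SU(s_L(g-1),s_L(g-1),\mc O)$.

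The third step accommodates the multiplicity parameter $m$. Set $H = H_0 \times \Sym_{m+1}$ and take $r$ to be the external tensor product of $r_0$ with the standard $m$-dimensional $\Q$-irreducible representation of $\Sym_{m+1}$, whose associated simple algebra is $\Mat_m(\Q)$ with the orthogonal (transpose) involution. The simple factor for $r$ is then $A \cong A_0 \otimes_{\Q} \Mat_m(\Q) \cong \Mat_{m s_L}(L)$, carrying the tensor-product involution whose sign agrees with the sign of $\tau$ on $A_0$ (since transpose contributes $+1$). Thus $\Omega_{H,r}$ is $\Sp(2m s_L(g-1),\mc O)$ or $\SU(m s_L(g-1),m s_L(g-1),\mc O)$ as required, and since $d(H) \le d(H_0) + d(\Sym_{m+1}) \le d(H_0) + 2$ independently of $m$, the hypothesis $d(H) < g$ is met for all $g \ge N_L := d(H_0) + 3$.

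The main obstacle is step~2: pinning down the exact isomorphism type of $\Omega_{H_0,r_0}$. This requires verifying via the Frobenius--Schur indicator that $\tau$ is genuinely of symplectic type on $A_0$ in the totally real case (rather than orthogonal, which would instead give an $\SO$-target), and, in the totally imaginary case, identifying the archimedean signature of the resulting Hermitian form as split at every place of $L^{+}$. Both facts follow from the explicit description of the skew-Hermitian form on $\hat R$ coming from the symplectic pairing on homology developed earlier in the paper, but the sign and signature calculations are the delicate combinatorial heart of the argument.
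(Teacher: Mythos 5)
Your overall strategy matches the paper's: realize $\Mat_{s_L}(L)$ as a simple component of some $\Q[H_0]$, tensor with the standard representation of $\Sym(m+1)$ to obtain $\Mat_{ms_L}(L)$ inside $\Q[H_0 \times \Sym(m+1)]$, and then invoke Theorem~\ref{theorem:maintheoremprocedural}. Your explicit $H_0 = \Z/n\Z \rtimes \Gal(\Q(\zeta_n)/L)$ is a more constructive alternative to the paper's citation of Mollin's characterization of the Schur subgroup; it works (the relevant component of $\Q[H_0]$ is the trivial crossed product $\Q(\zeta_n) * \Gal(\Q(\zeta_n)/L) \cong \Mat_{[\Q(\zeta_n):L]}(L)$), but this is an incidental difference from the paper.

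Where your argument genuinely breaks down is the type analysis in the totally real case, and the error would propagate if you actually ran the verification you propose. First, the claim that ``$r_0$ can be arranged so that $\tau$ is of symplectic type on $A_0$'' is false: since $A_0 \cong \Mat_{s_L}(L)$ is split and $K = L$ is totally real, one has $A_0 \otimes_K \R \cong \Mat_{s_L}(\R)$, and Proposition~\ref{prop:typefromrealrep} then \emph{forces} $\tau$ to be of orthogonal type---there is no freedom (a symplectic $\tau$ would make $A_0 \otimes_K \R$ quaternionic, contradicting splitness). Second, your dictionary between the type of $\tau$ and the target group is reversed. By Lemma~\ref{lemma:oppositetype} the adjoint involution $\sigma$ on $\End_A(M)$ has type \emph{opposite} to $\tau$, and since $\mc G$ is cut out by $\sigma(C)C = \Id$, Theorem~\ref{theorem:Gtype} gives: $\tau$ orthogonal $\Rightarrow \Sp$, $\tau$ symplectic $\Rightarrow \OO$. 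Correspondingly, a skew-Hermitian form over $(\Mat_s(L), \text{transpose})$ Morita-transfers to an alternating $L$-form precisely because the transpose involution is \emph{orthogonal} type, not symplectic. Your two sign errors cancel, so the stated conclusion ($\Sp$) is right, but if you carried out the Frobenius--Schur indicator computation you would find $\iota\chi = +1$ (orthogonal type), and your reversed dictionary would then incorrectly lead you to an $\SO$-target. The totally imaginary case and the bookkeeping of $s_L$, $m$, and $N_L$ are fine.
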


Note that any subfield of a cyclotomic field is necessarily either totally real or totally imaginary. (See Lemma \ref{lemma:cyccenter}.)

Taking $g=2$ and $m = 1$ in (a) of Theorem \ref{theorem:examplesofquotients}, we obtain the following corollary which
has been proven previously by Korkmaz by a different method \cite{Kork} and by McCarthy via similar techniques \cite{McCa}.

\begin{corollary} \label{corollary:mods2large}
There is a virtual epimorphism from $\Mod(\Sigma_2)$ onto $\Sp(2, \Z) = \SL(2, \Z)$. In particular,
there is a virtual epimorphism of $\Mod(\Sigma_2)$ onto a free group, and hence $\Mod(\Sigma_2)$ is large.
\end{corollary}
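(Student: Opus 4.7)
The plan is to deduce the corollary directly from Theorem \ref{theorem:examplesofquotients}(a) together with the classical fact that $\SL(2,\Z)$ is virtually a nonabelian free group. The heavy lifting is already packaged inside the theorem, so only a short bootstrapping argument is needed for the corollary itself.

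First, I would specialize Theorem \ref{theorem:examplesofquotients}(a) by setting $g = 2$ and $m = 1$. Since $2m(g-1) = 2$, this produces a virtual epimorphism $\rho : \Gamma \to \SL(2,\Z)$, where $\Gamma$ is a finite-index subgroup of $\Mod(\Sigma_2)$ and $\rho(\Gamma)$ has finite index in $\Sp(2,\Z) = \SL(2,\Z)$. This already yields the first assertion of the corollary.

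Second, I would invoke the structure of $\SL(2,\Z)$: the principal congruence subgroup $\Gamma(2) \leq \SL(2,\Z)$ has finite index and, being a central extension of the free group of rank $2$ (the fundamental group of a thrice-punctured sphere) by $\{\pm I\}$, splits as $\{\pm I\} \times F_2$. In particular $\SL(2,\Z)$ contains a finite-index subgroup $F$ which is a nonabelian free group. The intersection $F' := F \cap \rho(\Gamma)$ is then of finite index in both $F$ and $\rho(\Gamma)$, and is a nonabelian free group by Nielsen--Schreier. Setting $\Gamma' := \rho^{-1}(F')$ yields a finite-index subgroup of $\Mod(\Sigma_2)$ that surjects onto the nonabelian free group $F'$. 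This is the definition of largeness.

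There is no substantive obstacle at this stage; the only point of care is to intersect $F$ with $\rho(\Gamma)$ before pulling back, so that the resulting map from $\Gamma'$ has image all of $F'$ rather than merely a finite-index subgroup. The real content of the corollary lives entirely inside Theorem \ref{theorem:examplesofquotients}(a), specifically in producing a virtual epimorphism onto $\Sp(2,\Z)$ in the delicate case $g=2$.
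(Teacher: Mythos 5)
Your plan inverts the logical order the paper actually uses, and in doing so becomes circular. You deduce Corollary \ref{corollary:mods2large} by specializing Theorem \ref{theorem:examplesofquotients}(a) to $g=2$, $m=1$. But the main technical engine, Theorem \ref{theorem:mainresult}, is only stated and proved for $g \geq 3$; the whole apparatus of parabolic subgroups, opposite unipotents, and the Raghunathan--Venkataramana generation theorem is not available at $g=2$ (the $\Q$-rank hypothesis fails there). Consequently the paper explicitly warns, at the start of Section \ref{section:finitegroups}, that although the statement is \emph{formally} a corollary of Theorem \ref{theorem:examplesofquotients}, the proof must go the other way: Corollary \ref{corollary:mods2large} is proved directly, and then the $g=2$ case of Theorem \ref{theorem:examplesofquotients} is derived from it (via virtual surjection onto free groups and the fact that the target arithmetic groups are finitely generated). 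So citing the theorem at $g=2$ to establish the corollary is not a valid shortcut here.

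The missing content is therefore the direct argument for $g=2$, which the paper supplies as follows: take $H = \Z/2\Z$ with a $\phi$-redundant $p \colon T_2 \to \Z/2\Z$. Then $\Q[H] \cong \Q \oplus A$ with $A \cong \Q$, the involution $\tau|_A$ is trivial, and $\Aut_A(A^{2g-2},\langle-,-\rangle) = \Sp(2,\Q) = \SL(2,\Q)$. Lemma \ref{lemma:unipotents} exhibits the images of the Dehn twists $T_b$ and $T_a^{-1}$ as the elementary unipotents $\bigl(\begin{smallmatrix}1&1\\0&1\end{smallmatrix}\bigr)$ and $\bigl(\begin{smallmatrix}1&0\\1&1\end{smallmatrix}\bigr)$, which already generate $\SL(2,\Z)$, so no appeal to the $g \geq 3$ machinery is needed; one only checks that $\rho(\Gamma_{H,p})$ preserves the integral lattice $\overline{R}$ to land in $\Sp(2,\Z)$. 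Your second step, passing from a finite-index image in $\SL(2,\Z)$ to a free quotient (intersect with a free finite-index subgroup, then pull back), is correct and matches the paper's implicit argument for largeness.
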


We also deduce the main result of \cite{MasbReid} in Section \ref{section:ontoallfingrp}.

\begin{corollary} \label{corollary:surjectfinitegroups}
For every genus $g \geq 2$ and every finite group $G$, there is a finite index subgroup of $\Mod(\Sigma_g)$ 
which surjects onto $G$.
\end{corollary}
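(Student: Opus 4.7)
My plan is to bootstrap from the virtual symplectic quotients of Theorem \ref{theorem:examplesofquotients}(a). Given a finite group $G$ of order $n$, I will realize $G$ as a subgroup of a finite congruence quotient of $\Sp(2m(g-1),\Z)$ for a suitable $m$, and then pull back along the virtual epimorphism to obtain the desired finite-index subgroup of $\Mod(\Sigma_g)$ surjecting onto $G$.

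The first ingredient is that $G$ embeds into a symplectic group over a finite field. For any prime $p$, the left regular representation of $G$ on $\mathbb{F}_p[G]$ yields an inclusion $G \hookrightarrow \GL_n(\mathbb{F}_p)$, and the block map $A \mapsto \operatorname{diag}(A,(A^T)^{-1})$ embeds $\GL_n(\mathbb{F}_p)$ into $\Sp(2n,\mathbb{F}_p)$ preserving the standard symplectic form. Choose $m$ so that $k := m(g-1) \geq \max(n,2)$; padding by an identity block in $\Sp(2(k-n),\mathbb{F}_p)$ then gives an embedding $G \hookrightarrow \Sp(2k,\mathbb{F}_p)$.

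By Theorem \ref{theorem:examplesofquotients}(a) applied with this $m$, there is a finite-index subgroup $\Gamma \leq \Mod(\Sigma_g)$ and a homomorphism $\rho\colon \Gamma \to \Sp(2k,\Z)$ whose image $\Lambda$ has finite index. Since $k \geq 2$, the congruence subgroup property for $\Sp(2k,\Z)$ guarantees $\Lambda \supseteq \Gamma(N)$ for some $N$. For any prime $p$ coprime to $N$, the reduction $\Gamma(N) \to \Sp(2k,\mathbb{F}_p)$ is surjective, so the composition $\bar\rho\colon \Gamma \to \Sp(2k,\mathbb{F}_p)$ is surjective. Then $\bar\rho^{-1}(G)$ has finite index in $\Gamma$ (the target is finite), hence in $\Mod(\Sigma_g)$, and $\bar\rho$ restricts to a surjection from $\bar\rho^{-1}(G)$ onto $G$.

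The main step to watch is the surjectivity of $\bar\rho$ modulo $p$, which rests on the congruence subgroup property for $\Sp(2k,\Z)$ with $k \geq 2$; without it one would only know that $\Lambda$ has finite index in $\Sp(2k,\Z)$ and could not conclude that its reduction modulo $p$ fills out all of $\Sp(2k,\mathbb{F}_p)$. In the special case $g = 2$ one can avoid this machinery entirely by invoking Corollary \ref{corollary:mods2large}: since $\Mod(\Sigma_2)$ is large, it virtually surjects onto a non-abelian free group and hence onto every finite group.
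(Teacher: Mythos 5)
Your argument is correct and follows essentially the same route as the paper: use Theorem \ref{theorem:examplesofquotients}(a) to get a virtual epimorphism onto $\Sp(2k,\Z)$, reduce mod $p$ to fill out $\Sp(2k,\mathbb{F}_p)$, embed $G$ into that finite group, and pull back. The only cosmetic difference is that the paper embeds $G$ via $\Sym(k-1)\subseteq\SL(k,\mathbb{F}_p)\subseteq\Sp(2k,\mathbb{F}_p)$ while you use the regular representation $G\hookrightarrow\GL_{|G|}(\mathbb{F}_p)$ followed by the block embedding into $\Sp$, and you cite CSP explicitly where the paper just asserts surjectivity mod almost all $p$.
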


We now explain the main technical result of the paper. The standard involution $\tau$ of $\Q[H]$, defined above,
descends to an involution
on each simple factor $A_i$ of $\Q[H]$ (see Lemma \ref{lemma:algdecomp}). Let $K_i = L_i^\tau$
be the subfield of the center $L_i$ fixed by $\tau$, and let $\frak O_i$ be the image of $\Z[H]$ in $A_i$ which
is an order in $A_i$. Let $\mc G_{H, i}$ be the $K_i$-defined
algebraic group $\Aut_{A_i}(A_i^{2g-2}, \langle-,-\rangle)$, and let $\mc G_{H, i}^1$ be those elements of
reduced norm $1$ over $L_i$ (see Section \ref{subsection:gofinindex} for the definition of reduced norm).
Let $\mc G_{H, i}^1(\frak O_i)$ be the arithmetic subgroup
$\mc G_{H, i}^1 \cap \Aut_{\frak O_i}(\frak O_i^{2g-2})$.
Our main result says that, under a suitable condition, the image of $\Gamma_{H,p}$ contains a finite index
subgroup of $\mc G_{H, i}^1(\frak O_i)$. Namely, the suitable condition is that $p$ be {\em $\phi$-redundant},
which means that $p$ factors through a surjective map $\phi: T_g \to F_g$ where $F_g$ is the rank $g$
free group and the induced
map $p' : F_g \to H$ is {\em redundant}, i.e. $p'$ contains a free generator in its kernel.

\begin{theorem}\label{theorem:mainresult} Suppose $g \geq 3$ and $p: T_g \to H$ is $\phi$-redundant.
Then, for $\rho_{H, p, i},\; \Gamma_{H,p},\; \mc G_{H,i}^1(\frak O_i)$ as defined above,
$\rho_{H, p, i}(\Gamma_{H,p})$ is commensurable with $\mc G_{H,i}^1(\frak O_i)$.
\end{theorem}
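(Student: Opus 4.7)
My plan is to use the factorization $p = p' \circ \phi$ to import the Grunewald--Lubotzky arithmeticity result for $\Aut(F_g)$ into the surface setting. Geometrically, $\phi$ arises from a handlebody $V$ with $\pi_1(V) \cong F_g$, and the induced map $\hat R \to \widehat{R'}$ (where $R' = \ker p'$) has kernel a $\Q[H]$-Lagrangian for the skew-Hermitian form $\langle -,-\rangle$. On each simple factor this yields a hyperbolic decomposition $A_i^{2g-2}=A_i^{g-1}\oplus A_i^{g-1}$, whose stabilizer in $\mc G_{H,i}$ is a Siegel-type parabolic $P_i=L_i\ltimes U_i$ with Levi $L_i\cong \GL_{A_i}(A_i^{g-1})$ and unipotent radical $U_i$ consisting of $\tau$-Hermitian matrices.

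The inclusion $\rho_{H,p,i}(\Gamma_{H,p})\subset \mc G_{H,i}^1(\frak O_i)$, after dropping to a finite-index subgroup, is immediate from the fact that the action preserves $\langle -,-\rangle$ and the $\Z[H]$-lattice $\overline R$, reduced norm one being a finite-index condition. For the nontrivial reverse direction I would first realize a finite-index subgroup of the Levi $L_i(\frak O_i)$ inside the image. Let $\Gamma_V\leq \Gamma_{H,p}$ be the subgroup of mapping classes extending to $V$. Via $\phi$, it descends to a finite-index subgroup of the stabilizer of $p'$ in $\Aut(F_g)$, and by the main theorem of \cite{GrunLubo} applied to the redundant surjection $p'$, the corresponding image in the free-group representation is of finite index in $\GL_{\frak O_i}(\frak O_i^{g-1})$ intersected with reduced norm one. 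Identifying this free-group action with the action on $A_i^{g-1}\subset A_i^{2g-2}$ places a finite-index subgroup of $L_i(\frak O_i)$ inside $\rho_{H,p,i}(\Gamma_{H,p})$.

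The unipotent piece I would obtain from Dehn twists $T_c$ about simple closed curves $c\subset\Sigma_g$ with $[c]\in R$, so that $c$ lifts simply to the $H$-cover. On $\hat R$ such a twist acts by a symplectic transvection encoded by the class of the lift; restricting to curves whose lifts lie in the chosen Lagrangian produces transvections in $U_i$. Conjugating these transvections by the $L_i(\frak O_i)$-part constructed above, and using the hypothesis $g\geq 3$ to guarantee enough independent such curves within each $H$-isotypic sector, should produce a finite-index subgroup of $U_i(\frak O_i)$. Combined with the Levi, this yields a finite-index subgroup of the parabolic $P_i(\frak O_i)$ in the image.

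Finally, a single element of $\Gamma_{H,p}$ whose $\rho_{H,p,i}$-image lies outside $P_i$ --- for instance a Dehn twist about a non-separating curve meeting the cocore of the handle --- together with $P_i(\frak O_i)$ generates a finite-index subgroup of $\mc G_{H,i}^1(\frak O_i)$ by a standard generation theorem for classical arithmetic groups of isotropic Hermitian type. The step I expect to be the main obstacle is the unipotent construction: explicitly computing the transvection attached to a Dehn twist in terms of the $A_i$-decomposition, and checking that after sweeping by $L_i(\frak O_i)$ one exhausts $U_i(\frak O_i)$ up to finite index. This is the place where the surface-theoretic input (Magnus-type expansions and the $H$-action on curve lifts) interacts with the algebra of $A_i$, and is the essential new ingredient beyond the free-group argument of \cite{GrunLubo}.
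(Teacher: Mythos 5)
Your proposal is essentially the paper's proof: the handlebody map yields the $\Q[H]$-Lagrangian $\hat P$, the handlebody mapping class group together with the Grunewald--Lubotzky theorem yields the Levi $\GL_{g-1}(A_i^{op})$ up to finite index and reduced norm, Dehn twists about curves with trivial $H$-monodromy that lift into the Lagrangian (the paper's $T_b$) yield unipotents in $\mc N^+$, and conjugation by the Levi together with irreducibility of its action sweeps out a finite-index subgroup of $\mc N^+(\frak O)$. Two details of your sketch need tightening, though neither changes the strategy: the hypothesis $g \geq 3$ is used because Grunewald--Lubotzky's theorem gives finite index only for $g \geq 4$ (with $g=3$ requiring the Zariski-density fallback of Remark \ref{remark:stillzardense}) and because Raghunathan's generation theorem requires $\Q$-rank at least $2$ --- not because one needs ``enough independent curves.'' And rather than a ``parabolic plus one outside element'' generation principle, the paper explicitly produces a finite-index subgroup of the opposite unipotent $\mc N^-(\frak O)$ as well (via $\rho_i(T_a^{-1})$ and conjugation by the block-diagonal part of the Levi, Lemmas \ref{lemma:blockdiagonal} and \ref{lemma:getU-}), and then invokes Raghunathan's theorem on generation of arithmetic groups by congruence subgroups of root subgroups, which is the precise version of the generation step you are gesturing at.
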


It is the structure of $(A_i^{2g-2}, \langle-,-\rangle)$ which ultimately determines $\mc G_{H,i}^1(\frak O_i)$,
and this structure is intimately related to the representation theory of the finite group $H$.
The structure is determined by the pair $(A_i, \tau|_{A_i})$, and
the simple factors $A_i$ of $\Q[H]$ are in natural one-to-one correspondence with the nontrivial
irreducible $\Q$-representations $r_i$ of $H$ (and the factor $\Q$ of $\Q[H]$ corresponds to the
trivial representation). We can also extract information about the structure (Theorem \ref{theorem:Gtype} below)
by extending scalars to get $\R$-algebras and $\C$-representations as follows.
Let $n_i^2 = \dim_{L_i}(A_i)$. As we will see later
(Proposition \ref{prop:typefromrealrep}), $K_i$ is a totally real field. As $A_i$ is simple,
so are $A_i \otimes_{L_i} \C$ and $A_i \otimes_{K_i} \R$; consequently, $A_i \otimes_{L_i} \C \cong \Mat_{n_i}(\C)$,
and $A_i \otimes_{K_i} \R$ is isomorphic to one of $\Mat_{n_i}(\R), \Mat_{n_i}(\C),$ or  $\Mat_{n_i/2}(\bbH)$
where $\bbH$ denotes the Hamiltonian quaternions. Note that the projection $\Q[H] \to A_i$ induces a 
representation $H \to (A_i \otimes_{L_i} \C)^\times$ which by the isomorphism $A_i \otimes_{L_i} \C \cong \Mat_{n_i}(\C)$
gives an irreducible $\C$-representation $r_{i,\C}: H \to \GL_{n_i}(\C)$.

After extending scalars to $\C$, the group $\mc G_{H, i}$ ``becomes'' one of the classical
algebraic groups over $\C$. The following theorem describes the structure after extending scalars
and summarizes, essentially, how we obtain
the different types of arithmetic groups ($\Sp, \OO, U$) in Theorem \ref{theorem:examplesofquotients}.
(For definitions of first/second kind and symplectic/orthogonal type, see Section \ref{subsection:AutM}.)

\begin{theorem} \label{theorem:Gtype}
The group $\mc G_{H, i}$ is the group of $K_i$-points of a $K_i$-defined complex algebraic group $G$
with the additional property that
\begin{itemize}
\item $G \cong \Sp_{(2g-2)n_i}(\C)$ $\Leftrightarrow $ $r_{i, \C}(H)$ preserves a nondegenerate symmetric bilinear form $\Leftrightarrow$
$A_i \otimes_{K_i} \R \cong \Mat_{n_i}(\R)$  $\Leftrightarrow$ $\tau|_{A_i}$ is of first kind and orthogonal type.
\item $G \cong \OO_{(2g-2)n_i}(\C)$ $\Leftrightarrow$ $r_{i, \C}(H)$ preserves a nondegenerate alternating bilinear form $\Leftrightarrow$
$A_i \otimes_{K_i} \R \cong \Mat_{n_i/2}(\bbH)$ $\Leftrightarrow$ $\tau|_{A_i}$ is of first kind and symplectic type.
\item $G \cong \GL_{(2g-2)n_i}(\C)$ $\Leftrightarrow$ $r_{i, \C}(H)$ preserves no nonzero bilinear form $\Leftrightarrow$
$A_i \otimes_{K_i} \R \cong \Mat_{n_i}(\C)$ $\Leftrightarrow$ $\tau|_{A_i}$ is of second kind.
\end{itemize}
\end{theorem}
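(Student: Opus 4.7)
The plan is to pass from $K_i$ to $\C$, apply Morita equivalence to eliminate the matrix algebra, and then track signs. I would first check that $\mc G_{H, i}$ is the group of $K_i$-points of an algebraic group $G$ defined over $K_i$: since $\tau$ fixes $K_i$ by construction, the conditions ``$A_i$-linear'' and ``preserves $\langle-,-\rangle$'' are polynomial in any $K_i$-basis of $A_i^{2g-2}$ and so cut out such a $G$.

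Next I would compute $G \times_{K_i} \C$. The totally real field $K_i$ sits in $L_i$ either as all of $L_i$ (first kind) or as an index-$2$ subfield with $L_i$ totally imaginary over $K_i$ (second kind), so
$$ A_i \otimes_{K_i} \C \;\cong\; \begin{cases} \Mat_{n_i}(\C), & \tau \text{ of first kind,} \\ \Mat_{n_i}(\C) \oplus \Mat_{n_i}(\C), & \tau \text{ of second kind,} \end{cases} $$
with $\tau \otimes 1$ swapping the two summands in the second-kind case. Morita equivalence then replaces the right module $M_\C := A_i^{2g-2} \otimes_{K_i} \C$ by a $\C$-vector space $W$ of dimension $(2g-2)n_i$ in the first-kind case, or by a pair $(W^+, W^-)$ of such spaces in the second-kind case, with $\Aut_{A_i \otimes \C}(M_\C)$ transported to $\GL(W)$ or $\GL(W^+)$ respectively.

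The sesquilinear form $\langle-,-\rangle \otimes 1$ becomes a nondegenerate $\C$-bilinear form $b$ on $W$ in the first-kind case, or a perfect pairing $W^+ \times W^- \to \C$ in the second-kind case. The symmetry of $b$ is the product of the ``skew'' sign $-1$ with the sign of $\tau$, so orthogonal type gives $b$ alternating and thus $G(\C) \cong \Sp_{(2g-2)n_i}(\C)$, symplectic type gives $b$ symmetric and thus $G(\C) \cong \OO_{(2g-2)n_i}(\C)$, while in the second-kind case the duality pairing gives $G(\C) \cong \GL(W^+) \cong \GL_{(2g-2)n_i}(\C)$. The remaining equivalences in each clause --- with the Frobenius--Schur type of $r_{i,\C}$ and with the real form of $A_i \otimes_{K_i} \R$ --- follow from the classical representation-theoretic dictionary encoded in Proposition \ref{prop:typefromrealrep}.

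The main obstacle I expect is the sign bookkeeping in the Morita transport: in particular, verifying that \emph{skew-Hermitian} $\langle-,-\rangle$ combined with a \emph{symplectic-type} $\tau$ really yields a \emph{symmetric} (rather than alternating) bilinear form on $W$, and dually for orthogonal type. This requires fixing a sesquilinearity convention, choosing an idempotent and an involution representative on each $\Mat_{n_i}(\C)$ factor, and checking how the relation $\langle w,v \rangle = -\tau(\langle v, w \rangle)$ translates under the explicit Morita functor. Once this sign dictionary is established, the identification of $G(\C)$ in each case is immediate, and the second-kind case is similarly handled by noting that the two $\Mat_{n_i}(\C)$ summands are interchanged by $\tau \otimes 1$, forcing the automorphism on $W^-$ to be the contragredient of that on $W^+$.
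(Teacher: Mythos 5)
Your plan is essentially the paper's proof, repackaged via Morita equivalence on the module side rather than via the adjoint involution on the endomorphism algebra. The paper sets $B = \End_A(A^{2g-2}) \cong \Mat_{2g-2}(A^{op})$, observes that $\mc G$ is the unitary group of the adjoint involution $\sigma$ on $B$, proves in Lemma \ref{lemma:oppositetype} that $\sigma$ has the same kind as $\tau$ but the \emph{opposite} type (this is the sign flip coming from $F^* = -F$ for skew-Hermitian $\langle-,-\rangle$), extends $\sigma$ to $B\otimes_K\C$, and then invokes Lemma \ref{lemma:changeinv} to recognize $G$ as $\Sp$, $\OO$, or $\GL$; the remaining equivalences are Propositions \ref{prop:typefrominvtform} and \ref{prop:typefromrealrep}, as you say. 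Your Morita-equivalence step is equivalent to the paper's identification $B\otimes_K\C \cong \Mat_{(2g-2)n}(\C)$ (or a product of two), and the ``sign bookkeeping'' you correctly flag as the main obstacle --- that skew-Hermitian combined with orthogonal (resp.\ symplectic) $\tau$ gives an alternating (resp.\ symmetric) form on $W$ --- is precisely the content of Lemma \ref{lemma:oppositetype}. So you have the right idea and have located the crux; your plan would go through once that lemma is verified, but as written it stops short of actually carrying out that verification, which the paper does via the explicit $\pm 1$-eigenspace computation.
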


Theorem \ref{theorem:mainresult} and its proof show
that we have the following ``procedure'' to obtain a rich collection of arithmetic quotients
of $\Mod(\Sigma_g)$. We present this procedure in the form of a theorem.
Note that the homomorphism $\rho_{H, A}$ below is explicit and constructive.

\begin{theorem} \label{theorem:maintheoremprocedural}
Let $H$ be a finite group with $d(H) < g$ generators and let $A$ be 
a nontrivial simple component of $\Q[H]$. Let $\tau$ be the involution on $A$
induced by the standard one of $\Q[H]$, let $L$ be the center of $A$,
and let $K = L^\tau$ be the $\tau$-fixed subfield.  Set $M = A^{2g-2}$ with free basis $x_1, \dots, x_{g-1}, y_1, \dots, y_{g-1}$,
and endow $M$ with the skew-Hermitian sesquilinear form satisfying $\langle x_i, y_j \rangle = \delta_{ij}, 
\langle y_i, x_j \rangle = -\delta_{ij}, \langle x_i, x_j \rangle = 0,$ and $\langle y_i, y_j \rangle = 0$.
Set $\mc G$ to be the $K$-algebraic group of the $A$-automorphisms of $(M, \langle-,-\rangle)$
and $\mc G^1$ to be its elements of reduced norm $1$ over $L$. Set
$\Omega_{H,A} = \mc G^1(\frak O) = \mc G^1 \cap \Aut_{\frak O}(\frak O^{2g-2})$ 
where $\frak O$ is the order in $A$ which is the image of $\Z[H]$.
Then, there is a finite index subgroup $\Gamma_H < \Mod(\Sigma_g)$ and a homomorphism
$\rho_{H,A}: \Gamma_H \to \Omega_{H,A}$
whose image is of finite index.
\end{theorem}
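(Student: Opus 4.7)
The plan is to deduce Theorem \ref{theorem:maintheoremprocedural} from Theorem \ref{theorem:mainresult} together with the descent procedure of Section \ref{section:forgetmarkedpt}. Three things need checking: (i) given $H$ with $d(H) < g$, there exists a $\phi$-redundant surjection $p : T_g \to H$; (ii) the arithmetic data $(\mc G, \mc G^1, M, \langle-,-\rangle, \Omega_{H,A})$ in the present statement matches the data $(\mc G_{H,i}, \mc G_{H,i}^1, A_i^{2g-2}, \langle-,-\rangle, \mc G_{H,i}^1(\frak O_i))$ coming from the $i$-th simple factor of $\hat R$; and (iii) the resulting representation, which a priori is defined on a finite-index subgroup of $\Aut(T_g)^+ \cong \Mod(\Sigma_g, \ast)$, descends virtually to $\Mod(\Sigma_g)$.

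For (i), I would use the standard presentation $T_g = \langle a_1, b_1, \ldots, a_g, b_g \mid \prod_i [a_i,b_i]\rangle$ and take $\phi : T_g \to F_g = \langle x_1, \ldots, x_g \rangle$ to be the retraction sending $a_i \mapsto x_i$, $b_i \mapsto 1$; the surface relator maps to the identity, so $\phi$ is a well-defined surjection. Since $d(H) \le g-1$, pick generators $h_1, \ldots, h_{g-1}$ of $H$ and define $p' : F_g \to H$ by $x_i \mapsto h_i$ for $i < g$ and $x_g \mapsto 1$. Then $x_g$ is a free generator in $\ker p'$, so $p'$ is redundant, and $p := p' \circ \phi$ is $\phi$-redundant. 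For (ii), since $A$ is a nontrivial simple factor of $\Q[H]$, there is some index $i$ with $A = A_i$; the induced involution, center $L$, fixed subfield $K = L^\tau$, and order $\frak O$ (the image of $\Z[H]$) then agree with $(A_i, \tau|_{A_i}, L_i, K_i, \frak O_i)$ by definition. The identification of $M = A^{2g-2}$, with the skew-Hermitian form having the prescribed values on $\{x_i, y_j\}$, with the $i$-th summand of $(\hat R, \langle-,-\rangle)$ is exactly the content of Proposition \ref{prop:surfacegaschutz} combined with the explicit description of the form in Section \ref{section:symplecticgaschutz}; in particular $\Omega_{H,A} = \mc G_{H,i}^1(\frak O_i)$.

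With these matches in hand, Theorem \ref{theorem:mainresult} yields that $\rho_{H,p,i}(\Gamma_{H,p})$ is commensurable with $\Omega_{H,A}$. It remains to move from $\Mod(\Sigma_g,\ast)$ to $\Mod(\Sigma_g)$. The kernel of the forgetful map $\Mod(\Sigma_g,\ast) \to \Mod(\Sigma_g)$ is the point-push subgroup, isomorphic to $T_g$ itself; Section \ref{section:forgetmarkedpt} shows that, after replacing $\Gamma_{H,p}$ by a further finite-index subgroup $\Gamma_H'$, the $\rho_{H,p,i}$-image of the point-push kernel is trivial (essentially because point-pushes act on $\overline R$ by conjugation by elements of $R$, which is inner and hence trivial on the abelianization once we restrict to the appropriate covering). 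This produces a finite-index subgroup $\Gamma_H < \Mod(\Sigma_g)$ and a well-defined $\rho_{H,A} : \Gamma_H \to \Omega_{H,A}$ whose image, being commensurable up to a finite quotient with $\rho_{H,p,i}(\Gamma_H')$, still has finite index in $\Omega_{H,A}$.

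The real work is carried out in Theorem \ref{theorem:mainresult}; the proof of Theorem \ref{theorem:maintheoremprocedural} is essentially assembly. The one subtle step is (iii): one must confirm not only that the point-push kernel acts trivially after a finite-index restriction, but also that this restriction does not destroy finite-index-ness of the image. Since the point-push subgroup is finitely generated and the quotient by the finite-index subgroup killing it is finite, this is harmless. A minor point is that Theorem \ref{theorem:mainresult} requires $g \ge 3$; the statement here is only substantive under the same hypothesis, with $g=2$ handled either by the trivial case $H=1$ or by passing to a finite cover as in Corollary \ref{corollary:mods2large}.
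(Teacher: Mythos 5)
Your proposal is correct and follows the same route as the paper's Section~\ref{section:forgetmarkedpt}--\ref{section:finitegroups}: construct a $\phi$-redundant $p$ from the handlebody retraction (your explicit $a_i \mapsto x_i$, $b_i \mapsto 1$ is one realization of the handlebody map), apply Theorem~\ref{theorem:mainresult} to get an arithmetic image inside $\mc G^1(\frak O)$, and descend via Proposition~\ref{proposition:forgetmarkedpoint}. One small imprecision worth flagging: your parenthetical for the descent step suggests the point-push kernel lands inside $c(R)$ after passing to a finite-index subgroup, but in general $\Gamma_{H,p}\cap c(T_g)=c(p^{-1}(Z(H)))\supsetneq c(R)$; the paper's actual argument is that $\rho(\Gamma_{H,p}\cap c(T_g))$ is torsion (since $\alpha^n\in R$ for some $n$) and one then intersects with a torsion-free finite-index subgroup of $\Sp(2+(2g-2)|H|,\Z)$.
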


Note that since there is a bijection between nontrivial simple components $A$ of $\Q[H]$
and nontrivial irreducible $\Q$-representations $r$ of $H$, we could, with appropriate rewording,
replace $A$ with $r$ in the above theorem. Thus, if $A$ corresponds to $r$, then
$\rho_{H,r} = \rho_{H, A}$ and $\Omega_{H, r} = \Omega_{H, A}$ are respectively
the representation and arithmetic group promised at the beginning. Note, in addition, that
in Theorem \ref{theorem:maintheoremprocedural}, we did not reference
the $\phi$-redundant homomorphism $p: T_g \to H$. The homomorphism $\rho$ does depend
on the choice of $p$, but under certain conditions (such as $g \gg d(H)$) we know that different choices
of $p$ lead to representations $\rho$ which are equivalent in a natural way. 
In Section \ref{subsection:rhouniqueness}, we discuss the notion of equivalence of representations $\rho$
and when they are known to be equivalent. 

Theorem \ref{theorem:examplesofquotients} is deduced from Theorem \ref{theorem:maintheoremprocedural}
by making special choices of finite groups $H$ and simple components $A$ of $\Q[H]$ and then
analyzing the resulting arithmetic groups, in some cases using Theorem \ref{theorem:Gtype}.
(See Section \ref{section:finitegroups}
for the corresponding irreducible representations $r$.)
For (a), we use $H = \Sym(m+1)$ and $A = \Mat_m(\Q)$. For (b) and $m \geq 3$, we use $H = \Alt(m+1) \times \Dih(2n)$ where
$\Dih(2n)$ is the dihedral group of order $2n$, and $A = \Mat_n(L)$ where $L$ is the field $\Q(\zeta_n)^+$.
For (c) and $m \geq 2$, we use $H = \Sym(m+1) \times \Z/n\Z$ and $A = \Mat_n(\Q(\zeta_n))$. For (d) and $m \geq 3$, we use
$H = \Alt(m+1) \times \Dic(4n)$ where $\Dic(4n)$ is the dicyclic group of order $4n$ (definition 
in Section \ref{section:finitegroups}) and $A = \Mat_n(D)$ where $D$ is the quaternion algebra
$$D = \left\{ \left( \begin{array}{rr} \alpha & \beta \\ - \overline{\beta} & \overline{\alpha} \end{array} \right) \in \Mat_2(\Q(\zeta_n))
 	\; | \; \alpha, \beta \in \Q(\zeta_n) \right\}.$$
For smaller $m$ in cases (b),(c),(d), see Section \ref{section:finitegroups} for the corresponding $H$.
Theorem \ref{theorem:examplesofquotients2} is also deduced from Theorem \ref{theorem:maintheoremprocedural}, and
the fact that, for such $L$, the algebra $\Mat_{s_L}(L)$ appears as a simple factor of $\Q[H]$ for some finite group $H$ and some $s_L$.
The reader may notice that the group
$\mc G$ obtained is always of type $A_n, C_n,$ or $D_n$ but never $B_n, G_2, F_4, E_6, E_7$ or $E_8$.

The above mentioned results open a lot of questions. They show that the classical Torelli group of $\Mod(\Sigma)$ is just
a first in a list of countably many ``generalized Torelli subgroups'' -- $\ker(\rho_{H,r})$ as above. Are these subgroups
(or any of them) finitely generated? Note that these are very different from the ``higher Torelli groups'' (or equivalently, groups in 
the Johnson filtration) obtained from nilpotent quotients of $T_g$; in fact, for any group $\mc I$ in the Johnson filtration and
any $\frak T_{H,r} = \ker(\rho_{H,r})$ for $H$ nontrivial, the product $\mc I \frak T_{H,r}$ is of finite index in $\Mod(\Sigma)$.
(See Section \ref{subsection:johnson}.)

Our theorem can be useful also toward solving the long-standing problem whether 
$\Mod(\Sigma_g)$, for $g \geq 3$, can virtually surject onto $\Z$. In \cite{PutmWiel}, Putman and Wieland showed, roughly speaking, that if
$\rho_{H,p}(\Gamma_{H,p})$ has no finite orbits for all $p$ and $H$ (a collection for which $\ker(p)$ is cofinal would suffice),
then the mapping class group (for a surface of genus $1$ greater) does not virtually surject to $\Z$. 
Theorem \ref{theorem:mainresult} easily implies this condition for $\phi$-redundant $p$. This is just a step in this direction since
the subgroups $\ker(p)$ for all such $p$ do not form a cofinal family. (To get the conclusion that $\Mod(\Sigma)$ does not virtually surject onto $\Z$, one must prove a similar result for covers of a surface with one boundary component. See \cite{PutmWiel} for the precise formulation.)

The key idea for proving Theorem \ref{theorem:mainresult} is that the handlebody subgroup of the mapping class group
behaves like a maximal parabolic subgroup of a semisimple Lie group. Let us elaborate. Along the way, we will indicate where each step
in the argument is proved. As explained above, we are getting a representation
$\rho_{H,p, i}$ of $\Mod(\Sigma)$, or to be more precise a finite index subgroup of it, into $\mc G_{H, i}$.

Let $\Sigma \cong \partial \mc H$ be an identification inducing $\phi: T_g \to \pi_1(\mc H) \cong F_g$ where $\mc H$ is
a genus $g$ handlebody. From \cite[Theorem 5.2]{Jaco},
it follows that all surjective homomorphisms $\phi: T_g \to F_g$ arise this way.
We analyze first the image of $\Map(\mc H)$, the handlebody subgroup of $\Mod(\Sigma)$ for a handlebody $\mc H$ with
boundary $\Sigma$. (More accurately, we will study $\Map(\mc H, \ast)$, the handlebody group with a fixed point.)
This is the subgroup of $\Mod(\Sigma)$ consisting of all isotopy classes containing homeomorphisms which extend to
$\mc H$. This important subgroup of $\Mod(\Sigma)$ has been actively studied in recent years -- see for example \cite{HameHens1, HameHens2}. Via its action
on the fundamental group of $\mc H$, it is mapped onto $\Out(F_g)$, the outer automorphism group of the free group on $g$ generators. Following
carefully the definitions, one sees that when $p: T_g \to H$ is $\phi$-redundant, $\rho_{H,p,i}(\Map(\mc H))$ acts
on a submodule of $M_i$ in precisely the same way as $\rho(\Out(F_g))$ in \cite{GrunLubo} (Section \ref{section:liftrep}). We can therefore appeal to the results of
\cite{GrunLubo} to deduce that $\rho_{H,p,i}(\Map(\mc H))$ contains an arithmetic subgroup of the Levi factor $\mc L$ of a suitable
maximal parabolic subgroup $\mc P = \mc L \mc N^+$ of $\mc G_{H,i}$ (Section \ref{section:liftrep}). 

Moreover, we use again the $\phi$-redundant assumption to show that the image of $\Gamma_{H, p} < \Mod(\Sigma_g, \ast)$
contains nontrivial unipotent elements in the unipotent radical $\mc N^+$ of $\mc P$ as well as of $\mc N^-$, its opposite subgroup (upper triangular
versus lower triangular). At this point, we use both results as well as the fact that $\mc L$ acts irreducibly on $\mc N^+$ and on $\mc N^-$, to deduce
that the image of $\Mod(\Sigma)$ contains finite index subgroups of $\mc U^+(\frak O) = \mc U^+ \cap \mc G(\frak O)$ and 
$\mc U^-(\frak O) = \mc U^- \cap \mc G(\frak O)$ where $\mc U^+$ and $\mc U^-$ are opposite maximal 
unipotent subgroups of $\mc G_{H,i}$ (Section \ref{section:unipotents}). Now, we can appeal to the result of Raghunathan \cite{Ragh} (see also Venkataramana \cite{Venk})
that two such subgroups generate a finite index subgroup of $\mc G_{H,i}^1(\frak O)$ and Theorem \ref{theorem:mainresult} is deduced (Section \ref{section:image}). 

The paper is organized as follows. In Section \ref{section:gaschutz}, we prove an analogue of Gasch\"utz' Theorem for surface groups, and in Section
\ref{section:symplecticgaschutz}, we develop what we call ``Symplectic Gasch\"utz theory'', i.e. identifying the structure of $\hat{R} = \overline{R} \otimes_\Z \Q$,
not merely as a $\Q[H]$-module but also as a module with a skew-Hermitian form over $\Q[H]$ -- an algebra with involution. We relate its structure
to the representation theory of the finite group $H$. In Section \ref{section:tautypekind}, we elaborate on this connection. Most of the material
in Section \ref{section:tautypekind} is likely known to experts on simple algebras, but we chose to include a quick presentation which seems not
to be available in this form. This material is useful when one comes to producing examples of arithmetic quotients out of finite groups $H$
and their representations. But the reader can skip this section on a first reading, going right away to Section \ref{section:isotropic}.

In Section \ref{section:isotropic}, we describe a submodule of $\hat R$ isomorphic to $\Q[H]^2$ which leads (in Section
\ref{section:unipotents}) to an identification of two unipotent elements in $\rho_{H,p,i}(\Gamma_{H,p}$), and we
prove that the sesquilinear form $\langle-,-\rangle$ on $M_i=A_i^{2g-2}$ has an isotropic submodule $M_i'$
isomorphic to $A_i^{g-1}$. The parabolic subgroup $\mc P$ mentioned above consists precisely of those elements of $\mc G_{H,i}$
preserving $M_i'$. Some of the content of Sections \ref{section:liftrep}, \ref{section:unipotents}, and \ref{section:image} have been
indicated above; additionally, Section \ref{section:liftrep} establishes the basic properties of $\rho_{H,p}$, and in
Section \ref{section:image}, we finish the proof of Theorem \ref{theorem:maintheoremprocedural}. 
We end in Section \ref{section:finitegroups} with proofs of the remaining results claimed
in the introduction and some discussion of the arithmetic groups $\Omega_{H, A}$ as $(H, A)$ ranges over all pairs of
finite groups $H$ and simple components $A$ of $\Q[H]$. \\

\subsection{Index of Notation:}
For the convenience of the reader, we collect here some of the more important notation that
is consistent throughout the paper. \\

\noindent $\Mod(\Sigma)$: the mapping class group of the closed surface $\Sigma$\\
$\Mod(\Sigma, \ast)$: the mapping class group of the closed surface $\Sigma$ fixing the point $\ast$\\ 
$g$: genus of the surface \\
$H$ : a finite group \\
$r, r_i$:  an irreducible $\Q$-representation of $H$\\
$A, A_i$: a simple $\Q$-algebra (almost always denoting a component of $\Q[H]$)\\
$M, M_i$: modules over $A, A_i$ (almost always denoting $A^{2g-2}, A_i^{2g-2}$)\\
$L, L_i$: the center of $A, A_i$\\
$\tau$: the canonical involution on $\Q[H]$ and each $A, A_i$ (See Lemma \ref{lemma:algdecomp}.)\\
$K, K_i$: the $\tau$-fixed subfield of $L, L_i$\\
$\langle -,- \rangle$: the sesquilinear pairing on $A_i^{2g-2}$ (or $\hat{R}$; see 
Section \ref{section:pairingdefn}.) \\
$\mc G_{H, i}$: the algebraic group $\Aut_{A_i}(A_i^{2g-2}, \langle -,- \rangle)$ (defined over $K_i$)\\
$\mc G_{H, i}^1$: the elements in $\mc G_{H, i}$ of reduced norm $1$\\
$\frak O, \frak O_i$: an order in $A, A_i$ which is the image of $\Z[H]$\\
$\Omega, \Omega_{H, r}, \Omega_{H, A}$: the arithmetic group consisting of 
	$\frak O, \frak O_i$ points of $\mc G_{H, i}^1$. (where $A = A_i$ and $r$ is the
	representation $\Q[H] \to A$)\\
$T_g := \pi_1(\Sigma_g)$\\
$p: T_g \to H$: some surjective homomorphism\\
$R := \ker(p)$\\
$\overline{R} = R/[R,R]$\\
$\hat{R} = \overline{R} \otimes \Q$\\
$\Gamma_{H,p} = \{\gamma \in \Aut(T_g)^+ = \Mod(\Sigma, \ast) \;\; | \;\; p \circ \gamma = p \}$\\
$\rho_{H, p}: \Gamma_{H,p} \to \Aut_{\Q[H]}(\hat R)$ : the representation for the induced action
of $\Gamma_{H, p}$ on $\hat R$\\
$\rho_{H, p, i}: \Gamma_{H, p} \to  \mc G_{H,i}^1$: the map $\rho_{H, p}$ followed by the projection onto
the action of the $i$th isotypic component of $\hat R$ \\
$\Gamma_H$ :  a finite index subgroup of $\Mod(\Sigma)$ (of some relation to $\Gamma_{H,p}$; see
Section \ref{section:forgetmarkedpt}.)\\
$\rho_{H, r}, \rho_{H, A}$ : alternative names for the representation $\Gamma_H \to \Omega_{H, A}
= \Omega_{H, r}$ where $r: \Q[H] \to A$ is an irreducible representation.\\
$\mc H, \mc H_g$: the genus $g$ handlebody \\
$\Map(\mc H), \Map(\mc H, \ast)$ : the mapping class group of the handlebody (also called 
the handlebody group) respectively without and with a fixed point.\\
$F_g$ : the free group of rank $g$ \\
$\phi: T_g \to F_g$: a surjective homomorphism\\
$p': F_g \to H$: a surjective homomorphism satisfying $p' \circ \phi = p$ (when $p$ is 
$\phi$-redundant)\\
$S := \ker(p')$\\
$\overline{S} = S/[S,S]$\\
$\hat S = \overline{S} \otimes \Q$\\
$\mc P$ : a maximal parabolic in $\mc G_{H, i}$\\
$\mc L$: a Levi factor of $\mc G_{H, i}$\\
$\mc N^+$: the unipotent radical of $\mc P$\\
$\mc N^-$: the opposite subgroup of $\mc N^+$\\
$\mc U^+, \mc U^-$: opposite maximal unipotent radicals of $\mc G_{H, i}$

\subsection{Acknowledgements:} The authors acknowledge useful discussions with Ursula Hamenst\"adt and Sebastian Hensel on the 
handlebody groups and with V. Venkataramana on generators of arithmetic groups. We are also grateful to Andrei Rapinchuk, Eli
Eljadeff, and Uriah First for various discussions on algebras and arithmetic groups.
The work of the 3rd and 4th authors is supported by the ERC, and 
the work of the 2nd and 3rd authors by the NSF and BSF.

\section{A theorem of Gasch\"utz and a generalization to surface groups}
\label{section:gaschutz}
%!TEX root = ./MCGReps.tex
Suppose we have an exact sequence of groups 
$$1 \to R \to T \overset{p}{\to} H \to 1$$ 
where $T$ is the fundamental group of a closed orientable surface of genus $g$ and $H$ is a finite group. The action of $T$ on $\overline{R} = R/ [R, R]$
by conjugation descends to an action of $H$. Thus, $\overline{R}$ has the structure of a $\Z[H]$ module. In this section, 
we prove Proposition \ref{prop:surfacegaschutz} from the introduction. We recall the proposition here.

\surfgasc*

As mentioned in the introduction, this is a known result due to Chevalley--Weil.
We adopt a topological viewpoint to give an alternative proof of Proposition \ref{prop:surfacegaschutz}, and 
we begin by translating algebraic objects into topological ones. For any normal finite index subgroup $R < T$,
there is a corresponding finite index regular cover $\tilde \Sigma \to \Sigma$ such that the image 
$\pi_1(\tilde \Sigma) \to \pi_1(\Sigma)$ is precisely $R$. Using this, one can identify $\HH_1(\tilde \Sigma, \Z)$
with $\overline{R}$ and $\HH_1(\tilde \Sigma, \Q) \cong \hat R$. The group $H = T/R$ acts on the cover
by deck transformations and thereby induces an action of $H$ on $\HH_1(\tilde \Sigma, \Q)$. The isomorphism 
$\HH_1(\tilde \Sigma, \Q) \cong \hat R$ is an isomorphism of $\Q[H]$-modules.

\subsection{Gasch\"utz's Theorem}
In the case where $T$ is replaced by a free group $F$, the description of $\hat{R}$ is a classical result of Gasch\"utz.
Since we require its use in later sections, we present Gasch\"utz's theorem. We also provide a new topological
proof of the theorem which we then adapt for the analagous theorem for surfaces.

\begin{theorem}{(Gasch\"utz)} \label{theorem:gaschutz}
Suppose $\displaystyle 1 \to R \to F_n \overset{p}{\to} H \to 1$ is a short exact sequence where $F_n$ is the free group on $n$ generators
and $H$ is a finite group. Let $\overline{R} = R/[R,R]$ and $\hat{R} = \overline{R} \otimes_\Z \Q$. Then, there is a $\Q[H]$-module isomorphism:
$$\hat{R} \cong \Q[H]^{n-1} \oplus \Q$$
\end{theorem}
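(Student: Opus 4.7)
The plan is to give a topological proof by realizing $F_n$ as the fundamental group of a one-vertex graph (a wedge $X$ of $n$ circles), taking the regular $H$-cover $\tilde X \to X$ corresponding to $R$, and analyzing the cellular chain complex of $\tilde X$ as a complex of $\mathbb{Q}[H]$-modules. Under the identification $\pi_1(\tilde X) \cong R$, we have $\hat R \cong H_1(\tilde X,\mathbb{Q})$ as $\mathbb{Q}[H]$-modules, where $H$ acts via deck transformations.

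First I would read off the $\mathbb{Q}[H]$-structure of the cellular chains. Since $X$ has a single $0$-cell and $n$ one-cells, and since $H$ acts freely and transitively on the set of lifts of each cell of $X$, the chain groups are free $\mathbb{Q}[H]$-modules: choosing a basepoint of $\tilde X$ over the vertex of $X$ and basepoints for lifts of the $n$ edges identifies
\begin{equation*}
C_0(\tilde X,\mathbb{Q}) \;\cong\; \mathbb{Q}[H], \qquad C_1(\tilde X,\mathbb{Q}) \;\cong\; \mathbb{Q}[H]^n
\end{equation*}
as left $\mathbb{Q}[H]$-modules. Since $\tilde X$ is connected (the cover is regular and $p$ is surjective), $H_0(\tilde X,\mathbb{Q}) \cong \mathbb{Q}$, the trivial $H$-module. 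This yields the four-term exact sequence of $\mathbb{Q}[H]$-modules
\begin{equation*}
0 \longrightarrow H_1(\tilde X,\mathbb{Q}) \longrightarrow \mathbb{Q}[H]^n \longrightarrow \mathbb{Q}[H] \longrightarrow \mathbb{Q} \longrightarrow 0.
\end{equation*}

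Next I would invoke Maschke's theorem: $\mathbb{Q}[H]$ is semisimple, so the sequence splits, giving $\mathbb{Q}[H]^n \oplus \mathbb{Q} \cong H_1(\tilde X,\mathbb{Q}) \oplus \mathbb{Q}[H]$ as $\mathbb{Q}[H]$-modules. Since $\mathbb{Q}[H]$-modules are determined up to isomorphism by their characters (equivalently, Krull--Schmidt holds for finite-dimensional semisimple modules), I can cancel the common summand $\mathbb{Q}[H]$ on each side (using that $\mathbb{Q}[H]$ contains $\mathbb{Q}$ as a direct summand, so the $\mathbb{Q}$ on the left can be absorbed into one copy of $\mathbb{Q}[H]^n$, leaving $\mathbb{Q}[H]^{n-1}\oplus\mathbb{Q}$ for $H_1$). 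Explicitly, writing $\mathbb{Q}[H] \cong \mathbb{Q} \oplus I$ where $I$ is the augmentation ideal, the left side becomes $\mathbb{Q}[H]^{n-1}\oplus \mathbb{Q}\oplus I \oplus \mathbb{Q} \cong \mathbb{Q}[H]^{n-1}\oplus \mathbb{Q}\oplus \mathbb{Q}[H]$, from which cancellation gives $H_1(\tilde X,\mathbb{Q}) \cong \mathbb{Q}[H]^{n-1}\oplus\mathbb{Q}$.

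There is no genuine obstacle here; the only step requiring care is the cancellation, which is entirely formal given semisimplicity. The value of this proof is that it is written in a form that adapts verbatim to the surface case: replacing $X$ by a one-vertex CW structure on $\Sigma_g$ with $2g$ one-cells and a single $2$-cell will produce a longer chain complex with an additional free summand $\mathbb{Q}[H]$ in dimension $2$ whose image in $C_1$ is trivial modulo the relation, which is exactly the modification that converts the ``$+\mathbb{Q}$'' in Gasch\"utz's theorem into the ``$+\mathbb{Q}^2$'' appearing in Proposition \ref{prop:surfacegaschutz}.
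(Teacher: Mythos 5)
Your proof is correct and follows essentially the same route as the paper's: identify $F_n$ with $\pi_1$ of a wedge of $n$ circles, use the cellular chain complex of the regular $H$-cover to read off $C_1 \cong \Q[H]^n$ and $C_0 \cong \Q[H]$ as $\Q[H]$-modules, then invoke semisimplicity of $\Q[H]$ to split and cancel. The only cosmetic difference is that the paper identifies $\operatorname{im}\partial_1$ directly as the augmentation ideal rather than writing the four-term exact sequence, but the algebraic bookkeeping (via $\Q[H]\cong\Q\oplus\mathfrak h$) is identical.
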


\begin{proof}
To prove this, let us first identify $F_n$ with the fundamental group of an $n$-petalled rose $Y$ with oriented edges where each edge
is one of the free generators $x_i$ of $F_n$. Let $\widetilde{Y} \to Y$ be the cover corresponding to $R$. 
Lift the orientation of $Y$ to an orientation of the edges of $\widetilde{Y}$.

We compute $\HH_1(\widetilde{Y}, \Q)$ via cellular homology. Let $C_i(\widetilde{Y}, \Q)$ denote formal sums with $\Q$ coefficients
of $i$-cells of $\widetilde{Y}$.
Since there are no $2$-cells, $\HH_1(\widetilde{Y}, \Q)$ is the kernel of the boundary map $\partial_1$. Pick some vertex
$\tilde \ast$ of the graph $\widetilde{Y}$, and let $e_1, \dots, e_n$ be the (oriented) edges going out from $\tilde \ast$
where $e_i$ covers the edge corresponding to the generator $x_i$ of $F_n$. 
The group $H$ acts freely on the orbit of $e_i$, the $H$-orbit of $e_i$ and $e_j$ are disjoint if $i \neq j$,
and the $H$-orbits of all the $e_i$ cover $\widetilde{Y}$.
Thus, we have an internal $\Q[H]$-module direct sum decomposition of the space of $1$-chains
$$C_1(\widetilde{Y}, \Q) = \bigoplus_{i=1}^n \Q[H] \cdot e_i \cong \Q[H]^n$$
Furthermore, $C_0(\widetilde{Y},  \Q) = \Q[H] \cdot \tilde \ast \cong \Q[H]$. The boundary map 
is a $\Q[H]$-homomorphism, and furthermore, 
$\partial_1(e_i) = (h_i - 1) \cdot \tilde \ast$ where $h_i = p(x_i)$.

The above argument shows that the image of $\partial_1$ lies in $\frak{h} \cdot \tilde \ast$ where $\frak{h}$ is the
augmentation ideal; i.e. $\frak{h}$ is the kernel of the augmentation map $\epsilon: \Q[H] \to \Q$ defined by $\sum_{h \in H} \alpha_h h \mapsto \sum_{h \in H} \alpha_h$. 
Since we know that $\dim_{\Q}(\HH_0(\widetilde{Y}, \Q)) = 1$, the image of
$\partial_1$ must be the entire augmentation ideal. Semi-simplicity of $\Q[H]$-modules implies
$$\Q[H]^n \cong C_1(\widetilde{Y}, \Q) \cong \HH_1(\widetilde{Y}, \Q) \oplus \frak{h}.$$
Note that $\epsilon$ is a $\Q[H]$-module homomorphism from $\Q[H]$ to the trivial module so $\Q[H] \cong \Q \oplus \frak{h}$
and $\HH_1(\widetilde{Y}, \Q) \cong \Q[H]^{n-1} \oplus \Q$.
\end{proof}

\subsection{Theorem for surface groups}
We now prove Proposition \ref{prop:surfacegaschutz}. Let $Y_1$ be the $2g$-petalled rose
with oriented loops, and label the loops by $a_i, b_i$ for $i = 1, \dots g$.
Let $Y$ be the $2$-dimensional CW-complex obtained by gluing a $2$-cell along its boundary to the path
$[a_1, b_1] [a_2, b_2] \dots [a_g, b_g]$ in $Y_1$ where $[x, y] = xy x^{-1} y^{-1}$. It is well known
that $Y$ is a closed, genus $g$ surface. Let $\widetilde{Y}$ be the cover corresponding
to the subgroup $R < T = \pi_1(Y)$.

As in the above proof, $\partial_1(C_1(\widetilde{Y}, \Q)) = \frak{h}$ still holds. However,
in this case, $\partial_2(C_2(\widetilde{Y}, \Q))$ is nontrivial, so we must determine it to compute
$\HH_1(\widetilde{Y}, \Q)$. The group $H$
acts freely and transitively on the $2$-cells of $\widetilde{Y}$, so $C_2(\widetilde{Y}, \Q) \cong \Q[H] \cdot c_2 \cong \Q[H]$
where $c_2$ is some oriented $2$-cell of $\widetilde{Y}$. 
By semi-simplicity then,
finding $\partial_2(C_2(\widetilde{Y}, \Q))$  is equivalent to determining $\ker(\partial_2)$ which, since there are no $3$-cells, is
$\HH_2(\widetilde{Y}, \Q)$. Since $\widetilde{Y}$ is a surface,  
$\HH_2(\widetilde{Y}, \Q) \cong \Q$ as a $\Q$-vector space. However, we must verify that the $\Q[H]$-module structure
is trivial. The action of $h \in H$ on $\HH_2(\widetilde{Y}, \Q)$ is multiplication by the degree of the map,
and since $H$ acts by orientation-preserving homeomorphisms, that degree is necessarily $1$.

Thus, the image of
$\partial_2$ is isomorphic to $\frak{h}$, and by semi-simplicity of $\Q[H]$-modules,
$$\Q[H]^{2g} \cong C_1(\widetilde{Y}, \Q) \cong \HH_1(\widetilde{Y}, \Q) \oplus \frak{h}^2,$$
and the desired result follows. \eop

\section{Symplectic Gasch\"utz Theory}
\label{section:symplecticgaschutz}
%!TEX TS-program = ./Scripts/wrapper
Let us fix $R < T$, a finite index normal subgroup, and set $H = T/R$ and $\hat R = (R/[R,R]) \otimes_\Z \Q$ as above.
For surface groups $T$, the $\Q[H]$-module $\hat R$ has a richer structure than in the analogous situation for free
groups. Specifically, $\hat R$ admits a natural $\Q[H]$-valued sesquilinear form $\langle -, - \rangle$. In this section, we describe the form,
and we exhibit a decomposition of the pair $(\hat R, \langle -, - \rangle)$ into factors. Later in Section \ref{section:liftrep}, we will see 
that this structure is preserved by the action of $\rho_{H,p}(\Gamma_{H,p})$ (Lemma \ref{lemma:liftpreserveform}).

\subsection{The sesquilinear pairing on $\hat{R}$} \label{section:pairingdefn}

We first define a few necessary terms. An {\it anti-homomorphism} $\tau: A \to A$ of a $\Q$-algebra is a $\Q$-linear map of $A$ such that
$\tau(a_1 a_2) = \tau(a_2) \tau(a_1)$ for all $a_1, a_2 \in A$; furthermore $\tau$ is an {\it involution} if $\tau^2 = \Id$. 
Suppose $M$ is an $A$-module. A form $\langle -, - \rangle: M \times M \to A$ is {\it sesquilinear} (relative to the involution $\tau$)
if it is $\Q$-bilinear and for any $r, s \in A$, $m, m' \in M$
$$\langle r m, s m' \rangle = r \langle m, m' \rangle \tau(s)$$
The form furthermore is {\it skew-Hermitian} if $\langle m, m' \rangle = - \tau( \langle m', m \rangle)$ and 
{\it nondegenerate} if for all nonzero $m \in M$, there is an $m' \in M$ such that $\langle m, m' \rangle \neq 0$. 

Any group ring $\Q[H]$ admits a canonical involution $\tau$ defined by setting $\tau(h) = h^{-1}$ for $h \in H$ and extending linearly.
Recall that $\hat{R}$ is naturally identified with $\HH_1(\tilde \Sigma, \Q)$ which has an alternating intersection form, and we denote
this form by $\langle -, - \rangle_{\Sp}$.
In a similar way to \cite{Reide}, \cite[Section 3]{Hemp}, we define the following $\Q[H]$-valued form on $\hat{R}$:
\begin{equation} \label{eqn:sesqform} \langle x, y \rangle = \sum_{h \in H} \langle x, h y \rangle_{\Sp} h. \end{equation}

\begin{lemma}
The form $\langle -, - \rangle$ is nondegenerate, sesquilinear with respect to $\tau$, and skew-Hermitian. \end{lemma}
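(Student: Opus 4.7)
The plan is to verify each of the three properties from the definitions, using three essential input facts about the symplectic intersection form $\langle-,-\rangle_{\Sp}$ on $\HH_1(\tilde\Sigma,\Q)$: it is $\Q$-bilinear, alternating, nondegenerate, and $H$-invariant in the sense that $\langle hx, hy\rangle_{\Sp} = \langle x,y\rangle_{\Sp}$ for every $h\in H$ (because $H$ acts by orientation-preserving homeomorphisms, which preserve intersection numbers).

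First I would check sesquilinearity. By $\Q$-bilinearity in both slots, which is immediate from the definition, it suffices to verify $\langle h_1 x, h_2 y\rangle = h_1 \langle x,y\rangle \tau(h_2)$ for $h_1,h_2\in H$. Expanding the left-hand side and pushing the action of $h_1$ to the second slot via $H$-invariance gives
$$\langle h_1 x, h_2 y\rangle = \sum_{h\in H}\langle x, h_1^{-1} h h_2 y\rangle_{\Sp}\, h,$$
and the change of variables $k = h_1^{-1}h h_2$ produces exactly $h_1\langle x,y\rangle h_2^{-1} = h_1\langle x,y\rangle\tau(h_2)$.

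Next I would verify the skew-Hermitian identity $\langle x,y\rangle = -\tau(\langle y,x\rangle)$. Applying $\tau$ to $\langle y,x\rangle$ sends each group element $h$ to $h^{-1}$, and after the substitution $k=h^{-1}$ one uses $H$-invariance to rewrite $\langle y, k^{-1}x\rangle_{\Sp} = \langle ky, x\rangle_{\Sp}$, and then the alternating property of $\langle-,-\rangle_{\Sp}$ flips the sign to produce $-\langle x, ky\rangle_{\Sp}$. Resumming yields $-\langle x,y\rangle$, as required.

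Finally, for nondegeneracy, suppose $\langle x,y\rangle = 0$ for every $y\in\hat R$. Reading off the coefficient of the identity element of $H$ shows $\langle x,y\rangle_{\Sp}=0$ for all $y$, and nondegeneracy of the symplectic form forces $x=0$. None of the steps is a real obstacle; the only thing worth noting is that all three properties genuinely rest on the three input facts above, so the only conceptual point is the $H$-invariance of the intersection pairing, which follows from the fact that deck transformations of $\tilde\Sigma$ are orientation-preserving.
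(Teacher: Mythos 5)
Your proof is correct and follows essentially the same approach as the paper: the paper's proof is a brief sketch citing exactly the same three inputs (nondegeneracy of $\langle-,-\rangle_{\Sp}$, $H$-invariance coming from orientation-preserving deck transformations, and the alternating property), and you have simply filled in the straightforward change-of-variables computations that the paper calls "readily checked."
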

\begin{proof}
Nondegeneracy follows from the nondegeneracy of the symplectic form. It can be readily checked that sesquilinearity follows
from the fact that $H$ preserves the symplectic form. The action of $H$ preserves the symplectic form as it is equivalent to the
action of deck transformations on $\HH_1( \tilde \Sigma, \Q)$ and deck transformations act by orientation preserving
homeomorphisms. The form $\langle -, - \rangle $ is skew-Hermitian since $\langle -, - \rangle_{\Sp}$ is alternating.
\end{proof}

\subsection{Simple components of $\Q[H]$}
We now recall some basic facts about the group ring $\Q[H]$ and its modules. The ring $\Q[H]$ is a semisimple $\Q$-algebra and thus
is isomorphic to a finite product of simple $\Q$-algebras
$$\Q[H] = \Q \times \prod_{i=1}^\ell A_i.$$
Moreover, for each $i$, we have $A_i \cong \Mat_{m_i}(D_i)$ for some finite-dimensional division algebra $D_i$ with center $L_i$
which is a finite-dimensional field extension of $\Q$. 
For all $i$, the algebra $A_i$ acts on the $\Q$-vector space $V_i = D_i^{m_i}$ by left multiplication; via the projection to $A_i$,
each $V_i$ is a $\Q[H]$-module which is furthermore irreducible. We say $V_i$ is the irreducible module (or representation) corresponding to $A_i$.
Moreover, every irreducible $\Q[H]$-module (or representation of $H$) is isomorphic to one of the $V_i$, and note that as a $\Q[H]$-module, $A_i \cong V_i^{m_i}$.

We describe how $\tau$ acts on the decomposition in the following lemma.

\begin{lemma} \label{lemma:algdecomp}
Each $A_i$ in the decomposition of $\Q[H]$ is $\tau$-invariant.
\end{lemma}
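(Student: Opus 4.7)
The plan is to locate the primitive central idempotent $e_i \in \Q[H]$ cutting out the simple factor $A_i$ and show directly that $\tau(e_i) = e_i$. Once this is established, $A_i = e_i \cdot \Q[H]$ is automatically $\tau$-invariant, because $\tau$ is an anti-automorphism and $e_i$ is central, giving $\tau(A_i) = \tau(\Q[H]) \tau(e_i) = \Q[H] e_i = A_i$.

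To identify $\tau(e_i)$, I would first extend scalars to $\C$ and work with the explicit formula for the primitive central idempotent attached to an irreducible complex character $\chi$ of $H$, namely
\[
 e_\chi = \frac{\chi(1)}{|H|} \sum_{h \in H} \chi(h^{-1}) \, h .
\]
A direct reindexing $h \mapsto h^{-1}$ then yields
\[
 \tau(e_\chi) = \frac{\chi(1)}{|H|} \sum_{h \in H} \chi(h^{-1}) \, h^{-1} = \frac{\chi(1)}{|H|} \sum_{h \in H} \chi(h) \, h = e_{\bar\chi},
\]
where $\bar\chi(h) = \chi(h^{-1}) = \overline{\chi(h)}$ is the character of the contragredient representation. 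So on the level of $\C$-primitive idempotents, $\tau$ implements the involution $\chi \mapsto \bar\chi$.

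Next I would relate this back to the $\Q$-decomposition. The primitive central idempotent $e_i \in \Q[H]$ corresponding to $A_i$ is the sum $e_i = \sum_{\chi \in \mathcal{O}_i} e_\chi$ taken over a single orbit $\mathcal{O}_i$ of the Galois action of $\Gal(\Q(\zeta_n)/\Q)$ on the irreducible $\C$-characters of $H$, where $n$ is the exponent of $H$. The key observation is that complex conjugation on character values coincides with the Galois automorphism $\zeta_n \mapsto \zeta_n^{-1}$ (because each $\chi(h)$ is a sum of $\chi(1)$-many $n$-th roots of unity, and $\chi(h^{-1})$ is the sum of their inverses). Hence $\chi \mapsto \bar\chi$ is induced by an element of $\Gal(\Q(\zeta_n)/\Q)$, which necessarily preserves each Galois orbit $\mathcal{O}_i$. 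Combining this with the previous paragraph gives
\[
 \tau(e_i) = \sum_{\chi \in \mathcal{O}_i} \tau(e_\chi) = \sum_{\chi \in \mathcal{O}_i} e_{\bar\chi} = \sum_{\chi \in \mathcal{O}_i} e_\chi = e_i ,
\]
which is exactly what we need.

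I do not anticipate a real obstacle: the computation $\tau(e_\chi) = e_{\bar\chi}$ is a one-line re-indexing, and the passage from $\C$-characters to $\Q$-simple factors via Galois orbits is standard. The only point requiring care is the observation that the character-theoretic complex conjugation is realized by a genuine element of the Galois group $\Gal(\Q(\zeta_n)/\Q)$, which is what ensures that $\chi \mapsto \bar\chi$ does not mix different $\Q$-irreducible components.
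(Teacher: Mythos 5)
Your argument is correct, but it takes a genuinely different route than the paper. The paper's proof observes that since $\tau$ is an order-$2$ anti-automorphism, it permutes the minimal two-sided ideals $A_i$, so it suffices to rule out $\tau(A_i) = A_j$ for $j \neq i$; this is done by a trace computation. Specifically, the paper looks at the character $\chi$ of the left-regular representation of $\Q[H]$ on $A_i$, uses that $\chi$ is $\Q$-valued together with the general identity $\chi(h) = \overline{\chi(h^{-1})}$ to conclude $\chi \circ \tau = \chi$, and then evaluates at the unit $e_i$ to see $\chi(\tau(e_i)) = \chi(e_i) \neq 0$, so $\tau(e_i)$ acts nontrivially on $A_i$. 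Your proof instead goes through the explicit formula for the primitive central idempotents $e_\chi$ over $\C$, identifies $\tau(e_\chi) = e_{\bar\chi}$ by reindexing, and invokes the standard fact that $e_i = \sum_{\chi \in \mathcal{O}_i} e_\chi$ over a $\Gal(\Q(\zeta_n)/\Q)$-orbit, noting that $\chi \mapsto \bar\chi$ is induced by the Galois element $\zeta_n \mapsto \zeta_n^{-1}$ and therefore preserves each orbit. Both proofs ultimately rest on the same rationality fact (that $\chi(h^{-1}) = \overline{\chi(h)}$ is tied to a rational/Galois symmetry), but yours directly produces $\tau(e_i) = e_i$ at the cost of invoking the Galois-orbit description of $\Q$-simple components, whereas the paper's is shorter and more self-contained, avoiding the explicit decomposition by only needing to know that $\tau$ permutes the $A_i$ and then exhibiting a single nonvanishing pairing. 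Your route has the small advantage of being constructive (it tells you exactly what $\tau$ does to the idempotents), and it also makes transparent the parenthetical remark in the paper that the statement fails over $\C$ (because over $\C$ the primitive idempotents are the individual $e_\chi$ and $\tau(e_\chi) = e_{\bar\chi}$ can genuinely differ from $e_\chi$).
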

\begin{proof}
Note that each 
$A_i$ is a minimal two-sided ideal of $\Q[H]$. For the purpose of the proof, set $A_0$ to be the trivial factor $\Q$ in the decomposition of $\Q[H]$ . 
Since $\tau$ is an anti-homomorphism, $\tau$ sends minimal two-sided ideals to minimal two-sided ideals.
Since $\tau$ is order $2$, there are two possibilities; for any $i$ either $\tau(A_i) = A_i$, or there
is a $j \neq i$ such that $\tau(A_i) = A_j$ and $\tau(A_j) = A_i$. We must rule out the second possibility. (Our proof depends on the
base field being the totally real field $\Q$. The lemma is false, for instance, over $\C$.)

Let $e_i$ be the unit in $A_i$. It suffices to show that $\tau(e_i) A_i \neq 0$. Consider the representation
$$\Psi: \Q[H] \to \End_{\Q}(A_i) \cong \Mat_t(\Q)$$
given by left multiplication where $t = \dim_\Q(A_i)$. Viewing the matrices with entries in $\C$ yields a representation of $H$ to $\GL_t(\C)$, and
so $\chi(h) = \overline{\chi(h^{-1})}$ where $\chi$ is the character for $\Psi$ and $\overline{\phantom{a} }$ indicates complex conjugation. Since the traces are necessarily rational, in fact
$\chi(h) = \chi(h^{-1})$, and linearity of $\chi$ implies $\chi(r) = \chi(\tau(r))$ for any $r \in \Q[H]$. Thus, $0 \neq \chi(e_i) = \chi(\tau(e_i))$
and left multiplication on $A_i$ by $\tau(e_i)$ is non-zero.
\end{proof}

Recall from Theorem \ref{theorem:gaschutz} that $\hat{R} \cong \Q[H]^{2g-2} \oplus \Q^2$. Thus, as a $\Q[H]$-module
$$ \hat{R} \cong \Q^{2g} \oplus (\bigoplus_{i=1}^m A_i^{2g-2})$$
Let $M_i \subseteq \hat{R}$ be the submodule such that $M_i \cong A_i^{2g-2}$. Each $A_i$ is isomorphic to several copies
of an irreducible $\Q[H]$-module $V_i$ and $V_i \ncong V_j$ for $i \neq j$. Hence, the submodule $M_i$ is unique, and furthermore
any $\Q[H]$-automorphism $\varphi$ of $\hat{R}$ restricts to an automorphism of each $M_i$. Since all $A_j$ except $A_i$ annihilate $M_i$, we obtain a representation
$$ \Aut_{\Q[H]}(\hat R, \langle-,-\rangle) \to \Aut_{A_i}(M_i, \langle-,- \rangle).$$
Our next task then is to understand this automorphism group. (Note that we omitted the module corresponding to the trivial representation.
This is because the automorphism group turns out to be $\Sp(2g, \Q)$ and the representation $\Gamma_{H,p} \to \Sp(2g, \Q)$ is the standard 
symplectic representation of the mapping class group.)

\subsection{The automorphism group of $M_i$} \label{subsection:AutM}
We first need to know the properties of $\langle -, - \rangle$ restricted to each $M_i$. By abuse of notation,
we will view $\tau$ as an involution on $A_i$.
\begin{lemma}
For any $m, m' \in M_i$, the pairing $\langle m, m' \rangle$ lies in $A_i$. Furthermore, $\langle - , - \rangle: M_i \times M_i \to A_i$ is a nondegenerate, sesquilinear,
skew-Hermitian form when viewed as a form with values in $A_i$.
\end{lemma}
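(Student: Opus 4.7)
The plan is to deduce each of the asserted properties of $\langle-,-\rangle|_{M_i\times M_i}$ from the corresponding property of the $\Q[H]$-valued form on $\hat R$, once two supporting facts are in place: (i) the central idempotent $e_i\in\Q[H]$ cutting out $A_i$ satisfies $\tau(e_i)=e_i$, and (ii) the isotypic decomposition $\hat R=\Q^{2g}\oplus \bigoplus_j M_j$ is orthogonal with respect to $\langle-,-\rangle$.

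First I would check $\tau(e_i)=e_i$. By Lemma \ref{lemma:algdecomp}, $\tau$ restricts to an anti-automorphism of $A_i$. An anti-automorphism of an algebra sends the multiplicative identity to itself; since $e_i$ is the unit of $A_i$ (and $e_i$ is central in $\Q[H]$), this forces $\tau(e_i)=e_i$. This is the one place we use that we have already passed to the pieces $A_i$ via the previous lemma, and in that sense it is the crux of the argument.

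Second I would establish the orthogonality. For $m\in M_j$ and $m'\in M_k$ with $j\neq k$ (and similarly when one factor is the $\Q^{2g}$ summand attached to the trivial representation), write $m=e_jm$ and $m'=e_km'$. Sesquilinearity of the form on $\hat R$ together with $\tau(e_k)=e_k$ and the centrality of $e_j,e_k$ give
$$\langle m,m'\rangle \;=\; \langle e_jm,e_km'\rangle \;=\; e_j\langle m,m'\rangle e_k \;=\; e_je_k\,\langle m,m'\rangle \;=\; 0,$$
since $e_je_k=0$. For $m,m'\in M_i$ the same computation yields $\langle m,m'\rangle = e_i\langle m,m'\rangle e_i \in e_i\Q[H] = A_i$, which establishes that the restricted form is $A_i$-valued. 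Sesquilinearity with respect to $\tau|_{A_i}$ and the skew-Hermitian identity $\langle m,m'\rangle=-\tau(\langle m',m\rangle)$ are then inherited verbatim from the ambient form on $\hat R$.

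Finally, for nondegeneracy, suppose $0\neq m\in M_i$. Nondegeneracy of the form on $\hat R$ produces some $m'\in\hat R$ with $\langle m,m'\rangle\neq 0$. Decomposing $m'=m'_0+\sum_j m'_j$ along the isotypic summands and invoking the orthogonality just established, we obtain $\langle m,m'\rangle=\langle m,m'_i\rangle$, so $\langle m,m'_i\rangle\neq 0$ with $m'_i\in M_i$. Thus the form remains nondegenerate after restriction, completing the proof.
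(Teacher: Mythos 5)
Your proof is correct and follows essentially the same route as the paper: both arguments rest on the observation $\tau(e_i)=e_i$ (deduced from Lemma \ref{lemma:algdecomp}), use sesquilinearity to show $\langle M_i, M_j\rangle = e_i\,\Q[H]\,\tau(e_j)$ lands in $A_i$ when $i=j$ and vanishes when $i\neq j$, and then derive nondegeneracy from the mutual orthogonality of the isotypic components. The only cosmetic difference is that you spell out the nondegeneracy step and the $\Q^{2g}$ summand a bit more explicitly than the paper does.
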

\begin{proof}
Let $e_i$ be the unit in $A_i$. When restricted to $M_i$, the form takes values in $A_i$ since sesquilinearity implies that 
$$\langle m, m' \rangle = \langle e_i m, e_i m' \rangle = e_i \langle m, m' \rangle \tau(e_i) \in A_i$$
When viewed as a form with values in $A_i$, it is clear that the form remains sesquilinear and skew-Hermitian. Since
$\langle-, -\rangle$ is nondegenerate on all of $\hat R$, to show that it is nondegenerate on $M_i$, 
we only need to show that the $M_i$ are mutually perpendicular.
This is verified by essentially the same computation. If $m \in M_i$ and $m' \in M_j$ and $i \neq j$, then 
$$\langle m, m' \rangle = \langle e_i m, e_j m' \rangle = e_i \langle m, m' \rangle \tau(e_j)$$
Lemma \ref{lemma:algdecomp} implies that $\tau(e_j) = e_j$, so the product is $0$.
\end{proof}

The group $\Aut_{A_i}(M_i, \langle -, - \rangle)$ is the set of the $K_i$-points of an algebraic group defined over $K_i$ where
$K_i = L_i^\tau$, the fixed field of $\tau$ acting on $L_i$. Our next goal is to determine the structure of this algebraic group; to this end, we will use
some of the basic theory of involutions and follow the exposition in \cite[Section 2.3.3]{PlatRapi}.

\paragraph{Involutions}
In this subsection, we utilize the dictionary between nondegenerate bilinear forms and involutions to describe the structure of 
$\mc G_{H,i} =\Aut_{A_i}(M_i, \langle-,-\rangle)$. For convenience, we will fix $A = A_i$,
$M = M_i$, and $\mc G = \mc G_{H,i}$ as we will only be considering each algebra individually, so $M \cong A^{2(g-1)}$ with the sesquilinear form as described
above. We set $L$ to be the center of $A$ and $K = L^\tau$ the subfield of $L$ fixed by $\tau$. 
The dictionary is that the sesquilinear form goes over
to the unique involution $\sigma: \End_A(M) \to \End_A(M)$ satisfying the following
for all $m, m' \in M$ and $C \in \End_A(M)$:
$$ \langle Cm, m' \rangle = \langle m, \sigma(C)m' \rangle.$$
This is the {\it adjoint involution} associated to the sesquilinear form.
Now,
$$\begin{array}{rcl} \Aut_A(M, \langle-,-\rangle) & = & \{ C \in \End_A(M) \; | \; \langle C m, Cm' \rangle = \langle m, m' \rangle \;\; \forall m, m' \in M\} \\
									& = & \{ C \in \End_A(M) \; | \; \langle m, \sigma(C) Cm' \rangle = \langle m, m' \rangle \;\; \forall m, m' \in M\} \\
									& = & \{ C \in \End_A(M) \; | \; \sigma(C) C = \Id\} = \mc G \end{array} $$
In other words, those automorphisms preserving the form can be entirely determined by the involution alone.

Involutions of finite-dimensional simple algebras over a field $k$ form a trichotomy based on kind/type. The first distinction is that of kind.
An involution $\tau$ on a finite-dimensional simple algebra $A$ over a field $k$ is {\it of the first kind} if it fixes the center of $A$ and
is {\it of the second kind} if it does not. Furthermore, if $A$ is of the first kind and $n^2 = \dim_L(A)$, then either 
$\dim_L(A^\tau) = n(n+1)/2$ or $\dim_L(A^\tau) = n(n-1)/2$ where $L$ is the center of $A$ and $A^\tau$ is the subspace of $A$ 
fixed by $\tau$. If the former is true, we say $\tau$ is of {\it orthogonal type}
and if the latter is true, $\tau$ is of {\it symplectic type}. It is a standard fact that these are the only two possible types of involutions
of the first kind and this fact is, in
particular, implied by Lemma \ref{lemma:changeinv}. We note that in \cite{PlatRapi}, orthogonal and symplectic type are called
first and second type respectively.

We now set $B = \End_A(M)$ and seek to understand the kind and type of the involution $\sigma$ in terms of the kind and type of $\tau$. First,
however, we prove that $B$ is itself a simple algebra. Since $M$ is a free $A$-module of rank $2g-2$, $B$ is isomorphic to $\Mat_{2g-2}(A^{op})$ and we henceforth
fix some such identification.
\begin{lemma}
$B$ is simple.
\end{lemma}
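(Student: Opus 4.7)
The plan is to reduce to the well-known fact that matrix rings over a simple ring are simple. Since $B \cong \Mat_{2g-2}(A^{op})$ by the chosen identification, it suffices to verify two things: that $A^{op}$ is simple, and that $\Mat_n(R)$ is simple whenever $R$ is a simple (associative, unital) ring.

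For the first point, I would note that the two-sided ideals of $A^{op}$ coincide setwise with the two-sided ideals of $A$: a subset $I \subseteq A$ is a two-sided ideal in the reversed multiplication if and only if it is a two-sided ideal in the original multiplication, since the definition of two-sided ideal is symmetric in left and right multiplication. Hence $A$ simple implies $A^{op}$ simple.

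For the second point, I would give the standard matrix-unit argument. Let $R$ be simple and let $J \trianglelefteq \Mat_n(R)$ be a nonzero two-sided ideal. Pick any nonzero $X = (x_{k\ell}) \in J$ with some entry $x_{k\ell} \neq 0$. Using the matrix units $E_{ij}$ (with entry $1 \in R$ in the $(i,j)$-slot and zero elsewhere), one checks that $E_{ik} X E_{\ell j} = x_{k\ell} E_{ij}$. Thus $J$ contains $x_{k\ell} E_{ij}$ for all $i,j$, and multiplying on left and right by diagonal matrices with entries in $R$ shows that $J$ contains $r x_{k\ell} s \, E_{ij}$ for all $r, s \in R$ and all $i,j$. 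The set $\{r x_{k\ell} s \mid r, s \in R\}$ generates a nonzero two-sided ideal of $R$, which by simplicity of $R$ must be all of $R$; in particular it contains $1$. So $E_{ij} \in J$ for every $i,j$, and hence $J = \Mat_n(R)$.

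Applying this with $R = A^{op}$ and $n = 2g-2$ yields the simplicity of $B$. There is no serious obstacle here — the statement is standard Artin--Wedderburn material and the only thing worth being careful about is making sure the identification $\End_A(M) \cong \Mat_{2g-2}(A^{op})$ really does use $A^{op}$ (because $A$-linear endomorphisms of a free right module become matrices over $A^{op}$ when one writes them with respect to a basis), but the paper has already fixed this identification, so no further bookkeeping is needed.
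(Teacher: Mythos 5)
Your proof is correct, and it is a mild variant of the paper's route. The paper invokes Artin--Wedderburn to write $A^{op}\cong\Mat_m(D)$ for a division ring $D$, concludes $B\cong\Mat_{(2g-2)m}(D)$, and then quotes the standard fact that a full matrix ring over a division ring is simple. You instead prove the more general statement that $\Mat_n(R)$ is simple for \emph{any} simple unital ring $R$, via the matrix-unit computation $E_{ik}XE_{\ell j}=x_{k\ell}E_{ij}$ together with the two-sided-ideal argument, and then apply it with $R=A^{op}$ (observing, correctly, that $A^{op}$ is simple because two-sided ideals are insensitive to reversing multiplication). The net effect is the same, but your version is slightly more self-contained: it avoids Artin--Wedderburn entirely and replaces the cited fact with a complete elementary argument, at the cost of a bit more length. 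Either proof is perfectly acceptable here; the paper's is shorter because it is happy to lean on the structure theorem it already needs elsewhere.
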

\begin{proof}
Since $A$ is a simple finite dimensional $\Q$-algebra, so is $A^{op}$, and so $A^{op}$ is isomorphic to a matrix algebra $\Mat_m(D)$ over a division ring $D$. 
Thus, $B \cong \Mat_{2g-2}(A^{op}) \cong \Mat_{(2g-2)m}(D)$ which is simple.
\end{proof}

We now convert our terminology into that of matrices to establish the relation between the type and kind of $\tau$ and $\sigma$. 
Note that $\tau$ is also an involution on $A^{op}$ of the same kind and type. For a matrix $C = (c_{ij}) \in \Mat_{2g-2}(A^{op}) = B$, we set $C^* = (\tau(c_{ji}))$. 
Letting $e_1, \dots, e_{2g-2}$ be the standard free basis of $A^{2g-2}$, we define a matrix $F = (f_{ij})$ where 
$f_{ij} := \langle e_j, e_i \rangle$. 
It can be checked that $\langle Cv, w \rangle = \langle v, Dw \rangle$ for all $v,w \in M$ if and only if $FC = D^*F$. Moreover, 
$F^* = -F$ as $\langle-,-\rangle$ is skew-Hermitian. Thus, since $(F C)^* = C^* F^* = - C^* F$ and $(D^* F)^* = - F D$, we have
$$\sigma(C) = F^{-1} C^* F.$$

\begin{lemma} \label{lemma:oppositetype}
The involutions $\sigma$ and $\tau$ are of the same kind. The type of $\sigma$ is opposite that of $\tau$.
\end{lemma}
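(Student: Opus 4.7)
The plan is to use the explicit matrix formula $\sigma(C) = F^{-1} C^* F$ with $F^* = -F$ just established. First I will deduce that the kind of $\sigma$ equals the kind of $\tau$ by restricting $\sigma$ to the center of $B$; then, assuming both are of the first kind, I will deduce the opposite-type statement by a direct dimension count of $B^\sigma$ that exploits the skew-Hermiticity of the form.

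For the kind: the center of $B = \Mat_{2g-2}(A^{op})$ is the subalgebra of scalar matrices $\lambda I$ with $\lambda \in L$. For such a scalar, $(\lambda I)^* = \tau(\lambda) I$, and since $\tau(\lambda) \in L$ is central it commutes with $F$, yielding $\sigma(\lambda I) = \tau(\lambda) I$. Hence $\sigma$ restricted to the center $L$ agrees with $\tau|_L$, so $\sigma$ fixes the center if and only if $\tau$ does, which handles the first claim.

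Now assume both involutions are of the first kind, and set $n_A^2 = \dim_L A$ and $N = (2g-2) n_A$, so $\dim_L B = N^2$. The key observation I will use is that left multiplication by $F$ gives an $L$-linear isomorphism from $B^\sigma = \{C \in B : FC = C^* F\}$ onto the skew-fixed subspace $\{D \in B : D^* = -D\}$: indeed, for $C \in B^\sigma$ one computes $(FC)^* = C^* F^* = -C^* F = -FC$, and the inverse map sends $D$ to $F^{-1} D$. Since we are in characteristic zero, $B = B^* \oplus \{D : D^* = -D\}$, so this bijection forces $\dim_L B^\sigma = N^2 - \dim_L B^*$.

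It remains to compute $\dim_L B^*$ and compare to the two possible types. Splitting matrices into diagonal and off-diagonal parts, the condition $D^* = D$ becomes $D_{ii} \in A^\tau$ together with $D_{ji} = \tau(D_{ij})$ for $i<j$, giving $\dim_L B^* = (2g-2)\dim_L A^\tau + \binom{2g-2}{2} n_A^2$. Plugging in $\dim_L A^\tau = n_A(n_A+1)/2$ for $\tau$ of orthogonal type and simplifying yields $\dim_L B^* = N(N+1)/2$, hence $\dim_L B^\sigma = N(N-1)/2$, making $\sigma$ symplectic; the analogous simplification with $\dim_L A^\tau = n_A(n_A-1)/2$ in the symplectic case produces $\dim_L B^\sigma = N(N+1)/2$, making $\sigma$ orthogonal. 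The main obstacle is merely the elementary bookkeeping of these dimension formulas; there is no conceptual subtlety beyond recognizing that the skew-Hermitian sign $F^* = -F$ is precisely what forces $B^\sigma$ and $B^*$ to be complementary rather than equal.
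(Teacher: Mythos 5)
Your proof is correct and is essentially the same argument as the paper's. The paper's proof also rests on the two pillars you use: (i) the $F$-conjugation $\sigma(C)=F^{-1}C^*F$ together with $F^*=-F$ yields a bijection exchanging the $\sigma$-fixed space with the $*$-skew space, and (ii) a dimension count of $B^*$ (which the paper phrases as the intermediate observation that $*$ on $B$ has the same type as $\tau$, an observation it describes as "a straightforward argument counting dimension" and leaves to the reader — exactly the block-by-block count you carried out). Your treatment of the kind claim, via $\sigma(\lambda I)=\tau(\lambda)I$, is just a slightly more explicit version of the paper's "It is clear that $L^\sigma=L^\tau$." So the two arguments differ only in packaging, not in substance.
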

\begin{proof}
It is clear that $L^\sigma = L^\tau$ so they are of the same kind. A straightforward argument counting dimension shows that the involution $*$
on $B$ has the same type as $\tau$ if they are both first kind. Because $\langle-,-\rangle$ is skew-Hermitian, it follows that $F^* = -F$. 
Then, a computation shows that if $C^* = - C$, then $\sigma(F^{-1} C) = F^{-1} C$
and if $C^* = C$, then $\sigma(F^{-1} C) = -F^{-1} C$. Hence, via $F^{-1}$, the $+1$ eigenspace of $*$ is isomorphic to the
$-1$ eigenspace of $\sigma$ and vice versa. This implies $\sigma$ has the opposite type to $*$.
\end{proof}

\paragraph{Extending scalars}
Consider now the following three examples of involutions $\nu$ for matrix algebras over
$\C$.
\begin{itemize}
\item[(1)] $\nu: \Mat_n(\C) \to \Mat_n(\C)$ defined by $\nu(N) = N^t$.
\item[(2)] $\nu: \Mat_n(\C) \to \Mat_n(\C)$ defined by $\nu(N) = J N^t J^{-1}$ where $n$ is even and $J = \left( \begin{array}{cc} 0 & I \\ -I & 0 \end{array} \right)$.
\item[(3)] $\nu: \Mat_n(\C) \times \Mat_n(\C) \to \Mat_n(\C) \times \Mat_n(\C)$ defined by $\nu(A, B) = (B^t, A^t)$.
\end{itemize}
Note that if we consider the elements satisfying $\nu(M) M = I$, then from (1), we obtain $\OO_n(\C)$, from (2), $\Sp_n(\C)$ and from (3), $\GL_n(\C)$.
Moreover, it can be easily checked that the involution in (1) is of orthogonal type and the involution in (2) is of symplectic type.
We quote the following result \cite{PlatRapi} which essentially tells us these are the only involutions after extending scalars.

\begin{lemma} \label{lemma:changeinv}
Let $B$ be a finite-dimensional simple $\Q$-algebra with center $L$ and involution $\sigma$. Let $K = L^\sigma$ and
let $\tilde{\sigma}$ be the unique $\C$-linear extension of $\sigma$ to $\displaystyle B \underset{K}{\otimes} \C$. Set
$n = \dim_L(B)$.
\begin{itemize}
\item If $\sigma$ is of the first kind, then there is a $\C$-algebra isomorphism 
$$\varphi: B \underset{K}{\otimes} \C \cong \Mat_n(\C)$$
such that $\nu = \varphi \tilde \sigma \varphi^{-1}$ is one of the involutions (1) or (2).
\item If $\sigma$ is of the second kind, then there is a $\C$-algebra isomorphism 
$$\varphi: B \underset{K}{\otimes} \C \cong \Mat_n(\C) \times \Mat_n(\C)$$
such that $\nu = \varphi \tilde \sigma \varphi^{-1}$ is the involution (3).
\end{itemize}
\end{lemma}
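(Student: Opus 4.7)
The plan is to split according to the kind of $\sigma$ and, in each case, use that after extending scalars to $\C$ the structure becomes rigid enough that Skolem--Noether pins down the involution up to inner automorphism, after which one chooses a standard normal form for the associated bilinear form.

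\textbf{First kind.} Here $K = L$, so $B \otimes_K \C = B \otimes_L \C$ is a central simple $\C$-algebra, necessarily isomorphic to $\Mat_n(\C)$. Fix any such isomorphism $\varphi_0$ and set $\nu_0 = \varphi_0 \tilde\sigma \varphi_0^{-1}$. Since matrix transpose is a $\C$-linear anti-automorphism of $\Mat_n(\C)$, the composite of $\nu_0$ with transpose is a $\C$-algebra automorphism, hence inner by Skolem--Noether; thus $\nu_0(N) = U N^t U^{-1}$ for some $U \in \GL_n(\C)$. Imposing $\nu_0^2 = \Id$ forces $U^t U^{-1}$ to be central, so $U^t = \pm U$. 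If $U^t = U$, the nondegenerate symmetric form encoded by $U$ is $\C$-equivalent to the standard one, so conjugating $\varphi_0$ by a suitable element of $\GL_n(\C)$ turns $\nu$ into $N \mapsto N^t$, i.e.\ case (1); the fixed subspace then has $\C$-dimension $n(n+1)/2$, matching orthogonal type. If $U^t = -U$, then $n$ is even and $U$ is $\GL_n(\C)$-conjugate to $J$, yielding case (2), whose fixed subspace has $\C$-dimension $n(n-1)/2$, matching symplectic type. (Since base extension commutes with the averaging projector $(1+\sigma)/2$, the dimension of the fixed subspace is unchanged by $\otimes_K \C$, justifying the matchup with the type of $\sigma$.)

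\textbf{Second kind.} Here $[L:K] = 2$ and $L \otimes_K \C \cong \C \times \C$, where the two factors correspond to the two $K$-embeddings $L \hookrightarrow \C$ and are interchanged by $\sigma|_L$. The associated central idempotents $e_1, e_2$ split $B \otimes_K \C$ as $B_1 \times B_2$, each $B_j$ a central simple $\C$-algebra isomorphic to $\Mat_n(\C)$. Because $\tilde\sigma$ acts nontrivially on the center, it must swap $e_1$ and $e_2$, so in coordinates $\nu(A, B) = (\alpha(B), \beta(A))$ for $\C$-linear anti-automorphisms $\alpha, \beta$ of $\Mat_n(\C)$; the condition $\nu^2 = \Id$ then gives $\beta = \alpha^{-1}$. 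Applying Skolem--Noether to the $\C$-algebra automorphism $\alpha \circ (-)^t$ produces $V \in \GL_n(\C)$ with $\alpha(N) = V N^t V^{-1}$. Replacing $\varphi$ by its composition with the automorphism $(A, B) \mapsto (V^{-1} A V, B)$ of $\Mat_n(\C) \times \Mat_n(\C)$ converts $\alpha$ into transpose (and, by $\beta = \alpha^{-1}$, also converts $\beta$ into transpose), yielding case (3).

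\textbf{Where the real work lies.} The first-kind case is essentially the classical Wedderburn-type classification of involutions of $\Mat_n(\C)$ and requires only Skolem--Noether plus the normal form of nondegenerate (skew-)symmetric forms over $\C$. The substantive step in the second-kind case is observing that $\tilde\sigma$ must swap the idempotents $e_1, e_2$ (which would fail if $\sigma$ fixed $L$) and then transporting $\nu$ into the standard swap-and-transpose model via one further Skolem--Noether adjustment. I expect no substantive obstruction, only careful bookkeeping to ensure that each modification of $\varphi$ remains a $\C$-algebra isomorphism onto the claimed target.
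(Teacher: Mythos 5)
The paper does not prove this lemma; it quotes it from Platonov--Rapinchuk \cite{PlatRapi} (Section 2.3.3). Your proof is a correct rendering of the standard argument and fills the gap. In the first-kind case the Skolem--Noether step giving $\nu_0(N)=UN^tU^{-1}$ with $U^t=\pm U$, followed by normalization of the (skew-)symmetric form $U$ over $\C$, is exactly right; the parenthetical remark about the projector $(1+\sigma)/2$ commuting with base change correctly justifies that the sign of $U^t=\pm U$ is dictated by the type of $\sigma$, though the lemma as stated only requires showing $\nu$ is one of (1) or (2), so that remark is supplementary. In the second-kind case the key observation that $\tilde\sigma$ must swap the two central idempotents of $L\otimes_K\C\cong\C\times\C$ (precisely because $\sigma|_L$ is the nontrivial element of $\Gal(L/K)$), the reduction $\beta=\alpha^{-1}$ from $\nu^2=\Id$, and the one-sided Skolem--Noether adjustment $(A,B)\mapsto(V^{-1}AV,B)$ are all correct; one can check directly that this simultaneously converts both $\alpha$ and $\beta=\alpha^{-1}$ to transpose, as you assert. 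One minor point: the paper's statement contains a typo --- it reads $n=\dim_L(B)$ where it should read $n^2=\dim_L(B)$ (consistent with the paper's own usage elsewhere, e.g.\ in the proof of Theorem \ref{theorem:Gtype}); your proof implicitly uses the corrected convention.
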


At this point, we can already prove part of Theorem \ref{theorem:Gtype}. Specifically, we can show that the type of $\mc G$
is determined by the type and kind of $\tau$. We will be able to prove the full theorem after establishing further results
in Section \ref{section:tautypekind}.

\subsection{Reduced norm, reduced trace, and $\mc G^1(\frak O)$} \label{subsection:gofinindex}

We conclude this section with a few facts on reduced norms and apply them to 
$$\mc G(\frak O) = \Aut_{\frak O}(\frak O^{2g-2}, \langle -,-\rangle) \subseteq \End_A(A^{2g-2})$$
where, as defined in the introduction, $\frak O$ is the image of $\Z[H]$ in $A$. Our goal is to prove Proposition \ref{prop:GOfiniteindex}, which 
gives an ``upper bound'' on the image of the representation $\rho$ which we study (while Theorem \ref{theorem:mainresult}
gives the ``lower bound''). Recall from the introduction that $\mc G^1(\frak O) = \mc G_{H,i}^1(\frak O)$ is defined to be
the subgroup of $\mc G(\frak O)$ consisting of elements of reduced norm $1$.

The reduced norm (over $L$) of a finite-dimensional central simple $L$-algebra $B$ is defined as follows. Let $E$ be
any field extension of $L$ such that $B \otimes_L E$ {\em splits}, i.e. there is some isomorphism 
$\varphi: B \otimes_L E  \to \Mat_n(E)$ for some $n$. Then the {\em reduced norm} (over $L$) for $b \in B$ is 
$$\nrd_{B/L}(b) = \det(\varphi(b\otimes 1)).$$
The reduced norm is independent of the isomorphism $\varphi$, and $\nrd_{B/L}(B) \subseteq L$. (See e.g. \cite[Section 9]{Rein}.) 
In later sections, we also use the {\em reduced trace} which, for $b \in B$, is
$$\trd_{B/L}(b) = \Tr(\varphi(b\otimes 1))$$
where $\Tr$ is the trace of the $E$-linear map $\varphi(b)$.
Just as for reduced norm, the reduced trace always lies in $L$ and is independent of $\varphi$. 
From the definitions, it is clear that $\trd_{B/L}$ is $L$-linear.

\begin{lemma} \label{lemma:rednrminG}
Let $B = \Mat_{2g-2}(A^{op})$. Let $b \in \mc G = \Aut_A(A^{2g-2}, \langle-,-\rangle) \subseteq B$ and $\lambda = \nrd_{B/L}(b)$. Then, $\tau(\lambda) \lambda = 1$. 
In particular, if $\tau$ is of the first kind, then $\lambda = \pm 1$.
\end{lemma}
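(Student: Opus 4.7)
The plan is to extract the constraint $\tau(\lambda)\lambda = 1$ from the adjoint identity characterizing $\mc G$, applying the reduced norm to both sides and then using the compatibility of the reduced norm with the involution.

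First I would recall the reformulation already established in Section \ref{subsection:AutM}: the adjoint involution $\sigma$ on $B = \End_A(M) \cong \Mat_{2g-2}(A^{op})$ is the unique involution satisfying $\langle Cm,m'\rangle = \langle m, \sigma(C)m'\rangle$, and we have $b \in \mc G$ if and only if $\sigma(b)b = 1_B$. Applying the reduced norm $\nrd_{B/L}$ (which is multiplicative) to this identity yields
\begin{equation*}
\nrd_{B/L}(\sigma(b))\,\nrd_{B/L}(b) = 1.
\end{equation*}

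The key step is then the identity $\nrd_{B/L}(\sigma(b)) = \tau(\nrd_{B/L}(b))$ for every $b \in B$. To verify this I would fix an embedding $K \hookrightarrow \C$ and invoke Lemma \ref{lemma:changeinv}: there is a $\C$-algebra isomorphism $\varphi$ identifying $(B \otimes_K \C, \tilde{\sigma})$ with one of the three model pairs (1), (2), (3). Since the reduced norm is computed after such a splitting and $\sigma|_L = \tau|_L$ (both involutions share kind by Lemma \ref{lemma:oppositetype}, and on the center their restrictions agree), a direct computation in each model suffices. In case (1), $\det(N^t)=\det(N)$; in case (2), $\det(JN^tJ^{-1})=\det(N)$; both give $\nrd(\sigma(b))=\nrd(b)$, which matches $\tau(\nrd(b))$ because $\tau|_L = \id$ in the first kind. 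In case (3), where $L \otimes_K \C \cong \C \times \C$ with $\tau|_L$ the swap, the map $\nu(A,B)=(B^t,A^t)$ gives $\nrd(\sigma(b))$ equal to the swap of $\nrd(b)$, which is exactly $\tau(\nrd(b))$.

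Substituting this back gives $\tau(\lambda)\lambda = 1$. For the final assertion, if $\tau$ is of the first kind then by definition $\tau$ fixes $L$ pointwise, so the equation degenerates to $\lambda^2 = 1$ in $L$, forcing $\lambda = \pm 1$ since $L$ is a field.

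The main obstacle is the compatibility identity $\nrd_{B/L} \circ \sigma = \tau \circ \nrd_{B/L}$; everything else is a one-line multiplicative manipulation. Once one trusts Lemma \ref{lemma:changeinv} to reduce to the three explicit models, the verification is routine, but it is worth noting explicitly why the splitting computations are insensitive to the choice of $\varphi$ — this follows because reduced norm is itself independent of the choice of splitting isomorphism.
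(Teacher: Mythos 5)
Your proof is correct but takes a genuinely different route from the paper's. Both arguments reduce to the compatibility identity $\nrd_{B/L}(\sigma(b)) = \tau(\nrd_{B/L}(b))$; the difference is how it is established. The paper splits $B$ over a finite Galois extension $E/L$, defines $\psi: B \to \Mat_n(E)$ by $b \mapsto (\phi(\sigma(b)))^*$ (with $*$ the entrywise conjugate-transpose built from a Galois lift $\tilde\nu$ of $\tau|_L$), observes that $\psi$ is an $L$-algebra homomorphism, and applies the Skolem--Noether theorem to conclude that $\psi$ is conjugate to $\phi$; the determinant identity then follows uniformly, with no case split on the type or kind of the involution. You instead base-change to $\C$ over $K$ and invoke Lemma \ref{lemma:changeinv} to put $(B\otimes_K\C,\tilde\sigma)$ in one of the three normal forms, then verify the identity explicitly case by case. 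Your route has the virtue of reusing a lemma already stated in the paper and being entirely concrete, at the cost of a case analysis and a mild subtlety in the second-kind case, where $L\otimes_K\C \cong \C\times\C$ with $\tau|_L$ acting as the swap, so $\nrd_{B/L}$ must be tracked through this identification; you handle this correctly, including the point (via Lemma \ref{lemma:oppositetype}) that $\sigma|_L = \tau|_L$ so that Lemma \ref{lemma:changeinv} applies to $\sigma$ with the same fixed field $K$.
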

\begin{proof}
We know that $b \in \mc G$ satisfies $1 = \sigma(b) b$, and so 
$$1 = \nrd_{B/L} (\sigma(b) )\nrd_{B/L}(b) = \nrd_{B/L}(\sigma(b)) \lambda.$$
Thus, it suffices to show that $\nrd_{B/L}(\sigma(b)) = \tau(\lambda)$.

Let $E$ be a finite Galois field extension of $L$ such that there is an isomorphism $\varphi: B \otimes_L E \cong \Mat_n(E)$ for some $n$.
Let $\phi$ be the composition $B \to B \otimes_L E \cong \Mat_n(E)$.
Let $\nu = \tau|_L$, and let $\tilde \nu$ be an extension of $\nu$ to a Galois automorphism of $E$. For $C = (c_{ij}) \in \Mat_n(E)$,
define $C^* = ( \tilde \nu(c_{ji}))$. Now, let $\psi: B \to \Mat_n(E)$ be the map defined by 
$ b \mapsto (\phi(\sigma(b)))^*$. Note that since $\sigma$ and $^*$ are both anti-homomorphisms, $\psi$ is a homomorphism,
and moreover, one can check that $\psi$ is an $L$-algebra homomorphism. By the Skolem-Noether Theorem, $\psi$
and $\phi$ are the same up to conjugation by an element of $\Mat_n(E)$. Thus,
$$ \nrd_{B/L}(\sigma(b)) = \det(\phi(\sigma(b))) = \nu^{-1} ( \det( \psi(b)  )) = \nu^{-1} (\det(\phi(b)))= \nu^{-1}(\nrd_{B/L}(b)).$$
Since $\nrd_{B/L}(b) \in L$, we can replace $\nu^{-1}$ with $\tau^{-1}$ or equivalently $\tau$.
\end{proof}

We next show that elements in the ring of integers in a cyclotomic field $E$ of ``absolute value'' $\pm 1$ are roots of unity.
\begin{lemma} \label{lemma:normtortunity}
Let $E = \Q(\zeta)$ be a cyclotomic field, let $\mc O_E$ its ring of integers, and let $\iota: E \to \C$ be some embedding
into $\C$. If $\lambda \in \mc O_E$ and $| \iota(\lambda)|^2 = 1$, then $\lambda$ is a root of unity.
\end{lemma}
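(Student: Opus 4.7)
The plan is to reduce this to Kronecker's classical theorem: an algebraic integer, all of whose Galois conjugates have complex absolute value at most $1$, is either zero or a root of unity. The hypothesis only gives us information about a single embedding $\iota$, so the first task is to promote this to a statement about every embedding. This is where the cyclotomic hypothesis enters.

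First I would fix $\iota$ and observe that, since $E = \Q(\zeta)$ is a CM-field (in particular, closed under complex conjugation on $\iota(E) \subset \C$), the map $c'$ defined by $\iota \circ c' = (\text{complex conjugation}) \circ \iota$ is an element of $\Gal(E/\Q)$. The hypothesis $|\iota(\lambda)|^2 = 1$ then reads $\iota(\lambda \cdot c'(\lambda)) = 1$, and since $\iota$ is injective this forces the algebraic identity
$$\lambda \cdot c'(\lambda) = 1 \qquad \text{in } E.$$

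Next I would apply an arbitrary $\sigma \in \Gal(E/\Q)$ to both sides. Because $E/\Q$ is cyclotomic, $\Gal(E/\Q)$ is abelian, so $\sigma(c'(\lambda)) = c'(\sigma(\lambda))$, and thus $\sigma(\lambda) \cdot c'(\sigma(\lambda)) = 1$. Translating back through $\iota$, this says $|\iota(\sigma(\lambda))|^2 = 1$. Since every embedding $E \hookrightarrow \C$ is of the form $\iota \circ \sigma$ for some $\sigma \in \Gal(E/\Q)$, we have shown that every Galois conjugate of $\lambda$ has complex absolute value exactly $1$.

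Finally, I would invoke Kronecker's theorem to conclude that $\lambda$ is a root of unity. If one wishes to avoid quoting Kronecker, the standard short proof works here: for each $n \geq 1$, all conjugates of $\lambda^n$ still lie on the unit circle, so the coefficients of the minimal polynomial of $\lambda^n$ (elementary symmetric functions in its conjugates) are rational integers with absolute value bounded independently of $n$; since there are finitely many such polynomials and they all have bounded degree $\leq [E:\Q]$, the sequence $(\lambda^n)$ visits only finitely many values, whence $\lambda^m = \lambda^n$ for some $m > n$ and $\lambda^{m-n} = 1$. The abelianness of $\Gal(E/\Q)$ in step two is the only genuinely nontrivial ingredient; without it, a single-embedding absolute value bound would not suffice.
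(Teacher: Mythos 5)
Your proof is correct and follows essentially the same route as the paper's: both use that $E$ is a CM-field so complex conjugation (via $\iota$) gives a Galois automorphism, both exploit abelianness of $\Gal(E/\Q)$ to commute that automorphism past an arbitrary $\sigma$ and conclude all conjugates of $\lambda$ lie on the unit circle, and both finish by Kronecker's theorem. The self-contained sketch of Kronecker at the end is a nice optional addition but not a substantive difference.
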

\begin{proof}
According to a theorem of Kronecker, if all the Galois conjugates of an algebraic integer $\lambda$ have 
absolute value  $\leq 1$, then $\lambda$ is a root of unity. Let $\nu$ be the order $2$ Galois automorphism
that is the restriction of complex conjugation (via $\iota$). As $|\iota(e)|^2 = \iota(\nu(e) e)$ for any $e \in E$,
it will suffice to show that $\nu(\sigma(\lambda)) \sigma(\lambda) = 1$ for all Galois automorphisms $\sigma$
of $E$. Since $E/\Q$ is cyclotomic, its Galois group is abelian, and thus 
$$\nu(\sigma(\lambda)) \sigma(\lambda) = \sigma(\nu(\lambda)) \sigma(\lambda) = \sigma(\nu(\lambda) \lambda) = \sigma(1) = 1.$$
\end{proof}

\begin{remark}
The above lemma holds for any CM-field. The essential point is that complex conjugation always restricts
to the same automorphism of $E$ regardless of the embedding of $E$ into $\C$.
\end{remark}

\begin{prop} \label{prop:GOfiniteindex}
The group $\mc G^1(\frak O)$ is of finite index in $\mc G(\frak O)$.
\end{prop}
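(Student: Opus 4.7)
The strategy is to show that the reduced norm map $\nrd_{B/L}$, restricted to $\mc G(\frak O)$, has finite image; since $\mc G^1(\frak O)$ is by definition the kernel of this restriction, the finite-index claim will follow immediately.

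First observe that any $b \in \mc G(\frak O) \subseteq \Mat_{2g-2}(\frak O^{op})$ is integral over $\Z$, hence $\nrd_{B/L}(b)$ is an algebraic integer in $L$; that is, $\nrd_{B/L}(\mc G(\frak O)) \subseteq \mc O_L$. Combined with Lemma \ref{lemma:rednrminG}, which yields $\tau(\lambda)\lambda = 1$ for every $\lambda$ in this image, it suffices to prove that $U := \{\lambda \in \mc O_L : \tau(\lambda)\lambda = 1\}$ is finite.

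If $\tau$ is of the first kind, then $\tau|_L = \id$, so $U \subseteq \{\pm 1\}$. For the second kind I would invoke two classical ingredients: (i) the center $L$ of a simple component of $\Q[H]$ embeds into the cyclotomic field $E = \Q(\zeta_n)$ for $n = \exp(H)$, since $L$ is generated over $\Q$ by central characters and character values lie in $E$; and (ii) under any such embedding, $\tau|_L$ agrees with the restriction of the canonical complex conjugation $c : \zeta_n \mapsto \zeta_n^{-1}$ on $E$. The second point holds because $\tau$ sends each conjugacy-class sum $\sigma_C \in Z(\Q[H])$ to $\sigma_{C^{-1}}$, while the image of $\sigma_C$ in $L$ corresponds, via the chosen embedding into $\C$, to $|C|\chi(g)/\chi(1)$ for the appropriate irreducible complex character $\chi$, and $\chi(g^{-1}) = \overline{\chi(g)}$. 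Hence any $\lambda \in U$, viewed in $\mc O_E$, satisfies $c(\lambda)\lambda = 1$; under the standard embedding $\iota : E \to \C$ this reads $|\iota(\lambda)|^2 = 1$, and Lemma \ref{lemma:normtortunity} forces $\lambda$ to be a root of unity. Since the roots of unity in the number field $L$ form a finite group, $U$ is finite.

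The main obstacle I anticipate is the identification of $\tau|_L$ with complex conjugation so as to invoke Lemma \ref{lemma:normtortunity}; once this compatibility is secured, the finiteness is just Kronecker's theorem packaged inside that lemma, and the rest is bookkeeping with integrality of reduced norms together with the description of $\mc G^1(\frak O)$ as the kernel of $\nrd_{B/L}|_{\mc G(\frak O)}$.
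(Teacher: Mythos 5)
Your proposal is correct and follows essentially the same route as the paper: reduce to showing $\nrd_{B/L}(\mc G(\frak O))$ consists of roots of unity, invoke Lemma \ref{lemma:rednrminG}, split into first/second kind, and finish with Lemma \ref{lemma:normtortunity} via Kronecker's theorem. The one presentational difference is that you re-derive the fact that $\tau|_L$ corresponds to complex conjugation by a direct computation with conjugacy-class sums and character values $|C|\chi(g)/\chi(1)$, whereas the paper cites Proposition \ref{prop:typefromrealrep} (a forward reference to Section~\ref{section:tautypekind}); your self-contained version is entirely sound and avoids that forward dependency.
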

\begin{proof}
It suffices to show that the image of $\mc G(\frak O)$ under $\nrd_{B/L}$ consists only of roots of unity. If $\tau$ is of the first kind,
this is obvious by Lemma \ref{lemma:rednrminG}. Suppose $\tau$ is of the second kind.
Any $\lambda \in \nrd_{B/L}(\mc G(\frak O))$ lies in 
$\mc O_L$ where $\mc O_L$ is the ring of integers in $L$, and by Lemma \ref{lemma:rednrminG},
$\lambda$ satisfies $\tau(\lambda) \lambda = 1$.
Fix some embedding $\iota : L \to \C$. 
By Proposition \ref{prop:typefromrealrep} below, $\tau|_L$ extends to complex conjugation, and so 
$1= \iota(\tau(\lambda) \lambda) = \overline{\iota(\lambda)} \iota(\lambda) = |\iota(\lambda)|^2$.
Since $L$ is a subfield of a cyclotomic field $E$ (to which $\iota$ extends), 
by Lemma \ref{lemma:normtortunity}, $\lambda$ is a root of unity in $L$.
\end{proof}

\section{Determining type and kind of $\tau$}
\label{section:tautypekind}

The kind of algebraic group into which $\rho_{H, p,i}$ maps depends on $\tau|_{A_i}$. 
The kind and type of $\tau$ depends on the representation theory of the finite group $H$. We therefore dedicate this section to study
this dependency. This will be important later when we come in Section \ref{section:finitegroups} to produce specific arithmetic groups as
(virtual) quotients of $\Mod(\Sigma)$. However, this section may be skipped on a first reading if the reader wants to see first why the image of 
$\rho_{H,p,i}$ is an arithmetic group. Before beginning, we note additionally that the material presented here is known to experts,
but we include it for the convenience of the non-expert and for lack of a convenient reference. 
As in the previous section, we will focus on one nontrivial component of $\Q[H]$ and drop the subscript $i$. Since we fix some $A$, by abuse of notation, we will
refer to $\tau|_A$ as $\tau$.

Recall that there is a trichotomy: $\tau$ can be of the first kind and orthogonal type, of the first kind and symplectic type, or of the second kind. This trichotomy corresponds in a nice
way to other trichotomies in the representation theory of the finite group $H$.
The first trichotomy arises from the various kinds of invariant bilinear forms for complex representations. Fix some embedding $L \hookrightarrow \C$
and some isomorphism $A \otimes_L \C \cong \Mat_n(\C)$. Then the composition $H \to A \to A \otimes_L \C \to \Mat_n(\C)$ yields 
an action of $H$ on $V = \C^n$ which in turn is an irreducible $\C[H]$-module. There are three possibilities: $V$ has an $H$-invariant
non-degenerate symmetric bilinear form, an $H$-invariant non-degenerate alternating bilinear form, or no nonzero $H$-invariant bilinear form.
This corresponds to the type and kind of $\tau$ as follows.

\begin{prop} \label{prop:typefrominvtform}
Keeping notation as above, we have that, restricted to $A$, the involution $\tau$ is
\begin{itemize}
\item of the first kind and orthogonal type if and only if $V$ has an $H$-invariant nondegenerate symmetric bilinear form,
\item of the first kind and symplectic type if and only if $V$ has an $H$-invariant nondegenerate alternating bilinear form,
\item of the second kind if and only if $V$ has no invariant non-zero $H$-invariant nondegenerate bilinear form.
\end{itemize}
\end{prop}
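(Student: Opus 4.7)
The plan is to convert the question about invariant bilinear forms on $V$ into a question about the $\C[H]$-module structure of $V^*$, and then read off the answer from how the canonical involution $\tau$ permutes the simple components of $\C[H]$.

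First I would observe that by Schur's lemma $\dim_\C \Hom_H(V, V^*) \leq 1$, so a nonzero $H$-invariant bilinear form $B$ on $V$ exists iff $V^* \cong V$ as $\C[H]$-modules, and then $B$ is unique up to scalar. Since $B'(v,w) := B(w,v)$ is another such form, uniqueness forces $B' = \pm B$, so $B$ is necessarily symmetric or alternating. Thus the classification of invariant forms reduces to matching the kind/type of $\tau|_A$ with the three cases: (a) $V^* \not\cong V$, (b) $V^* \cong V$ with symmetric form, (c) $V^* \cong V$ with alternating form.

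Next I would identify $V^*$ inside the Wedderburn decomposition of $\C[H]$. The simple components of $\C[H]$ are parametrized by irreducible $\C$-characters $\chi$, and these come in Galois orbits corresponding to the simple factors $A_i$ of $\Q[H]$, with character field $L_i = \Q(\chi)$ equal to the center of $A_i$. A direct computation with the primitive central idempotents $e_\chi = \frac{\chi(1)}{|H|}\sum_h \overline{\chi(h)}\, h$, using $\chi(h^{-1}) = \overline{\chi(h)}$ for any finite-group character, yields $\tau(e_\chi) = e_{\chi^*}$, where $\chi^*(h) := \chi(h^{-1})$. Hence $\tau$ fixes the $\C$-simple summand of $\chi$ iff $\chi = \chi^*$ iff $V^* \cong V$. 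Translating back to $A \otimes_\Q \C \cong \prod_{\sigma : L \hookrightarrow \C} \Mat_n(\C)$, this says $\tau \otimes 1_\C$ stabilizes each factor iff $\tau|_L = \id_L$, i.e., iff $\tau$ is of first kind. This disposes of the second-kind case of the proposition.

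For the first-kind case $\tau|_L = \id_L$ lets $\tau$ extend $\C$-linearly to an involution $\tilde\tau$ on $\End_\C(V) = \Mat_n(\C)$. Extension of scalars preserves the dimension of the $\pm 1$-eigenspaces, so $\tilde\tau$ has the same type as $\tau|_A$. The $H$-invariance $B(hv,w) = B(v, h^{-1} w) = B(v, \tau(h) w)$, extended $\C$-linearly using that the image of $H$ spans $\Mat_n(\C)$ over $\C$, becomes $B(Nv,w) = B(v, \tilde\tau(N) w)$, identifying $\tilde\tau$ with the adjoint involution of $B$. Writing $B(v,w) = v^t J w$ for some $J \in \GL_n(\C)$ and computing, the adjoint involution $\tilde\tau(N) = J^{-1} N^t J$ is of orthogonal type iff $J^t = J$ iff $B$ is symmetric, and of symplectic type iff $J^t = -J$ iff $B$ is alternating. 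This matches (b) and (c).

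The main obstacle is the careful translation between $\tau$ on the $\Q$-algebra $\Q[H]$ and its behavior after extending scalars to $\C$; in particular one must verify that $\tau \otimes 1_\C$ acts on the set of $\C$-irreducible characters by $\chi \mapsto \chi^*$, which is precisely the content of the idempotent identity $\tau(e_\chi) = e_{\chi^*}$. Once this bookkeeping step is in place, the three cases follow from Schur's lemma, the dimension-preservation of types under extension of scalars, and the standard description of the adjoint involution of a bilinear form on $\C^n$.
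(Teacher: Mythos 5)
Your proof is correct, and it takes a genuinely different route from the paper's in two places. For the equivalence of ``first kind'' with ``$V \cong V^*$,'' the paper argues in the forward direction via Lemma \ref{lemma:changeinv} (to realize $\tilde\tau$ as the standard orthogonal or symplectic involution and hence produce a form) and in the converse direction via Lemma \ref{lemma:unique} (uniqueness of the involution satisfying $\nu(\psi(h))\psi(h)=\Id$, which forces the adjoint involution of $B$ to agree with $\tau$ on $A$, so that $\C$-linearity of $\nu$ forces $\tau|_L=\id$). You instead compute directly that $\tau\otimes 1_\C$ sends the primitive central idempotent $e_\chi$ to $e_{\chi^*}$, so $\tau$ stabilizes a $\C$-simple factor iff $\chi=\chi^*$ iff $V\cong V^*$, and this immediately translates into $\tau|_L=\id$. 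This is more self-contained and avoids invoking Lemma \ref{lemma:changeinv}, at the cost of a small amount of character bookkeeping. For matching orthogonal/symplectic type with symmetric/alternating forms, both you and the paper use the observation that $\tilde\tau$ is the adjoint involution of $B$ and that passing from $A$ to $A\otimes_L\C$ preserves the dimensions of the $\pm1$-eigenspaces; you make this explicit with the formula $\tilde\tau(N)=J^{-1}N^tJ$ and the dichotomy $J^t=\pm J$, whereas the paper defers to the normal forms supplied by Lemma \ref{lemma:changeinv}. The only thing you rely on that is not spelled out is that $\psi(H)$ spans $\Mat_n(\C)$, but this is just Burnside's theorem applied to the irreducible module $V$; including a one-line justification would make the argument airtight. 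Overall a clean, slightly more elementary alternative proof.
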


Before proving this proposition, we first describe two other trichotomies closely related to the trichotomy of kind and type.
One of these is a trichotomy involving the real representations of $H$. This trichotomy also relates to a dichotomy
involving the center $L$ of $A$. In order to present the dichotomy, we first recall some elementary facts about $L$.
A short proof of the lemma is given below.

\begin{lemma} \label{lemma:cyccenter}
Let $A$ be a simple component of $\Q[H]$ with center $L$. Then, $L$ is a subfield of a cyclotomic field extension of $\Q$.
Moreover, $L$ is either totally imaginary (there are no embeddings $L \hookrightarrow \R$) or
totally real (all embeddings $L \hookrightarrow \C$ have image in $\R$).
\end{lemma}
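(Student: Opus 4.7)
The plan is to proceed in two stages, handling cyclotomicity first and then the real/imaginary dichotomy, using only elementary facts about characters and the Galois theory of $\Q(\zeta_n)$.

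For the first claim, I would use the standard dictionary between simple factors of $\Q[H]$ and Galois orbits of absolutely irreducible characters. The simple factor $A$ of $\Q[H]$ corresponds, after extending scalars to $\C$, to a Galois orbit $\{\chi^{\sigma} : \sigma \in \Gal(\Q(\zeta_n)/\Q)/\text{Stab}(\chi)\}$ of irreducible $\C$-characters of $H$; concretely, $A\otimes_\Q \C \cong \prod_\sigma \Mat_{\dim \chi}(\C)$ indexed by this orbit, and the center $L$ is exactly the fixed field $\Q(\chi) := \Q(\chi(h) : h \in H)$ of the stabilizer of $\chi$. Since each $\chi(h)$ is a sum of eigenvalues of $h$ acting on the representation, and those eigenvalues are roots of unity whose orders divide $|H|$, we get $\chi(h) \in \Q(\zeta_n)$ with $n = |H|$. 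Hence $L = \Q(\chi) \subseteq \Q(\zeta_n)$.

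For the dichotomy, the main point I would exploit is that $\Gal(\Q(\zeta_n)/\Q)$ is abelian. Let $E = \Q(\zeta_n)$ and let $c \in \Gal(E/\Q)$ denote the element $\zeta_n \mapsto \zeta_n^{-1}$, which is the restriction of complex conjugation under the tautological embedding $E \hookrightarrow \C$. For any embedding $\tilde\sigma : E \to \C$, the automorphism $c_{\tilde\sigma} := \tilde\sigma^{-1}\circ (\overline{\phantom{a}}) \circ \tilde\sigma \in \Gal(E/\Q)$ equals $c$: indeed, if $\tau \in \Gal(E/\Q)$ satisfies $\tilde\sigma = \tilde\sigma_0 \circ \tau$ for the tautological $\tilde\sigma_0$, then by abelianness
\[
c_{\tilde\sigma} = \tau^{-1} c_{\tilde\sigma_0} \tau = \tau^{-1} c \tau = c.
\]
Now given $L \subseteq E$, restrict $c$ to $L$. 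If $c|_L = \id_L$, then $L \subseteq E^{\langle c\rangle} = \Q(\zeta_n)^+$, which is totally real; extending any embedding $\sigma : L \to \C$ to an embedding of $\Q(\zeta_n)^+$ shows $\sigma(L) \subseteq \R$, so $L$ is totally real. If $c|_L \neq \id_L$, then for any embedding $\sigma : L \to \C$, extend to $\tilde\sigma : E \to \C$; the identity $c_{\tilde\sigma} = c$ gives $\overline{\sigma(x)} = \sigma(c(x))$ for all $x \in L$, and picking $x \in L$ with $c(x) \neq x$ forces $\sigma(x) \notin \R$. Hence $L$ is totally imaginary.

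The first part is essentially a citation of classical character theory, so the only real content is the second part; the key observation that makes it work cleanly is the abelianness of $\Gal(\Q(\zeta_n)/\Q)$, which pins down complex conjugation as a single intrinsic element of the Galois group independent of the embedding. I do not anticipate a genuine obstacle beyond carefully stating this extension-of-embeddings argument.
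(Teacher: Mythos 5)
Your argument is correct, but it takes a genuinely different route from the paper in both halves. For cyclotomicity, you invoke the character-field dictionary ($L \cong \Q(\chi)$ for the Galois orbit of an irreducible $\C$-character), which is a slightly heavier classical input; the paper instead argues directly with the reduced trace $\trd_{A/L}$: since each $\trd_{A/L}(r(h))$ is a sum of roots of unity and $\trd_{A/L}\circ r$ is a $\Q$-linear surjection $\Q[H]\to L$, the image $L$ lands in $\Q(\zeta_{|H|})$ with no appeal to the orbit description at all. For the dichotomy, the paper gives a one-line deduction: subfields of cyclotomic fields are Galois over $\Q$, so all embeddings $L\hookrightarrow\C$ have the same image, hence they are all real or all non-real. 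Your argument is more explicit and proves a sharper statement --- it identifies which case occurs according to whether the intrinsic complex-conjugation element $c\in\Gal(\Q(\zeta_n)/\Q)$ (well-defined independently of embedding precisely because the Galois group is abelian) restricts to the identity on $L$. That extra information is in fact exactly what the paper needs later (cf.\ the proofs of Lemma \ref{lemma:normtortunity} and Proposition \ref{prop:typefromrealrep}, where the well-definedness of the conjugation automorphism is used), so your version of the dichotomy argument would serve the paper equally well. One small point to make explicit in your write-up: to conclude that $c|_L$ makes sense as an automorphism of $L$ (so that $c(x)\in L$ for $x\in L$), you should note that $L/\Q$ is Galois because $\Gal(\Q(\zeta_n)/\Q)$ is abelian --- this is implicit in what you wrote but deserves a word.
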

 
The next proposition presents the relation to kind and type. The isomorphisms
in the proposition below are as $\R$-algebras.

\begin{prop} \label{prop:typefromrealrep}
Let notation be as above, let $n^2 = \dim_L(A)$ and let $e: L \hookrightarrow \C$ be an arbitrary embedding. Then, the embedding
satisfies $K = e^{-1}(\R)$, i.e. $\tau$, restricted to $A$, is of the first kind if and only if $L$ is totally real.
If $\tau$ is of the second kind, then $\tau|_L$ extends to complex conjugation via the embedding $e$.
Moreover, restricted to $A$, the involution $\tau$ is
\begin{itemize}
\item of the first kind and orthogonal type if and only if $A \otimes_{K,e} \R \cong \Mat_n(\R)$,
\item of the first kind and symplectic type if and only if $A \otimes_{K,e} \R \cong \Mat_{n/2}(\bbH)$,
\item of the second kind if and only if $A \otimes_{K,e} \R \cong \Mat_n(\C)$.
\end{itemize}
\end{prop}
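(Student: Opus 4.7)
The plan is to prove Proposition \ref{prop:typefromrealrep} in two stages: first determining the action of $\tau$ on $L$ (which yields the claims $K = e^{-1}(\R)$ and that $\tau|_L$ realizes complex conjugation via $e$), and then matching $A \otimes_{K,e} \R$ with the type and kind of $\tau$ by combining Proposition \ref{prop:typefrominvtform} with the Frobenius--Schur classification of complex irreducible characters of $H$.

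For the first stage, I would start by noting that any anti-automorphism of a simple algebra preserves its center, so $\tau|_L$ is a field involution of $L$ and $K = L^\tau$. The key identity is $\trd_{A/L}(\tau(a)) = \tau|_L(\trd_{A/L}(a))$, which can be verified after extending scalars to a splitting field, where $\tau$ becomes a $\tau|_L$-semilinear matrix involution in standard form. For the embedding $e:L\hookrightarrow \C$ and a splitting $A\otimes_{L,e}\C \cong \Mat_n(\C)$, the composition $H\to A\to \Mat_n(\C)$ gives an irreducible complex character $\chi_e$ with $\chi_e(h)=e(\trd_{A/L}(h))$. Combining $\tau(h)=h^{-1}$ with $\chi_e(h^{-1})=\overline{\chi_e(h)}$ yields $e(\tau|_L(\trd_{A/L}(h)))=\overline{e(\trd_{A/L}(h))}$ for all $h\in H$. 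Since the values $\trd_{A/L}(h)$, $h\in H$, span $L$ over $\Q$ (the reduced trace is $\Q$-linearly surjective on $\Q[H]$), this forces $e\circ\tau|_L$ to agree with complex conjugation composed with $e$ on all of $L$. Consequently $\ell\in K$ iff $e(\ell)\in\R$, giving $K=e^{-1}(\R)$; and in the second kind case $\tau|_L$ is realized as complex conjugation via $e$.

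For the trichotomy, observe that $A\otimes_{K,e}\R$ is a simple $\R$-algebra whose center is $L\otimes_K \R$: this is $\R$ in the first kind case (where $L=K$ is totally real) and $\C$ in the second kind case (where $L$ is CM with complex conjugation $\tau|_L$, shown in the first stage). Wedderburn's theorem over $\R$ together with a dimension count restrict the possibilities to $\Mat_n(\R)$ or $\Mat_{n/2}(\bbH)$ in the first kind case, and force $\Mat_n(\C)$ in the second kind case. To distinguish between $\Mat_n(\R)$ and $\Mat_{n/2}(\bbH)$, I would appeal to the Frobenius--Schur classification of the complex irrep $V$: the local Schur index of $A$ at the real place $e$ is $1$ exactly when $V$ is of real type (equivalently, $V$ descends to a real representation of $H$), giving $A\otimes_{L,e}\R\cong \Mat_n(\R)$; it is $2$ exactly when $V$ is of quaternionic type (equivalently, the realification of $V$ has endomorphism ring $\bbH$), giving $A\otimes_{L,e}\R\cong \Mat_{n/2}(\bbH)$; and $V$ is of complex type precisely in the second kind case. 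Chaining these equivalences with Proposition \ref{prop:typefrominvtform}, which identifies each Frobenius--Schur type with a specific kind and type of $\tau$, yields all three claimed equivalences.

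I expect the main technical obstacle to be the chain of equivalences linking the $\R$-isomorphism class of $A \otimes_{K,e} \R$ with the Frobenius--Schur type of $V$. This is essentially the classical statement that the real Schur index of an irreducible complex character equals $2$ precisely in the quaternionic case, and it requires some care in tracking how extension of scalars from $\Q$ to $\R$ interacts with the decomposition of $A\otimes_\Q \R$ into its real simple factors. The semilinearity identity for $\trd$ under $\tau$ in the first stage is a subsidiary point that follows cleanly from the matrix description of involutions on a splitting field.
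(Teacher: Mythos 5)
Your proof is correct in outline, but it takes a genuinely different route from the paper at both stages. For the first stage (showing $K = e^{-1}(\R)$ and that $\tau|_L$ is realized by complex conjugation), you use the semilinearity identity $\trd_{A/L}(\tau(a)) = \tau|_L(\trd_{A/L}(a))$ together with the character relation $\chi_e(h^{-1}) = \overline{\chi_e(h)}$ and the $\Q$-spanning property of $\{\trd_{A/L}(h)\}_{h\in H}$; this works, but the semilinearity identity does require a separate verification (e.g., via nondegeneracy of the trace pairing on $L/\Q$, or carefully extending $\tau|_L$ to a Galois automorphism of a splitting field as you sketch). The paper instead proves a standalone lemma (Lemma \ref{lemma:realinvols}): given a surjection $\R[H]\to B$ with $B$ simple, it constructs a canonical involution $\nu$ on $B$ by averaging the standard inner/Hermitian product on $\R^m$, $\C^m$, or $\bbH^m$, so that $\nu$ is ``conjugate transpose'' for an $H$-invariant positive-definite form. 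Applying Lemma \ref{lemma:realinvols} to $B = A\otimes_{K',e}\R$ with $K' = e^{-1}(\R)$ and invoking uniqueness (Lemma \ref{lemma:unique}) shows $\nu|_A = \tau$ all at once, from which $K' = K$ and the whole trichotomy fall out simultaneously.

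For the second stage (matching $A\otimes_{K,e}\R$ with type/kind), you appeal to the classical fact that the real Schur index of $V$ is $1$ in the real-type case and $2$ in the quaternionic case, then chain through Proposition \ref{prop:typefrominvtform}. This is correct, but it imports exactly the piece of classical theory that the paper's averaging lemma re-proves internally: the existence of the conjugate-transpose involution on $\Mat_m(\R)$, $\Mat_m(\C)$, or $\Mat_m(\bbH)$ with $\nu(\psi(h))\psi(h) = \Id$ is the content needed to link the Frobenius--Schur type with the real division-algebra factor, and the averaging construction gives it directly. In short: your route (reduced-trace semilinearity plus Frobenius--Schur/real-Schur-index theory) is valid and conceptually clean, but relies on external classical machinery; the paper's route (Lemma \ref{lemma:realinvols} plus uniqueness of the involution) is more self-contained, constructive, and delivers the first-stage and second-stage conclusions in a single stroke. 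Both are legitimate, and you correctly identify that the delicate point in your version is the real-Schur-index/Frobenius--Schur correspondence, which is precisely what the paper's Lemma \ref{lemma:realinvols} sidesteps.
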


Another trichotomy relates to the Frobenius-Schur indicator $\iota \chi$ for any character $\chi$ of $H$, defined to be
$$ \iota \chi = \frac{1}{|H|}\sum_{h \in H} \chi(h^2).$$
We will be quoting some results from \cite{JameLieb} where $\iota\chi$ is defined differently and only for irreducible $\C$-characters (over $\C$). However,
the definitions coincide for irreducible characters (see the proof of Theorem 23.14 in \cite{JameLieb}), and this is all we require.

\begin{prop} \label{prop:typefromchi}
Continuing our notation as above, let $\chi$ be the character corresponding to the irreducible $\Q$-representation $V$ of $H$. Then, restricted to $A$, the involution
$\tau$ is
\begin{itemize}
\item of the first kind and orthogonal type if and only if $\iota \chi > 0$,
\item of the first kind and symplectic type if and only if $\iota \chi < 0$,
\item of the second kind if and only if $\iota \chi = 0$
\end{itemize}
\end{prop}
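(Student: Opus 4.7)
The plan is to reduce to the classical Frobenius--Schur theorem for irreducible complex representations of $H$ and then combine it with Proposition \ref{prop:typefrominvtform}, which already translates the three cases of kind/type of $\tau|_A$ into the existence of an invariant symmetric, alternating, or no bilinear form on a suitable complex representation.

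First, I would decompose $V \otimes_\Q \C$ into irreducible $\C[H]$-modules. Writing $A = \Mat_m(D)$ with $D$ a division algebra of dimension $d^2$ over its center $L$, set $n = md$. Then
\[ A \otimes_\Q \C \;=\; A \otimes_L (L \otimes_\Q \C) \;\cong\; \prod_{\sigma} A \otimes_{L,\sigma} \C \;\cong\; \prod_\sigma \Mat_n(\C), \]
where $\sigma$ runs over the $[L:\Q]$ embeddings $L \hookrightarrow \C$. Each factor gives an irreducible $\C[H]$-module $W_\sigma$ of $\C$-dimension $n$, and these are exactly the irreducible $\C$-representations of $H$ that correspond to $A$. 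Since $V = D^m$ as an $A$-module, a straightforward dimension count (or the same tensoring argument applied to $V$) gives
\[ V \otimes_\Q \C \;\cong\; \bigoplus_\sigma W_\sigma^{\,d}. \]

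Next, I would observe that the $W_\sigma$ are Galois conjugates of one another: the action of $\Gal(\overline{\Q}/\Q)$ on embeddings $L \hookrightarrow \C$ permutes the factors $\Mat_n(\C)$ and hence the representations $W_\sigma$. Since the Frobenius--Schur indicator $\iota\chi_\sigma = \tfrac{1}{|H|}\sum_h \chi_\sigma(h^2)$ is a rational integer (in fact lies in $\{-1,0,1\}$ by the classical Frobenius--Schur theorem), it is Galois-invariant, so all the $\iota\chi_\sigma$ are equal to a common value $\epsilon \in \{-1,0,1\}$. By additivity,
\[ \iota\chi \;=\; d \sum_\sigma \iota\chi_\sigma \;=\; d \,[L:\Q]\, \epsilon. \]
Hence $\operatorname{sign}(\iota\chi) = \epsilon$, and in particular $\iota\chi > 0$, $\iota\chi < 0$, $\iota\chi = 0$ correspond respectively to $\epsilon = +1, -1, 0$.

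Finally, I would invoke the classical Frobenius--Schur trichotomy for the irreducible $\C$-character $\chi_\sigma$: $\epsilon = +1$ iff $W_\sigma$ admits a nondegenerate $H$-invariant symmetric bilinear form, $\epsilon = -1$ iff it admits a nondegenerate $H$-invariant alternating form, and $\epsilon = 0$ iff it admits no nonzero $H$-invariant bilinear form. Combined with Proposition \ref{prop:typefrominvtform} (applied to the embedding $\sigma$ and the resulting identification $A \otimes_L \C \cong \Mat_n(\C)$, under which the induced $\C$-representation of $H$ is precisely $W_\sigma$), these three cases are identified with $\tau|_A$ being of first kind orthogonal type, first kind symplectic type, and second kind, respectively. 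This yields all three equivalences of the proposition. The only real subtlety is the constancy of $\iota\chi_\sigma$ across $\sigma$, and that is handled cleanly by the integrality of the Frobenius--Schur indicator; no serious obstacle is expected beyond bookkeeping.
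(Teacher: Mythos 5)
Your proposal is correct and reaches the same destination as the paper, but via a different computation.

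Both arguments reduce to the combination of the classical Frobenius--Schur trichotomy (Theorem 23.16 of \cite{JameLieb}, quoted as Theorem \ref{theorem:invtform}) with Proposition \ref{prop:typefrominvtform}, and so the essential task is to relate $\iota\chi$ for the $\Q$-irreducible $V$ to the Frobenius--Schur indicator of a complex irreducible constituent. The paper does this by a chain of trace identities: it identifies the complex character $\psi$ coming from $A\otimes_L\C\cong\Mat_n(\C)$ with the reduced trace $\trd_{A/L}$ of the $H$-action on $A$, then uses $\Tr_{A/L}=n\,\trd_{A/L}$, $\Tr_{L/\Q}\circ\Tr_{A/L}=\Tr_{A/\Q}$, and the relation $\Tr_{A/\Q}=m\chi$ (Lemma \ref{lemma:tracetochar}) to land on $\iota\psi=\frac{m}{n\,[L:\Q]}\,\iota\chi$. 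You instead write out the decomposition $V\otimes_\Q\C\cong\bigoplus_\sigma W_\sigma^{\,e}$ (with $e$ the Schur index and $\sigma$ ranging over embeddings $L\hookrightarrow\C$), observe that the $W_\sigma$ form a single Galois orbit, and use the fact that the Frobenius--Schur indicator is a rational integer and hence Galois-invariant to conclude that all $\iota\chi_\sigma$ agree. That gives $\iota\chi=e\,[L:\Q]\,\epsilon$ with $\epsilon=\iota\chi_\sigma$, which is the same positive constant $n[L:\Q]/m=e[L:\Q]$ the paper obtains (since $n=me$). Your route makes the Galois orbit and the Schur index explicit and avoids the reduced-trace machinery; the paper's route packages the sum over embeddings into a single $\Tr_{L/\Q}$ and leans on the lemmas about traces it has already set up. Either way the key inputs are the same, and your argument is complete, with the only point worth flagging being that the invocation of Proposition \ref{prop:typefrominvtform} should be made for a fixed choice of embedding $\sigma$ (as you indicate), which is exactly the setup under which that proposition is stated.
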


We start with the following elementary lemma showing that $\tau$ is the unique involution on $A$ with a certain property.
\begin{lemma} \label{lemma:unique}
Suppose $k$ is some field and $\psi: k[H] \to A$ is a surjective homomorphism of $k$-algebras. Then, there is at most one $k$-algebra 
involution $\nu$ on $A$ such that $\nu(\psi(h)) \psi(h) = \Id$.
\end{lemma}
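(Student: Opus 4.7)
The plan is to exploit the fact that the condition $\nu(\psi(h))\psi(h) = \Id$ determines $\nu$ completely on the set $\psi(H)$, together with the fact that $\psi(H)$ $k$-linearly spans $A$.

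More concretely, suppose $\nu_1$ and $\nu_2$ are two $k$-algebra involutions on $A$ both satisfying the hypothesis. For every $h \in H$, the element $\psi(h) \in A$ is invertible (with inverse $\psi(h^{-1})$), and the defining condition rewrites as $\nu_j(\psi(h)) = \psi(h)^{-1} = \psi(h^{-1})$ for $j = 1, 2$. Hence $\nu_1$ and $\nu_2$ agree on the subset $\psi(H) \subset A$.

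Next I would observe that, because $\psi\colon k[H] \to A$ is surjective and the group elements $\{h : h \in H\}$ form a $k$-basis of $k[H]$, the set $\psi(H)$ spans $A$ as a $k$-vector space. A $k$-algebra involution is by definition $k$-linear (it is a $k$-linear anti-homomorphism of order two), so $\nu_1 - \nu_2$ is a $k$-linear map vanishing on a spanning set, hence identically zero. This gives $\nu_1 = \nu_2$, as required.

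There is essentially no obstacle here; the only thing to keep straight is that the word ``involution'' in the paper means a $\Q$-linear anti-homomorphism squaring to the identity, which is in particular $k$-linear when $k \subseteq \Q$ or when we work with $k$-algebra involutions directly. Once that is noted, the lemma reduces to the general principle that a $k$-linear (or $k$-linear anti-linear, it does not matter) map out of $A$ is determined by its values on any spanning set.
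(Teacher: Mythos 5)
Your argument is correct and is the same as the paper's, which simply notes that the elements $\psi(h)$ span $A$ over $k$; you have just spelled out the two steps (the condition forces $\nu(\psi(h)) = \psi(h^{-1})$, and a $k$-linear map is determined on a spanning set) that the paper leaves implicit.
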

\begin{proof}
The elements $\psi(h)$ generate $A$ as a $k$-vector space.
\end{proof}

\begin{proof}[Proof of Proposition \ref{prop:typefrominvtform}]
Suppose $\tau$ is of the first kind.
Then, $\tau$ is $L$-linear and the involution extends to a ($\C$-algebra) involution $\tilde \tau$ of $A \otimes_L \C$. By Lemma \ref{lemma:changeinv}, there
is an isomorphism $\varphi: A \otimes_L \C \cong \Mat_n(\C)$ such that $\nu = \varphi \tilde \tau \varphi^{-1}$ is the standard symplectic
or orthogonal involution. If $\psi$ is the composition $\Q[H] \to A \to A \otimes_L \C$, then it is the case that $\tilde \tau(\psi(h)) \psi(h) = \Id$ and so
$\nu(\varphi(\psi(h))) \; \varphi(\psi(h))  = \Id$. This implies $\varphi(\psi(h))$ preserves
a nondegenerate symmetric bilinear form (resp. alternating form) if $\tau$ is of orthogonal (resp. symplectic) type. 

Suppose that $V$ has a non-zero $H$-invariant bilinear form. Then, since $V$ is irreducible, the form must be nondegenerate
because a nontrivial null-space would necessarily be $H$-invariant. Now, let $\psi$ be the projection $\Q[H] \to A$.
For all $h \in H$, the adjoint involution $\nu$ associated to the form satisfies
$\nu( \psi(h) \otimes 1) =  (\psi(h) \otimes 1)^{-1}$. Thus, $\nu$ preserves $A = A \otimes 1$, and 
by Lemma \ref{lemma:unique} $\nu|_A = \tau$. Since $\nu$ is $\C$-linear, $L \otimes 1 = 1 \otimes L$,
and $\nu|_{L \otimes 1} = \tau$, the involution $\tau$ is $L$-linear and thus of the first kind. Moreover, $\nu$ is the extension of $\tau$,
so they are of the same type and $\nu$ is of orthogonal type (resp. symplectic type) when the invariant bilinear form is symmetric
(resp. alternating).
\end{proof}

Before moving on to the next characterization, we first prove Lemma \ref{lemma:cyccenter} and a lemma about involutions when projecting $\R[H]$ to a simple $\R$-algebra.

\begin{proof}[Proof of Lemma \ref{lemma:cyccenter}]
First we show $L$ is a subfield of a cyclotomic field. The reduced trace (over $L$) is an $L$-linear map $\trd_{A/L}: A \to L$, and it is clear that
it is surjective. (Indeed, $\trd_{A/L}(L) = L$.) Choose some embedding $L \to \C$ and an isomorphism 
$\varphi: A \otimes_L \C \cong \Mat_n(\C)$, and let $r: \Q[H] \to A$ be the projection. Some power of the image $\varphi(r(h) \otimes 1)$ is
the identity, and so its trace (which is $\trd_{A/L}(r(h))$ by definition) is a sum of roots of unity. The composition $\trd_{A/L} \circ r$ is $\Q$-linear and $H$ spans the image $L$ as a
$\Q$-vector space. It is clear from this that $L$ is a subfield of a cyclotomic field. The second claim follows from the fact that subfields of cyclotomic fields
are all Galois extensions of $\Q$.
\end{proof}

\begin{lemma} \label{lemma:realinvols}
Suppose there is a surjection of $\R$-algebras $\psi: \R[H] \to B$ where $B$ is simple. There is an $\R$-linear involution $\nu$ on $B$ such that 
$\nu(\psi(h)) \psi(h) = \Id$ for every $h \in H$ and 
\begin{itemize}
\item $\nu$ is of the first kind and orthogonal type if and only if $B \cong \Mat_m(\R)$ for some $m \in \N$,
\item $\nu$ is of the first kind and symplectic type if and only if $B \cong \Mat_m(\bbH)$ for some $m \in \N$,
\item $\nu$ is of second kind if and only if $B \cong \Mat_m(\C)$ for some $m \in \N$, in which case
the center of $B$ is $\C$ and $\C^\nu = \R$.
\end{itemize}
\end{lemma}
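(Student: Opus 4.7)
The plan is to construct $\nu$ by an averaging argument and then read off its kind and type from the Wedderburn type of $B$. Wedderburn's classification of finite-dimensional simple $\R$-algebras gives $B \cong \Mat_m(D) \cong \End_D(V)$ with $V = D^m$ and $D \in \{\R, \C, \bbH\}$; the corresponding centers are $\R, \C, \R$.

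To construct $\nu$, fix the standard positive definite form $(-,-)_0$ on $V$: symmetric $\R$-bilinear if $D = \R$, Hermitian (with respect to the canonical conjugation on $D$) if $D = \C$ or $\bbH$. Since $\psi$ is an $\R$-algebra map and $\R$ is central in $B$, each $\psi(h)$ lies in $\End_D(V)$, so averaging yields an $H$-invariant positive definite form
\[ \langle v, w \rangle := \sum_{h \in H} \bigl( \psi(h) v,\ \psi(h) w \bigr)_0 \]
of the same kind as $(-,-)_0$. Its adjoint involution $\nu$ on $B$, defined by $\langle Cv, w \rangle = \langle v, \nu(C) w \rangle$, is an $\R$-algebra involution satisfying $\nu(\psi(h)) \psi(h) = \Id$ for every $h$; Lemma \ref{lemma:unique} applied with $k = \R$ shows this $\nu$ is the only such $\R$-algebra involution.

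It remains to match each Wedderburn case with a kind and type. In the $\R$ and $\bbH$ cases $Z(B) = \R$ is automatically pointwise fixed by the $\R$-linear map $\nu$, so $\nu$ is of first kind. For $D = \R$, $\langle -, - \rangle$ is symmetric bilinear, whence $\nu$ is of orthogonal type by definition. For $D = \bbH$, the fixed subspace of $\nu$ is the space of quaternionic Hermitian matrices, whose real dimension is $m + 4\binom{m}{2} = 2m^2 - m$ (real diagonal plus quaternionic off-diagonal pairs); since $\Mat_m(\bbH) \otimes_\R \C \cong \Mat_{2m}(\C)$ the reduced degree of $B$ over $Z(B) = \R$ is $n = 2m$, and $n(n-1)/2 = 2m^2 - m$ is exactly the $+1$-eigenspace dimension of a symplectic-type involution, so $\nu$ is symplectic. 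For $D = \C$, right multiplication by $\lambda \in \C$ on $V$ has adjoint equal to right multiplication by $\bar \lambda$ under any $\C$-Hermitian form, so $\nu|_\C$ is complex conjugation; hence $\nu$ is of second kind with $\C^\nu = \R$. The ``only if'' directions follow by the exhaustiveness of the two trichotomies. The one nontrivial step is the dimension count verifying that the quaternionic case produces a symplectic-type involution; the remaining identifications are immediate.
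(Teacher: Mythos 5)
Your proposal is correct and follows essentially the same route as the paper: Wedderburn classification over $\R$, averaging a positive-definite inner/Hermitian form under $H$, taking the associated adjoint involution, and invoking Lemma~\ref{lemma:unique} for the ``only if'' directions. The only cosmetic difference is that you read off $\nu$ directly as the adjoint of the averaged form, whereas the paper first normalizes that form by Gram--Schmidt to make $\nu$ literally (conjugate-)transpose; you also spell out the dimension count that shows the quaternionic case is symplectic type, a step the paper leaves as ``easily checked.''
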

\begin{proof}
First, note that since the only finite-dimensional division algebras over $\R$ are $\R, \C,$ and $\bbH$, it follows that 
$B$ is isomorphic to one of $\Mat_m(\R), \Mat_m(\C), \Mat_m(\bbH)$ for some $m \in \N$. Let respectively $D$ be one of $\R, \C,$ or $\bbH$.
We can without loss of generality identify $B$ with $\Mat_m(D)$, and thus obtain a corresponding action on $D^m$.

In the cases of $D = \R, \C$, we can average the standard
inner or Hermitian product on $D^m$ under $H$ to obtain an $H$-invariant inner or Hermitian product. In the case of $\bbH$,
there is also a standard Hermitian product, namely $v \cdot w = \sum_{i=1}^m v_i \overline{w_i}$ where
$\overline{w}$ is the involution on $\bbH$ given by $\overline{a + bi + cj + dk} = a - bi - cj - dk$. In this case also,
averaging the standard Hermitian product yields an $H$-invariant Hermitian product. In each case $v \cdot v > 0$ for
nonzero $v$ and the average has the same property. By a $D$-linear change of basis via Gram-Schmidt, 
we may assume the $H$-invariant inner or Hermitian products are the standard ones. We can take $\nu(b) = b^t$ in the case of $D = \R$ 
and $\nu(b) = \overline{b}^t$ in the cases of $D = \C, \bbH$. This involution is easily
checked to satisfy the claimed properties. Uniqueness of $\nu$ by Lemma \ref{lemma:unique} allows us to conclude
the ``only if'' in each of the three statements.
\end{proof}

\begin{proof}[Proof of Proposition \ref{prop:typefromrealrep}]
Let $K' = e^{-1}(\R)$. Implicitly, when we tensor over
$K'$ or $L$, we will do it via the map $e$. If $K' \neq L$, then we have an isomorphism
$e': L \otimes_{K'} \R \to \C$ satisfying $e'(\ell \otimes \lambda) = e(\ell) \lambda$, and so
by associativity of tensor products, there are isomorphisms
$$A \otimes_{K'} \R \cong A \otimes_L (L \otimes_{K'} \R) \cong A \otimes_L \C.$$
In either case ($L = K'$ or $L \neq K'$), $A \otimes_{K'} \R$ is isomorphic to $A$ tensored 
over its center $L$ with a simple $\R$-algebra ($\R$ or $\C$) and hence is simple. 
We have $A \otimes_{K'} \R \cong \Mat_n(\R), \Mat_n(\C),$ or $\Mat_{n/2}(\bbH)$.

Let $\varphi: \Q[H] \to A$ be the projection. There is an induced homomorphism 
$\R[H] = \Q[H] \otimes_\Q \R \to A \otimes_{K'} \R$, and since
the image of $H$ generates $A$ over $\Q$ (and $K'$), this map is surjective. Let $\nu$ be the involution
on $A \otimes_{K'} \R$ as in Lemma \ref{lemma:realinvols}, which is unique by Lemma \ref{lemma:unique}.
Since $\nu(\varphi(h)) = \varphi(h)^{-1}$ and $\varphi(h) \in A \otimes 1$, the involution $\nu$
restricts to an involution on $A = A \otimes 1$ which is $K'$-linear (hence $\Q$-linear). By uniqueness
(Lemma \ref{lemma:unique}), $\nu|_A = \tau$.
By Lemma \ref{lemma:realinvols}, the $\nu$-fixed subfield of the center in all cases is $\R$ whose intersection with
$e(L)$ is $e(K')$, and agreement of $\nu$ and $\tau$ on $A$ implies $K' = K$.

The above argument implies that if $L = K$, then all embeddings of $L$ in $\C$ are real-valued;
consequently, $A \otimes_K \R$ has an involution of the first kind and orthogonal or symplectic type
and so $A \otimes_K \R$ is isomorphic to either $\Mat_n(\R)$ or $\Mat_{n/2}(\bbH)$ respectively. Conversely,
if $A \otimes_K \R$ is either $\Mat_n(\R)$ or $\Mat_{n/2}(\bbH)$, then $\nu$ fixes the center and so fixes
$L$ which implies $\tau$ is of the first kind. The remaining cases are $L \neq K$ and 
$A \otimes_K \R \cong \Mat_n(\C)$ and so they coincide.
\end{proof}

To prove Proposition \ref{prop:typefromchi}, we first establish some facts for $\iota \chi$ when $\chi$ is an irreducible $\C$-character. We recall some theorems relating
$\iota \chi$ to the existence of certain invariant bilinear forms on irreducible representations and translate them into facts about
involutions. We will then bootstrap the results to $\Q$ to prove the proposition. We recall Theorem 23.16 from \cite{JameLieb}
with some mild modification. Note that for irreducible characters (over $\C$), $\iota\chi$ takes only values in $\{-1,0,1\}$.

\begin{theorem} \label{theorem:invtform}
Suppose $V$ is an irreducible $\C[H]$-module with character $\chi$. Then $\iota\chi \neq 0$ if and only if $V$ admits
a non-zero $H$-invariant bilinear form, and in this case, the form is nondegenerate. Furthermore, the form is symmetric
if and only if $\iota\chi =1$ and skew-symmetric if and only if $\iota\chi = -1$. 
\end{theorem}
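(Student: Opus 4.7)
The plan is to translate the existence of $H$-invariant bilinear forms into a statement about $\Hom_H$ spaces and then compute dimensions via characters. An $H$-invariant bilinear form on $V$ corresponds exactly to an element of $\Hom_H(V,V^*)$; such a form is symmetric (resp.\ skew-symmetric) precisely when the corresponding $H$-map factors through the natural $H$-equivariant embeddings $\Sym^2 V \hookrightarrow V\otimes V$ (resp.\ $\Lambda^2 V \hookrightarrow V \otimes V$). Equivalently, the space of symmetric (resp.\ skew-symmetric) $H$-invariant forms is $\Hom_H(\Sym^2 V,\C)$ (resp.\ $\Hom_H(\Lambda^2 V,\C)$), so its dimension is the multiplicity of the trivial representation in $\Sym^2 V^*$ (resp.\ $\Lambda^2 V^*$).

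Using the standard formulas $\chi_{\Sym^2 V}(h) = \tfrac12(\chi(h)^2 + \chi(h^2))$ and $\chi_{\Lambda^2 V}(h) = \tfrac12(\chi(h)^2 - \chi(h^2))$, the orthogonality relations give
\begin{align*}
\dim_\C\Hom_H(\Sym^2 V,\C) &= \tfrac{1}{2|H|}\sum_{h\in H}\bigl(\chi(h)^2 + \chi(h^2)\bigr), \\
\dim_\C\Hom_H(\Lambda^2 V,\C) &= \tfrac{1}{2|H|}\sum_{h\in H}\bigl(\chi(h)^2 - \chi(h^2)\bigr).
\end{align*}
Adding these, the total dimension of $H$-invariant bilinear forms equals $\tfrac{1}{|H|}\sum_h \chi(h)^2$, which is exactly $\dim_\C\Hom_H(V,V^*) = \langle\chi,\chi^*\rangle$, as it should; subtracting them recovers $\iota\chi$. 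I would then invoke Schur's lemma: since $V$ is irreducible, $\dim_\C\Hom_H(V,V^*)\in\{0,1\}$, equal to $1$ iff $V\cong V^*$ iff $\chi$ is real-valued.

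From this, the trichotomy falls out immediately. If $V\not\cong V^*$, both dimensions above vanish, so no nonzero invariant form exists and $\iota\chi = 0$. If $V\cong V^*$, the two dimensions sum to $1$, so exactly one is $1$ and the other $0$; since each of them equals $\tfrac12(1\pm\iota\chi)$ and is a nonnegative integer, we must have $\iota\chi\in\{\pm 1\}$, with $\iota\chi=1$ corresponding to a $1$-dimensional space of symmetric forms and $\iota\chi=-1$ to a $1$-dimensional space of skew-symmetric forms. Finally, nondegeneracy is automatic: the radical of any nonzero $H$-invariant bilinear form is an $H$-invariant proper subspace of $V$, which must be zero by irreducibility. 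No step is genuinely difficult here — the only thing one has to be careful about is keeping track of whether $V\cong V^*$, which is the mechanism that forces $\iota\chi\in\{-1,0,1\}$ in the irreducible case and links the sign of $\iota\chi$ to the symmetry type of the form.
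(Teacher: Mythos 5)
Your proof is correct. The paper treats the bulk of this as known and simply cites James--Liebeck, Theorem 23.16, adding only the nondegeneracy claim, which it attributes to Proposition \ref{prop:typefrominvtform} (whose proof uses exactly your observation that the radical of a nonzero invariant form is a proper $H$-invariant subspace). Your argument is the standard Frobenius--Schur computation that the cited theorem encapsulates --- decomposing $V\otimes V$ into $\Sym^2 V\oplus\Lambda^2 V$, computing invariants by characters, and using Schur's lemma to force $\iota\chi\in\{-1,0,1\}$ --- so you are taking essentially the same route, just carried out in full rather than by reference. One small phrasing nit: a bilinear form is a map $V\otimes V\to\C$, so symmetry corresponds to vanishing on $\Lambda^2 V$ (equivalently being determined by restriction to the summand $\Sym^2 V$), not to factoring through the \emph{embedding} $\Sym^2 V\hookrightarrow V\otimes V$; your conclusion that the space of invariant symmetric forms is $\Hom_H(\Sym^2 V,\C)$ is nonetheless correct.
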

\begin{proof}
This is precisely Theorem 23.16 from \cite{JameLieb} except for our claim that the forms are nondegenerate. (In \cite{JameLieb}, it only says ``non-zero''.)
The form must be non-degenerate by Proposition \ref{prop:typefrominvtform}.
\end{proof}

By Proposition \ref{prop:typefrominvtform}, proving Proposition \ref{prop:typefromchi} requires only that we translate information about
complex traces to rational ones. Recall that $A$ is a simple factor in the decomposition of $\Q[H]$. Let $W$ be the corresponding 
irreducible $\Q$-representation. Let $\mc A = A \otimes_L \C$. We have a surjection $\C[H] \cong \Q[H] \otimes_\Q \C \to \mc A \cong \Mat_n(\C)$
and a corresponding irreducible $\C$-representation $V$. Unfortunately, it is not always the case that $V$ is $W$ tensored with $\C$,
so we must expend some effort to relate the characters.

First, we recall the definitions of various traces for algebras over fields and some well-known facts relating them.
Suppose $A$ is a finite-dimensional $\Q$-algebra with 
center $L$. Then, $A$ acts on itself by left multiplication and this gives a representation 
$\eta_\ell: A \to \End_L(A)$. Define the trace of $a \in A$ over $L$ to be
$$\Tr_{A/L}(a) = \tr(\eta_\ell(a)).$$
Furthermore, for any field $k$ contained in $L$, left multiplication by $a$ is $k$-linear 
and we can consider the trace as a $k$-linear map on the $k$-vector space $A$. Denote this new trace
$\Tr_{A/k}(a)$.

If $k$ is a subfield of $L$, then as for $A$, we can view elements in $L$ as $k$-linear maps on
$L$ by left multiplication. As before we have $\Tr_{L/k}(\alpha)$ for $\alpha \in L$. 
We recall the following elementary fact about traces \cite[Section 9]{Rein}.
In particular, for the algebra $A$, we have $\Tr_{A/k}(a) = \Tr_{L/k} (\Tr_{A/L}(a))$. 
\begin{lemma}
Suppose $V$ is an $L$-vector space and $T: V \to V$ is $L$-linear. Let $T_k$
be $T$ viewed as a $k$-linear map on the $k$-vector space $V$. Then,
$$\tr(T_k) = \Tr_{L/k}(\tr(T).)$$
\end{lemma}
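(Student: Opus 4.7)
The plan is to prove this by a direct basis computation using the double basis trick. The result is a standard tower formula for traces, and the essential idea is that a $k$-linear map on an $L$-vector space that happens to be $L$-linear has a very structured matrix when written in a compatible $k$-basis.

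First I would choose an $L$-basis $v_1, \dots, v_n$ of $V$ and a $k$-basis $e_1, \dots, e_d$ of $L$ over $k$, so that $\{e_\ell v_i\}_{1 \le \ell \le d,\, 1 \le i \le n}$ is a $k$-basis of $V$. Write the matrix of $T$ in the $L$-basis as $(a_{ji})$, so that $T(v_i) = \sum_j a_{ji} v_j$; by definition $\tr(T) = \sum_i a_{ii} \in L$.

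Next I would compute the matrix of $T_k$ in the $k$-basis $\{e_\ell v_i\}$. Using $L$-linearity of $T$, we have $T(e_\ell v_i) = e_\ell T(v_i) = \sum_j (e_\ell a_{ji}) v_j$, and then expanding $e_\ell a_{ji}$ in the basis $\{e_m\}$ of $L/k$ gives precisely the matrix of left multiplication by $a_{ji}$ on $L$ (viewed as a $k$-linear operator). Thus the matrix of $T_k$ is block-structured with blocks indexed by $(j,i)$, the $(j,i)$-block being the matrix of left multiplication $\mu_{a_{ji}} \in \End_k(L)$.

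Finally, the trace is the sum of the diagonal blocks' traces: $\tr(T_k) = \sum_i \tr(\mu_{a_{ii}}) = \sum_i \Tr_{L/k}(a_{ii}) = \Tr_{L/k}\!\bigl(\sum_i a_{ii}\bigr) = \Tr_{L/k}(\tr(T))$, where in the last step I use $k$-linearity of $\Tr_{L/k}$. There is no real obstacle here; the only thing to be careful about is keeping the left/right multiplication conventions consistent so that the block decomposition has $\mu_{a_{ji}}$ on block $(j,i)$, and in particular $\mu_{a_{ii}}$ on the diagonal blocks.
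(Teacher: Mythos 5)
Your proof is correct. The paper does not actually prove this lemma --- it cites it as an ``elementary fact about traces'' with a reference to Reiner's \emph{Maximal Orders} --- so there is no proof in the paper to compare against. Your argument (pick an $L$-basis $\{v_i\}$ of $V$ and a $k$-basis $\{e_\ell\}$ of $L$, observe that the matrix of $T_k$ in the product basis is block-structured with $(j,i)$-block equal to the regular representation $\mu_{a_{ji}}$ of $L/k$, then sum the traces of the diagonal blocks) is precisely the standard ``tower of traces'' computation that the cited reference would give, and every step checks out: the block structure follows from $T(e_\ell v_i) = \sum_j (e_\ell a_{ji}) v_j$ together with the expansion of $e_\ell a_{ji}$ in the $k$-basis of $L$, and the final equality uses $k$-linearity of $\Tr_{L/k}$. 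In short, you have supplied a clean proof of a fact the paper chose to leave to the reader.
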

We relate the trace $\Tr_{A/L}$ and the corresponding character for irreducible
representations.

\begin{lemma} \label{lemma:tracetochar}
Suppose $V$ is $k$-vector space and $H \to \GL(V)$ is an irreducible $k$-representation and suppose $A$ is the corresponding simple $k$-algebra
in the decomposition of $k[H]$. Let $\chi$ be the character of $V$. Then, for $a \in A$,
$$ m \chi(a) = \Tr_{A/k}(a)$$
where $A \cong V^m$ as $k[H]$-modules.
\end{lemma}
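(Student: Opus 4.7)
The plan is to identify $A$ with $V^m$ as a left $A$-module and then compute the trace of left multiplication by $a$ using this decomposition.

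First I would recall that in the Wedderburn decomposition $k[H] \cong \prod_j A_j$, each simple factor $A_j$ is isomorphic to $\Mat_{m_j}(D_j)$ for some division $k$-algebra $D_j$, and the unique simple left $A_j$-module (up to isomorphism) is $D_j^{m_j}$, which, viewed as a $k[H]$-module via the projection $k[H] \to A_j$, is precisely the irreducible $k$-representation corresponding to $A_j$. For the specific simple factor $A$ in the statement, this simple module is $V$, and as a left $A$-module one has $A \cong V^m$ (the multiplicity $m$ is forced: it is the integer satisfying $A \cong \Mat_m(D)$ with $V \cong D^m$). This is already what the statement asserts.

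Next I would observe that the $k[H]$-module structure on $A$ used to make the identification $A \cong V^m$ is the one coming from left multiplication, and this coincides with the left $A$-module structure coming from left multiplication, since $k[H]$ acts on $A$ by first projecting onto $A$ and then multiplying. Consequently, under the isomorphism $A \cong V^m$ of $k[H]$-modules, the $k$-linear endomorphism $\eta_\ell(a) : A \to A$ of left multiplication by $a \in A$ corresponds to the diagonal action of $a$ on $V^m$, i.e.\ to $m$ copies of the endomorphism $a|_V$.

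Finally, taking $k$-linear traces and using additivity of trace under direct sums gives
\[
\Tr_{A/k}(a) \;=\; \tr\bigl(\eta_\ell(a)\bigr) \;=\; \tr\bigl(a|_{V^m}\bigr) \;=\; m\,\tr\bigl(a|_V\bigr) \;=\; m\,\chi(a),
\]
which is the claimed identity. I do not anticipate any substantive obstacle here: the only point that could use a line of justification is the compatibility of the $k[H]$- and $A$-module structures on $A$, and this is immediate from the Wedderburn decomposition.
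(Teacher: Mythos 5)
Your argument is correct and is essentially the paper's own proof: the paper likewise observes that the left regular action of $A$ on itself block-diagonalizes as $m$ copies of its action on $V$, and then takes traces. You have merely filled in the (routine) justification via the Wedderburn decomposition.
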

\begin{proof}
By definition of character, $\chi(a)$ is the trace of $a$ via the action of $k[H]$, and hence $A$, on $V$.
The action of $A$ on itself block-diagonalizes as $m$ copies of its action on $V$ and the result easily follows.
\end{proof}

Recall the reduced trace $\trd_{A/L}$ for a central simple $L$-algebra $A$ from Section \ref{subsection:gofinindex}.
In addition to the properties discussed in that section, we have that $\Tr_{A/L} = n \trd_{A/L}$ where $n^2 = \dim_L(A)$.
(See \cite[Section 9]{Rein}.)
Now, we have the tools to prove Proposition \ref{prop:typefromchi}.
\begin{proof}[Proof of Proposition \ref{prop:typefromchi}]
As above, set $\mc A = A \otimes_L \C$ which has an isomorphism $\varphi: \mc A \to \Mat_n(\C)$, and let $V$
be the corresponding irreducible $\C$-representation of $H$. Let $\psi$ be
the corresponding character. Proposition \ref{prop:typefrominvtform} and Theorem \ref{theorem:invtform} 
imply that it is sufficient to show that
$\iota \psi$ and $\iota\chi$ are positive multiples of each other.

Now suppose $\eta_A: H \to A^\times$ is the representation from $H$ to the invertible elements
of $A$ and $\eta_{\mc A}: H \to \mc A^\times$ is the composition of $\eta_A$ with the canonical $A \to A \otimes_L \C= \mc A$.
Note that for all $h \in H$, by definition, $\psi(h)$ is the trace of $\eta_{\mc A}(h)$ which is the reduced
trace (over $L$) of $\eta_A(h)$. Thus,
$$|H| \iota \psi = \sum_{h \in H} \psi(h^2) = \sum_{h \in H} \trd_{A/L}(\eta_A(h^2)).$$
By the properties of reduced trace,
$$\sum_{h \in H} \trd_{A/L}(\eta_A(h^2)) = \frac{1}{n} \sum_{h \in H} \Tr_{A/L}(\eta_A(h^2)). $$
Now, suppose $m$ is such that $A \cong V^m$ where $V$ is the irreducible
representation corresponding to $A$. Since $\iota \psi \in \{-1, 0, 1\}$, we further have that if $d = \dim_\Q(L)$, 
$$|H| \iota \psi =\frac{|H|}{d}\Tr_{L/\Q}(\iota \psi) =  \frac{1}{nd} \Tr_{L/\Q} (\sum_{h \in H} \Tr_{A/L}(\eta_A(h^2))) $$
$$ =  \frac{1}{nd} \sum_{h \in H} \Tr_{A/\Q}(\eta_A(h^2))
		=  \frac{m}{nd} \sum_{h \in H} \chi(h^2) = \frac{|H|m}{nd} \iota \chi.$$
where the fourth equality follows from Lemma \ref{lemma:tracetochar}.
\end{proof}

We now prove Theorem \ref{theorem:Gtype}. As we have been doing, we drop the subscript $i$ in our proof.
\begin{proof}[Proof of Theorem \ref{theorem:Gtype}]
Recall that $\mc G = \Aut_A(A^{2g-2}, \langle-,-\rangle)$, and that $\sigma$ is the adjoint involution defined on $B=\Mat_{2g-2}(A^{op})$
and associated to $\langle-,-\rangle$.
Extend $\sigma$ (uniquely) to a $\C$-linear involution $\tilde{\sigma}$ on $\displaystyle B \underset{K}{\otimes} \C$
and let $$G = \{ C \in B \underset{K}{\otimes} \C \; | \;  \tilde \sigma(C) C = I \}.$$
Then, $G$ is a $K$-defined algebraic group whose $K$-points are $\mc G$.
Let $n^2 = \dim_L(A)$.

First, suppose that $\tau$ is of the first kind.
By Lemma \ref{lemma:changeinv}, there is an isomorphism $\varphi: B \otimes_K \C \to \Mat_{(2g-2)n}(\C)$
such that $\nu = \varphi \tilde \sigma \varphi^{-1}$ is either involution (1) or (2).
The involution $\nu$ has the same type as $\tilde \sigma$ and $\sigma$,
which, by Lemma \ref{lemma:oppositetype}, have the opposite type of $\tau$. Thus $\varphi$ maps $G$ isomorphically onto 
$\Sp_{(2g-2)n}(\C)$ if $\tau$ is of orthogonal type and onto $\OO_{(2g-2)n}(\C)$ otherwise.

If $\tau$ is of the second kind, then there is an isomorphism $\varphi: B \otimes_K \C \to \Mat_{(2g-2)n}(\C) \times \Mat_{(2g-2)n}(\C)$
such that $\nu = \varphi \tilde \sigma \varphi^{-1}$ is the involution (3). Arguments similar to the above show $G \cong \GL_{(2g-2)n}(\C)$.
The remaining equivalences follow from Propositions \ref{prop:typefrominvtform} and \ref{prop:typefromrealrep}.
\end{proof}

\section{Submodules of $M_i$}
\label{section:isotropic}
%!TEX root = ./MCGReps.tex
In this section we analyze further the structure of $\hat{R}$, when $p$ is $\phi$-redundant. In this case,
we show that $\hat R$ decomposes into two isomorphic totally isotropic submodules. Later, in Section \ref{section:liftrep},
we will see that the image of the handlebody group under $\rho$ preserves one of the submodules and thus
has a block uppertriangular form.
Moreover,  we will show that
when $p$ is $\phi$-redundant, there is an explicit rank two $\Q[H]$-submodule of $\hat{R}$ which allows us (in Section 
\ref{section:liftrep}) to produce 
two tightly controlled unipotent elements in the image of $\rho_{H,p}$.

Recall that if $\phi: T_g \to F_g$ is a surjective homomorphism, we say that an epimorphism $p: T_g \to H$
is $\phi$-redundant if $p$ factors through an epimorphism $p' : F_g \to H$ which is redundant (i.e. the kernel
contains a free generator). As mentioned in the introduction,
every epimorphism $\phi: T_g \to F_g$ arises as follows \cite[Theorem 5.2]{Jaco}.
Let $\mc H = \mc H_g$ be a genus $g$ handlebody and pick some identification of $\partial \mc H$ with
$\Sigma_g$. The inclusion $\Sigma_g \hookrightarrow \mc H$ induces a map on the fundamental
groups which is surjective. Since the fundamental group of $\mc H$ is a free group, this induces a 
surjective map $\phi: T_g \to F_g = \pi_1(\mc H)$ where $F_g$ is the free group of rank $g$. Henceforth, we will simply
refer to $\pi_1(\mc H)$ as $F_g$.

Now fix some $\phi: T_g \to F_g$ arising from an identification $\Sigma \cong \partial \mc H$, fix some $\phi$-redundant $p$, and set $S = \ker{p'}$ 
for the corresponding $p': F_g \to H$. Let $\tilde \Sigma \to \Sigma$ be the cover corresponding to $p$.

\subsection{The handlebody group and $\Aut(F_g)$}
In this section, we recall a basic fact about handlebody groups. Let $\ast$ be some point on the boundary of $\mc H$. The
handlebody group with fixed point, $\Map(\mc H, \ast)$, is the subgroup of those mapping classes in $\Mod(\Sigma, \ast)$ which
contain a representative homeomorphism extending to a homeomorphism of $\mc H$.
Recall that the handlebody group (without fixed point) $\Map(\mc H)$ is the similarly defined subgroup of $\Mod(\Sigma)$.

\begin{theorem}{\cite{Zies, Grif, McMi} } \label{theorem:handlebodytoautFg}
The natural homomorphisms $\Map(\mc H, \ast) \to \Aut(F_g)$ and $\Map(\mc H) \to \Out(F_g)$ are surjective.
\end{theorem}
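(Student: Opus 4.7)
The plan is to realize every automorphism of $F_g$ by an explicit homeomorphism of the handlebody $\mc H = \mc H_g$ that fixes the basepoint $\ast \in \partial \mc H$. I would fix a standard handle decomposition: a $3$-ball $B$ with $g$ mutually disjoint $1$-handles attached, place $\ast$ on $\partial B$, and choose generators $x_1, \dots, x_g$ of $\pi_1(\mc H, \ast) \cong F_g$ where $x_i$ is represented by a loop passing once through the $i$-th $1$-handle. The map $\Map(\mc H, \ast) \to \Aut(F_g)$ comes from the action on $\pi_1(\mc H,\ast)$, so surjectivity amounts to lifting each element of a generating set of $\Aut(F_g)$.

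Next, I would invoke Nielsen's classical generation of $\Aut(F_g)$ by (i) the transpositions $x_i \leftrightarrow x_j$, (ii) the inversion $x_1 \mapsto x_1^{-1}$ (fixing other generators), and (iii) the transvection $x_1 \mapsto x_1 x_2$ (fixing other generators). Each of these is induced by a geometric move on the handlebody that fixes $B$ pointwise outside a small neighborhood of the relevant handle(s) and, in particular, fixes $\ast$:
\begin{itemize}
\item[(i)] An ambient isotopy of $\mc H$ swapping the $i$-th and $j$-th $1$-handles.
\item[(ii)] A $\pi$-rotation of a single handle that reverses the direction of the core curve, extended by the identity on the complement.
\item[(iii)] A \emph{handle slide}: an isotopy of the attaching disk of the first handle across the second handle, which on the level of $\pi_1(\mc H,\ast)$ sends $x_1$ to $x_1 x_2$ and fixes the remaining generators.
\end{itemize}
Since each Nielsen generator lifts to $\Map(\mc H, \ast)$, surjectivity of $\Map(\mc H, \ast) \to \Aut(F_g)$ follows.

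For the outer version, I would use the commuting diagram in which the forgetful map $\Map(\mc H, \ast) \to \Map(\mc H)$ covers the quotient $\Aut(F_g) \to \Out(F_g)$: given $\bar\phi \in \Out(F_g)$, pick any lift $\phi \in \Aut(F_g)$, realize $\phi$ by some $h \in \Map(\mc H, \ast)$ by the first part, and observe that the class of $h$ in $\Map(\mc H)$ maps to $\bar\phi$. The main obstacle is the transvection case (iii): one must verify carefully that the chosen handle slide has exactly the advertised effect on $\pi_1$ and can be arranged to fix $\ast$; the other two generators are essentially formal, while the handle slide requires choosing the sliding arc and its framing coherently with the chosen generators $x_1, x_2$. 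Once this is set up, standard $3$-manifold topology yields the desired Nielsen transvection.
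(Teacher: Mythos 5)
The paper does not prove this theorem; it cites it to Zieschang, Griffiths, and McMillan and moves on. Your argument is the classical one that those references use: generate $\Aut(F_g)$ by Nielsen moves (permutations, an inversion, a transvection) and realize each generator by an explicit homeomorphism of the handlebody — handle permutations, a $\pi$-rotation of one handle, and a handle slide, each supported away from $\ast$ — and then deduce the $\Out(F_g)$ version by pushing forward along the forgetful maps. This is correct; the only step that requires genuine care, as you note, is pinning down the sign/direction convention in the handle slide so that it produces $x_1 \mapsto x_1 x_2$ rather than some variant such as $x_1 \mapsto x_2^{\pm 1} x_1$, but since the group generated is the same either way, that is a matter of bookkeeping rather than of substance. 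So your proposal is a correct self-contained proof of the cited result, matching the approach of the sources the paper relies on.
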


\subsection{Free submodule of rank two}
We start by finding a special torus with one boundary component embedded in $\Sigma$.
Let $\ast \in \Sigma = \partial \mc H$ be a basepoint for the 
fundamental groups of both $\mc H$ and $\Sigma$.
\begin{lemma} \label{lemma:specialtorus}
Let $p:T_g \to H$ be $\phi$-redundant and $p': F_g \to H$ the induced map. Then, $\Sigma$ contains a subsurface $\Sigma'$ homeomorphic
to a torus with one boundary component such that
the image of $\pi_1(\Sigma', \ast)$ in $\pi_1(\Sigma, \ast) = T_g$ lies in the kernel of $p$. Moreover, there are simple closed curves $a, b$ in 
$\Sigma'$ such that
\begin{itemize}
\item $a$ and $b$ intersect once, and $\langle a, b \rangle_{\Sp} = 1$
\item the homology classes of $a, b$ generate $\HH_1(\Sigma', \Q)$
\item $b$ bounds a disc in $\mc H$
\item $a, b$ avoid $\ast$.
\end{itemize}
\end{lemma}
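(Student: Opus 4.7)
The plan is to transport a standard once-punctured torus on $\Sigma = \partial \mc H$ by a mapping class of the handlebody that realizes the basis change witnessing $\phi$-redundancy. First, I would fix a standard geometric symplectic basis of simple closed curves $\alpha_1,\beta_1,\dots,\alpha_g,\beta_g$ on $\Sigma$, all based at $\ast$ and disjoint away from $\ast$, with $|\alpha_i \cap \beta_j| = \delta_{ij}$, each $\beta_i$ bounding a pairwise disjoint meridian disk $D_i \subset \mc H$, and a chosen free basis $x_1,\dots,x_g$ of $F_g = \pi_1(\mc H,\ast)$ such that $\phi(\beta_i) = 1$ and $\phi(\alpha_i) = x_i$. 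A regular neighborhood $\Sigma_0' \subset \Sigma$ of the wedge $\alpha_1 \cup \beta_1$ is then a once-punctured torus containing $\ast$.

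Since $p$ is $\phi$-redundant, $p = p' \circ \phi$ with $p'$ redundant, so there is a free basis $y_1,\dots,y_g$ of $F_g$ with $y_1 \in \ker p'$. Let $\psi \in \Aut(F_g)$ be the automorphism $x_i \mapsto y_i$. By Theorem \ref{theorem:handlebodytoautFg}, there is a homeomorphism $f \in \Map(\mc H,\ast)$ inducing $\psi$ on $\pi_1(\mc H, \ast)$. Setting $\alpha_1' := f(\alpha_1)$, $\beta_1' := f(\beta_1)$, and $\Sigma' := f(\Sigma_0')$, the curve $\beta_1'$ bounds the meridian disk $f(D_1) \subset \mc H$, and $\Sigma'$ is a once-punctured torus on $\Sigma$ containing $\ast$. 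Naturality of $\phi$ under $f|_\Sigma$ gives $\phi(\alpha_1') = \psi(x_1) = y_1$ and $\phi(\beta_1') = \psi(1) = 1$, so $p(\alpha_1') = p'(y_1) = 1$ and $p(\beta_1') = 1$. Because $\pi_1(\Sigma',\ast)$ is free on the based loops $\alpha_1'$ and $\beta_1'$, its image in $T_g$ lies in $\ker p$.

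Finally, I would perturb $\alpha_1'$ and $\beta_1'$ inside $\Sigma'$ by a small isotopy supported in a neighborhood of $\ast$ to produce simple closed curves $a,b \subset \Sigma' \setminus \{\ast\}$ that meet transversely in exactly one point. As $a,b$ are freely homotopic in $\Sigma'$ to $\alpha_1',\beta_1'$, their classes generate $H_1(\Sigma',\Q)$, and after a choice of orientation one has $\langle a,b\rangle_{\Sp} = 1$; moreover $b$ is isotopic in $\Sigma \subset \mc H$ to $\beta_1'$ and therefore still bounds a disk in $\mc H$.

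The only substantive input is Theorem \ref{theorem:handlebodytoautFg}, which lets us realize the Nielsen change of basis by an ambient homeomorphism of $\mc H$ fixing $\ast$; the remainder is a routine verification of naturality of $\phi$ under $f$ and a standard trick of pushing transverse curves off the basepoint.
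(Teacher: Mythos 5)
Your proposal is correct and takes essentially the same route as the paper's proof: both set up a standard geometric once-punctured torus on $\Sigma = \partial\mc H$ together with a compatible free basis of $\pi_1(\mc H)$ (with the ``$\beta$'' curve bounding a meridian disk), and both invoke Theorem \ref{theorem:handlebodytoautFg} to realize, by a basepoint-fixing homeomorphism of the handlebody, the automorphism of $F_g$ that carries the chosen basis to one having a generator in $\ker p'$. Transporting the standard torus by that homeomorphism and pushing the curves off $\ast$ at the end is exactly what the paper does; the only cosmetic difference is that the paper words the application of Theorem \ref{theorem:handlebodytoautFg} as ``send the geometric basis element $\alpha_1'$ to the redundant generator $\alpha$,'' while you word it as ``lift the Nielsen change of basis $x_i \mapsto y_i$.''
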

\begin{proof}
Let $\alpha$ be a free basis element of $F_g$ lying in the kernel of $p'$. This exists by the assumption that $p$ is $\phi$-redundant. 
We first show that the homotopy class $\alpha$ contains some simple closed curve $a$ supported on $\Sigma$.
Moreover, we show that there is an oriented simple closed curve $b$ on $\Sigma$ passing through $\ast$ such that $b$ intersects $a$ transversely at one point
and the based homotopy class of $b$ lies in the kernel of $\phi$. (Note that since we define $\Mod(\Sigma)$ as $\Homeo^+(\Sigma)/\Homeo_0(\Sigma)$, we
are implicitly working in the category of topological spaces and so ``transverse intersection'' strictly speaking has no meaning. However, $\Mod(\Sigma)$ is
also $\Diff^+(\Sigma)/\Diff_0(\Sigma)$, and we can just as well work in the smooth category.)
To see all this, note that by assumption there is some free basis $\alpha = \alpha_1, \dots, \alpha_g$ of $F_g$.
Recall that $\phi: T_g \to F_g$ is the natural map on fundamental groups. By explicit construction, one can find a (potentially different)
free basis $\alpha_1', \dots, \alpha_g'$ of $F_g$ such that each homotopy class $\alpha_i'$ contains a simple closed curve $a_i'$ on $\Sigma$;
moreover, we can ensure there is some oriented simple closed curve $b'$ passing through $\ast$ such that $a_1'$ and $b$ intersect once transversely and $b'$ bounds a disc
in $\mc H$ (so $b'$ lies in the kernel of $\phi$). See Figure \ref{figure:fgbasis}.

\begin{figure}[htbp] \centering
	\includegraphics[width=.8\textwidth]{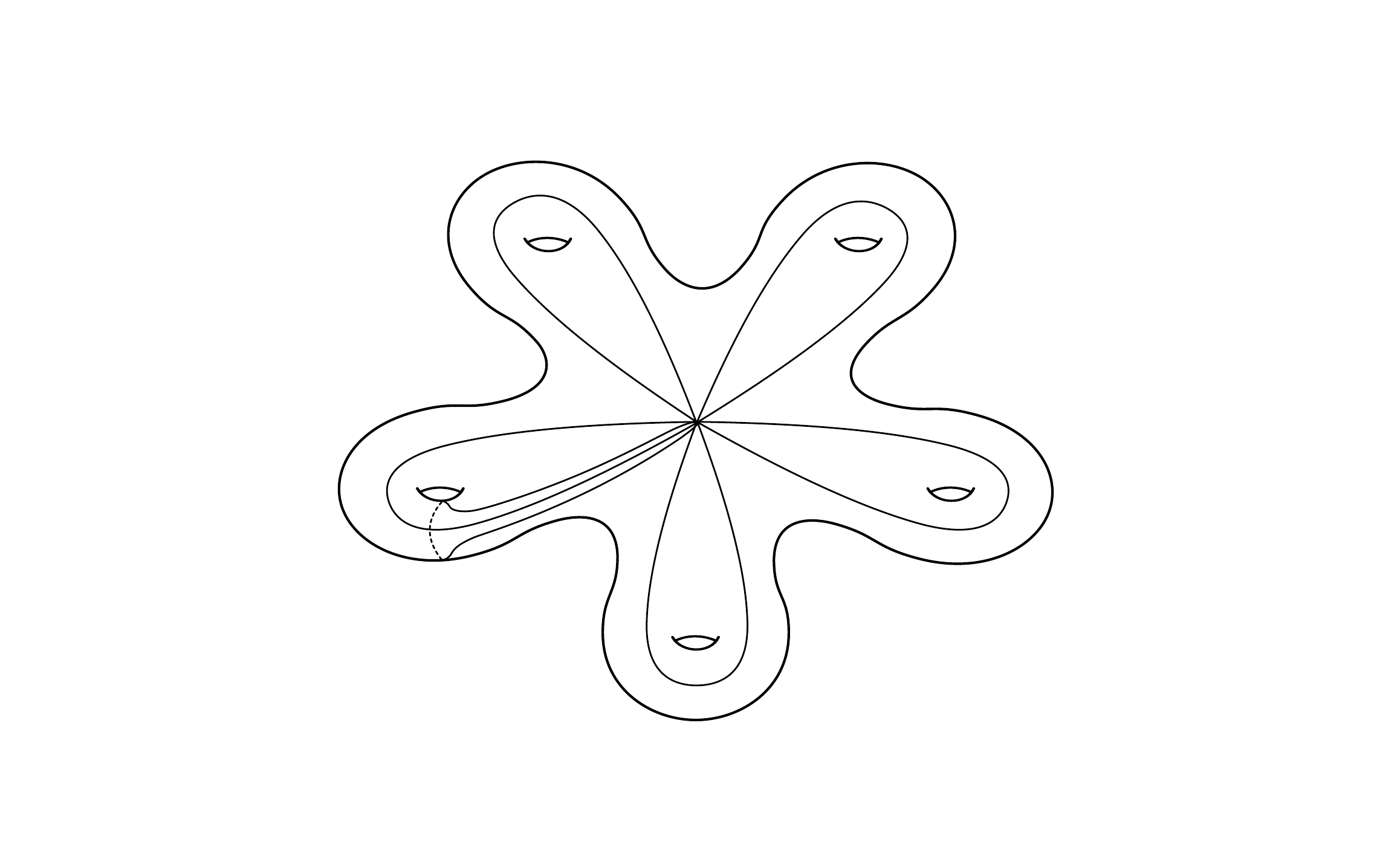}
	\caption{The curves $a_1', \dots, a_g'$ and $b'$ in the proof of Lemma \ref{lemma:specialtorus} for $g = 5$.}
	\label{figure:fgbasis}
\end{figure}

Since $\Map(\mc H, \ast) \to \Aut(F_g)$ is surjective (Theorem \ref{theorem:handlebodytoautFg}),
there is some homeomorphism $\varphi$ of the handlebody fixing $\ast$ and mapping $\alpha_1'$ to $\alpha$. Let $a = \varphi(a')$ and
$b = \varphi(b')$. Note that $b$ bounds a disc in $\mc H$ as well.

Since $a$ and $b$ intersect once transversely, a regular neighborhood $\Sigma'$
of $a$ and $b$ in $\Sigma$ is a torus with one boundary component and with fundamental group generated by the homotopy classes of $a$ and $b$.
Since (the homotopy classes of) $a$ and $b$ both lie in the kernel of $p$, it follows that (the image of) $\pi_1(\Sigma', \ast)$ lies in the kernel.

As it stands, the curves $a, b$ satisfy the first three properties required by the lemma
but $a$ and $b$ do not avoid $\ast$. This is easily fixed by an isotopy of the curves.
\end{proof}

Let $\Sigma', a, b$ be as in the lemma. Then, the preimage of $\Sigma'$ under the cover
$\tilde \Sigma \to \Sigma$ consists of $|H|$ disjoint homeomorphic copies of $\Sigma'$. Label these surfaces as $\tilde \Sigma_h'$ as
$h$ ranges over $H$ so that $d_h(\tilde \Sigma_{1_H}') = \tilde \Sigma_h'$ where $d_h$ is the deck transformation induced by $h \in H$.

Similar to $\Sigma'$, the preimages of $a$ and $b$ each consist of $|H|$ simple closed curves which project homeomorphically
to $a$ and $b$ via the covering map. Call the preimages $\tilde a_h$ and $\tilde b_h$ as $h$ ranges over $H$ so 
that $a_h, b_h$ lie in $\tilde \Sigma_h'$. Then, $a_h = d_h(a_{1_H})$ and $b_h = d_h(b_{1_H})$.
For a simple closed curve $c$, we denote its homology class by $[c]$.

\begin{lemma} \label{lemma:rank2submodule}
Let notation be as above. Then $[\tilde a_{1_H}]$ (similarly $[\tilde b_{1_H}]$)
generates a submodule of $\HH_1(\tilde \Sigma, \Q)$ isomorphic to $\Q[H]$. Together, $[\tilde a_{1_H}]$ and $[\tilde b_{1_H}]$ generate a submodule
isomorphic to $\Q[H]^2$. Moreover, $\langle [\tilde a_{1_H}], [\tilde b_{1_H}] \rangle = 1$.
\end{lemma}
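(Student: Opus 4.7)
The plan is to exploit the fact that the $|H|$ preimage components $\tilde\Sigma'_h$ are pairwise disjoint, so that the intersection numbers between the lifted curves are essentially forced by the Kronecker delta on $H$. Concretely, I would first record the equivariance
$$h\cdot [\tilde a_{1_H}]=[d_h(\tilde a_{1_H})]=[\tilde a_h], \qquad h\cdot [\tilde b_{1_H}]=[\tilde b_h],$$
which follows from the definition of the $\Q[H]$-module structure on $\HH_1(\tilde\Sigma,\Q)$ via deck transformations. This reduces the first assertion to proving that the $2|H|$ classes $\{[\tilde a_h],[\tilde b_h]\}_{h\in H}$ are $\Q$-linearly independent.

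Next I would compute the symplectic intersection pairing $\langle-,-\rangle_{\Sp}$ on $\HH_1(\tilde\Sigma,\Q)$ among these lifts. Because the covering map restricts to an orientation-preserving homeomorphism $\tilde\Sigma'_h\to\Sigma'$, and the restrictions $\tilde a_h,\tilde b_h\subset\tilde\Sigma'_h$ project to $a,b$, we have $\langle[\tilde a_h],[\tilde b_h]\rangle_{\Sp}=\langle a,b\rangle_{\Sp}=1$ for each $h$. For $h\neq k$, the curves $\tilde a_h$ and $\tilde b_k$ lie in the disjoint surfaces $\tilde\Sigma'_h,\tilde\Sigma'_k$ and so $\langle[\tilde a_h],[\tilde b_k]\rangle_{\Sp}=0$; likewise $\langle[\tilde a_h],[\tilde a_k]\rangle_{\Sp}=0=\langle[\tilde b_h],[\tilde b_k]\rangle_{\Sp}$ (for $h=k$ by skew-symmetry of a simple closed curve's self-intersection, for $h\neq k$ by disjointness). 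Thus the pairing matrix of these $2|H|$ classes is block anti-diagonal with identity blocks, hence nonsingular. This forces $\Q$-linear independence, which gives the $\Q[H]^2$-submodule claim.

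Finally, plugging into the defining formula \eqref{eqn:sesqform} for the $\Q[H]$-valued sesquilinear form,
$$\langle[\tilde a_{1_H}],[\tilde b_{1_H}]\rangle=\sum_{h\in H}\langle[\tilde a_{1_H}],h\cdot[\tilde b_{1_H}]\rangle_{\Sp}\,h=\sum_{h\in H}\langle[\tilde a_{1_H}],[\tilde b_h]\rangle_{\Sp}\,h,$$
and by the computation above only the $h=1_H$ term survives, with coefficient $1$. So $\langle[\tilde a_{1_H}],[\tilde b_{1_H}]\rangle=1$ in $\Q[H]$.

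The main obstacle I anticipate is purely bookkeeping — making certain the indexing convention $d_h(\tilde\Sigma'_{1_H})=\tilde\Sigma'_h$ is consistent with the stated $\Q[H]$-action and with the sign conventions in \eqref{eqn:sesqform}, and verifying that the covering map preserves orientation so that the intersection number of the lifts really does equal $\langle a,b\rangle_{\Sp}=1$ rather than $-1$. Once these conventions are pinned down, the argument is short and essentially formal.
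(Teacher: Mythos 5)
Your proof is correct. The one genuinely different step is how you establish that the $2|H|$ lifted classes $\{[\tilde a_h],[\tilde b_h]\}_{h\in H}$ are linearly independent in $\HH_1(\tilde\Sigma,\Q)$: you compute the Gram matrix of the symplectic intersection form on these classes (block anti-diagonal with unit blocks, hence nonsingular), and nonsingularity of a Gram matrix forces linear independence. The paper instead invokes Mayer--Vietoris for the decomposition $\tilde\Sigma = \tilde\Sigma' \cup (\tilde\Sigma\setminus\mathrm{int}\,\tilde\Sigma')$ to conclude that $\HH_1(\tilde\Sigma',\Q)=\bigoplus_h\HH_1(\tilde\Sigma'_h,\Q)$ injects as a $\Q[H]$-submodule (indeed a direct summand) of $\HH_1(\tilde\Sigma,\Q)$, then does the same dimension count. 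Both routes are short; yours has the advantage of being self-contained and of doing double duty, since the same pairing computation is what you need anyway for the final assertion $\langle[\tilde a_{1_H}],[\tilde b_{1_H}]\rangle=1$, whereas the Mayer--Vietoris route additionally gives direct-summandness of $\HH_1(\tilde\Sigma',\Q)$ as a $\Q[H]$-module for free (not needed for this lemma, but pleasant to have). Your worry about orientation signs is correctly resolved by the observation that the restriction of the covering to each $\tilde\Sigma'_h$ is an orientation-preserving homeomorphism onto $\Sigma'$, so $\langle[\tilde a_h],[\tilde b_h]\rangle_{\Sp}=\langle a,b\rangle_{\Sp}=1$ exactly.
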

\begin{proof}
Consider the disjoint union $\tilde  \Sigma' = \cup_{h \in H} \Sigma_h'$.
From the Mayer-Vietoris sequence, one can deduce that $\HH_1(\tilde \Sigma', \Q)$
is a direct summand as a $\Q$-vector space of $\HH_1(\tilde \Sigma, \Q)$.
Since the action of $H$ preserves $\tilde \Sigma'$ and its complement,
$\HH_1(\tilde \Sigma', \Q)$ is a summand as a $\Q[H]$-module.
As $\tilde \Sigma'$ is the disjoint union of the $\tilde \Sigma_h'$, it follows that
$\HH_1(\tilde \Sigma', \Q) \cong \bigoplus_{h \in H} \HH_1(\tilde \Sigma_h', \Q)$.
Each $ \HH_1(\tilde \Sigma_h', \Q)$ is $2$-dimensional so $\dim_{\Q} \HH_1(\tilde \Sigma', \Q) = 2|H|$.

Now, we know that for each $h \in H$, the classes $[\tilde a_h]$ and $[\tilde b_h]$ generate $ \HH_1(\tilde \Sigma_h', \Q)$, and so
$[\tilde a_{1_H}], [\tilde b_{1_H}]$ generate $\HH_1(\tilde \Sigma', \Q) $ as a $\Q[H]$-module. Because of dimension, we must have
$[\tilde a_{1_H}], [\tilde b_{1_H}]$ freely generate, and  $\HH_1(\tilde \Sigma', \Q) \cong \Q[H]^2$. It follows that each of $[\tilde a_{1_H}]$ and $[\tilde b_{1_H}]$
generate a module isomorphic to $\Q[H]$.
For the last claim, recall that by definition, 
$$\langle [\tilde a_{1_H}], [\tilde b_{1_H}] \rangle = \sum_{h \in H} \langle [\tilde a_{1_H}], h \cdot [\tilde b_{1_H}] \rangle_{\Sp} h$$
Since $h \cdot [\tilde b_{1_H}] = [\tilde b_h]$ is disjoint from $\tilde a_{1_H}$, the only term which survives is that for $h = 1_H$.
Since $\langle [\tilde a_{1_H}],   [\tilde b_{1_H}] \rangle_{\Sp} = \langle [a],   [b] \rangle_{\Sp} = 1$,
the claim follows.
\end{proof}

\subsection{Totally isotropic subspaces}

The inclusion $\Sigma_g \hookrightarrow \mc H$ induces a surjective map $\HH_1(\Sigma_g, \Q) \to \HH_1(\mc H, \Q)$.
It follows that the action of a homeomorphism of the handlebody preserves the kernel.
It is easy to see that the kernel of this map is {\it totally isotropic} relative to the alternating intersection form $\langle-,-\rangle_{\Sp}$.
I.e. for any two elements $\beta, \beta'$ in the kernel, $\langle \beta, \beta' \rangle_{\Sp} = 0$.  Moreover, this subspace has a complementary
totally isotropic subspace which projects onto $\HH_1(\mc H, \Q)$. We want to show similarly that $\hat{R} = \HH_1(\tilde \Sigma, \Q)$ has 
an isotropic submodule and decomposition.

Now, let $\hat{S} = (S/[S, S]) \otimes_{\Z} \Q$ which is naturally identified with $\HH_1(\tilde {\mc H}, \Q)$, the
rational first homology of the cover $\tilde{\mc H} \to \mc H$ corresponding to the inclusion $S \to F_g$. Let
$\hat{P}$ be the kernel of the map $\hat{\phi}_R :\hat{R} \to \hat{S}$ induced by $\phi$.
Note that this kernel is equivalent to the kernel of the map $\HH_1(\tilde \Sigma, \Q) \to \HH_1(\tilde{\mc H}, \Q)$.
\begin{lemma} \label{lemma:H1decomp}
The subspace $\hat P$ is a $\Q[H]$-submodule which is totally isotropic relative to the sesquilinear form $\langle-,-\rangle$.
Moreover, as $\Q[H]$-modules, $\hat{R} \cong \hat{P} \oplus \hat{S}$ and 
both $\hat{P}$ and $\hat{S}$ are isomorphic to $\Q[H]^{g-1} \oplus \Q$.
\end{lemma}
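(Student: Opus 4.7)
The plan is to verify each of the four claims in turn, leveraging the topological interpretation of $\hat{R}$ and $\hat{S}$ as rational first homologies of the covers $\tilde\Sigma \to \Sigma$ and $\tilde{\mc H} \to \mc H$ corresponding to $R$ and $S$, respectively, and then using semisimplicity together with the Gasch\"utz-type theorems already established.

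First I would check that $\hat\phi_R : \hat R \to \hat S$ is $\Q[H]$-equivariant, from which it follows that $\hat P = \ker \hat\phi_R$ is a $\Q[H]$-submodule. Topologically, $\phi : T_g \to F_g$ is induced by the inclusion $\Sigma \hookrightarrow \mc H$, and since $p = p' \circ \phi$, this inclusion lifts equivariantly to the $H$-cover $\tilde\Sigma \hookrightarrow \tilde{\mc H}$. The induced map on first rational homology is $\hat\phi_R$, and it commutes with the deck group $H$ on both sides.

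Next, I would show $\hat P$ is totally isotropic for $\langle-,-\rangle$. By the standard handlebody fact, the kernel of $H_1(\partial\tilde{\mc H},\Q)\to H_1(\tilde{\mc H},\Q)$ is a Lagrangian (in particular, totally isotropic) subspace with respect to the intersection form $\langle-,-\rangle_{\Sp}$; this kernel is exactly $\hat P$. Since $\hat P$ is a $\Q[H]$-submodule, for any $x,y\in\hat P$ we have $hy\in \hat P$ for every $h\in H$, so
\[ \langle x,y\rangle \;=\; \sum_{h\in H}\langle x,hy\rangle_{\Sp}\,h \;=\; 0, \]
giving the desired isotropy for the sesquilinear form.

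For the splitting, I would observe that we have a short exact sequence of $\Q[H]$-modules
\[ 0 \longrightarrow \hat P \longrightarrow \hat R \stackrel{\hat\phi_R}{\longrightarrow} \hat S \longrightarrow 0, \]
which splits because $\Q[H]$ is semisimple (Maschke). Hence $\hat R \cong \hat P \oplus \hat S$ as $\Q[H]$-modules.

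Finally, to identify the summands, I would apply Gasch\"utz's Theorem (Theorem \ref{theorem:gaschutz}) to the exact sequence $1\to S\to F_g\to H\to 1$ to conclude $\hat S \cong \Q[H]^{g-1}\oplus\Q$, and then invoke the surface analogue (Proposition \ref{prop:surfacegaschutz}) which gives $\hat R \cong \Q[H]^{2g-2}\oplus\Q^2$. Comparing with the splitting above yields
\[ \Q[H]^{2g-2}\oplus\Q^2 \;\cong\; \hat P \oplus \Q[H]^{g-1}\oplus\Q, \]
and uniqueness of isotypic decomposition for semisimple $\Q[H]$-modules (Krull--Schmidt) forces $\hat P \cong \Q[H]^{g-1}\oplus\Q$. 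The only step requiring any thought beyond bookkeeping is the isotropy argument, and the key observation there is the passage from isotropy under $\langle-,-\rangle_{\Sp}$ to isotropy under $\langle-,-\rangle$ via the $H$-stability of $\hat P$; everything else reduces to citing the results already proved in the paper.
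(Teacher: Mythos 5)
Your proposal is correct and follows essentially the same route as the paper's proof: establish that $\hat P$ is a $\Q[H]$-submodule, deduce $\langle-,-\rangle_{\Sp}$-isotropy from the handlebody kernel fact, upgrade to $\langle-,-\rangle$-isotropy, and then use semisimplicity together with the two Gasch\"utz-type results and cancellation of isotypic components. In fact you spell out the upgrade from symplectic isotropy to sesquilinear isotropy (via $H$-stability of $\hat P$) a bit more carefully than the paper's one-line remark, but the underlying reasoning is identical.
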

We remark that below
we show that the isomorphism $\hat{P} \oplus \hat{S} \cong \hat{R}$ can be chosen such that the image of $\hat{S}$
is totally isotropic relative to $\langle -,-\rangle$.
\begin{proof}
As mentioned above, the map $\HH_1(\Sigma, \Q) \to \HH_1(\mc H, \Q)$ has totally isotropic kernel relative
to the alternating intersection form. Applying the same fact 
to the map on the covers $\HH_1(\tilde \Sigma, \Q) \to \HH_1(\tilde{\mc H}, \Q)$ implies
that $\hat P$ is totally isotropic relative to $\langle -,-\rangle_{\Sp}$. Since the sesquilinear form is defined as a sum with
coefficients in terms of $\langle -, - \rangle_{\Sp}$, it follows that $\hat P$ is also totally isotropic relative to the sesquilinear form.

It follows from the definitions that $\hat{\phi}_R$ is a $\Q[H]$-homomorphism, so its kernel, $\hat{P}$, is a $\Q[H]$-submodule.
By Theorem \ref{theorem:gaschutz}, $\hat{S} \cong \Q[H]^{g-1} \oplus \Q$. This fact combined with Proposition
\ref{prop:surfacegaschutz} and surjectivity of $\hat{\phi}_R$ implies the rest of the Lemma since $\Q[H]$ is semisimple.
\end{proof}

Now consider the above situation projected to the $i$th isotypic component $M_i$ of $\hat{R}$ using the
notation of the previous sections. The above decomposition
projects to a decomposition of $M_i$, and we can conclude a stronger statement relative to the sesquilinear form. Let
$M_i'$ be the projection of $\hat{P}$ to $M_i$.

\begin{lemma} \label{lemma:niceAibasis}
Let $\tilde a_{1_H}, \tilde b_{1_H}$ be as in Lemma \ref{lemma:rank2submodule} and $\tilde \alpha_i, \tilde \beta_i$ the projection of their
homology classes to $M_i$. The module $M_i'$ is totally isotropic relative to $\langle-,-\rangle$, and there is a subspace $M_i''$ of $M_i$ such that
\begin{itemize}
\item $M_i''$ is totally isotropic relative to $\langle-,-\rangle$,
\item $M_i = M_i' \oplus M_i''$
\item there are free $A_i$-bases $\tilde \beta_i = m_{i,1}', m_{i,2}', \dots, m_{i, g-1}'$
(resp. $\tilde \alpha_i = m_{i,1}'', m_{i,2}'', \dots, m_{i, g-1}''$) of $M_i'$ (resp. $M_i''$) such that $\langle m_{i,j}'', m_{i,k}' \rangle = \delta_{j,k}$
\end{itemize}
\end{lemma}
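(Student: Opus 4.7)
The proof will proceed in two main stages: first establishing the structural properties of $M_i'$ (isotropic, free of rank $g-1$, containing $\tilde \beta_i$), then constructing $M_i''$ with the required compatibility conditions.

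For the first stage, the plan is to exploit the idempotent description. Since $\hat R = \hat P \oplus \hat S$ as $\Q[H]$-modules by Lemma \ref{lemma:H1decomp}, applying the central idempotent $e_i$ of $A_i$ gives $M_i = e_i\hat P \oplus e_i\hat S$, so $M_i' = e_i\hat P$ is a direct summand. Total isotropy is immediate: for $x,y \in \hat P$, sesquilinearity together with Lemma \ref{lemma:algdecomp} ($\tau(e_i)=e_i$) gives $\langle e_i x, e_i y\rangle = e_i\langle x,y\rangle \tau(e_i) = 0$ because $\hat P$ is isotropic. Since $\hat P \cong \Q[H]^{g-1} \oplus \Q$ and $e_i$ annihilates the trivial part, $M_i' \cong A_i^{g-1}$. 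The inclusion $\tilde \beta_i \in M_i'$ follows because $b$ bounds a disc in $\mc H$ (Lemma \ref{lemma:specialtorus}), so $\tilde b_{1_H}$ bounds one of the $|H|$ lifted discs in $\tilde{\mc H}$, placing $[\tilde b_{1_H}] \in \hat P$. To extend $\tilde \beta_i$ to a free basis of $M_i'$, note that the identity $\langle \tilde \alpha_i, \tilde \beta_i\rangle = 1$ (from Lemma \ref{lemma:rank2submodule}, after projection) forces the left annihilator of $\tilde \beta_i$ in $A_i$ to vanish: if $r\tilde \beta_i = 0$ then $0 = \langle \tilde \alpha_i, r\tilde \beta_i\rangle = \tau(r)$. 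Hence $A_i\tilde \beta_i \cong A_i$, and by semisimplicity of $A_i$-modules this is a direct summand; choosing any basis of a complement yields $m_{i,1}' = \tilde \beta_i, m_{i,2}',\ldots,m_{i,g-1}'$.

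For the second stage, I will construct $M_i''$ incrementally. First, $\tilde \alpha_i \notin M_i'$ since pairing with $\tilde \beta_i$ gives $1 \neq 0$, and by the same annihilator argument $A_i\tilde \alpha_i \cong A_i$ and $A_i\tilde\alpha_i \cap M_i' = 0$. By semisimplicity, I choose an $A_i$-complement $N$ of $M_i'$ in $M_i$ whose basis $n_1 = \tilde \alpha_i, n_2,\ldots,n_{g-1}$ has $\tilde \alpha_i$ as first element. The next step is to make the two bases dual: after replacing $n_j \leadsto n_j - \langle n_j,\tilde \beta_i\rangle \tilde \alpha_i$ for $j \geq 2$ we achieve $\langle n_j, \tilde \beta_i\rangle = \delta_{j1}$ (this modifies $N$ but keeps $n_1 = \tilde \alpha_i$ and $N$ as a complement). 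Then nondegeneracy of $\langle-,-\rangle$ combined with $M_i'$ being isotropic implies the induced pairing $N \times M_i' \to A_i$ is nondegenerate, so I can solve a linear system to replace $m_{i,k}'$ (for $k \geq 2$) by $A_i$-combinations of $m_{i,2}',\ldots,m_{i,g-1}'$ to obtain $\langle n_j, m_{i,k}'\rangle = \delta_{jk}$, while $m_{i,1}' = \tilde \beta_i$ is preserved automatically by our normalization.

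The final step is to adjust each $n_j$ by an element of $M_i'$ to kill pairwise pairings, yielding the isotropic $M_i''$. Setting $C_{jk}' := \langle n_j,n_k\rangle$, the skew-Hermitian identity $\tau(C_{jk}') = -C_{kj}'$ holds, and importantly $C_{11}' = \langle \tilde \alpha_i, \tilde \alpha_i\rangle = 0$ because the $H$-translates $\tilde a_h$ are mutually disjoint for $h\neq 1_H$ and $\langle -,-\rangle_{\Sp}$ is alternating. Define $m_{i,j}'' := n_j + \sum_\ell t_{j\ell} m_{i,\ell}'$ with $t_{1\ell}=0$ (so $m_{i,1}'' = \tilde \alpha_i$), $t_{j1} = C_{j1}' = \langle n_j, \tilde \alpha_i\rangle$ for $j \geq 2$, and $t_{jk} = \tfrac{1}{2} C_{jk}'$ for $j,k \geq 2$. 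A direct computation using sesquilinearity and skew-Hermiticity shows $\langle m_{i,j}'', m_{i,k}''\rangle = C_{jk}' + \tau(t_{kj}) - t_{jk} = 0$ in all cases (the choice $t_{jk} = \tfrac{1}{2}C_{jk}'$ solves $t_{jk} - \tau(t_{kj}) = C_{jk}'$ via skew-Hermiticity), while the addition of $M_i'$-terms preserves both $\langle m_{i,j}'', m_{i,k}'\rangle = \delta_{jk}$ (since $M_i'$ is isotropic) and free-basis status (the change from $\{n_j\}$ is unipotent modulo $M_i'$). The main subtlety is ensuring all constraints are satisfied with $\tilde \alpha_i$ and $\tilde \beta_i$ rigidly installed as first basis elements; the crux is that the self-pairings $\langle \tilde \alpha_i,\tilde \alpha_i\rangle$ and $\langle \tilde \beta_i,\tilde \beta_i\rangle$ already vanish by the geometric disjointness of $H$-translates, so no modification of the first basis elements is needed.
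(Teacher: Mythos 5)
Your proposal is essentially correct and follows the same overall structure as the paper's proof: show $M_i'$ is an isotropic direct summand isomorphic to $A_i^{g-1}$ with $\tilde\beta_i$ a free-basis element, choose a complement $N$ of $M_i'$ containing $\tilde\alpha_i$, pass to a pair of dual bases, and then shear the basis of $N$ by elements of $M_i'$ to kill the self-pairings. The only real difference in workflow is which side you normalize first: the paper first adjusts the basis $m_{i,j}'$ of $M_i'$ so that $\langle\tilde\alpha_i, m_{i,j}'\rangle = \delta_{1j}$ and then takes the unique dual basis \emph{in $N_i$}, whereas you adjust the $n_j$ so that $\langle n_j,\tilde\beta_i\rangle=\delta_{j1}$ and then take the dual basis \emph{in $M_i'$}. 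These are symmetric and equally valid in principle. Your final shearing formula (a full sum with coefficient $\tfrac12 C_{jk}'$ and a special treatment of $j=1$) differs cosmetically from the paper's triangular sum $\sum_{\ell\le j} c_{j,\ell}m_{i,\ell}'$, but the verification is the same type of computation.

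There is one genuine slip. You assert that you can replace the $m_{i,k}'$ (for $k\geq 2$) by $A_i$-combinations \emph{of $m_{i,2}',\dots,m_{i,g-1}'$ alone} to obtain the dual basis. This is not true in general: without a prior normalization making $\langle n_1, m_{i,\ell}'\rangle = 0$ for $\ell\geq 2$, the unique dual basis elements will typically have nonzero $m_{i,1}'$-component, so the ``linear system'' you describe, constrained to the span of $m_{i,2}',\dots,m_{i,g-1}'$, can be inconsistent (already for $g=3$ this fails whenever $\langle n_1, m_{i,2}'\rangle\neq 0$). The fix is cheap and doesn't change your architecture: either first add appropriate $A_i$-multiples of $m_{i,1}'=\tilde\beta_i$ to the $m_{i,\ell}'$ ($\ell\geq 2$) to make them orthogonal to $n_1=\tilde\alpha_i$ (which is exactly the paper's normalization), or simply invoke nondegeneracy to produce the unique dual basis in all of $M_i'$ and note that its first element is forced to equal $\tilde\beta_i$ since $\tilde\beta_i$ already satisfies the defining relations.

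On the plus side, you are more explicit than the paper in one spot that deserves it: the observation that $\langle\tilde\alpha_i,\tilde\alpha_i\rangle = 0$ (so that $t_{1\ell}=0$ is legitimate and $m_{i,1}''=\tilde\alpha_i$), justified by the disjointness of the $H$-translates of $\tilde a_{1_H}$ together with the alternating property of $\langle-,-\rangle_{\Sp}$. The paper asserts $m_{i,1}''=n_{i,1}$ without comment, which tacitly requires $c_{1,1}=0$, and your proof supplies the missing reason.
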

\begin{proof}
The module $M_i'$ is totally isotropic since $\hat P$ is.
Notice that by the choice of $\tilde b_{1_H}$, its homology class lies in $\hat{P}$. The homology class of $\tilde b_{1_H}$ generates a 
copy of $\Q[H]$ in $\hat{P}$, and so $[\tilde b_{1_H}]$ necessarily
generates a copy of $A_i$ in $M_i$. Thus, $\tilde \beta_i$ can be extended to some basis $\tilde \beta_i = m_{i,1}', m_{i,2}', \dots, m_{i, g-1}'$
of $M_i'$. Furthermore, we can ensure $\langle \tilde \alpha_i, m_{i,j}' \rangle = \delta_{1j}$. (If it is not already true, we can alter $m_{i,j}'$ for $j > 1$ by 
adding an appropriate multiple of $\tilde \beta_i$.)

Lemma \ref{lemma:H1decomp} implies that $M_i'$ is isomorphic to $A_i^{g-1}$, and so we immediately have some complementary
subspace $N_i$ which is isomorphic to $A_i^{g-1}$ (but not necessarily isotropic). Moreover, since $\tilde \alpha_i$ does not lie in $M_i'$, we can further arrange
that $N_i$ contains $\tilde \alpha_i$.
Consider the homomorphism to the dual space of $N_i$
\begin{align} M_i' &\to  \Hom_{A_i} ( N_i, A_i)  \nonumber \\ m_i' & \mapsto  \langle -, m_i' \rangle  \label{eqn:dual} \end{align}
Note that this is an $A_i$-module homomorphism where the (left) $A_i$-module structure on $\Hom_{A_i} ( N_i, A_i)$ is given
by $(a \cdot f)(n) = (f (n) ) \tau(a)$ for all $n \in N_i$. Since $M_i'$ is totally isotropic and $\langle-,-\rangle$ is nondegenerate, each $m' \in M_i'$
must pair nontrivially with some element in $N_i$. Consequently, the map in (\ref{eqn:dual}) has trivial kernel, and the fact that
$\Hom_{A_i} ( N_i, A_i) \cong A_i^{g-1}$ implies (\ref{eqn:dual}) is an isomorphism.

There is therefore a unique basis
$n_{i,1}, \dots, n_{i, g-1}$ of $N_i$ such that $\langle n_{i,j}, m_{i,k}' \rangle = \delta_{j,k}$.
Moreover, since $\tilde \alpha_i$ lies in $N_i$ and 
$\langle \alpha_i, m_{i,j}' \rangle = \delta_{1j}$,
it follows that $n_{i, 1} =  \tilde \alpha_i$. From $N_i$, we can construct a submodule $M_i''$ which has the
same properties as $N_i$ but is also isotropic.

Let $c_{j,k} = \langle n_{i,j}, n_{i,k} \rangle$ for $j \neq k$, and let $c_{j,j} = \frac{1}{2} \langle n_{i,j}, n_{i,j} \rangle$.
Note that since $\langle-,-\rangle$ is skew-Hermitian, $\tau(c_{j,k}) = - c_{k,j}$.
Let $m_{i,j}'' = n_{i,j} + \sum_{\ell \leq j} c_{j,\ell} m_{i,\ell}'$ and $M_i''$ be the submodule generated by $m_{i,1}'', \dots, m_{i,g-1}''$.
Note that $m_{i,1}'' = n_{i,1} =  \tilde \alpha_i$. It follows from the properties of the $n_{i,j}$ and the fact that $M_i'$ is isotropic
that $\langle m_{i,j}'', m_{i,k}' \rangle = \delta_{jk}$. Moreover, one computes that for all pairs $j > k$:
$$\begin{array}{rcl} \langle m_{i,j}'', m_{i,k}'' \rangle & = & \langle n_{i,j}, n_{i,k} \rangle + \langle \sum_{\ell \leq j}  c_{j,\ell} m_{i,\ell}', n_{i,k} \rangle
													+ \langle n_{i,j}, \sum_{\ell \leq k} c_{k,\ell} m_{i,\ell}' \rangle \\
									     & = &  c_{j,k} + \langle c_{j, k} m_{i,k}', n_{i,k} \rangle =  c_{j,k} - c_{j,k} = 0 . \end{array} $$
Similarly, 
$$\begin{array}{rcl} \langle m_{i,j}'', m_{i,j}'' \rangle & = & \langle n_{i,j}, n_{i,j} \rangle + 
												\langle c_{j,j} m_{i,j}', n_{i,j} \rangle + \langle n_{i,j}, c_{j,j} m_{i,j}' \rangle \\
									 &     = & 2 c_{j,j} - c_{j,j} + \tau(c_{j,j}) = 0 . \end{array}$$
Thus, $M_i''$ has all the desired properties.
\end{proof}

\begin{corollary}
The submodule $\hat P$ has a complementary totally isotropic isomorphic submodule in $\hat R$.
\end{corollary}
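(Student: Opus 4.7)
The plan is to leverage the isotypic decomposition $\hat R \cong M_0 \oplus \bigoplus_{i=1}^{\ell} M_i$ (where $M_0 \cong \Q^{2g}$ is the trivial isotypic component and $M_i \cong A_i^{2g-2}$ for $i \geq 1$), build an isotropic complement in each isotypic component separately, and then assemble them. Since $\hat P$ is a $\Q[H]$-submodule, it respects the isotypic decomposition, so $\hat P = P_0 \oplus \bigoplus_{i \geq 1} M_i'$, where $P_0 := \hat P \cap M_0$ and $M_i'$ is the projection of $\hat P$ to $M_i$ (which coincides with $\hat P \cap M_i$) as in Lemma \ref{lemma:niceAibasis}.

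For each $i \geq 1$, Lemma \ref{lemma:niceAibasis} already furnishes a totally isotropic complement $M_i'' \subset M_i$ to $M_i'$ with $M_i'' \cong A_i^{g-1}$. So the only new work lies in the trivial isotypic summand $M_0$. For $x, y \in M_0$ we have $hy = y$ for all $h \in H$, so
\[
\langle x, y \rangle \;=\; \sum_{h \in H} \langle x, h y \rangle_{\Sp}\, h \;=\; \langle x, y\rangle_{\Sp}\,\Bigl(\sum_{h \in H} h\Bigr),
\]
and consequently a subspace of $M_0$ is totally isotropic for $\langle -,-\rangle$ if and only if it is isotropic for the ordinary symplectic intersection form on $M_0$. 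Identifying $M_0$ with $\HH_1(\Sigma,\Q)$ via the transfer map (which sends the symplectic form to a nonzero scalar multiple of the symplectic form on $\HH_1(\Sigma,\Q)$), the subspace $P_0$ corresponds to the kernel of $\HH_1(\Sigma, \Q) \to \HH_1(\mc H, \Q)$, which is a standard Lagrangian of dimension $g$ inside the $2g$-dimensional symplectic vector space $\HH_1(\Sigma,\Q)$. Any Lagrangian in a symplectic vector space admits a Lagrangian complement; pick one and call the corresponding subspace of $M_0$ by $Q_0 \cong \Q^g$.

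Now define $\hat Q := Q_0 \oplus \bigoplus_{i \geq 1} M_i''$. Each summand is totally isotropic for $\langle -,-\rangle$, and the distinct isotypic components are mutually orthogonal (this was checked in Section \ref{subsection:AutM} using $\tau(e_j) = e_j$ and sesquilinearity), so $\hat Q$ is totally isotropic. By construction $M_i = M_i' \oplus M_i''$ and $M_0 = P_0 \oplus Q_0$, so $\hat R = \hat P \oplus \hat Q$. Finally, $\hat Q \cong \hat R / \hat P \cong \hat S \cong \Q[H]^{g-1} \oplus \Q \cong \hat P$ by Lemma \ref{lemma:H1decomp}, so $\hat Q$ is isomorphic to $\hat P$ as a $\Q[H]$-module, proving the corollary.

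The only step requiring care is the trivial isotypic summand: one must verify that isotropy for the sesquilinear form on $M_0$ collapses to isotropy for the ordinary symplectic form (done via the norm element computation above), and then observe that $P_0$ lands as a genuine Lagrangian so that standard symplectic linear algebra supplies the complement. The rest is a routine isotypic-by-isotypic assembly using Lemma \ref{lemma:niceAibasis}.
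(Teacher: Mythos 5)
Your proof is correct and follows essentially the same route as the paper's: decompose $\hat R$ into isotypic components, use $M_i = M_i' \oplus M_i''$ from Lemma \ref{lemma:niceAibasis} on the nontrivial components, handle the trivial component $\Q^{2g}$ separately by finding a Lagrangian complement to the genus-$g$ Lagrangian $\hat P \cap \Q^{2g}$, and assemble. You spell out the trivial-component step in more detail than the paper (which merely asserts the existence of the isotropic complement $B$) and you explicitly verify the module isomorphism $\hat Q \cong \hat P$ via Lemma \ref{lemma:H1decomp}, but the substance is the same.
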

\begin{proof}
By Lemma \ref{lemma:niceAibasis}, there is a direct sum decomposition of $\hat R$ as a $\Q[H]$-module:
$$\hat R = \Q^{2g} \oplus \bigoplus_{i=1}^\ell M_i = \Q^{2g} \oplus \bigoplus_{i=1}^\ell (M_i' \oplus M_i'')$$
where $M_i'$ and $M_i''$ are isotropic and $M_i' = \hat P \cap M_i$. Similarly, $\Q^{2g} \cap \hat P \cong \Q^g$ and
there is a complimentary isotropic subspace $B$ isomorphic to $\Q^g$. The submodule complementary to $\hat P$ is
$B \oplus (\bigoplus_{i=1}^\ell M_i'')$.
\end{proof}

\section{The representation $\rho$ and the image of the handlebody group}
\label{section:liftrep}
%!TEX root = ./MCGReps.tex
We now provide more details about the representations $\rho_{H,p}$. As mentioned in the introduction, we will work with
$\Aut(T_g)^+ = \Mod(\Sigma_g, \ast)$ instead of $\Mod(\Sigma_g)$, and at the end (Section \ref{section:forgetmarkedpt}) we will use (virtual) arithmetic
quotients of the former to obtain (virtual) arithmetic quotients of the latter. As in the previous section,
let $p:T_g \to H$ be a $\phi$-redundant surjective homomorphism, and let $S$ be the kernel
of the induced map $p': F_g \to H$. We have the following commutative diagram
and short exact sequences.
$$ \begin{CD}  1 @>>> R @>>> T_g @>p>> H @>>>1 \\
		       @.         @VVV     @VV\phi V             @|         @. \\
		       1  @>>> S @>>> F_g @>p'>> H @>>> 1 \end{CD}    
$$

We set 
$$\Gamma_{H,p} = \{f \in \Aut(T_g)^+ \; | \; p \circ f = p \}.$$
Note that if $f \in \Gamma_{H,p}$, then it follows automatically that $f(R) = R$.
For such $f$, we can restrict the automorphism to $R$ and then project to an automorphism
of the abelianization $\overline{R}$. This, in turn, induces an automorphism of $\hat R$.
Denote this map as $\rho_{H, p}: \Gamma_{H,p} \to \Aut(\hat R)$.
For the remainder of this section, we will suppress the subscripts $H$ and $p$,  
so henceforth we fix some $p$ and $H$, and set $\rho = \rho_{H,p}$
and $\Gamma = \Gamma_{H,p}$.

As we have seen, $\hat R$ has a $\Q[H]$-module structure, and this is preserved by such $f$.
\begin{lemma} \label{lemma:rhotomoduleaut}
The image $\rho(\Gamma)$ lies in $\Aut_{\Q[H]}( \hat R)$.
\end{lemma}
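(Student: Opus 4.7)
The plan is to verify directly from the definitions that $\rho(f)$ commutes with the $H$-action on $\overline R$ (and hence on $\hat R = \overline R \otimes_\Z \Q$) for every $f \in \Gamma$. Recall that $H$ acts on $\overline R$ as follows: for $h \in H$, choose any lift $t \in T_g$ with $p(t) = h$; then $h \cdot [r] = [trt^{-1}]$ for $r \in R$, and this is well-defined because different lifts of $h$ differ by an element of $R$, which acts trivially on $\overline R$ by conjugation.

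First I would fix $f \in \Gamma$ and $h \in H$, choose a lift $t \in T_g$ of $h$, and note that since $p \circ f = p$, we have $p(f(t)) = p(t) = h$, so $f(t) = t\, r'$ for some $r' \in R$. For any $r \in R$, the element $trt^{-1}$ lies in $R$ (since $R$ is normal), and
\[
f(trt^{-1}) = f(t)\, f(r)\, f(t)^{-1} = t\, r'\, f(r)\, (r')^{-1}\, t^{-1}.
\]
Passing to $\overline R$, conjugation by $r' \in R$ is trivial, so $[r'\, f(r)\, (r')^{-1}] = [f(r)]$. Conjugation by $t$ corresponds to the $H$-action by $h$, so
\[
\rho(f)(h \cdot [r]) = [f(trt^{-1})] = h \cdot [f(r)] = h \cdot \rho(f)([r]).
\]
Thus $\rho(f)$ is $\Z[H]$-linear on $\overline R$; extending scalars to $\Q$ shows it is $\Q[H]$-linear on $\hat R$.

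The only subtle point is the well-definedness of the $H$-action — i.e. that the formula $h\cdot[r] = [trt^{-1}]$ does not depend on the lift $t$ — but this is exactly the standard reason $\overline R$ is a $\Z[H]$-module (inner automorphisms by $R$ act trivially on $R^{\mathrm{ab}}$), and it is precisely the same mechanism that makes the calculation above go through (the factor $r'$ becomes invisible in $\overline R$). Since $\rho(f)$ is visibly a $\Q$-linear automorphism of $\hat R$, combining this with $\Q[H]$-linearity yields $\rho(f) \in \Aut_{\Q[H]}(\hat R)$.
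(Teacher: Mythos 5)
Your proof is correct and takes essentially the same approach as the paper's: both verify directly that $\rho(f)$ commutes with the $H$-action on $\overline R$ by choosing a lift $t\in T_g$ of $h\in H$, noting that $p\circ f=p$ forces $f(t)=tr'$ with $r'\in R$, and observing that the extra conjugation by $r'$ vanishes in the abelianization. The only difference is cosmetic — the paper writes the lift of $h$ also as $h$ and phrases the identity as $r_0f(h)=h$, whereas you introduce the lift $t$ explicitly.
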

\begin{proof}
Let $f \in \Gamma$, and let $\overline{f}$ be the induced action of $f$ on $\overline{R}$.
It suffices to check that $\overline{f}$ commutes with the action of $H$. Suppose $r \in R$ and $\overline{r} \in \overline{R}$
is its image in the quotient. For any $h \in H$, the action on $\overline{f}(\overline{r})= \overline{f(r)}$ is 
$h \cdot \overline{f}(\overline{r}) = \overline{h f(r) h^{-1}}$.
Since $f \in \Gamma$, it follows that $f$ is a trivial automorphism mod $R$, and so $r_0 f(h)  = h$ for some $r_0 \in R$ and 
$$\overline{h f(r) h^{-1}} = \overline{r_0 f(h r h^{-1}) r_0^{-1}} = \overline{f(h r h^{-1})} = \overline{f}(h \cdot \overline{r}).$$
\end{proof}

To show that $\rho(f)$ additionally preserves the form $\langle-,-\rangle$, we appeal to a topological reformulation.
Interpreted in a topological setting, the homomorphism can be viewed as lifting the homeomorphisms to the cover and using the induced action
on the first homology group. Let us be more precise. Suppose $\tilde \ast$ is a point in the fiber over $\ast \in \Sigma$ for the cover $\tilde \Sigma \to \Sigma$.
For a homeomorphism $\varphi \in \Homeo(\Sigma, \ast)$, the lift of $\varphi$ is the homeomorphism $\tilde \varphi$ fixing $\tilde \ast$ such that 
$\Pi \circ \tilde \varphi = \varphi \circ \Pi$ where $\Pi: \tilde \Sigma \to \Sigma$ is the covering map.
The homeomorphism $\varphi$ will lift precisely when the induced action of $\varphi$ on $\pi_1(\Sigma, \ast)$ preserves 
$R = \pi_1(\tilde \Sigma, \tilde \ast)$, i.e. its mapping class is an element of $\Gamma$. Moreover, this lifting induces a well-defined
homomorphism $\Gamma \to \Mod(\tilde \Sigma, \tilde \ast)$. Recall that $\hat R$ is naturally identified with $\HH_1(\tilde \Sigma, \Q)$.
If $f \in \Gamma$ and $\tilde f \in \Mod(\tilde \Sigma, \tilde \ast)$ is its lift, then $\rho(f) = \tilde f_*$ where $\tilde f_*$ is the induced
action on $\HH_1(\tilde \Sigma, \Q)$. We can now establish the following.

\begin{lemma} \label{lemma:liftpreserveform}
The action of $\rho(f)$ preserves the sesquilinear form $\langle-,-\rangle$ for all $f \in \Gamma$.
\end{lemma}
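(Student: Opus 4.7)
The plan is to use the topological description of $\rho$ recalled just before the statement: if $\tilde f \in \Mod(\tilde\Sigma, \tilde\ast)$ is the lift of $f \in \Gamma$ fixing $\tilde\ast$, then $\rho(f) = \tilde f_\ast$ on $\hat R = H_1(\tilde\Sigma, \Q)$. Two ingredients then suffice. First, $\tilde f$ is an orientation-preserving homeomorphism of the closed surface $\tilde\Sigma$, hence $\tilde f_\ast$ preserves the rational intersection pairing $\langle -,-\rangle_{\Sp}$. Second, $\tilde f_\ast$ commutes with the $H$-action, i.e. is $\Q[H]$-linear; this is exactly Lemma \ref{lemma:rhotomoduleaut}, but it is worth noting that topologically it comes from the defining condition $p\circ f = p$: for a loop $\gamma$ representing $h = p(\gamma)$, the lift of $f(\gamma)$ based at $\tilde\ast$ ends at $d_{p(f(\gamma))}(\tilde\ast) = d_h(\tilde\ast)$, forcing $\tilde f \circ d_h = d_h \circ \tilde f$ as homeomorphisms of $\tilde\Sigma$.

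Given these two ingredients, the lemma follows from a one-line unwinding of the definition of $\langle -,-\rangle$. Namely, for $x,y \in \hat R$,
\[
\langle \rho(f)x,\, \rho(f)y\rangle \;=\; \sum_{h\in H} \langle \tilde f_\ast x,\; h\,\tilde f_\ast y\rangle_{\Sp}\, h \;=\; \sum_{h\in H} \langle \tilde f_\ast x,\; \tilde f_\ast(hy)\rangle_{\Sp}\, h \;=\; \sum_{h\in H} \langle x, hy\rangle_{\Sp}\, h \;=\; \langle x,y\rangle,
\]
where the second equality uses the $\Q[H]$-linearity of $\tilde f_\ast$ and the third uses that $\tilde f_\ast$ preserves $\langle -,-\rangle_{\Sp}$.

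There is essentially no obstacle here beyond confirming the two ingredients above; the sesquilinear form was tailor-made (as a twisted sum of intersection numbers) so that any $\Q[H]$-linear, $\langle-,-\rangle_{\Sp}$-preserving automorphism of $\hat R$ automatically preserves it. The only subtle point is making sure that the lift $\tilde f$ is genuinely $H$-equivariant on the nose (not merely up to a twist by an automorphism of $H$), but this is precisely what the hypothesis $f \in \Gamma$, i.e. $p\circ f = p$, guarantees.
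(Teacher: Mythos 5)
Your proof is correct and follows essentially the same route as the paper: both reduce to the two facts that $\tilde f_\ast$ preserves $\langle-,-\rangle_{\Sp}$ (since $\tilde f$ is an orientation-preserving homeomorphism) and that $\tilde f_\ast$ is $\Q[H]$-linear (Lemma \ref{lemma:rhotomoduleaut}), after which the definition of $\langle-,-\rangle$ forces the conclusion. You spell out the one-line computation that the paper leaves implicit, and you add a nice topological re-derivation of the $H$-equivariance from $p\circ f = p$, but the argument is the same one.
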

\begin{proof}
By the above discussion, $\rho(f)$ is equivalent to $\tilde f_*$ which is the action induced by 
the orientation-preserving mapping class $\tilde f$. Since such a homomorphism 
preserves $\langle-,-\rangle_{\Sp}$, and, in addition, $\rho(f)$ acts by a $\Q[H]$-automorphism by Lemma
\ref{lemma:rhotomoduleaut}, it must preserve $\langle-,-\rangle$.
\end{proof}

Since the image acts by $\Q[H]$-automorphisms, it decomposes into actions on each isotypic component. Consequently,
we obtain representations 
$$\rho_i = \rho_{H,p,i}: \Gamma \to \Aut_{A_i}(M_i, \langle-,-\rangle)$$
by projecting the action.

\subsection{Previous results for automorphisms of a free group}
Here, we recall a theorem from \cite{GrunLubo} which describes the image of $\Aut(F_n)$ for a representation analogous to $\rho$.
We will use this result to show that the image of the handlebody group maps (virtually) onto the $\frak O$-points of the Levi factor of a
parabolic subgroup of $\Aut_{A_i}(A_i^{2g-2}, \langle-,-\rangle)$.
The essential connection between the two
is that $\Map(\mc H, \ast)$ surjects onto $\Aut(F_g)$ via the action on its fundamental group.

We set up some additional notation and define some new terms.  For any group $X$ and subgroup $Y$, we set 
$$\Aut(X | Y) = \{ f \in \Aut(X) \; | \; f(Y) = Y \}.$$
Moreover, for $Y$ normal, let $\Aut^t(X | Y)$ be the subgroup of $\Aut(X | Y)$ consisting of automorphisms acting trivially on $X/Y$.
Note that in this notation $\Gamma = \Aut(T_g)^+ \cap \Aut^t(T_g | R)$.
We have a homomorphism $\eta = \eta_{H,p} : \Aut^t(F_g | S) \to \Aut_{\Q[H]}(\hat{S})$
defined similarly to $\rho$; namely restrict the automorphism to $S$, project to the abelianization $\overline{S}$, and take the induced map
on $\hat S = \overline{S} \otimes_\Z \Q$.

By Gasch\"utz's theorem (Theorem \ref{theorem:gaschutz}), $\hat S \cong \Q[H]^{g-1} \oplus \Q$. Consequently, for each $i$, we
obtain, via projection, a representation 
$$\eta_{i} : \Aut^t(F_g | S) \to \Aut_{A_i}(A_i^{g-1}) = \GL_{g-1}(A_i^{op}).$$
Recall that $A_i$ is simple and isomorphic to $\Mat_{m_i}(D_i)$ for some division algebra $D_i$ and integer $m_i$.
Thus, we can furthermore identify the target space as follows.
$$\GL_{g-1}(A_i^{op}) \cong \GL_{g-1}(\Mat_{m_i}(D_i)^{op}) \cong \GL_{(g-1)m_i}(D_i^{op})$$
The special linear group $\SL_{(g-1)m_i}(D_i^{op})$ is the group of matrices $C$ satisfying $\nrd_{B/L}(C) = 1$
for $B = \End_{D_i}(D_i^{m_i(g-1)})$.

Now, suppose $K$ is some number field, and suppose $D$ is a finite-dimensional $K$-algebra. Then a subring
$\mc R \subseteq D$ is an {\it order} if it contains a $K$-basis of $D$ and it is a finitely generated $\Z$-module. Recall that $A_i$ is 
finite-dimensional over $K_i$, (the $\tau$-fixed subfield of its center), and so $D_i$ is a finite-dimensional $K_i$-algebra.
The next theorem is a direct consequence of Theorem 5.5 of \cite{GrunLubo}.

\begin{theorem}{(Grunewald--Lubotzky)} \label{theorem:GrunLubo}
Keeping notation as above, if $g \geq 4$, then there exists an order $\mc R_i$ of $D_i^{op}$ such that the intersection $\eta_i(\Aut^t(F_g | S)) \cap \SL_{m_i(g-1)}(\mathcal R_i)$
is of finite index in $\SL_{m_i(g-1)}(\mathcal R_i)$.
\end{theorem}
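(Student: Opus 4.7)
The plan is to deduce the statement directly from Theorem 5.5 of \cite{GrunLubo}, which treats precisely the representation $\eta: \Aut^t(F_g|S) \to \Aut_{\Q[H]}(\hat{S})$ induced by (the classical) Gasch\"utz decomposition $\hat{S} \cong \Q[H]^{g-1} \oplus \Q$. That theorem decomposes $\hat{S}$ by the isotypic components of $\Q[H]$, identifies $\Aut_{A_i}(A_i^{g-1})$ with $\GL_{(g-1)m_i}(D_i^{op})$ via $A_i \cong \Mat_{m_i}(D_i)$, and concludes that the image of $\eta_i$ is commensurable with $\SL_{m_i(g-1)}(\mathcal R_i)$ for some order $\mathcal R_i \subseteq D_i^{op}$. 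The present statement, about the intersection being of finite index in $\SL_{m_i(g-1)}(\mathcal R_i)$, is then immediate from commensurability. So the real content of the proof is a verification that the definitions of $\eta_i$, $\Aut^t(F_g|S)$ and the ambient arithmetic group match those in \cite{GrunLubo}, which is routine.

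For motivation, let me outline the strategy underlying \cite[Thm.~5.5]{GrunLubo}. First, one produces a large supply of Nielsen-type automorphisms in $\Aut^t(F_g|S)$: concretely, automorphisms of the form $\sigma_{j,w}: x_j \mapsto x_j w$, $x_k \mapsto x_k$ for $k \ne j$, where $w \in S$ is built from commutators or powers of generators so that $w$ is trivial modulo $S$. These automorphisms lie in $\Aut^t(F_g|S)$ by construction. Second, one computes the action of $\sigma_{j,w}$ on $\overline{S}\otimes\Q = \hat{S}$ via Fox calculus applied to the Reidemeister-Schreier rewriting for $S\lhd F_g$; after projecting to the $A_i$-isotypic component of $\hat{S}$, the resulting matrix in $\GL_{g-1}(A_i)$ turns out to be an elementary matrix $I + \alpha E_{jk}$ whose off-diagonal entry $\alpha$ lies in the image $\mathfrak O_{H,i}$ of $\Z[H]$ in $A_i$. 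As $w$ varies, $\alpha$ ranges over enough of $\mathfrak O_{H,i}$ to generate (after unwinding $A_i \cong \Mat_{m_i}(D_i)$) all $\mathcal R_i$-elementary matrices in $\SL_{m_i(g-1)}(\mathcal R_i)$ for an appropriate order $\mathcal R_i \subseteq D_i^{op}$.

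Third, one invokes the classical generation theorems in the spirit of Bass-Milnor-Serre, extended by Vaserstein and others to arithmetic $\SL_n$ over orders in division algebras: for rank $\ge 3$, the $\mathcal R_i$-elementary matrices generate a finite-index subgroup of $\SL_{m_i(g-1)}(\mathcal R_i)$. The hypothesis $g \geq 4$ enters exactly here, ensuring that $g - 1 \geq 3$ so the rank condition holds (and in fact is usually more than needed once one multiplies by $m_i$). Combining these three steps yields the theorem.

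The main technical obstacle is the second step: one must carry out the Fox calculus computation carefully enough to see that the elementary matrix entries $\alpha$ produced genuinely cover (or at least generate, as a subgroup of the additive group of the order) a full order $\mathcal R_i$ in $D_i^{op}$, rather than some proper sub-lattice. This is where the full flexibility of choosing $w \in S$ --- and in particular the freeness of $F_g$ to accommodate many such $w$ --- is used. For our purposes, however, we need only cite the conclusion from \cite{GrunLubo}; we will apply it in the next sections to control the image of the handlebody subgroup of $\Mod(\Sigma_g)$ under $\rho_{H,p,i}$.
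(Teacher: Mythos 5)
Your proposal is correct and matches the paper's approach: the paper itself treats this as a direct citation of Theorem 5.5 of Grunewald--Lubotzky, noting only (in the surrounding text and in the subsequent remark) that the hypothesis there is $g\ge 4$ with a partial extension to $g=3$. Your sketch of the underlying proof of that theorem is offered as background rather than as the argument itself, and you explicitly fall back to citing the conclusion, which is exactly what the paper does.
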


\begin{remarknum} \label{remark:stillzardense}
Theorem 5.5 in \cite{GrunLubo} is proved for $g \geq 4$. However, as explained on page 1592 there, it is also true when $g = 3$
in many cases, e.g. if $m_i > 1$. Moreover, even if $m_i = 1$, the image of $\eta_i$ contains finite index subgroups of the upper
and lower unitriangular subgroups of $\SL_{m_i(g-1)}(\mc R_i)$. The issue is that it is not clear there that these unitriangular subgroups generate
$\SL_{m_i(g-1)}(\mc R_i)$. But in any event, they generate a Zariski dense subgroup.
\end{remarknum}

\subsection{Structure of the image of the handlebody group}
Here, we will analyze the image of the handlebody subgroup $\Map(\mc H, \ast)$ of $\Mod(\Sigma, \ast)$. 
The handlebody group naturally has a homomorphism to $\Aut(F_g)$ via the action on the fundamental group
of $\mc H$.
We define the finite index subgroup
$$\Lambda = \Gamma \cap \Map(\mc H, \ast)$$

We will show that the image of $\Lambda$ under $\rho_{i}$ block uppertriangularizes, and furthermore
that the blocks on the diagonal range over a wide variety of matrices. More precisely, we have the following result.
Similar to Section \ref{section:symplecticgaschutz}, we set $C^* = (\tau(c_{ji}))$ for any matrix $C = (c_{ij}) \in \Mat_m(A_i^{op})$.
We define the parabolic subgroup of $\mc G$
$$\mc P = \left\{  \left( \begin{array}{cc} (C^*)^{-1} & E \\ 0 & C\end{array} \right) \; | \; C, E \in \Mat_{g-1}(A_i^{op}) \text{ and } E^* C = C^* E  \right \}.$$
Let $pr: \mc P \to \GL_{g-1}(A_i^{op})$ be the homomorphism 
$$\left( \begin{array}{cc} (C^*)^{-1} & E \\ 0 & C \end{array} \right) \mapsto C.$$
\begin{prop} \label{prop:handlebodyimage}
After identifying $\Aut_{A_i}(M_i)$ with $\GL_{2(g-1)}(A_i^{op})$ via 
the ordered basis $m_{i,1}', \dots, m_{i, g-1}', m_{i, 1}'', \dots, m_{i,g-1}''$ from Lemma \ref{lemma:niceAibasis},
we have
$$\rho_i(\Lambda) \subseteq \mc P$$
Moreover, if $g \geq 4$, then there is an order $\mc R_i$ of $D_i^{op}$ such that $pr(\rho_i(\Lambda))$
is a finite index subgroup in $\SL_{m_i(g-1)}(\mc R_i)$. If $g = 3$, then $pr(\rho_i(\Lambda))$ is Zariski
dense in $\SL_{m_i(g-1)}(\mc R_i)$.
\end{prop}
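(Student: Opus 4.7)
The plan is to separate the argument into the containment in $\mc P$ and the analysis of the projection $pr(\rho_i(\Lambda))$, the latter by reducing to the Grunewald--Lubotzky theorem via the handlebody-to-$\Aut(F_g)$ map.

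First I would observe that every $f \in \Lambda$ is represented by a homeomorphism of $\Sigma$ extending to $\mc H$; the condition $f \in \Gamma$ guarantees that this extension lifts to the cover $\tilde{\mc H} \to \mc H$ associated to $S < F_g$, and this lift restricts on the boundary to a lift of $f$ to $\tilde \Sigma$. By functoriality of $\HH_1$ applied to the inclusion $\tilde \Sigma \hookrightarrow \tilde{\mc H}$, the action $\rho(f)$ on $\hat R = \HH_1(\tilde \Sigma, \Q)$ preserves the kernel $\hat P$ of $\HH_1(\tilde \Sigma, \Q) \to \HH_1(\tilde{\mc H}, \Q)$; projecting to the $i$th isotypic component, $\rho_i(f)$ preserves $M_i' = M_i \cap \hat P$. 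In the ordered basis $m_{i,1}', \ldots, m_{i,g-1}', m_{i,1}'', \ldots, m_{i,g-1}''$ from Lemma \ref{lemma:niceAibasis}, this already forces block upper-triangular form. Combining with the preservation of $\langle -, - \rangle$ (Lemma \ref{lemma:liftpreserveform}) and the hyperbolic pairing $\langle m_{i,j}'', m_{i,k}' \rangle = \delta_{jk}$ on that basis, a short matrix computation in the spirit of Section \ref{subsection:AutM} (analogous to the identity $\sigma(C) = F^{-1} C^* F$ there) forces the diagonal blocks to be $(C^*)^{-1}$ and $C$ and the off-diagonal block $E$ to satisfy $E^* C = C^* E$. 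This gives $\rho_i(\Lambda) \subseteq \mc P$.

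For the second assertion, I would use the natural homomorphism $\pi : \Map(\mc H, \ast) \to \Aut(F_g)$ given by Theorem \ref{theorem:handlebodytoautFg}. Any $f \in \Map(\mc H, \ast)$ satisfies $\phi \circ f = \pi(f) \circ \phi$, so by surjectivity of $\phi$, the condition $p \circ f = p$ is equivalent to $p' \circ \pi(f) = p'$, i.e.\ $\pi(f) \in \Aut^t(F_g \mid S)$. Since $\pi$ is surjective, it follows that $\pi(\Lambda) = \Aut^t(F_g \mid S)$. Next I would identify $pr \circ \rho_i$ with $\eta_i \circ \pi$ on $\Lambda$: the two-level covering diagram $\tilde \Sigma \to \Sigma$, $\tilde{\mc H} \to \mc H$ and Lemma \ref{lemma:H1decomp} exhibit $\hat S$ as $\hat R / \hat P$, and functoriality of $\HH_1$ shows that the induced action of $f \in \Lambda$ on this quotient is exactly $\eta(\pi(f))$; projecting to the $A_i$-isotypic component and reading off the bottom-right block yields $pr(\rho_i(f)) = \eta_i(\pi(f))$. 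Hence $pr(\rho_i(\Lambda)) = \eta_i(\Aut^t(F_g \mid S))$, and Theorem \ref{theorem:GrunLubo} (respectively Remark \ref{remark:stillzardense} for $g = 3$) finishes the argument.

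The main obstacle is the compatibility step: one has to set up the chain of identifications $\hat S \cong \HH_1(\tilde{\mc H}, \Q) \cong \overline{S} \otimes \Q$ consistently with the choice of basepoint and with the handlebody lift, so that $pr \circ \rho_i = \eta_i \circ \pi$ holds on the nose rather than only up to conjugacy. Once this bookkeeping is settled, the structural input about $\eta_i(\Aut^t(F_g \mid S))$ comes entirely from \cite{GrunLubo}, and the parabolic form of $\rho_i(\Lambda)$ is essentially dictated by the isotropy of $M_i'$ established in Section \ref{section:isotropic}.
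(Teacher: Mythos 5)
Your proposal is correct and follows essentially the same path as the paper: you show $\rho_i(\Lambda)$ preserves $\hat P$ (hence $M_i'$) by lifting handlebody homeomorphisms to $\tilde{\mc H}$, deduce the parabolic form from the hyperbolic pairing, and then identify $pr\circ\rho_i$ with $\eta_i$ composed with the surjection $\Lambda\to\Aut^t(F_g\mid S)$ before invoking Theorem \ref{theorem:GrunLubo} and Remark \ref{remark:stillzardense}. The compatibility step you flag as the main obstacle is exactly what the paper formalizes in Lemma \ref{lemma:invtsubspace} via its chain of commutative diagrams, so no new ingredient is needed.
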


The first part of the proposition is essentially equivalent to the following lemma.

\begin{lemma} \label{lemma:invtsubspace}
The following is a well-defined commutative diagram.
$$\begin{CD}  \Lambda @>\rho>> \Aut_{\Q[H]}( \hat{R} | \hat{P})\\
                                  @VVV                                                                        @VVV               \\
		            \Aut^t(F_g | S)  @>\eta>>   \Aut_{\Q[H]}(\hat{S})              
\end{CD}$$
\end{lemma}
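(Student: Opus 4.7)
The plan is to verify three things in turn: (i) well-definedness of the left vertical arrow $\Lambda \to \Aut^t(F_g \mid S)$, (ii) well-definedness of the right vertical arrow $\Aut_{\Q[H]}(\hat R \mid \hat P) \to \Aut_{\Q[H]}(\hat S)$ together with the fact that $\rho(\Lambda)$ actually preserves $\hat P$, and (iii) commutativity of the square.

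For (i), I will use that any $f \in \Lambda \subseteq \Map(\mc H,\ast)$ has a representative homeomorphism $F$ of $\mc H$ fixing $\ast$, and $F$ induces an automorphism $f' \in \Aut(F_g)$ of $\pi_1(\mc H,\ast) = F_g$ satisfying $\phi \circ f = f' \circ \phi$ (this is the handlebody map to $\Aut(F_g)$ from Theorem \ref{theorem:handlebodytoautFg}). Since $f \in \Gamma$ means $p \circ f = p$, and $p = p' \circ \phi$, surjectivity of $\phi$ gives $p' \circ f' = p'$; hence $f'(S) = S$ and $f'$ acts trivially on $F_g/S$, i.e.\ $f' \in \Aut^t(F_g \mid S)$.

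For (ii), Lemma \ref{lemma:H1decomp} says that $\hat\phi_R \colon \hat R \to \hat S$ is a surjective $\Q[H]$-homomorphism with kernel $\hat P$, so any $\Q[H]$-automorphism of $\hat R$ preserving $\hat P$ descends to a $\Q[H]$-automorphism of $\hat R/\hat P \cong \hat S$. To see that $\rho(f)$ does preserve $\hat P$ and to get commutativity simultaneously, I will pass to the topological picture. Identify $\hat R$ with $\HH_1(\tilde\Sigma,\Q)$ and $\hat S$ with $\HH_1(\tilde{\mc H},\Q)$, so that $\hat\phi_R$ is the map on first rational homology induced by the inclusion $\tilde\Sigma \hookrightarrow \tilde{\mc H}$. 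Since $f \in \Gamma$, the representative $F$ restricts on $\Sigma = \partial\mc H$ to a homeomorphism that lifts to $\tilde\Sigma$ fixing the chosen lift $\tilde\ast$; call this lift $\tilde f$, so $\rho(f) = \tilde f_\ast$ on $\HH_1(\tilde\Sigma,\Q)$.

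The key point is that $F$ itself admits a lift $\tilde F \colon \tilde{\mc H} \to \tilde{\mc H}$ fixing $\tilde\ast$: this follows from the covering-space lifting criterion applied to the cover $\tilde{\mc H} \to \mc H$ associated to $S$, because $f'_\ast$ preserves $S$ (indeed $f' \in \Aut^t(F_g\mid S)$). Moreover, by uniqueness of lifts, $\tilde F$ restricts on $\partial\tilde{\mc H} = \tilde\Sigma$ to $\tilde f$. Naturality of homology applied to the commutative square
\[
\begin{CD}
\tilde\Sigma @>\tilde f>> \tilde\Sigma \\
@VVV @VVV \\
\tilde{\mc H} @>\tilde F>> \tilde{\mc H}
\end{CD}
\]
shows that $\tilde F_\ast \circ \hat\phi_R = \hat\phi_R \circ \tilde f_\ast$, which both proves $\rho(f)(\hat P) \subseteq \hat P$ (since $\hat P = \ker\hat\phi_R$) and identifies the induced map on $\hat R/\hat P \cong \hat S$ with $\tilde F_\ast$. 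Finally, the same topological description of $\eta$ in terms of lifts to the cover $\tilde{\mc H}$ gives $\tilde F_\ast = \eta(f')$ on $\hat S$, completing the commutativity.

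The main obstacle is not conceptual but bookkeeping: keeping track of basepoints and choices of lifts, and invoking the lifting criterion in the right place so that $\tilde F$ exists and restricts to $\tilde f$. Once that is set up, the rest is pure functoriality.
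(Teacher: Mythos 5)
Your proof is correct and takes essentially the same approach as the paper: both pass to the topological picture, lift the handlebody homeomorphism to the cover $\tilde{\mc H}$ using the fact that $f'$ preserves $S$, observe that the lift restricts on $\partial\tilde{\mc H}=\tilde\Sigma$ to the lift $\tilde f$, and then invoke naturality of first homology to obtain both preservation of $\hat P$ and commutativity. The paper organizes this as two stacked commutative squares rather than one naturality square, but the content is identical.
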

\begin{proof}
Note that $\Lambda$ can project to $\Aut^t(F_g | S)$ since $\Map(\mc H, \ast)$ 
preserves $\ker(\phi)$ and $\Gamma$ lies in $\Aut^t(T_g | R)$.
In addition to commutativity, we have to establish that $\rho(\Lambda)$ preserves the subspace $\hat P$.
We translate parts of the diagram into topological language where it becomes the following:
$$\begin{CD}\Map(\mc H, \ast) \cap \Gamma @>\rho>>    \Aut_{\Q[H]}( \HH_1(\tilde \Sigma, \Q) | \hat{P}) \\
                                  @VVV                                                                        @VVV               \\
		          \Aut^t(F_g | S) @>\eta>>   \Aut_{\Q[H]}(\HH_1(\tilde{ \mc H}, \Q))    
\end{CD}.$$

We prove commutativity in two steps. First, we can lift elements from $\Map(\mc H, \ast)$ to $\Map(\tilde{\mc H}, \tilde \ast)$.
and properties of lifts and the fact that $S = \pi_1(\tilde{ \mc H}, \tilde \ast)$ implies commutativity in the following diagram.
$$\begin{CD}   \Map(\mc H, \ast) \cap \Gamma @>\text{lift}>> \Map(\tilde{ \mc H}, \tilde \ast) \\
                                  @VVV                                                                        @VVV               \\
		    \Aut^t(F_g | S) @>>>   				\Aut(S)          
\end{CD}$$

Since elements of $\Map(\tilde{ \mc H}, \tilde \ast)$ preserve $\hat P$, the following diagram is well-defined and commutative.
$$\begin{CD}\Map(\tilde{ \mc H}, \tilde \ast)  @>>> \Aut( \HH_1(\tilde \Sigma, \Q) | \hat{P}) \\
                                  @VVV                                                                        @VVV               \\
		               \Aut(S) @>>>  		  \Aut(\HH_1(\tilde{ \mc H}, \Q))     
\end{CD}$$
Putting the diagrams together establishes the Lemma.
\end{proof}

This establishes the block diagonal form claimed in Proposition \ref{prop:handlebodyimage}.
Proving Proposition \ref{prop:handlebodyimage} is now a matter of combining some of the
above results.

\begin{proof}[Proof of Proposition \ref{prop:handlebodyimage}]
Let $\gamma \in \Lambda$.
Lemma \ref{lemma:invtsubspace} implies that $\rho_i(\gamma)$ preserves $M_i'$ and so $\rho_i(\gamma)$ block diagonalizes.
To show that $\rho_i(\gamma)$ lies in $\mc P$ as defined, it suffices to show that any element of $\mc G$ preserving $M_i'$
has diagonal entries $ (C^*)^{-1}, C$ for some matrix $C \in \Mat_{g-1}(A^{op})$.
Notice that relative to the basis, (and multiplying in $\Mat(-^{op})$)
$$\langle (a_1, \dots, a_{2(g-1)}), (b_1 , \dots, b_{2(g-1)}) \rangle = 
			(\tau(b_1) \dots \tau(b_{2(g-1)}) ) \left( \begin{array}{rr} 0 & I \\ -I & 0 \end{array} \right) \left( \begin{array}{c} a_1 \\ \vdots \\ a_{2(g-1)} \end{array} \right) $$
Consequently any matrix $\left( \begin{array}{cc} F & E \\ 0 & C \end{array} \right)$ preserving $\langle-,-\rangle$ must satisfy
$$ \left( \begin{array}{cc} F^* & 0 \\ E^* & C^* \end{array} \right)   \left( \begin{array}{rr} 0 & I \\ -I & 0 \end{array} \right) 
			\left( \begin{array}{cc} F & E \\ 0 & C \end{array} \right) =\left( \begin{array}{rr} 0 & I \\ -I & 0 \end{array} \right) $$
This implies $F = (C^*)^{-1}$, and $C^* E = E^* C$.

Theorem \ref{theorem:handlebodytoautFg} implies that $\Lambda$ surjects
onto $\Aut^t(F_g|S)$. Lemma  \ref{lemma:invtsubspace} implies that $pr \circ \rho_i = \eta_i$, and so
Theorem \ref{theorem:GrunLubo} and Remark \ref{remark:stillzardense} finish the proof.
\end{proof}

\section{Generating unipotents in the image}
\label{section:unipotents}
%!TEX root = ./MCGReps.tex
Our goal in this section is to prove that the image of $\rho_{H,p, i}$ contains the finite index subgroups of the unipotent subgroups $\mc N^+(\frak O)$
and $\mc N^-(\frak O)$. We begin in this first subsection
by finding two concrete examples of unipotents in the image of $\rho_{H,p, i}$. 
As before, we fix $H$ and $p$ and drop them from the subscripts of $\rho$ and $\Gamma$.

\subsection{Two unipotent elements}
Let $a, b$ be the simple closed
curves on $\Sigma$ as in Lemmas \ref{lemma:specialtorus} and \ref{lemma:rank2submodule}. The tightly controlled unipotents
are the images of the Dehn twists about $a$ and $b$ which we denote by $T_a$ and $T_b$.

\begin{lemma} \label{lemma:unipotents}
Let $M_i', M_i''$ be the modules and $m_{i,1}', \dots, m_{i, g-1}'$ and $m_{i,1}'', \dots, m_{i, g-1}''$ 
their respective free $A_i$-bases given by Lemma \ref{lemma:niceAibasis}. Then, with respect to the ordered basis 
$m_{i,1}', \dots, m_{i, g-1}', m_{i,1}'', \dots, m_{i, g-1}''$,
$$ \rho_i(T_b) = \left( \begin{array}{cc} I & e_{1,1} \\ 0 & I \end{array} \right) $$
$$  \rho_i(T_a^{-1}) = \left( \begin{array}{cc} I & 0 \\ e_{1,1} & I \end{array} \right) $$
where $e_{1,1} \in \Mat_{g-1}(A_i)$ is the matrix with $1$ in the upper left corner and $0$ in all other
entries.
\end{lemma}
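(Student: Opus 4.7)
The plan is to apply the Picard--Lefschetz formula on the cover $\tilde\Sigma$. Since the homotopy classes of $a$ and $b$ lie in $R=\ker p$ by \lemref{specialtorus}, the Dehn twists $T_a, T_b$ preserve $R$ and act trivially on $T_g/R=H$, so both lie in $\Gamma$. Each of their lifts to $\tilde\Sigma$ is the simultaneous Dehn twist about the $|H|$ disjoint lifts $\tilde c_h=d_h(\tilde c_{1_H})$ of $c\in\{a,b\}$, because the preimage of $c$ consists of $|H|$ disjoint simple closed curves and the lift of $T_c$ is the product of the Dehn twists about them.

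By Picard--Lefschetz on $\tilde\Sigma$, and the fact that Dehn twists about pairwise disjoint curves contribute additively to the action on homology, for any $y\in\hat R=\HH_1(\tilde\Sigma,\Q)$,
$$\rho(T_c)(y)\;=\;y+\sum_{h\in H}\langle y,[\tilde c_h]\rangle_{\Sp}\,[\tilde c_h].$$
Using $[\tilde c_h]=h\cdot[\tilde c_{1_H}]$ and the definition of the sesquilinear form from Section \ref{section:pairingdefn}, this collapses into
$$\rho(T_c)(y)\;=\;y+\langle y,[\tilde c_{1_H}]\rangle\cdot[\tilde c_{1_H}],$$
where the product is the left $\Q[H]$-module action on $\hat R$. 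Since $\rho$ is $\Q[H]$-linear by \lemref{rhotomoduleaut} and the isotypic components of $\hat R$ are pairwise orthogonal with respect to $\langle-,-\rangle$, I may restrict to $y\in M_i$ and replace $[\tilde c_{1_H}]$ by its projection to $M_i$, which is $m_{i,1}'=\tilde\beta_i$ for $c=b$ and $m_{i,1}''=\tilde\alpha_i$ for $c=a$ (see \lemref{rank2submodule} and \lemref{niceAibasis}). Hence
$$\rho_i(T_b)(y)=y+\langle y,m_{i,1}'\rangle\,m_{i,1}',\qquad \rho_i(T_a)(y)=y+\langle y,m_{i,1}''\rangle\,m_{i,1}''.$$

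The rest is an immediate calculation from \lemref{niceAibasis}. Because $M_i'$ and $M_i''$ are totally isotropic, $\langle m_{i,j}',m_{i,1}'\rangle=0=\langle m_{i,j}'',m_{i,1}''\rangle$, so $\rho_i(T_b)$ fixes each $m_{i,j}'$ and each $m_{i,j}''$ with $j>1$, while $\rho_i(T_b)(m_{i,1}'')=m_{i,1}''+\langle m_{i,1}'',m_{i,1}'\rangle m_{i,1}'=m_{i,1}''+m_{i,1}'$. Similarly, skew-Hermiticity gives $\langle m_{i,j}',m_{i,k}''\rangle=-\tau(\delta_{j,k})=-\delta_{j,k}$, so $\rho_i(T_a)$ fixes each $m_{i,j}''$ and each $m_{i,j}'$ with $j>1$, and $\rho_i(T_a)(m_{i,1}')=m_{i,1}'-m_{i,1}''$. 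Reading off the matrices in the ordered basis $m_{i,1}',\dots,m_{i,g-1}',m_{i,1}'',\dots,m_{i,g-1}''$, and inverting to pass from $\rho_i(T_a)$ to $\rho_i(T_a^{-1})$, yields the stated shapes.

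The one delicate point will be the bookkeeping of signs: one must confirm that the sign conventions for the direction of a Dehn twist, for $\langle-,-\rangle_{\Sp}$, and for the skew-Hermitian form chosen in Section \ref{section:pairingdefn} line up so that the pluses and minuses in the calculation above produce precisely the stated matrices, rather than their transposes or negatives. Beyond this, the argument is a direct computation using isotropy and the identity $\langle m_{i,j}'',m_{i,k}'\rangle=\delta_{j,k}$.
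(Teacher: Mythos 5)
Your proposal is correct and follows essentially the same argument as the paper: lift $T_a$ (resp.\ $T_b$) to a multitwist about the $|H|$ disjoint preimages, apply the standard homological Dehn twist formula to each, and collapse the sum over $H$ using the definition of $\langle-,-\rangle$ from Section \ref{section:pairingdefn}; the paper then evaluates on the basis of Lemma \ref{lemma:niceAibasis} exactly as you do, using skew-Hermiticity to get $\langle m_{i,1}',m_{i,1}''\rangle=-1$ and passing to $T_a^{-1}$ to flip the sign. Your worry about sign bookkeeping is resolved by the computation you already carried out, which matches the paper's.
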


\begin{proof}
We prove it for $T_a^{-1}$; the proof for $T_b$ is similar. Let $\varphi$ be some representative homeomorphism of $T_a$ which
fixes $\ast$. The lift $\tilde \varphi$ is precisely the twist $\varphi$ on each copy $\Sigma_h'$ of $\Sigma$.
I.e., the lift in $\Mod(\tilde \Sigma, \tilde \ast)$ is simply the composition of twists $\prod_{h \in H} T_{a_h}$. Notice that since all the
twists have mutually disjoint support, all the $T_{a_h}$ commute with each other, so the order of composition is irrelevant.
It is a standard fact (see e.g. \cite[Proposition 6.3]{FarbMarg}) that the action on homology $T_{\delta *}$ of a single Dehn twist $T_{\delta}$ on
a homology class $v \in \HH_1(\Sigma, \Q)$ is
$$ T_{\delta *}(v) = v + \langle v, [\delta] \rangle_{\Sp} [\delta]$$
where $[\delta]$ is the homology class of $\delta$.

We now simply compute our action. Notice that $T_{a_h*}([a_{h'}]) = [a_{h'}]$ for all pairs $h \neq h'$ as $a_h, a_{h'}$ are disjoint, and
so we do not ``accumulate'' any extra terms in composing.
I.e. the composition of the $T_{a_h*}$ applied to any  $v \in \HH_1(\tilde \Sigma, \Q)$ is
$$\begin{array}{rcl} \prod_{h \in H} T_{a_h *} (v) &= &v + \sum_{h \in H} \langle v, [a_h] \rangle_{\Sp} [a_h] \\
										    &= &v + \sum_{h \in H} \langle v, h \cdot [a_1] \rangle_{\Sp} h\cdot [a_1] \\
										    &= &v + \langle v, [a_1] \rangle [a_1] \end{array}$$
By the choice of basis, $\langle v, [a_1] \rangle$ is $-1$ when $v = b_1$ and is $0$ when $v$ is any other basis element. Using the inverse Dehn twist then
gives us positive $1$.
\end{proof}

\subsection{The unipotent subgroups $\mc N^+$ and $\mc N^-$}
We again fix some simple component $A = A_i$ of $\Q[H]$ and the corresponding submodule $M = M_i$ of $\hat R$ 
with its associated sesquilinear form. Also let $\rho = \rho_i = \rho_{H, p, i}$.
Let us now consider the following unipotent subgroups of the corresponding group $\mc G = \Aut_A(A^{2g-2}, \langle-,-\rangle)$ 
which we continue to view in terms of the basis from Lemma \ref{lemma:niceAibasis}.
Let us set $B = \Mat_{g-1}(A^{op})$, and for $E = (e_{ij}) \in B$, define $E^*$ to be $(\tau(e_{ji})$. For any 
matrix of the form $\left( \begin{array}{cc} I & E \\ 0 & I \end{array} \right)$ in $\mc G$, a straightforward computation shows that $E^* = E$.
We set
$$\mc N^+ = \left\{ \left( \begin{array}{cc} I & E \\ 0 & I \end{array} \right) \in \Mat_2(B)\; \; | \;\; E^* = E\right\}$$
$$\mc N^- = \left\{ \left( \begin{array}{cc} I & 0 \\ E & I \end{array} \right) \in \Mat_2(B)\; \; | \;\; E^* = E \right\}$$

In this section we show that $\rho(\Gamma)$ contains a lattice in each of these unipotent subgroups. We will produce them by conjugating
the elements $\rho(T_b)$ and $\rho(T_a^{-1})$ by elements in $\rho(\Lambda)$. Recall from Proposition
\ref{prop:handlebodyimage} that $\rho(\Lambda)$ consists of elements of the form 
$\left( \begin{array}{cc} (C^*)^{-1} & E \\ 0 & C\end{array} \right)$ with $E^*C = C^* E$, and note that the inverse of such a matrix is
$\left( \begin{array}{cc} C^* & E^* \\ 0 & C^{-1}\end{array} \right)$. 
One can compute that
$$  \left( \begin{array}{cc} C^* & E^* \\ 0 & C^{-1} \end{array} \right) \left( \begin{array}{cc} I & E' \\ 0 & I \end{array} \right) 
 		\left( \begin{array}{cc} (C^*)^{-1} & E \\ 0 & C\end{array} \right) 
		= \left( \begin{array}{cc} I & C^* E' C \\ 0 & I \end{array} \right)$$
Recall also from Proposition \ref{prop:handlebodyimage} that, if $g \geq 4$, for every $C$ in a finite index subgroup of  $\SL_{m(g-1)}(\mc R)$, the image 
$\rho(\Lambda)$ contains a matrix of the form $\left( \begin{array}{cc} (C^*)^{-1} & E \\ 0 & C \end{array} \right)$ (and for $g=3$, this is true
for every $C$ in some Zariski dense subgroup of $\SL_{m(g-1)}(\mc R)$).
Hence, our interest is in the set of matrices $C^* E C$ where $E = E^*$ and $C \in \SL_{m(g-1)}(\mc R)$.

First, we will understand the action of $\SL_1(B) = \{ C \in B \;\; | \;\; \nrd_{B/L}(C) = 1\}$ on $N = \{ E \in B \;\; | \;\; E^* = E \}$ given by
$C \cdot E = C^* E C$. Recall that $L$ is the center of $A$ and hence $B$, and
that $K$ is the field fixed by $\tau$ and hence by $^*$.
\begin{lemma} \label{lemma:irreducibleSaction}
The above action of $\SL_1(B)$ on $N$ is irreducible over $K$.
\end{lemma}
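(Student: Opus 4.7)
The plan is to reduce the irreducibility statement over $K$ to the analogous statement after extending scalars to $\bar K$, and then exploit the classification of involutions on central simple algebras (Lemma \ref{lemma:changeinv}) to identify the representation explicitly in each of the three possible cases for the involution $*$ on $B$.

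First, I would observe that $\SL_1(B)$ is the group of $K$-points of the simply connected simple algebraic $K$-group $\mathbf{SL}_1(B)$. Because simply connected simple algebraic groups over an infinite field are unirational, $\mathbf{SL}_1(B)(K)$ is Zariski-dense in $\mathbf{SL}_1(B)$. Consequently, any $K$-subspace $V \subseteq N$ preserved by $\SL_1(B)$ satisfies: the locus $\{g \in \mathbf{SL}_1(B) : g \cdot V_{\bar K} \subseteq V_{\bar K}\}$ is Zariski-closed, contains all $K$-points, and hence is the whole group. Thus $V \otimes_K \bar K$ is $\mathbf{SL}_1(B)(\bar K)$-stable, and so it suffices to prove that the action of $\mathbf{SL}_1(B)(\bar K)$ on $N_{\bar K} := N \otimes_K \bar K$ has no proper nonzero invariant $\bar K$-subspace.

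Next I would apply Lemma \ref{lemma:changeinv} to $(B,*)$ (with $\bar K$ in place of $\C$), using that by Lemma \ref{lemma:oppositetype} the kind and type of $*$ on $B$ is determined by $\tau$. Setting $n = m_i(g-1)$, the three cases become:
\begin{itemize}
\item $*$ of first kind, orthogonal type: $B \otimes_K \bar K \cong \Mat_n(\bar K)$ with $*$ conjugate to transpose, $\mathbf{SL}_1(B)(\bar K) = \SL_n(\bar K)$, and $N_{\bar K}$ identifies with the space of symmetric matrices $\Sym^2(\bar K^n)$, with action $E \mapsto X^t E X$.
\item $*$ of first kind, symplectic type: $B \otimes_K \bar K \cong \Mat_n(\bar K)$ with $*(X) = J^{-1} X^t J$ for skew-symmetric $J$; the map $E \mapsto JE$ sends $*$-symmetric matrices isomorphically onto skew-symmetric matrices $\wedge^2(\bar K^n)$, intertwining the $\SL_n(\bar K)$-action with the standard $\wedge^2$ representation.
\item $*$ of second kind: $B \otimes_K \bar K \cong \Mat_n(\bar K) \times \Mat_n(\bar K)$ with $*$ swapping and transposing the factors, so $\mathbf{SL}_1(B)(\bar K) = \SL_n(\bar K) \times \SL_n(\bar K)$; the $*$-symmetric elements are parameterized by $E \mapsto (E, E^t)$, and the action of $(X,Y)$ becomes $E \mapsto Y^t E X$, the external tensor product of the standard representation with its dual.
\end{itemize}

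Finally I would invoke the classical fact that each of $\Sym^2(\bar K^n)$, $\wedge^2(\bar K^n)$, and the external tensor product $\bar K^n \boxtimes (\bar K^n)^{*}$ is an irreducible rational representation of the relevant group (for $n \geq 2$, which holds since $g \geq 3$). The main obstacle I anticipate is bookkeeping the identifications in each case carefully enough that the action $C \cdot E = C^* E C$ transports to the correct standard action; once this is done, irreducibility is a standard fact and no further computation is needed.
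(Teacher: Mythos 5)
Your proposal is correct and takes essentially the same approach as the paper: both reduce irreducibility over $K$ to irreducibility over an algebraic closure by Zariski density of rational points of the reductive group $\mathbf{SL}_1(B)$ (you invoke unirationality, the paper cites Borel--Springer for the same fact), and both then apply Lemma \ref{lemma:changeinv} to split into the three cases of symmetric matrices, alternating matrices, and the second-kind bilinear case. The only difference is cosmetic: you name the resulting representations as $\Sym^2$, $\wedge^2$, and an external tensor product while the paper verifies irreducibility by direct matrix computation.
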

\begin{proof}
Suppose $\tau$ (hence $^*$) is of the first kind, and choose some isomorphism $\varphi: B \otimes_K \C \to \Mat_n(\C)$ (for appropriate $n$)
so that $^*$ goes over to the involution $\nu$ in (1) or (2) of Lemma \ref{lemma:changeinv}. Applying $\varphi$ to $N \otimes_K \C$
identifies $N \otimes_K \C$ with $N' = \{E \in \Mat_n(\C)\; | \; \nu(E) = E\}$. Let $\varphi'$ be the composition of $B \to B \otimes_K \C$
followed by $\varphi$. Now, $\varphi'(\SL_1(B))$ consists of the $K$-points of a form of $\SL_n$ in $\SL_n(\C)$, and so $\varphi'(\SL_1(B))$
is Zariski dense in $\SL_n(\C)$ by \cite[Theorem A]{BoreSpri}. (See also \cite{Grot} or \cite{Rose}.)
Thus, if $\SL_n(\C)$ acts irreducibly on $N'$, so does
$\varphi'(\SL_1(B))$, and so $\SL_1(B)$ acts irreducibly on $N$. We will show that for any nonzero $E \in N'$, the $\SL_n(\C)$-orbit
spans $N'$. Since scaling $E$ does not affect the span, it is equivalent to prove this for the $\GL_n(\C)$-orbit. 

In case (1), we have $\nu(C) = C^t$, and $N'$ is the set of symmetric matrices and the action is $C \cdot E = C^t E C$.
This action is well-known to be irreducible.
In case (2), we have $\nu(C) = J C^t J^{-1}$. Notice that $J^{-1} = J^t = -J$. The set $N'$ consists of those matrices $E$ such that
$J E^t J^{-1} = E$ which is equivalent to $J^{-1} E$ being skew-symmetric. Then, we have
$$ J^{-1} (C \cdot E) = J^{-1} J C^t J^{-1} E C  = C^t (J^{-1}E) C$$
Consequently, this case reduces to the fact that the action $E \to C^t E C$ on the set of skew-symmetric matrices $E$ is
irreducible. The irreducibility of this action is also well-known.

Suppose now that $\tau$ is of second kind. We use a proof very similar to the previous cases. 
By Lemma \ref{lemma:changeinv}, there is an isomorphism
$\varphi: B \otimes_K \C \to \Mat_n(\C) \times \Mat_n(\C)$ (for appropriate $n$), so that $^*$ goes over to
the involution $\nu$ defined by $\nu(C, D) = (D^t, C^t)$. Thus, the homomorphism $\varphi$ identifies $N \otimes_K \C$ with
$N' = \{ (E, E^t) \in \Mat_n(\C) \times \Mat_n(\C) \}$.
Let $\varphi'$ be the composition of $B \to B \otimes_K \C$ followed by $\varphi$. By \cite[Theorem A]{BoreSpri},
the image $\varphi'(\SL_1(B))$ is Zariski dense in $\SL_n(\C) \times \SL_n(\C)$. Just as before, $\SL_1(B)$
acts irreducibly on $N$ if $\SL_n(\C) \times \SL_n(\C)$ acts irreducibly on $N'$. In this case, the action on $N'$ is
$$ (C, D) \cdot (E, E^t) = (C^t, D^t) (E, E^t) (D, C) = (C^t E D, D^t E^t C).$$
It is clearly irreducible.

\end{proof}

We can now show that our image contains a finite index subgroup of $\mc N^+(\frak O) = \mc G(\frak O) \cap \mc N^+$.

\begin{lemma} \label{lemma:getU+}
The intersection $\rho(\Lambda) \cap \mc N^+(\frak O)$ is of
finite index in $\mc N^+(\frak O)$.
\end{lemma}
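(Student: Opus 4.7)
The plan is to exhibit a full-rank sublattice of $\mc N^+(\frak O)$ inside $\rho(\Lambda) \cap \mc N^+(\frak O)$ by conjugating the unipotent $\rho(T_b) = \left( \begin{array}{cc} I & e_{1,1} \\ 0 & I \end{array} \right)$ from Lemma \ref{lemma:unipotents}. First I would verify $T_b \in \Lambda$: by Lemma \ref{lemma:specialtorus} the curve $b$ bounds a disc in $\mc H$, so $T_b \in \Map(\mc H, \ast)$, and $[b] \in \ker(p)$, so $T_b \in \Gamma$.

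Next I would conjugate $\rho(T_b)$ by the block-triangular elements $\left( \begin{array}{cc} (C^*)^{-1} & E \\ 0 & C \end{array} \right)$ of $\rho(\Lambda)$ produced by Proposition \ref{prop:handlebodyimage}. A direct matrix computation gives the conjugate $\left( \begin{array}{cc} I & C^* e_{1,1} C \\ 0 & I \end{array} \right)$, independent of $E$. By Proposition \ref{prop:handlebodyimage} together with Remark \ref{remark:stillzardense}, the block $C$ ranges over a subgroup $\Gamma'$ of $\SL_{m_i(g-1)}(\mc R_i)$ that is of finite index if $g \geq 4$ and at least Zariski dense if $g = 3$; in either case Borel density makes $\Gamma'$ Zariski dense in the ambient algebraic group $\mc S := \SL_1(B)$.

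The heart of the argument is to show that the additive subgroup $V \subseteq N := \{E \in B \mid E^* = E\}$ generated by the orbit $\{C^* e_{1,1} C : C \in \Gamma'\}$ has full $\Q$-rank. Since $\Gamma'$ is Zariski dense in $\mc S$, the $\Q$-span of this orbit is an $\mc S$-invariant $\Q$-subspace of $N$ containing $e_{1,1}$, so it suffices to show that $\mc S$ acts $\Q$-irreducibly on $N$. Lemma \ref{lemma:irreducibleSaction} gives absolute $K$-irreducibility, and a standard Galois-descent argument, based on the decomposition $N \otimes_\Q \bar\Q = \bigoplus_{\sigma : K \hookrightarrow \bar\Q} N \otimes_{K,\sigma} \bar\Q$ into absolutely irreducible factors permuted transitively by $\Gal(\bar\Q/\Q)$, upgrades this to $\Q$-irreducibility.

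Finally, since $\Gamma' \subseteq \SL_{m_i(g-1)}(\mc R_i)$ has entries in the order $\mc R_i$, the orbit elements $C^* e_{1,1} C$ all lie in a lattice commensurable with the integer lattice of $N$, so the unipotent subgroup $\left\{\left( \begin{array}{cc} I & v \\ 0 & I \end{array} \right) : v \in V\right\}$ is contained in $\rho(\Lambda) \cap \mc N^+$ and, being a full-rank lattice, is of finite index in $\mc N^+(\frak O)$. The main obstacle is the irreducibility upgrade from $K$ to $\Q$; the $g=3$ case poses no extra difficulty since the argument only uses Zariski density of $\Gamma'$, not that it be arithmetic.
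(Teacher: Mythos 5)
Your approach is the same as the paper's: conjugate $\rho(T_b)$ by the block-triangular elements of $\rho(\Lambda)$ provided by Proposition~\ref{prop:handlebodyimage} to obtain the orbit $\{C^* e_{1,1} C\}$, and argue that this orbit $\Q$-spans $N$. Where you diverge (and improve the exposition) is at the irreducibility step: Lemma~\ref{lemma:irreducibleSaction} gives only $K$-irreducibility of the $\SL_1(B)$-action on $N$, whereas what is actually needed is that the $\Q$-span of the orbit fills all of $N$. The paper's proof passes directly from $K$-irreducibility to the finite-index conclusion without comment; you correctly identify that a descent is required and supply the standard Galois argument, using that each $N\otimes_{K,\sigma}\bar\Q$ is absolutely irreducible (established in the proof of Lemma~\ref{lemma:irreducibleSaction}) and that $\Gal(\bar\Q/\Q)$ permutes the factors transitively, to upgrade to $\Q$-irreducibility of $N$ under $\mathrm{Res}_{K/\Q}\SL_1(B)$.

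One small technical point worth making explicit in your version: when you assert that a $\Gamma'$-invariant $\Q$-subspace is $\mc S$-invariant, the relevant algebraic group is the Weil restriction $\mathrm{Res}_{K/\Q}\SL_1(B)$, not $\SL_1(B)$ as a $K$-group, and Zariski density over $K$ does not in general imply Zariski density in the Weil restriction (an infinite cyclic subgroup of $\mathbb{G}_a(K)$ is $K$-dense but not $\Q$-dense in $\mathrm{Res}_{K/\Q}\mathbb{G}_a$). For $g\geq 4$ this is harmless because $\Gamma'$ is a finite-index subgroup of an arithmetic lattice, and Borel density applies in the $\Q$-form; for $g=3$ one should observe that the generating unitriangular pieces from Remark~\ref{remark:stillzardense} are lattices in unipotent subgroups defined over $\Q$, whence their Zariski closure in the $\Q$-form is again all of $\mathrm{Res}_{K/\Q}\SL_1(B)$. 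The paper is silent on this too; it is a worthwhile addition to nail down.
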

\begin{proof}
Let $\mc F = \rho(\Lambda) \cap \mc G(\frak O)$.
We show that the subgroup of the (abelian) group $\mc N^+(\frak O)$ generated by conjugates of 
$\left( \begin{array}{cc} I & e_{1,1} \\ 0 & I \end{array} \right)$ by $\mc F$ is of finite index in $\mc N^+(\frak O)$.
(Note that by construction, this unipotent matrix lies in $\rho(\Lambda)$.)
Using Proposition \ref{prop:handlebodyimage}, this is equivalent to the subgroup of $N(\frak O)$ generated by
the orbit of $e_{1,1}$ under the action of the finite index subgroup $G = pr(\rho(\Lambda))$ of $\SL_{m(g-1)}(\mc R)$.
By Proposition \ref{prop:handlebodyimage},
the group $G$ is Zariski dense in $\SL_1(B) \cong \SL_{g-1}(A^{op})$. Consequently, since $\SL_1(B)$
acts irreducibly by Lemma \ref{lemma:irreducibleSaction}, so does $G$, and thus the group generated by the $G$-orbit of $e_{1,1}$ 
is of finite index in $\mc N^+(\frak O)$.
\end{proof}

Using this lemma, we can now see that $\rho(\Lambda)$ contains block diagonal matrices.
In fact, we can show the following.
\begin{lemma} \label{lemma:blockdiagonal} Assume $g \geq 4$.
The image $\rho(\Lambda)$ contains a finite index subgroup of the following group:
$$\left\{ \left(\begin{array}{cc} (C^*)^{-1} & 0 \\ 0 & C \end{array} \right) \in \Mat_{2}(B)\; | \; C \in \SL_{m(g-1)}(\mc R) \right\} $$
where $\mc R$ is the order from Theorem \ref{theorem:GrunLubo}.
\end{lemma}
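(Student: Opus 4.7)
The plan is to kill the upper-right block of the matrices furnished by Proposition \ref{prop:handlebodyimage}, using the integral unipotents provided by Lemma \ref{lemma:getU+}, and then show the obstruction to doing so is a cocycle into a finite module. First, after replacing $\rho(\Lambda)$ by its finite-index subgroup $\rho(\Lambda) \cap \mc G(\frak O)$, Proposition \ref{prop:handlebodyimage} guarantees that for each $C$ in a finite-index subgroup $\Gamma_0 \subseteq \SL_{m(g-1)}(\mc R)$ one may choose some
$$\gamma_C = \left(\begin{array}{cc} (C^*)^{-1} & E_C \\ 0 & C \end{array}\right) \in \rho(\Lambda) \cap \mc G(\frak O).$$
A direct computation gives $\gamma_C \, u_C = \ell_C$, where $\ell_C = \left(\begin{array}{cc} (C^*)^{-1} & 0 \\ 0 & C \end{array}\right)$ is the desired block-diagonal target and $u_C := \left(\begin{array}{cc} I & -C^* E_C \\ 0 & I \end{array}\right) \in \mc N^+(\frak O)$. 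Thus it suffices to find a finite-index subgroup $\Gamma' \subseteq \Gamma_0$ such that $u_C \in \rho(\Lambda)$ for every $C \in \Gamma'$.

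By Lemma \ref{lemma:getU+}, the quotient $M := \mc N^+(\frak O)/(\rho(\Lambda) \cap \mc N^+(\frak O))$ is a finite abelian group; let $\bar u_C \in M$ denote the class of $u_C$. One verifies, using that $\rho(\Lambda) \cap \mc N^+$ is normal in $\rho(\Lambda)$ and that the conjugation action of $\rho(\Lambda)$ on $\mc N^+$ factors through the Levi quotient, that (i) $\bar u_C$ is independent of the choice of $\gamma_C$ above (replacing $\gamma_C$ by $\gamma_C v$ with $v \in \rho(\Lambda) \cap \mc N^+$ merely replaces $u_C$ by $v^{-1} u_C$), and (ii) there is a natural action of $\Gamma_0$ on $M$ via $C \mapsto (\text{conjugation by } \ell_C)$. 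Rewriting the identity $\gamma_{C_1}\gamma_{C_2}\gamma_{C_1 C_2}^{-1} \in \rho(\Lambda) \cap \mc N^+$ in additive notation then yields the $1$-cocycle identity
$$\bar u_{C_1 C_2} \;=\; \ell_{C_2}^{-1} \cdot \bar u_{C_1} \;+\; \bar u_{C_2}$$
in $M$; from this one checks routinely that $\Gamma' := \{C \in \Gamma_0 \mid \bar u_C = 0\}$ is a subgroup.

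To see $\Gamma'$ has finite index in $\Gamma_0$, pass first to the finite-index subgroup $\Gamma_0'' \subseteq \Gamma_0$ acting trivially on $M$; this exists because $\Aut(M)$ is finite. On $\Gamma_0''$ the cocycle identity collapses to ordinary additivity, so $\bar u|_{\Gamma_0''}$ becomes a genuine homomorphism into a finite abelian group, and its kernel has index at most $|M|$ in $\Gamma_0''$. This kernel is contained in $\Gamma'$, so $\Gamma' \cap \Gamma_0''$ has finite index in $\SL_{m(g-1)}(\mc R)$, and for every such $C$ we obtain $\ell_C = \gamma_C u_C \in \rho(\Lambda)$, proving the lemma. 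The main technical steps are the verification of the cocycle identity and the bookkeeping needed to keep $u_C$, the conjugation action, and all intermediate quantities inside $\mc N^+(\frak O)$ so that Lemma \ref{lemma:getU+} applies; this is why we restricted to $\rho(\Lambda) \cap \mc G(\frak O)$ at the outset.
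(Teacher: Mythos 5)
Your proof is correct, but it takes a genuinely different route from the paper's. The paper invokes the congruence subgroup property for the unipotent group $\mc N^+$: by \lemref{getU+}, $\rho(\Lambda) \cap \mc N^+(\frak O)$ has finite index in $\mc N^+(\frak O)$, and CSP (for which the paper cites Raghunathan) guarantees it contains $\mc N^+(\frak a)$ for some ideal $\frak a$; passing to the finite-index subgroup $\mc F' = \mc F \cap \mc G(\frak a)$ then forces the off-diagonal block $E$ (and hence $-C^*E$, since $\frak a$ is an ideal) into $\mc N^+(\frak a) \subseteq \rho(\Lambda)$, after which the matrix identity $\gamma_C u_C = \ell_C$ finishes the job immediately. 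You instead make no reference to congruence subgroups of $\mc N^+$ at all: you pass directly to the finite module $M = \mc N^+(\frak O)/(\rho(\Lambda)\cap\mc N^+(\frak O))$, observe that the obstruction $\bar u_C$ to block-diagonalizing satisfies a $1$-cocycle identity, and show by the standard ``restrict to where the action is trivial, then it's a homomorphism into a finite group'' trick that the cocycle vanishes on a finite-index subgroup. Both arguments are sound and do roughly the same work; the paper's is shorter here because $\mc N^+$ is a free abelian group, so CSP amounts to the observation that a finite-index subgroup of $\Z^k$ contains $N\Z^k$, whereas your cocycle bookkeeping (well-definedness of $\bar u_C$, the $\ell_C$-action preserving both $\rho(\Lambda)\cap\mc N^+$ and $\mc N^+(\frak O)$, etc.) is a bit more to check. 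One small thing worth making explicit: you implicitly use that $\ell_C = \gamma_C u_C \in \mc G(\frak O)$ so that conjugation by $\ell_C$ preserves $\mc N^+(\frak O)$, and that conjugation by $\ell_C$ and by $\gamma_C$ agree on the abelian group $\mc N^+$ so that it also preserves $\rho(\Lambda)\cap\mc N^+$; both hold, but they are exactly the points that make the action on $M$ well-defined, so it's worth spelling them out.
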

\begin{proof}
Let $\mc F = \rho(\Lambda) \cap \mc G(\frak O)$ which necessarily lies in $\mc P(\frak O) = \mc P \cap \mc G(\frak O)$ by
Proposition \ref{prop:handlebodyimage}. Recall that unipotent subgroups have the congruence subgroup property, so, by Lemma \ref{lemma:getU+},
there is some ideal $\frak a$ of $\frak O$ such that $\rho(\Lambda) \cap \mc N^+(\frak O)$ contains $\mc N^+(\frak a)$ (see e.g. page 303 of \cite{Ragh2}).
After passing to a finite index subgroup $\mc F'$ of $\mc F$, we can ensure that
if $\left( \begin{array}{cc} (C^*)^{-1} & E \\ 0 & C \end{array} \right) \in \mc F'$, then $\JMat{ I & E \\ 0 & I } \in \mc N^+(\frak a)$ 
and therefore $\left(\begin{array}{cc} I & - C^* E  \\ 0 & I \end{array} \right)$ also lies in 
$\mc N^+(\frak a)$. The result then follows
from Proposition \ref{prop:handlebodyimage} and the computation:
$$  \left( \begin{array}{cc} (C^*)^{-1} & E \\ 0 & C \end{array} \right)  \left(\begin{array}{cc} I & -C^* E \\ 0 & I \end{array} \right)  = 
	\left(\begin{array}{cc} (C^*)^{-1} & 0 \\ 0 &  C \end{array} \right)$$
\end{proof}

\begin{remarknum} \label{remark:blockdiagonal} For $g=3$, a similar argument using Remark \ref{remark:stillzardense} instead of Proposition \ref{prop:handlebodyimage}
will show that $\rho(\Lambda)$ contains a Zariski dense subgroup of 
$$\left\{ \left(\begin{array}{cc} (C^*)^{-1} & 0 \\ 0 & C \end{array} \right) \in \Mat_{2}(B)\; | \; C \in \SL_{m(g-1)}(\mc R) \right\}. $$
\end{remarknum}

The block diagonal matrices act in similar ways on $\mc N^+(\frak O)$ and $\mc N^-(\frak O)$ by conjugation. Applying the argument of 
Lemma \ref{lemma:getU+} but using the block diagonal matrices from Lemma \ref{lemma:blockdiagonal} and Remark \ref{remark:blockdiagonal}
in place of $\rho(\Lambda) \cap \mc G(\frak O)$, we obtain the following lemma.
\begin{lemma} \label{lemma:getU-}
The intersection $\rho(\Gamma) \cap \mc N^-(\frak O)$ is of finite index in $\mc N^-(\frak O)$.
\end{lemma}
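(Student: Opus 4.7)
The plan is to mirror the proof of Lemma \ref{lemma:getU+} essentially word-for-word, swapping the roles of $\mc N^+$ and $\mc N^-$, and replacing the seed unipotent $\rho(T_b)$ with $\rho(T_a^{-1})$. By Lemma \ref{lemma:unipotents},
$$\rho(T_a^{-1}) = \left( \begin{array}{cc} I & 0 \\ e_{1,1} & I \end{array} \right) \in \rho(\Gamma) \cap \mc N^-(\frak O),$$
so we already have one nontrivial element of the target intersection; the task is then to produce enough conjugates of it inside $\rho(\Gamma)$ to sweep out a finite-index subgroup of $\mc N^-(\frak O)$.

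To generate conjugates, I would feed $\rho(T_a^{-1})$ to the block-diagonal subgroup supplied by Lemma \ref{lemma:blockdiagonal} (or, when $g=3$, by Remark \ref{remark:blockdiagonal}). A direct calculation gives
$$\left(\begin{array}{cc} C^* & 0 \\ 0 & C^{-1}\end{array}\right) \left(\begin{array}{cc} I & 0 \\ E & I\end{array}\right) \left(\begin{array}{cc} (C^*)^{-1} & 0 \\ 0 & C\end{array}\right) = \left(\begin{array}{cc} I & 0 \\ C^{-1} E (C^*)^{-1} & I \end{array}\right),$$
so on the lower-left block the conjugation action takes the form $E \mapsto C^{-1} E (C^*)^{-1}$ on $N = \{E \in B : E^* = E\}$. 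Substituting $D = (C^*)^{-1}$ (an automorphism of $\SL_1(B)$) rewrites this as $E \mapsto D^* E D$, which is precisely the action of $\SL_1(B)$ on $N$ considered in Lemma \ref{lemma:irreducibleSaction}; by that lemma the action is irreducible over $K$.

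From here everything proceeds in parallel with Lemma \ref{lemma:getU+}. For $g \geq 4$, $pr(\rho(\Lambda))$ contains a finite-index subgroup of $\SL_{m(g-1)}(\mc R)$ by Proposition \ref{prop:handlebodyimage}, so conjugating $\rho(T_a^{-1})$ by the block-diagonal lifts produced in Lemma \ref{lemma:blockdiagonal} yields, in the lower-left block, the orbit of $e_{1,1}$ under a finite-index (in particular, Zariski-dense) subgroup of $\SL_1(B)$. By the irreducibility above, the additive group generated by this orbit is of finite index in $\mc N^-(\frak O)$, exactly as in the proof of Lemma \ref{lemma:getU+}. For $g = 3$, one uses Remark \ref{remark:blockdiagonal} in place of Lemma \ref{lemma:blockdiagonal}; Zariski density of $pr(\rho(\Lambda))$ in $\SL_1(B)$ is all that the irreducibility argument actually needs, so the same conclusion follows.

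I do not anticipate a genuine obstacle: the only things to verify carefully are (i) that the conjugating elements coming from $\rho(\Lambda)$ really preserve integrality, i.e. lie in $\mc G(\frak O)$, so that the conjugates stay in $\mc N^-(\frak O)$ (this is built into the statements of Lemma \ref{lemma:blockdiagonal} and Remark \ref{remark:blockdiagonal}), and (ii) that one invokes the congruence subgroup property for the abelian unipotent group $\mc N^-$ in the same way as in Lemma \ref{lemma:getU+} to pass from Zariski density of the orbit to finite index in $\mc N^-(\frak O)$. Both are routine, so the lemma is essentially a corollary of the machinery assembled in Sections \ref{section:liftrep} and \ref{section:unipotents}.
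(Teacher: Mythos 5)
Your argument is correct and is exactly the argument the paper has in mind; the paper's own proof of this lemma is just the one-line instruction to "apply the argument of Lemma \ref{lemma:getU+} but using the block-diagonal matrices from Lemma \ref{lemma:blockdiagonal} and Remark \ref{remark:blockdiagonal}," and you have filled in precisely those details, including the correct conjugation computation $E \mapsto C^{-1}E(C^*)^{-1}$ on the lower-left block and the observation that the change of variable $D=(C^*)^{-1}$ reduces it to the action $E\mapsto D^*ED$ treated in Lemma \ref{lemma:irreducibleSaction}. One minor point worth noting (which you get right implicitly): the seed $\rho(T_a^{-1})$ lies in $\rho(\Gamma)$ but not necessarily in $\rho(\Lambda)$, since $a$ does not bound a disc in $\mc H$ -- this is exactly why the lemma is stated with $\Gamma$ rather than $\Lambda$, in contrast to Lemma \ref{lemma:getU+}.
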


\section{The image of the mapping class group}
\label{section:image}
%!TEX root = ./MCGReps.tex
We are now ready to finish the proof of our main technical result Theorem \ref{theorem:mainresult}, 
and then to deduce Theorem \ref{theorem:maintheoremprocedural} which outlines the general procedure of obtaining arithmetic quotients of the mapping class group.
In the next section, we will apply the procedure to specific examples of $H$ and $r$ to deduce Theorem \ref{theorem:examplesofquotients},
its corollaries, and Theorem \ref{theorem:examplesofquotients2}.

\subsection{Finishing the proof of Theorem \ref{theorem:mainresult}}
First, let $\Gamma = \Gamma_{H,p}, \; \rho = \rho_{H,p}, \; \mc G = \mc G_{H,i}$, and $\mc G^1 = \mc G_{H,i}^1$.
We can now use the results of Lemmas \ref{lemma:getU+}, \ref{lemma:blockdiagonal}, \ref{lemma:getU-}
to get the following.
Let $\mc G$ be the algebraic group $\Aut_A(A^{2g-2}, \langle-,-\rangle)$. It is a reductive group but
not necessarily semisimple. Let $\mc G^1$ be the elements of reduced norm $1$ over $L$.
This subgroup $\mc G^1$ is semisimple.

First, let us see why a finite index subgroup of $\Gamma$ must map into 
$\mc G(\frak O) = \Aut_{\frak O}(\frak O^{2g-2}, \langle-,-\rangle)$.
Recall that $\rho$ is originally defined by the action of $\Gamma < \Aut(T_g)^+$ on $\overline{R} = R/[R,R]$
which is induced by the action of $\Gamma$ on $R < T_g$. After tensoring $\overline{R}$ by $\Q$
we obtain $\hat{R} \cong \Q[H]^{2g-2} \oplus \Q^2$ and the action of $\Gamma$ projects
to an action of $A^{2g-2}$ preserving $\langle-,-\rangle$. Since $\Gamma$ preserves
$\overline{R} \subset \hat{R}$, it also preserves $\overline{R} \cap A^{2g-2}$ which is
a lattice in the $K$-vector space $A^{2g-2}$. Thus, some finite index subgroup of $\Gamma$ maps into $\mc G(\frak O)$.

Recall that $\mc P$ is the parabolic subgroup of $\mc G$ which consists of those elements preserving
the submodule $M'$ in the notation
of Section \ref{section:isotropic}. Let $\mc P^1 = \mc P \cap \mc G^1$. Then $\mc P^1$ is the normalizer of 
the unipotent subgroup $\mc N^+$. The group $\mc P^1$ is a semi-direct product of
its Levi factor $\mc L$ and its unipotent radical, which is equal to $\mc N^+$.
The Levi factor $\mc L$ is isomorphic to $\GL_{g-1}(A^{op}) \cong \GL_{(g-1)m}(D^{op})$. Let
$T$ be the maximal $K$-split torus of $\mc L$ given by diagonal matrices with entries in $K$; in particular,
$T$ is isomorphic to $(K^*)^{m(g-1)}$. Viewing $\mc L$ as block diagonal matrices, $T$ is also a maximal $K$-split
torus for $\mc G^1$. I.e. the following is a maximal $K$-split torus for $\mc G^1$:
$$ \{\JMat{ (C^*)^{-1} & 0 \\ 0 & C } \;\; | \;\; C \in T \}.$$
(See classification in \cite[Table II]{Tits}.) Let $\Phi$ be a $K$-root system with respect to this
torus. Then for every $\alpha \in \Phi$, the corresponding root subgroup lies either
in $\mc N^+, \mc N^-$, or $\mc L$.

Now, when $g \geq 4$, Lemmas \ref{lemma:getU+}, \ref{lemma:blockdiagonal}, \ref{lemma:getU-} imply, therefore,
that for every $\alpha \in \Phi$, the intersection of $\rho(\Gamma)$ with the root subgroup is commensurable
to the integral points of the $K$-root subgroup. Recall also that unipotent subgroups have an affirmative
answer to the congruence subgroup problem \cite{Ragh2}, and hence a finite index subgroup of the root subgroup
contains a congruence subgroup of it. For $g = 3$, we can still get the same conclusion even though
Lemma \ref{lemma:blockdiagonal} is not valid, as explained in Remark \ref{remark:stillzardense}.

We are now in a position to use Theorem 1.2 of \cite{Ragh} which asserts that exactly in such a situation
the group generated by congruence subgroups of $U_\alpha$ as $\alpha$ ranges over $\Phi$ is an arithmetic group, i.e.
of finite index in $\mc G^1(\frak O)$. (The theorem requires that the $\Q$-rank of $\mc G^1$ is at least
$2$, and this is always true when $g \geq 3$.)
We thus deduce that $\rho(\Gamma)$ contains a finite index subgroup of
$\mc G^1(\frak O)$. By Proposition \ref{prop:GOfiniteindex}, $\mc G^1(\frak O)$ is of finite index
in $\mc G(\frak O)$, and so $\rho(\Gamma)$ and $\mc G^1(\frak O)$ are commensurable.

\subsection{Passing to mapping class groups without fixed point} \label{section:forgetmarkedpt}
We have so far only proven results for $\Mod(\Sigma, \ast)$. We now show how the homomorphism $\rho$
induces a map on $\Mod(\Sigma)$. Let $\epsilon: \Mod(\Sigma, \ast) \to \Mod(\Sigma)$ be the natural
map that forgets the fixed point. The following proposition says that after passing to a finite index
subgroup, $\rho$ factors through $\epsilon$.

\begin{prop} \label{proposition:forgetmarkedpoint}
Let $p: T_g \to H$ be a $\phi$-redundant homomorphism and 
$\rho_{H,p}: \Gamma_{H,p} \to \Aut_{\Q[H]}(\hat R, \langle -, - \rangle)$ the corresponding
homomorphism. Then, there is a finite index subgroup $\Gamma'_{H,p} < \Gamma_{H,p}$
so that $\rho_{H,p}|_{\Gamma'_{H,p}}$ factors through $\epsilon$.
\end{prop}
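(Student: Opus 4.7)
The plan is to use the Birman exact sequence to pin down $\ker(\epsilon)\cap\Gamma_{H,p}$, show that $\rho_{H,p}$ sends this kernel into a finite torsion subgroup of $\Aut_{\Q[H]}(\hat R)$, and then kill that torsion by cutting down to a Minkowski-type congruence subgroup.

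First we identify $\ker(\epsilon)\cap \Gamma_{H,p}$. Under the Dehn--Nielsen--Baer identification $\Mod(\Sigma_g,\ast)\cong \Aut(T_g)^+$, the forgetful map $\epsilon$ becomes the quotient $\Aut(T_g)^+\to \Out(T_g)^+$, whose kernel is $\Inn(T_g)$ (which maps isomorphically onto $T_g$ since $T_g$ is centerless for $g\geq 2$). An inner automorphism $\inn(\gamma)$ belongs to $\Gamma_{H,p}$ iff $p(\gamma x\gamma^{-1})=p(x)$ for every $x\in T_g$, i.e.\ iff $p(\gamma)\in Z(H)$. Hence
\[ \ker(\epsilon)\cap\Gamma_{H,p}\;=\;\Inn\bigl(p^{-1}(Z(H))\bigr). \]

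Next we compute $\rho_{H,p}$ on this subgroup. For $\gamma\in p^{-1}(Z(H))$ and $\bar r\in\overline R=R/[R,R]$, the induced map sends $\bar r$ to $\overline{\gamma r\gamma^{-1}}$, which is, by definition of the $\Z[H]$-module structure on $\overline R$, left multiplication by the coset $\gamma R = p(\gamma)\in H$. Since $p(\gamma)\in Z(H)$, this left multiplication is $\Q[H]$-linear on $\hat R$, and its order divides the order of $p(\gamma)$ in the finite group $H$. Consequently $\rho_{H,p}\bigl(\ker(\epsilon)\cap\Gamma_{H,p}\bigr)$ is a finite torsion subgroup of $\Aut_{\Q[H]}(\hat R)$.

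To conclude, we apply Minkowski's lemma. Because $\Gamma_{H,p}$ preserves the integer lattice $\overline R\subset\hat R$, the image $\rho_{H,p}(\Gamma_{H,p})$ sits inside $\GL(\overline R)\cong\GL_N(\Z)$ for $N=\rk_\Z\overline R$. The kernel of the reduction-mod-$3$ map $\GL_N(\Z)\to \GL_N(\Z/3\Z)$ is torsion-free, so let $\Gamma'_{H,p}$ be the $\rho_{H,p}$-preimage of this congruence subgroup in $\Gamma_{H,p}$; it has finite index. Then $\rho_{H,p}(\Gamma'_{H,p}\cap\ker(\epsilon))$ is simultaneously torsion (by the previous paragraph) and contained in the torsion-free group $\rho_{H,p}(\Gamma'_{H,p})$, hence trivial. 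Therefore $\rho_{H,p}|_{\Gamma'_{H,p}}$ descends through $\epsilon$ to a well-defined homomorphism on the finite-index subgroup $\epsilon(\Gamma'_{H,p})\leq\Mod(\Sigma_g)$. The only nontrivial point is the identity $\rho_{H,p}(\inn(\gamma))=p(\gamma)\cdot(-)$, which is essentially definitional once one recalls how conjugation by $T_g$ produces the $\Z[H]$-structure on $\overline R$; everything else is a packaging of Birman's sequence with Minkowski's lemma.
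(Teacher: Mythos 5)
Your proof is correct and follows essentially the same route as the paper's: identify the kernel of $\epsilon$ via the Birman exact sequence, show that $\rho_{H,p}$ sends it to torsion elements of a group preserving the lattice $\overline{R}$, and then cut down to the preimage of a torsion-free finite-index subgroup (the paper just invokes the existence of such a subgroup of $\Sp(2+(2g-2)|H|,\Z)$, while you instantiate it with the mod-$3$ congruence subgroup, which is only a cosmetic difference). The one place where you are more precise than the paper is in pinning down $\ker(\epsilon)\cap\Gamma_{H,p}$ exactly as $\Inn(p^{-1}(Z(H)))$ and identifying its $\rho_{H,p}$-image as left multiplication by central elements of $H$, hence a \emph{finite} subgroup; the paper contents itself with noting that for $c(\alpha)\in\Gamma_{H,p}$ one has $\alpha^n\in R$ for some $n$, so the image element is torsion. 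Both versions suffice, and the extra precision in yours is a pleasant refinement rather than a different idea.
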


\begin{proof}
We recall the Birman Exact Sequence for $\Mod(\Sigma, \ast)$: (\cite{FarbMarg, Birm})
$$ 1 \to T_g \overset{c}{\to} \Mod(\Sigma, \ast) \overset{\epsilon}{\to} \Mod(\Sigma) \to 1. $$
Here $T_g$ maps to the ``point-pushing'' mapping classes and each $c(\alpha)$ for $\alpha \in T_g$
acts as conjugation by $\alpha$ on the fundamental group. For $\rho$ to factor through
$\epsilon$ on some subgroup $\Gamma' < \Mod(\Sigma, \ast)$, we need 
$\epsilon(\Gamma' \cap c(T_g))$ to act trivially on $R/[R,R]$. Hence,
it suffices that $\Gamma' \cap c(T_g) \subseteq c(R)$.

Let $\alpha \in T_g$ and suppose $c(\alpha) \in \Gamma_{H,p}$.
Since $R$ is of finite index, there is some $n$ such that $\alpha^n \in R$,
and so $c(\alpha)^n$ acts trivially on $R/[R,R]$ by conjugation. Thus,
$\rho(\Gamma_{H,p} \cap c(T_g))$ consists entirely of torsion elements.
The image $\rho(\Gamma_{H,p})$ lies in $\Aut(\overline{R}, \langle -,-\rangle_{\Sp})$
which is isomorphic to $\Sp(2 + (2g-2) |H|, \Z)$ and contains some finite
index torsion-free subgroup $G$. Thus, $\Gamma'_{H,p} = \rho^{-1}(G) \cap \Gamma_{H,p}$
is the required subgroup.
\end{proof}

\subsection{Producing virtual quotients from irreducible representations of finite groups}
Theorem \ref{theorem:mainresult} and Section \ref{section:forgetmarkedpt} establish Theorem
\ref{theorem:maintheoremprocedural} which presents a procedure
to obtain arithmetic quotients and which we summarize now. This theorem will be convenient
for use in Section \ref{section:finitegroups} and possibly the future.

Let $\Sigma_g$ be a closed surface, 
let $H$ be some finite group generated by $d(H) < g$ generators, and let 
$A$ be a simple component of $\Q[H]$. This is the required input for the theorem. By Lemma \ref{lemma:algdecomp},
the standard involution $\tau$ on $\Q[H]$ defined by $h \mapsto h^{-1}$ restricts to an involution
on $A$ which, by abuse of notation, we will also call $\tau$. 

Set $M = A^{2g-2}$ with 
free basis $x_1, \dots, x_{g-1}, y_1, \dots y_{g-1}$ and let $\langle-,-\rangle$
be the skew-Hermitian form sesquilinear relative to $\tau$ such that
$\langle x_i, y_j \rangle = \delta_{ij}, 
\langle y_i, x_j \rangle = -\delta_{ij}, \langle x_i, x_j \rangle = 0,$ and $\langle y_i, y_j \rangle = 0$.
Let $\mc G = \Aut_A(A^{2g-2}, \langle-,-\rangle)$, and set 
$\Omega_{H, A} = \mc G^1(\frak O) = \mc G^1 \cap \Aut_{\frak O}(\frak O^{2g-2}, \langle-,-\rangle)$ where
$\frak O$ is the order in $A$ which is the image of $\Z[H]$.

To obtain our representation, we need some $\phi$-redundant homomorphism $p: T_g \to H$.
Let $\phi: T_g \to F_g$ be the map on fundamental groups induced by the inclusion
$\Sigma_g \hookrightarrow \mc H_g$. Since $d(H) < g$, there is some epimorphism $p': F_g \to H$
which maps at least one free generator to the trivial element. The composition $p = p' \circ \phi$
is $\phi$-redundant. Recall that $R$ is the kernel of $p$ and $\overline{R} = R/[R,R]$. From
the discussion in Section \ref{section:liftrep}, the following subgroup of $\Mod(\Sigma_g, \ast) \cong \Aut(T_g)^+$
$$\Gamma = \{f \in \Aut(T_g)^+ \; | \;  p \circ f = p \}$$
has a well-defined action on $\overline{R}$, hence on $\hat{R} \cong \Q[H]^{2g-2} \oplus \Q^2$,
and hence on $A^{2g-2}$. This induces a homomorphism $\Gamma \to \mc G$ by Lemmas
\ref{lemma:rhotomoduleaut} and \ref{lemma:liftpreserveform}.
By Theorem \ref{theorem:mainresult}, there is a finite index subgroup $\Gamma' < \Gamma$
and a homomorphism $\rho: \Gamma' \to \Omega_{H,A}$ with finite index image. By 
Lemma \ref{proposition:forgetmarkedpoint}, there is a finite index subgroup $\Gamma_{H,A} < \Mod(\Sigma_g)$
and a representation $\rho_{H,A}: \Gamma_{H,A} \to \Omega_{H,A}$ with finite index image.
This establishes the theorem.

\subsection{Some remarks on the uniqueness of $\rho_{H,r}$} \label{subsection:rhouniqueness}
Notice that in the above $\Omega_{H,r} = \Omega_{H, A}$ is uniquely determined by $H$ and $r$, but
the representation $\rho_{H,r} = \rho_{H,A}$ depends on the choice of the surjective representation $p: T_g \to H$. One way
to get other epimorphisms $p$ is via the action of the group $\Aut(T_g) \times \Aut(H)$
by $(\psi, \varphi) \cdot p = \varphi \circ p \circ \psi^{-1}$. The corresponding induced
representations $\rho$ are ``equivalent'' in the following sense. Let $\inn(\psi): \Aut(T_g) \to \Aut(T_g)$
denote the inner automorphism $\inn(\psi)(f) = \psi f \psi^{-1}$.
The proof of the following lemma is elementary, so we omit it.

\begin{lemma} \label{lemma:equivalentrhos}
Suppose $p_1: T_g \to H$ is some epimorphism and $p_2 = \varphi p_1 \psi^{-1}$ for some
$(\psi, \varphi) \in \Aut(T_g) \times \Aut(H)$. Let $R_i = \ker(p_i)$,let  $\hat{R}_i = (R_i/[R_i,R_i]) \otimes_\Z \Q$, and
let $\rho_{H, p_i}: \Gamma_{H, p_i} \to \Aut_{\Q[H]}(\hat{R}_i, \langle-,-\rangle)$ be the
corresponding representation as presented in Section \ref{section:liftrep}.
Then, there is an isomorphism 
$\epsilon: \Aut_{\Q[H]}(\hat{R}_1) \to \Aut_{\Q[H]}(\hat{R}_2)$ such that $\rho_{H, p_2} = \epsilon \circ \rho_{H, p_1} \circ \inn(\psi)$.
\end{lemma}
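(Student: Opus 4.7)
The plan is to exhibit the isomorphism $\epsilon$ as conjugation by the isomorphism $\hat R_1 \to \hat R_2$ induced by $\psi$ itself, and then verify the claimed identity by unwinding the definitions of $\rho_{H,p_i}$. The key observation is that the twist by $\varphi$ that is forced upon the induced map $\hat R_1 \to \hat R_2$ cancels out when one conjugates a $\Q[H]$-linear endomorphism.

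First I would note that $R_2 = \ker(p_2) = \ker(\varphi \circ p_1 \circ \psi^{-1}) = \psi(R_1)$, so $\psi$ restricts to a group isomorphism $R_1 \to R_2$ and descends to isomorphisms $\bar\psi: \overline{R}_1 \to \overline{R}_2$ and $\hat\psi: \hat R_1 \to \hat R_2$ of $\Q$-vector spaces. Next I would check that $\hat\psi$ intertwines the $H$-actions up to the twist $\varphi$, i.e.
\[
\hat\psi(h \cdot m) = \varphi(h) \cdot \hat\psi(m), \qquad h \in H,\ m \in \hat R_1.
\]
Indeed, for $r \in R_1$ and any lift $t \in T_g$ of $h$ under $p_1$, the element $\psi(t)$ lifts $\varphi(h)$ under $p_2$, since $p_2(\psi(t)) = \varphi(p_1(t)) = \varphi(h)$. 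Applying $\psi$ to $trt^{-1}$ therefore computes the $\varphi(h)$-action on $\overline{\psi(r)}$.

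Now I would define
\[
\epsilon: \Aut_{\Q[H]}(\hat R_1) \to \Aut_{\Q[H]}(\hat R_2), \qquad \epsilon(\alpha) = \hat\psi \circ \alpha \circ \hat\psi^{-1}.
\]
For $\alpha \in \Aut_{\Q[H]}(\hat R_1)$ and $m \in \hat R_2$, the $\varphi^{-1}$-semilinearity of $\hat\psi^{-1}$, the $\Q[H]$-linearity of $\alpha$, and the $\varphi$-semilinearity of $\hat\psi$ combine as
\[
\epsilon(\alpha)(h\cdot m) = \hat\psi\bigl(\alpha(\varphi^{-1}(h)\cdot \hat\psi^{-1}(m))\bigr) = \hat\psi\bigl(\varphi^{-1}(h)\cdot \alpha \hat\psi^{-1}(m)\bigr) = h \cdot \epsilon(\alpha)(m),
\]
so $\epsilon(\alpha) \in \Aut_{\Q[H]}(\hat R_2)$, and $\epsilon$ is manifestly a group isomorphism with inverse $\beta \mapsto \hat\psi^{-1}\beta\hat\psi$.

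Finally I would verify $\rho_{H,p_2} \circ \inn(\psi) = \epsilon \circ \rho_{H,p_1}$. A direct check shows $\inn(\psi)$ carries $\Gamma_{H,p_1}$ into $\Gamma_{H,p_2}$, since $p_2 \circ (\psi f \psi^{-1}) = \varphi p_1 \psi^{-1}\psi f \psi^{-1} = \varphi p_1 \psi^{-1} = p_2$ when $p_1\circ f = p_1$. For $f \in \Gamma_{H,p_1}$ and $\bar r \in \overline{R}_2$, unwinding both sides gives
\[
\epsilon(\rho_{H,p_1}(f))(\bar r) = \hat\psi\bigl(\overline{f(\psi^{-1}(r))}\bigr) = \overline{(\psi f \psi^{-1})(r)} = \rho_{H,p_2}(\psi f \psi^{-1})(\bar r),
\]
as required. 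The only place where care is needed is the semilinearity bookkeeping in step 2, since $\hat\psi$ is not literally $\Q[H]$-linear; however, once this twist is recorded, everything else is formal.
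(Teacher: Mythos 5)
The paper explicitly omits a proof of this lemma (``The proof of the following lemma is elementary, so we omit it''), so there is no argument in the paper to compare against; what you supply is the expected proof and it is essentially correct. Your plan --- take $\epsilon$ to be conjugation by the $\Q$-linear isomorphism $\hat\psi:\hat R_1\to\hat R_2$ induced by $\psi$, check that $\hat\psi$ is $\varphi$-semilinear for the $H$-actions, and observe that the two twists by $\varphi$ cancel when one conjugates a $\Q[H]$-endomorphism --- is the natural one, and each of your verifications (that $R_2=\psi(R_1)$, the semilinearity, that $\epsilon$ lands in $\Aut_{\Q[H]}(\hat R_2)$, that $\inn(\psi)$ carries $\Gamma_{H,p_1}$ into $\Gamma_{H,p_2}$, and the final identity) goes through.

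One thing worth flagging: what you actually prove is $\rho_{H,p_2}\circ\inn(\psi)=\epsilon\circ\rho_{H,p_1}$, equivalently $\rho_{H,p_2}=\epsilon\circ\rho_{H,p_1}\circ\inn(\psi^{-1})$, whereas the lemma as printed reads $\rho_{H,p_2}=\epsilon\circ\rho_{H,p_1}\circ\inn(\psi)$. With the paper's convention $\inn(\psi)(f)=\psi f\psi^{-1}$, the printed formula would require $\inn(\psi)$ to send $\Gamma_{H,p_2}$ into $\Gamma_{H,p_1}$, but, as you check, it goes the other way. So the version you prove is the correct one; the lemma's displayed formula seems to have $\psi$ where $\psi^{-1}$ is meant, and your proof quietly corrects this. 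It would be worth stating this discrepancy explicitly rather than silently restating the identity.
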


Recall that our maps $T_g \to H$ are $\phi$-redundant, so a natural question is: how
transitively does $\Aut(T_g) \times \Aut(H)$ act on them?
Suppose $p_1, p_2: T_g \to H$ are respectively $\phi_1$-redundant and $\phi_2$-redundant, and
$p_1', p_2': F_g \to H$ are the respective induced maps.
Recall from the introduction and Section \ref{section:isotropic} that all maps $\phi: T_g \to F_g$ are induced by some identification $\Sigma \cong \partial \mc H$.
This fact combined with Theorem \ref{theorem:handlebodytoautFg} implies that $p_1$ and $p_2$ are
are in the same $\Aut(T_g) \times \Aut(H)$-orbit if $p_1'$ and $p_2'$ are in the same
$\Aut(F_g) \times \Aut(H)$-orbit.

The question then reduces to the transitivity of the $\Aut(F_g) \times \Aut(H)$ action
on redundant homomorphisms $F_g \to H$. In \cite[Conjecture 6.3]{Lubo}, it was conjectured
that this action is transitive. This is known to be true when $H$ is solvable \cite{Dunw} and when $H$ is simple
\cite{Gilm, Evan} (see also \cite[Theorem 6.6]{Lubo}). If the conjecture is true, then our homomorphisms
$\rho_{H,r}$ depend only on $(H,r)$ up to the above equivalence.

A more general problem is to understand the $\Aut(T_g) \times \Aut(H)$-orbits 
of all epimorphisms, $\Epi(T_g, H)$, not just those which are $\phi$-redundant. One obstruction to transitivity is
$\HH_2(H, \Z)$. Given an epimorphism $T_g \to H$, there is an induced
map $\HH_2(T_g, \Z) \to \HH_2(H, \Z)$ which specifies a well-defined element
of $\HH_2(H, \Z)/\Out(H)$ up to the $\Aut(T_g) \times \Aut(H)$-action.
Those epimorphisms $T_g \to H$ factoring through $F_g$ and, in particular,
$\phi$-redundant homomorphisms induce the ($\Out(H)$-orbit of the) trivial element in $\HH_1(H, \Z)/\Out(H)$.
Theorem 6.20 of \cite{DunfThur} shows that for any fixed finite group $H$
and sufficiently large $g$, the $\Aut(T_g) \times \Aut(H)$-orbits of $\Epi(T_g, H)$
are in bijective correspondence with $\HH_2(H, \Z)/\Out(H)$. 
We make the following conjecture.

\begin{conjecture} If $H$ is finite and $g > d(H)$, then the $\Aut(T_g) \times \Aut(H)$-orbits
of $\Epi(T_g, H)$ are in one-to-one correspondence with $\HH_2(H, \Z)/\Out(H)$.
\end{conjecture}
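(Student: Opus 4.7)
The plan is to construct a natural invariant $\Phi: \Epi(T_g, H)/(\Aut(T_g) \times \Aut(H)) \to \HH_2(H,\Z)/\Out(H)$ and to prove it is a bijection under the hypothesis $g > d(H)$. I would define $\Phi$ by $\Phi(p) = p_*[\Sigma_g]$, where $[\Sigma_g]$ is a fixed generator of $\HH_2(T_g,\Z) \cong \Z$. Naturality of group homology, together with the facts that inner automorphisms of $H$ act trivially on $\HH_2(H,\Z)$ and that orientation-reversing elements in $\Aut(T_g)$ act by $-1$ on $[\Sigma_g]$ (a sign either absorbed into the $\Out(H)$-action, or handled by restricting to $\Aut(T_g)^+$), shows that $\Phi$ descends to the orbit set. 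This is the Dunfield--Thurston invariant, and Theorem 6.20 of \cite{DunfThur} already confirms $\Phi$ is a bijection once $g$ is sufficiently large; the conjecture sharpens this threshold to $g > d(H)$.

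For surjectivity at $g > d(H)$, Hopf's formula $\HH_2(H,\Z) \cong (R_0 \cap [F,F])/[F,R_0]$ applied to a presentation $1 \to R_0 \to F = F_{d(H)} \to H \to 1$ represents each class by a word $w = \prod_{j=1}^k [u_j, v_j] \in R_0$. The challenge is to show that $w$ can be rewritten so that $k \leq g$, i.e., that the ``commutator genus'' of a representative is controlled by $d(H) + 1$. Since $\HH_2(H,\Z)$ is finite for finite $H$, this reduces to bounding the commutator genus on a finite generating set, for which one can appeal to explicit constructions of central extensions. An epimorphism $p: T_g \to H$ with $\Phi(p)$ equal to the chosen class is then constructed by using $d(H)$ handles to ensure surjectivity and the remaining $g - d(H) \geq 1$ handles (augmented by stabilization if $k$ is large) to encode $w$.

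The main obstacle is injectivity: given $p_1, p_2 \in \Epi(T_g, H)$ with $\Phi(p_1) = \Phi(p_2)$, I must produce $(\psi, \varphi) \in \Aut(T_g) \times \Aut(H)$ with $p_2 = \varphi \circ p_1 \circ \psi^{-1}$. My proposed strategy is stabilization-destabilization: first stabilize $p_1, p_2$ to epimorphisms $\tilde p_1, \tilde p_2: T_{g_0} \to H$ (by adding handles mapping to the identity) with $g_0$ large enough that Dunfield--Thurston applies, and obtain an equivalence $(\tilde\psi, \varphi) \in \Aut(T_{g_0}) \times \Aut(H)$ between them. Descent to $T_g$ is the hard step: the $g_0 - g$ stabilization handles must be cancelled consistently. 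The hypothesis $g > d(H)$ should be exploited by showing that each $p_i$ admits, after an automorphism, a ``redundant'' handle $(a_j, b_j) \subseteq \ker p_i$ (a surface-group analogue of the redundancy used in the treatments of the corresponding free-group question in \cite{Dunw, Evan, Gilm}), and this redundant handle serves as a reservoir for absorbing the cancellations — in spirit parallel to how Theorem \ref{theorem:handlebodytoautFg} is used in the present paper. Implementing this cleanly, so that the destabilization preserves the $\HH_2$-class rather than merely producing a different orbit sharing the same invariant, is the key technical obstacle; it likely requires careful bookkeeping via the handlebody group and Stallings-type surgery on associated 3-manifolds, in combination with an analysis of the action of $\Mod(\Sigma_g)$ on the connected components of $\Hom(T_g, H)$.
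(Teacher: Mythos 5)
This statement is a \emph{conjecture} in the paper; the authors explicitly present it as an open problem sharpening Theorem 6.20 of \cite{DunfThur}, which establishes the bijection only for $g$ sufficiently large relative to $H$. The paper contains no proof, so there is nothing to compare your attempt against.

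As for your proposal itself: it is a reasonable research plan, and you correctly identify the Dunfield--Thurston invariant $p \mapsto p_*[\Sigma_g]$ as the right map, but as you yourself say twice, it is not a proof. Both halves have genuine gaps. For surjectivity you assert that any class in $\HH_2(H,\Z)$ admits a representative word of commutator genus at most $g - d(H)$ (so that $d(H)$ handles give surjectivity and the rest encode the $\HH_2$-class), but you offer no argument for this bound; you remark that one may ``augment by stabilization if $k$ is large,'' which is circular, since stabilizing raises the genus and you need the epimorphism on $T_g$ for the fixed $g$ in question. For injectivity, the stabilize/apply-Dunfield--Thurston/destabilize scheme leaves the entire burden on the destabilization step, which you acknowledge is ``the key technical obstacle.'' This is precisely where the conjecture is hard: once two epimorphisms are stabilized to a common large genus and made equivalent there, one must cancel the extra handles while staying inside a single $\Aut(T_g) \times \Aut(H)$-orbit, and no mechanism for doing so at the threshold $g = d(H) + 1$ is supplied. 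Your analogy with the redundancy used for $F_g \to H$ in \cite{Dunw, Gilm, Evan} is apt and is likely the right intuition (the paper's own \cite[Conjecture 6.3]{Lubo} discussion points the same way), but the surface-group case has the additional $\HH_2$ obstruction to track through the destabilization, and your sketch does not show that the invariant is preserved under the handle cancellations. In short: the strategy is plausible and in the spirit of what is known, but no step past the definition of $\Phi$ is actually established.
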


Note that for any surjective homomorphism $p:T_g \to H$ and irreducible
$\Q$-representation $r$ of $H$, there is an associated representation $\rho_{H,r}$
even if $p$ is not $\phi$-redundant. The assumption that $p$ is $\phi$-redundant is
only necessary for us to establish that the image is arithmetic.
If the above conjecture is true, then, up to the natural equivalence, $\rho_{H,r}$ is
uniquely determined by the pair $(H, r)$ and an element of $\HH_2(H, \Z)/\Out(H)$.

\subsection{Remarks on property $(T)$ and Theorem \ref{theorem:mainresult} for the case $g = 2$} 
\label{subsection:propT}
Another remark which seems to be worth mentioning is that, except for the case $g=2$, all
the virtual arithmetic quotients we obtain for $\Mod(\Sigma_g)$ are lattices in higher-rank
semisimple Lie groups (whose simple factors are of $\R$-rank $\geq 2$) and hence have Kazhdan's property $(T)$.
We do not know if this is the case for the analogous virtual quotient of $\Aut(F_n)$ constructed
in \cite{GrunLubo}. There it is possible that the image has an infinite abelian quotient. See
\cite{GrunLubo} for a detailed discussion.

\subsection{Image of subgroups in the Johnson filtration} \label{subsection:johnson}
As promised in the introduction, we show that the groups in the Johnson filtration
are ``very different'' from the new Torelli groups we define here. Precisely, we prove the following.
\begin{prop} \label{prop:torelliimage}
Assume $g \geq 3$.
Let $H$ be a nontrivial finite group, $r$ a nontrivial irreducible $\Q$-representation of $H$,
and let $\Gamma=\Gamma_{H, r}$ and $\rho = \rho_{H,r}$ be as in 
Theorem \ref{theorem:maintheoremprocedural}. Let $\frak T = \frak T_{H, r} = \ker(\rho_{H,r})$
and $\mc I$ any element in the Johnson filtration of $\Mod(\Sigma)$. Then $\mc I \cdot \frak T$
is of finite index in $\Mod(\Sigma)$.
\end{prop}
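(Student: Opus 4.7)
The plan is to reduce the problem to showing that $\rho_{H,r}(\mc I_1 \cap \Gamma_H)$ is infinite, where $\mc I_1$ is the classical Torelli group, and then to invoke Margulis's normal subgroup theorem. Set $\rho := \rho_{H,r}$ and $\frak T := \frak T_{H,r} = \ker\rho$. Since $\Gamma_H$ has finite index in $\Mod(\Sigma_g)$, the proposition is equivalent to the statement that $(\mc I \cap \Gamma_H)\cdot \frak T$ has finite index in $\Gamma_H$, which in turn is equivalent to $\rho(\mc I \cap \Gamma_H)$ having finite index in $\rho(\Gamma_H)$. Normality of $\mc I$ in $\Mod(\Sigma_g)$ gives normality of $\mc I \cap \Gamma_H$ in $\Gamma_H$, and hence normality of $\rho(\mc I \cap \Gamma_H)$ in $\rho(\Gamma_H)$. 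By Theorem~\ref{theorem:maintheoremprocedural} and the discussion in Subsection~\ref{subsection:propT}, for $g \geq 3$ the group $\rho(\Gamma_H)$ is commensurable with an arithmetic lattice in a semisimple Lie group each of whose almost-simple factors has real rank at least $2$, so Margulis's normal subgroup theorem applies: $\rho(\mc I \cap \Gamma_H)$ is either finite (and contained in the finite center of $\rho(\Gamma_H)$) or of finite index. The task becomes ruling out the finite case.

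Next, writing $\mc I = \mc I_k$, I would use the standard commutator inclusion $[\mc I_j, \mc I_\ell] \subseteq \mc I_{j+\ell}$ in the Johnson filtration, from which we inductively obtain $\gamma_k(\mc I_1) \subseteq \mc I_k$, where $\gamma_k$ denotes the $k$-th term of the lower central series. This reduces the entire proposition to the single claim that $\rho(\mc I_1 \cap \Gamma_H)$ is infinite. Indeed, once this is established, Margulis forces $\rho(\mc I_1 \cap \Gamma_H)$ to have finite index in $\rho(\Gamma_H)$, hence to be Zariski dense in the ambient algebraic group $\mc G^1_{H,i}$ by Borel density. Since $\mc G^1_{H,i}$ is semisimple and therefore perfect, every term of the lower central series of a Zariski dense subgroup remains Zariski dense, so $\rho(\gamma_k(\mc I_1 \cap \Gamma_H)) = \gamma_k(\rho(\mc I_1 \cap \Gamma_H)) \subseteq \rho(\mc I_k \cap \Gamma_H)$ is Zariski dense, and in particular infinite, completing the argument.

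The main obstacle is this final step: verifying that $\rho(\mc I_1 \cap \Gamma_H)$ is infinite whenever $H$ is nontrivial. My main approach would be Margulis superrigidity. If $\rho(\mc I_1 \cap \Gamma_H)$ were finite, it would lie in the finite center of $\rho(\Gamma_H)$, and $\rho$ would factor, modulo finite kernel and central cokernel, through the symplectic quotient $\Gamma_H / (\mc I_1 \cap \Gamma_H) \hookrightarrow \Sp(2g, \Z) = \Mod(\Sigma_g)/\mc I_1$. Superrigidity would extend such a factorization to an algebraic homomorphism $\Sp_{2g} \to \mc G^1_{H,i}$ with image of finite index in $\rho(\Gamma_H)$, which is impossible: by Theorem~\ref{theorem:Gtype} and the examples of Section~\ref{section:finitegroups}, for nontrivial $(H, r)$ the semisimple group $\mc G^1_{H,i}$ is typically of strictly larger rank than $\Sp_{2g}$, and in any case it has the wrong isogeny type to be an image of $\Sp_{2g}$. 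A more hands-on alternative, avoiding superrigidity, would be to exhibit an explicit element of $\mc I_1 \cap \Gamma_H$ with infinite-order $\rho$-image; for instance, choosing a separating simple closed curve $c$ on $\Sigma$ bounding a subsurface $S_+$ of genus $d(H)$ whose fundamental group surjects onto $H$, and letting $N$ be a positive integer with $T_c^N \in \Gamma_H$, the mapping class $T_c^N$ lies in $\mc I_1 \cap \Gamma_H$ and lifts to a product of Dehn twists along the components of the preimage of $c$, each of which is non-separating in $\tilde\Sigma$ since the preimage of $S_+$ is connected; the resulting product acts as a nontrivial unipotent on $\HH_1(\tilde\Sigma, \Q) = \hat R$, which has infinite order in $\mc G^1_{H,i}$.
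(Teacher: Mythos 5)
Your overall skeleton --- reduce to the classical Torelli group $\mc I_1$, show $\rho(\mc I_1 \cap \Gamma_H)$ cannot be finite, then bootstrap to general $\mc I_k$ --- parallels the paper's, and both proofs live entirely inside Margulis's theory of higher-rank lattices, but the decisive step is handled by a genuinely different mechanism. The paper never exhibits an infinite-order element in the image: it applies the normal subgroup theorem twice, first in $\Gamma/(\mc I_1 \cap \Gamma)$ inside $\Sp(2g,\Z)$ and then, in the remaining branch, in $\Gamma/(\mc I_1 \cap \Gamma \cap \frak T)$, concluding that otherwise $\frak T$ and $\mc I_1\cap\Gamma$ would be commensurable, which it then rejects. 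Your Route 1 instead feeds the resulting factorization through $\Sp(2g,\Z)$ into Margulis superrigidity, obtaining an algebraic homomorphism $\Sp_{2g} \to \mc G^1_{H,i}$ which is then ruled out. Both variants ultimately rest on the same fact --- that $\Omega_{H,r}$ is not commensurable with $\Sp(2g,\Z)$ --- and neither spells it out; but your phrasing ``typically of strictly larger rank'' is actually false in the simplest case: for $H=\Z/2\Z$ and $r$ the sign character one has $\mc G^1 = \Sp_{2g-2}$ over $\Q$, of \emph{smaller} rank. The correct observation is a dimension comparison: any nontrivial algebraic quotient of $\Sp_{2g}$ has Lie algebra $\mathfrak{sp}_{2g}$, of real dimension $g(2g+1)$, whereas for nontrivial $(H,r)$ and $g\geq 3$ the real Lie group in which $\Omega_{H,r}$ sits as a lattice never has this dimension.

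Route 2 as stated has a genuine gap. Connectedness of the preimage of $S_+$ does \emph{not} force the components $\tilde c_j$ of the preimage of $c$ to be nonseparating in $\tilde\Sigma$; whether $\tilde c_j$ separates is governed by the opposite side. If $p(c)$ generates $p(\pi_1(S_-))$ --- in particular if $p$ kills $\pi_1(S_-)$, which can perfectly well occur since $S_-$ has genus only $g - d(H)$ --- then each component of the preimage of $S_-$ has a single boundary circle, every $\tilde c_j$ separates that copy from the rest of $\tilde\Sigma$, and the lift of $T_c^N$ acts trivially on $\HH_1(\tilde\Sigma,\Q)$. Even when the $\tilde c_j$ are nonseparating, you would still have to verify that the projection of $[\tilde c_j]$ to the chosen isotypic piece $M_i$ (rather than merely to $\hat R$) is nonzero, which depends on the $\Q[H]$-module generated by $[\tilde c_j]$ and is left unaddressed. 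So Route 1, with the dimension comparison supplied, is the one that works.
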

Before proving the proposition, note that this implies $\rho_{H,r}(\mc I)$ has finite index image
in $\Omega_{H,r}$ when $H$ is nontrivial. Thus, our main results imply also that the classical
Torelli group and the subgroups in the Johnson filtration have a rich collection of arithmetic quotients.

\begin{proof}
First, suppose that $\mc I$ is the classical Torelli group, i.e. the kernel of the standard homomorphism
$\Mod(\Sigma) \to \Sp(2g, \Z)$. Let $\mc I' = \mc I \cap \Gamma$. Then $\mc I'$ and $\frak T$ are both
normal in $\Gamma$. Consider $\mc I \cdot \frak T/\mc I$ which is a normal subgroup of $\Gamma/\mc I$,
a finite index subgroup of $\Sp(2g, \Z)$. By Margulis' normal subgroup theorem \cite[Theorem (4')]{Marg},
it is either finite or of finite index. In the latter case, it follows that $\mc I \cdot \frak T$ is of finite index in
$\Mod(\Sigma)$.

Suppose then that $\mc I \cdot \frak T/\mc I$ is finite. Then $\hat{\frak T} = \mc I \cap \frak T$ is of finite
index in $\frak T$. Consider now $\hat{\frak T} \mc I / \hat{\frak T}$ in $\Gamma/\hat{\frak T}$. The latter group
is commensurable to a higher rank lattice, so as above $\hat{\frak T} \mc I / \hat{\frak T}$ is finite or of finite index. If the latter holds,
we are done. If the former holds, then $\mc I \cap \hat{\frak T}$ is of finite index in $\mc I$, and so 
$\frak T$ and $\mc I$ are commensurable, but that is impossible.

Now, let $\mc I$ be any group in the Johnson filtration and let $\mc I_0$ be the classical Torelli group.
Let $\mc I_0' = \Gamma \cap \mc I_0$ and $\mc I' = \Gamma \cap \mc I$.
The quotient $\mc I_0/\mc I$ is nilpotent, and thus $\mc I_0'/\mc I'$ is nilpotent. Let $\Pi: \Gamma \to \Gamma/\frak T$
be the quotient map.
The above two paragraphs show that $\Pi(\mc I_0')$ is of finite index in $\Gamma/\frak T$. Thus,
$\Pi(\mc I_0')/\Pi(\mc I')$ is a nilpotent quotient of a higher rank lattice and hence finite. 
This implies that $\mc I \cdot \frak T$ is of finite index in $\Mod(\Sigma)$.
\end{proof}

\section{Finite groups and arithmetic quotients}
\label{section:finitegroups}
%!TEX root = ./MCGReps.tex
As explained in the introduction, our main result gives us a way to associate a virtual epimorphism $\rho$ from $\Mod(\Sigma)$
to an arithmetic group $\Omega$ from the choice of a finite group $H$ and irreducible $\Q$-representation $r$ or, equivalently,
a simple component $A$ of $\Q[H]$. In this section,
we present various kinds of arithmetic groups which arise as we vary $H$ and $r$. 

In the first section, we prove Corollary \ref{corollary:mods2large}. While this is formally a corollary of Theorem \ref{theorem:examplesofquotients}
as stated, our actual proof must proceed by deducing the corollary directly. 
Then, for genus $2$, the various examples in
Theorem \ref{theorem:examplesofquotients} actually follow from the ``corollary''. Note, then, that for genus $2$, we do
{\em not} obtain these virtual arithmetic quotients of $\Mod(\Sigma_2)$ by various representations $\rho_{H, p}$.
In the next few sections, we prove Theorem \ref{theorem:examplesofquotients} by listing the finite groups $H$ and irreducible
representations $r$,
establishing the requisite properties and determining the arithmetic group $\Omega_{H,A}$ with the use of results in Section
\ref{section:tautypekind} when necessary. We then prove Corollary \ref{corollary:surjectfinitegroups} using example (a) of Theorem \ref{theorem:examplesofquotients}.
In the final section, we discuss more generally the question of what simple $\Q$-algebras with involution can arise as factors of 
$\Q[H]$ and deduce Theorem \ref{theorem:examplesofquotients2}. 

\subsection{Proof of Corollary \ref{corollary:mods2large} and Theorem \ref{theorem:examplesofquotients} for $g = 2$}
We set $H = \Z/2\Z$ and we let $p: T_2 \to \Z/2\Z$ be some surjective $\phi$-redundant map. Here, $\Q[H] \cong \Q \oplus A$
where $A \cong \Q$ as a $\Q$-algebra and where as a $\Q[H]$ module, the nontrivial element of $H$ acts on $A$ by
multiplication by $-1$. Note that the standard involution $\tau$ on $\Q[H]$ is trivial, and hence its restriction to $A$ is trivial. 
Thus, $\Aut(A^{2g-2}, \langle-,-\rangle) = \Sp(2, \Q) = \SL(2, \Q)$. 
From Lemma \ref{lemma:unipotents}, it follows directly that $\rho(\Gamma)$ contains the generating elements 
$$\left(\begin{array}{rr} 1 & 1 \\ 0 & 1 \end{array} \right) \;\;\;\;\;\;\;\left(\begin{array}{rr} 1 & 0 \\ 1 & 1 \end{array} \right). $$
Moreover, as $\rho(\Gamma_{H,p})$ preserves $\overline{R} \subseteq \hat{R}$, there is some finite index subgroup of $\Gamma_{H,p}$
whose image is $\Sp(2, \Z)$.

Because of the corollary, $\Mod(\Sigma_2)$ virtually surjects onto a finitely generated free group, and thus onto all
free groups of finite rank. Since all the arithmetic groups in Theorem \ref{theorem:examplesofquotients} are finitely generated,
$\Mod(\Sigma_2)$ virtually surjects onto them.

\subsection{The symmetric and alternating groups} \label{section:symgroup}
In this section, we describe an irreducible representation for the symmetric group and alternating group.
Let $\Sym(m+1)$ denote the symmetric group on $m+1$ letters and $\Alt(m+1)$ the alternating group on $m+1$
letters. The symmetric group has an irreducible representation
(sometimes called the standard representation) defined as follows. There is an obvious action of $\Sym(m+1)$ on $\Q^{m+1}$ acting
by the permutation matrices. The vector $w = (1, 1, \dots, 1)$ is a fixed vector, but $\Sym(m+1)$ acts irreducibly on the orthogonal complement
$V = w^\perp$. The representation $r: \Sym(m+1) \to \GL(V)$ is irreducible and moreover, the induced homomorphism
$\tilde r: \Q[\Sym(m+1)] \to \End_\Q(V)$ is surjective, and so the corresponding simple factor in $\Q[\Sym(m+1)]$ is
$A \cong \Mat_m(\Q)$. Since the center is $L = \Q$, the fixed field $K$ under $\tau$ is $K = L = \Q$. Since 
$A \otimes_K \R = A \otimes_\Q \R \cong \Mat_m(\R)$, by Proposition \ref{prop:typefromrealrep}, $\tau$ is of first kind and orthogonal type.

For $m \geq 3$, the restriction $r : \Alt(m+1) \to \GL(V)$ is irreducible, so $A = \Mat_m(\Q)$ is a simple component
of $\Q[\Alt(m+1)]$ also. For the same reasons as above, the induced involution $\tau$ on $A$ is of first kind and orthogonal type.
For all $m \geq 2$, a $3$-cycle $\gamma$ and $m$-cycle $\delta$ for $m$ odd (resp. $m+1$-cycle $\delta$ for $m$ even)  
generate $\Alt(m+1)$; moreover, we can choose $\delta, \gamma$ such that $\Alt(m+1)$ is generated by all elements of the form
$\delta^k \gamma \delta^{-k}$.

\subsection{Example (a) of Theorem \ref{theorem:examplesofquotients}}
In this section, we establish example (a) of Theorem \ref{theorem:examplesofquotients} for genus $g \geq 3$. 
Here and in the following three sections, we use our main result, Theorem \ref{theorem:maintheoremprocedural}, 
and so here, and in the following sections, we generally present the following:
\begin{itemize}
\item the finite group $H$ and a demonstration that $d(H) \leq 2 < g$
\item an irreducible representation $r$ of $H$ and/or the corresponding simple factor $A$ of $\Q[H]$
\item an identification of a finite index subgroup of the arithmetic group $\Omega_{H, A}$.
\end{itemize}

For example (a), let $H = \Sym(m+1)$. It is well-known that $\Sym(m+1)$ is generated by two elements.
We take $r$ to be the standard representation, so the corresponding simple factor $A$ is
$\Mat_m(\Q)$ as explained in Section \ref{section:symgroup}.

We now describe $\Omega_{H, A}$.
Let $\mc G = \Aut_{A}(A^{2g-2}, \langle -, -\rangle)$, and let $\sigma$ be the involution of $\End_A(A^{2g-2})$ associated to $\langle -,-\rangle$.
Since $\tau$ is of orthogonal type, the involution $\sigma$ associated to $\langle-,-\rangle$ is of symplectic type by Lemma \ref{lemma:oppositetype}.
The endomorphism ring $\End_A(A^{2g-2})$ is isomorphic to $\Mat_{2(g-1)}(A^{op}) \cong \Mat_{2m(g-1)}(\Q)$.
Over fields $K$ of characteristic $0$, all symplectic involutions of $\Mat_{2(g-1)}(K)$ are equivalent, and so there is an isomorphism
$\varphi: \End_A(A^{2g-2}) \to \Mat_{2m(g-1)}(\Q)$ sending $\sigma$ to the standard symplectic involution. The intersection of 
$\varphi(\mc G^1(\frak O))$ and $\Sp(2m(g-1), \Z)$ is of finite index in the latter. 
By Theorem \ref{theorem:maintheoremprocedural}, there is a virtual epimorphism of $\Mod(\Sigma_g)$ onto $\Omega_{H,A} = \mc G^1(\frak O)$
which, up to finite index, is isomorphic to $\Sp(2m(g-1), \Z)$.
This establishes example (a) in Theorem \ref{theorem:examplesofquotients}.

\subsection{Example (b) of Theorem \ref{theorem:examplesofquotients}} \label{section:exampleb}
The required finite group will be $H = \Dih(2n)$ for $m = 1$ and $H = \Alt(m+1) \times \Dih(2n)$ for $m \geq 3$
where $\Dih(2n)$ is the dihedral group of order $2n$ which has presentation
$$\Dih(2n) = \langle x, y \;\; | \;\; x^n = 1,\;  y^2 = 1,\; y x y^{-1} = x^{-1} \rangle$$
For $m = 2$, we require
a group containing $\Dih(2n)$ which we will describe below. (We remark that for
$g \geq 4$, one can use $H = \Sym(m+1) \times \Dih(2n)$ for all $m$; however this choice is not suitable
for $g=3$ as the group is not generated by two elements for $n$ even.)

We first establish (b) in the case of $m \geq 3$. The group $H$ is generated by
$(\gamma, y)$ and $(\delta, x)$ for the following reasons. Both $(\gamma, 1)$
and $(1, y)$ are powers of $(\gamma, y)$. By conjugating $(\gamma, 1)$ by powers of 
$(\delta, x)$, we obtain all elements of the form $(\delta^k \gamma \delta^{-k}, 1)$. From
this, we generate $\Alt(m+1) \times 1$, and then clearly $(\gamma, y)$ and $(\delta, x)$
generate $H$.

We next describe an irreducible representation of $\Dih(2n)$.
Let $\zeta$ be a primitive $n$th root of unity, and let $\alpha \to \overline{\alpha}$ denote the order $2$ automorphism
of $\Q(\zeta)$ defined by $\zeta \mapsto \zeta^{-1}$. Set
$$B = \{ \left(\begin{array}{rr} \alpha & \beta \\ \overline{\beta} & \overline{\alpha} \end{array} \right) 
		\; \; | \;\; \alpha, \beta \in \Q(\zeta) \}. $$
The group ring $\Q[H]$ surjects onto the $\Q$-algebra $B$ via the homomorphism $\tilde s$ defined by 
$$\tilde s(x) = \left( \begin{array}{rr} \zeta & 0 \\ 0 & \overline{\zeta} \end{array} \right) \;\;\;\;
	\tilde s(y) = \left( \begin{array}{rr} 0 & 1 \\ 1 & 0 \end{array} \right).$$
The center $L$ of $B$ is equal to the subfield of $\Q(\zeta)$ invariant under $\overline{\phantom{a}}$,
i.e. $L = \Q(\zeta)^+$. Recall that the induced involution, call it $\nu$, on $B$ is the unique involution satisfying
$\nu(\tilde s(z)) = \tilde s(z)^{-1}$ for all $z \in \Dih(2n)$. It follows that the involution is the one given by
conjugate transpose of the matrix, and so $\nu$ is of first kind. Moreover, $\dim_L(B) = 4$ and 
the $\nu$-fixed vector subspace is $3$-dimensional, so $\nu$ is of orthogonal type.

We can describe $B$ further. Recall that $B$ is a matrix ring over a division algebra with center
$L$. Notice, however, that $B$ is not a division algebra, and since it has dimension $4$ over $L$,
it follows that $B \cong \Mat_2(L)$.

We now describe the simple factor $A$ of $\Q[H]$ and the involution $\tau$.
Let $\tilde s': \Q[\Alt(m+1)] \to \Mat_m(\Q)$ be the standard representation
from Section \ref{section:symgroup}, and let $\nu'$ be the involution of $\Mat_m(\Q)$ induced by the standard involution
of $\Q[\Alt(m+1)]$. Set $A = \Mat_m(\Q) \otimes_\Q B$.
There is a surjective homomorphism $\tilde r: \Q[H] \to A$ defined by 
$\tilde r(z, w) = \tilde s'(z) \otimes \tilde s(w)$ on $(z,w) \in H =\Alt(m+1) \times \Dih(2n)$.
The involution $\tau$ of $A$ induced by the standard involution of $\Q[H]$ satisfies
$$\tau( x \otimes b) = \nu'(x) \otimes \nu(b).$$
It is straightforward to check that $\tau$ is of first kind.
Let $V^+, V^-$ (resp. $W^+, W^-$) be the $+1$- and $-1$-eigenspace of $\nu'$ (resp. $\nu$).
Then, the $+1$-eigenspace of $\tau$ is $V^+ \otimes W^+ +V^- \otimes W^-$,
and, after counting dimension, one finds that this eigenspace is more than half the dimension of $A$.
Consequently, $\tau$ is of orthogonal type.

In total, there is a surjective homomorphism $s: \Q[H] \to A \cong \Mat_{m}(B) \cong \Mat_{2m}(L)$ with
induced orthogonal involution on $\Mat_{2m}(L)$. Arguing as in the Section \ref{section:symgroup}, we
find that, up to finite index, $\Omega_{H,A} = \mc G^1(\frak O)$ is isomorphic to $\Sp(2 \cdot (2m) (g-1), \mc O) = \Sp(4m(g-1), \mc O)$.
Thus, there is a virtual epimorphism $\Mod(\Sigma_g)$ onto $\Sp(4m(g-1), \mc O)$.

For $m = 1$, the simple factor of $\Q[H]$ is $A = B$, and clearly $d(H) = 2$.
The analysis above applies
mutatis mutandis to establish that $\mc G^1(\frak O)$, up to finite index, is $\Sp(4(g-1), \mc O)$
and there is a virtual epimorphism $\Mod(\Sigma_g) \to \Sp(4(g-1), \mc O)$.

Now suppose $m = 2$. We construct $H$ as a subgroup of the units in $\Mat_2(B)$. Namely,
let $X = \tilde s(x) \in B$ and $Y = \tilde s(y) \in B$, and set
$$H = \left\{ \JMat{ X^{2k} Z & 0 \\ 0 & Z}, \JMat{ 0 & X^{2k+1} Z \\ Z & 0 } \in \Mat_2(B) \;\; | \;\; Z \in s(\Dih(2n)), k \in \Z \right\}  $$
This is a group which is finite and generated by the following two elements
$$ \JMat{ Y & 0 \\ 0 & Y} \;\;\;\;\; \JMat{ 0 & X \\ I & 0 }. $$
The inclusion $H \to \Mat_2(B)$ induces a homomorphism $\Q[H] \to \Mat_2(B) \overset{\text{def}}{=} A$
which is surjective. The induced involution $\tau$ satisfies $\tau( (b_{ij})) = (\nu(b_{ji}))$.
Using arguments similar to the above, we find that $\mc G^1(\frak O)$ is, up to finite index, $\Sp(8(g-1), \mc O)$
and there is a virtual epimorphism $\Mod(\Sigma_g) \to \Sp(8(g-1), \mc O)$.

\subsection{Example (c) of Theorem \ref{theorem:examplesofquotients}}
Here, we use the finite group $H = \Sym(m+1) \times \Cyc(n)$ for $m \geq 2$ and
$H = \Cyc(n)$ for $m = 1$ where $\Cyc(n) = \Z/n\Z$.
We first describe an irreducible representation of $\Cyc(n)$. Namely, there is an
irreducible representation $\Cyc(n) \to \Q(\zeta)^\times$ sending the generator
of $\Cyc(n)$ to $\zeta$, a primitive $n$th root of unity. The corresponding map
$\Q[\Cyc(n)] \to \Q(\zeta)$ is surjective. The induced involution, $\nu$
on $L = \Q(\zeta)$ is the unique one sending $\zeta$ to $\zeta^{-1}$
which is an involution of the second kind with fixed field $K = \Q(\zeta)^+$.

Suppose now that $m \geq 2$. The group $\Sym(m+1) \times \Cyc(n)$ has
generators $(\gamma, 0)$ and $(\delta, 1)$ where $\gamma$ is a transposition
and $\delta$ is an $m+1$-cycle. The conjugates of $(\gamma, 0)$ by $(\delta, 1)$
generate $\Sym(m+1) \times 0$, and from there it is obvious that $H$ is generated
by both elements.

Via an argument similar to the one used in Section \ref{section:exampleb}, we have a surjective homomorphism
$\Q[H] \to \Mat_m(\Q) \otimes \Q(\zeta) \cong \Mat_m(\Q(\zeta)) = A$. Since the center is $L = \Q(\zeta)$,
the induced involution on $A$ is of second kind by Proposition \ref{prop:typefromrealrep}. The
$\tau$-fixed subfield $K$ is $\Q(\zeta)^+$.

It only remains to describe $\Omega_{H,A}$. From the description of $\langle -,-\rangle$,
it is clear that a matrix $E$ is the adjoint of
$C \in \Mat_{2g-2}(A^{op})$ if and only if 
$$ E^* \JMat{ 0 & I \\ -I & 0}  = \JMat{ 0 & I \\ -I & 0} C$$
where $E^*$ is the map defined by $(e_{ij})^* = (\tau(e_{ji}))$.
Thus, the adjoint involution for $\langle -,-\rangle$ is 
$$\sigma(C) = \JMat{ 0 & I \\ -I & 0}^{-1} C^* \JMat{ 0 & I \\ -I & 0}.$$
Since $\Mat_{2g-2}(A^{op}) \cong \Mat_{2m(g-1)}(\Q(\zeta))$, the involution
$\sigma$ is also the adjoint involution for a nondegenerate $\Q(\zeta)$-valued Hermitian form
$\langle -,-\rangle_0$ on $\Q(\zeta)^{2m(g-1)}$ (\cite[Theorem 4.2]{Knusetal}). From the fact that 
$\langle -,-\rangle$ has a maximal isotropic submodule, one can deduce that $\langle -,-\rangle_0$ has a
maximal isotropic subspace (i.e. of $\Q(\zeta)$-dimension $m(g-1)$) as well. All Hermitian forms on
$\Q(\zeta)^{2m(g-1)}$ with a maximal isotropic subspace are equivalent (e.g. one can apply
the same arguments as in Lemma \ref{lemma:niceAibasis}), and so the form $\langle-,-\rangle_0$
is equivalent to the form defining $U(m(g-1), m(g-1), \Q(\zeta))$.

Thus, there is an isomorphism from a finite index subgroup of $\Omega_{H,A} = \mc G^1(\frak O)$
to a finite index subgroup of $\SU(m(g-1), m(g-1), \mc O)$ where $\mc O$ is the ring of integers in $\Q(\zeta)$.
By Theorem \ref{theorem:maintheoremprocedural}, there is a virtual epimorphism from $\Mod(\Sigma_g)$
onto $\SU(m(g-1), m(g-1), \mc O)$.

For the case $m=1$, a simplified version of the above argument applies, and so there is a virtual epimorphism
$\Mod(\Sigma_g)$ onto $\SU((g-1), (g-1), \mc O)$.

\subsection{Example (d) of Theorem \ref{theorem:examplesofquotients}}
For these examples, we use the 
finite group $H = \Alt(m+1) \times \Dic(4n)$ for $m \geq 3$ and $\Dic(4n)$ for $m = 1$ where $\Dic(4n)$ is the dicyclic group of order $4n$. 
The group $\Dic(4n)$ has the following presentation:
$$\Dic(4n) = \langle x, y \; | \; x^{2n} = 1, y^2 = x^n, y^{-1} x y = x ^{-1} \rangle. $$
For $m = 2$, we use a group containing $\Dic(4n)$ which we will describe below and which has a construction
similar to that for example (b).

We begin with the case $m \geq 3$. The group $H = \Alt(m+1) \times \Dic(4n)$ is generated
by the two elements $(\gamma, y)$ and $(\delta, x)$. This follows verbatim from the same argument 
for $\Alt(m+1) \times \Dih(2n)$.

The group ring $\Q[\Dic(4n)]$ has a representation onto a division algebra $D$ defined as follows.
Let $\zeta$ be the primitive $2n$th root of unity, and let $\overline{\phantom{\alpha}}$ denote the unique order $2$ automorphism
of $\Q(\zeta)$ sending $\zeta \mapsto \zeta^{-1}$. The division algebra is 
$$D = \left\{ \left( \begin{array}{rr} \alpha & \beta \\ - \overline{\beta} & \overline{\alpha} \end{array} \right) \in \Mat_2(\Q(\zeta))
 	\; | \; \alpha, \beta \in \Q(\zeta) \right\}.$$
The center $L$ of $D$ is $\Q(\zeta)^+$, and the epimorphism $\tilde s: \Q[\Dic(4n)] \to D$ is defined on the generators by 
$$\tilde s(x) =  \left( \begin{array}{rr} \zeta & 0 \\ 0 & \overline{\zeta} \end{array} \right) \;\;\;\;
      \tilde s(y) =  \left( \begin{array}{rr} 0 & 1 \\ -1 & 0 \end{array} \right)$$
The induced involution, call it $\nu$, on $D$ is that involution satisfying $\nu(\tilde s(z))  = \tilde s(z)^{-1}$ 
for all $z \in \Dic(4n)$. Thus, in terms of the matrices, $\nu$ is conjugate transpose, and it is of first kind (on $D$). The fixed subspace
of $\nu$ has $L$-dimension $1$ and $D$ has $L$-dimension $4$, so $\nu$ is of symplectic type.

We describe the simple factor $A$ of $\Q[H]$. Let $\tilde s': \Q[\Alt(m+1)] \to \Mat_m(\Q)$ be
the homomorphism from Section \ref{section:symgroup}, and let $\nu'$ be the involution of 
$\Mat_m(\Q)$ induced by the standard involution of $\Q[\Alt(m+1)]$.
Let $A = \Mat_m(\Q) \otimes_\Q D \cong \Mat_m(D)$.
There is a surjective homomorphism 
$\tilde r: \Q[H] \to A$ defined by $\tilde r(z, w) = \tilde s'(z) \otimes \tilde s(w)$ on $(z,w) \in H$.
The induced involution, $\tau$, on $A$ is equal to $\nu' \otimes \nu$. By an argument similar
to the one in Section \ref{section:exampleb}, one can show $\tau$ is of first kind and
symplectic type.
By Theorem \ref{theorem:Gtype}, $\mc G = \Aut_A(A^{2g-2}, \langle-,-\rangle)$
is the $L$-points ($K = L$ here) of an $L$-defined algebraic group whose real form
is $\SO(2m(g-1), 2m(g-1), \R)$. Thus, there is a virtual epimorphism of
$\Mod(\Sigma_g)$ onto $\Omega_{H,A} = \mc G^1(\frak O)$, an arithmetic subgroup of $\SO(2m(g-1), 2m(g-1), \R)$.

For $m = 1$, it is clear that $H = \Dih(2n)$ is generated by two elements, and the above
arguments apply mutatis mutandis to show that there is a virtual epimorphism of 
$\Mod(\Sigma_g)$ onto an arithmetic subgroup of $\SO(2(g-1), 2(g-1),  \R)$.

For $m = 2$, we construct $H$ as a subgroup of the units of $\Mat_2(D)$. Namely, 
let $X = s(x) \in D$ and $Y = s(y) \in D$, and set
$$H =  \left\{ \JMat{ X^{2k} Z & 0 \\ 0 & Z}, \JMat{ 0 & X^{2k+1} Z \\ Z & 0 }  \in \Mat_2(D)\;\; | \;\; Z \in s(\Dic(4n)), k \in \Z \right\} . $$
Like the group $H$ for $m=2$ in Section \ref{section:exampleb}, $H$ here is generated by two elements,
and the inclusion $H \to \Mat_2(D)$ induces a homomorphism $\Q[H] \to \Mat_2(D) \overset{\text{def}}{=} A$
which is surjective.  The induced involution $\tau$ satisfies $\tau( (b_{ij})) = (\nu(b_{ji}))$, and so $\tau$
is of symplectic type. Using arguments similar to the above, we find a virtual epimorphism onto an arithmetic
subgroup of $\SO(4(g-1), 4(g-1),  \R)$.

\subsection{Proof of Corollary \ref{corollary:surjectfinitegroups}} \label{section:ontoallfingrp}
This follows rather quickly from example (a) of Theorem \ref{theorem:examplesofquotients}.
A finite index subgroup of $\Sp(2m(g-1), \Z)$ is mapped onto $\Sp(2m(g-1), \Z/p\Z)$
for almost all primes $p$. The latter contains $\SL(m(g-1), \Z/p\Z)$ and hence contains
$\Sym(m(g-1) - 1)$. Now, every finite group $G$ embeds in $\Sym(m(g-1) - 1)$ for 
sufficiently large $m$. Thus, $\Mod(\Sigma_g)$ has a finite index subgroup
which is mapped onto $G$.

\subsection{Simple components of $\Q[H]$ in general}
From Theorem \ref{theorem:maintheoremprocedural}, given a finite group $H$ 
with $d(H) < g$ and
a simple factor $A$ of $\Q[H]$, there is a corresponding arithmetic group
$\Omega_{H, A}$ which is a virtual quotient of $\Mod(\Sigma_g)$.
It is therefore of interest to understand what kinds of algebras with involution
$(A, \tau)$ are obtained in this way. This seems to be an open problem in general.

We start with some basic properties of the pairs $(A, \tau)$. 
Recall that $A$ is isomorphic to $\Mat_m(D)$ for some integer $m$ and
some division algebra $D$ with a number field $L$ as center. Recall from
Lemma \ref{lemma:cyccenter}
that $L$ must be a subfield of a cyclotomic field. This is equivalent to $L$ being an
abelian extension of $\Q$.
Recall, moreover, from Proposition \ref{prop:typefromrealrep} that $L$ is totally
real if and only if $\tau$ is of first kind and $L$ is totally imaginary if and only if
$\tau$ is of second kind.

We will not say much about the involution $\tau$. However, in the case where $\tau$ is of first kind,
the division algebra $D$ is considerably restricted. The following proposition is essentially the
Brauer-Speiser Theorem, and the proof we give is basically that given by Fields in \cite{Fiel}.
Recall that the degree $d$ of a division algebra $D$ is the integer satisfying $d^2 = \dim_L(D)$
where $L$ is the center of $D$.

\begin{prop}
If $A = \Mat_m(D)$ has an involution of the first kind, then the degree of $D$ is at most $2$, i.e.
$D$ is the field $L$ or a quaternion algebra over $L$.
\end{prop}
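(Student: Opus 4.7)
The plan is to show that the Brauer class $[D]\in \mathrm{Br}(L)$ has exponent dividing $2$, and then invoke Albert--Brauer--Hasse--Noether to conclude $\deg D \le 2$.

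First I would observe that an involution of the first kind on $A$ is, by definition, an $L$-algebra anti-automorphism of $A$, equivalently an $L$-algebra isomorphism $A \cong A^{op}$. Consequently $[A]=[A^{op}]$ in $\mathrm{Br}(L)$; combined with the identity $[A]+[A^{op}]=0$ holding in any Brauer group, this gives $2[A]=0$. Since Brauer equivalent algebras have the same class and $A=\Mat_m(D)$, we have $[A]=[D]$, hence $2[D]=0$ in $\mathrm{Br}(L)$. So the exponent of $[D]$ is at most $2$.

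Next I would invoke that, by Lemma~\ref{lemma:cyccenter}, $L$ is a subfield of a cyclotomic field, and in particular a number field. The Albert--Brauer--Hasse--Noether theorem for central simple algebras over a number field asserts that the Schur index and the exponent of a Brauer class coincide. Applied to $[D]$, this gives $\deg D = \mathrm{ind}([D]) \le 2$, so $D$ is either equal to $L$ or is a quaternion algebra over $L$, as claimed.

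The principal substantive ingredient is the equality of exponent and Schur index over a number field; the remainder is essentially bookkeeping once one recognizes an anti-automorphism as an isomorphism with the opposite algebra. If one preferred a more hands-on argument avoiding the global theorem, one could instead proceed place by place: at each (necessarily real) archimedean place, Proposition~\ref{prop:typefromrealrep} already shows that $D\otimes_L\R$ has underlying division algebra in $\{\R,\bbH\}$ and so local index at most $2$; at each non-archimedean place, a division algebra over $L_v$ whose Brauer class is $2$-torsion has Hasse invariant in $\{0,\tfrac12\}$ and hence local degree at most $2$; these assemble via $\mathrm{ind}(D)=\mathrm{lcm}_v\mathrm{ind}(D\otimes_L L_v)$ to give the same bound. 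The expected obstacle is merely correctly citing whichever global statement one prefers.
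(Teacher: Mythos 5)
Your argument is correct and is essentially the paper's own proof: both deduce $A\cong A^{op}$ from the involution of the first kind, conclude $[A]=[D]$ is $2$-torsion in $\mathrm{Br}(L)$, and then invoke the equality of exponent and Schur index over a number field (the paper cites Reiner, Theorem~32.19; you cite Albert--Brauer--Hasse--Noether, which is the same fact). The local-to-global remark you add at the end is a valid alternative unwinding of that global theorem, but it is not needed.
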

\begin{proof}
If $\tau$ is an involution of the first kind, then $\tau$ is an isomorphism $A \cong A^{op}$
as $L$-algebras. Thus $A \otimes_L A \cong A \otimes_L A^{op} \cong \Mat_{n}(L)$
where $n = \dim_L(A)$ \cite[Proposition 3.12]{FarbDenn}. Thus, the class defined by $A$
in the Brauer group of $L$ has order $1$ or $2$. (See below for the definition of the Brauer group
and \cite{FarbDenn} for more details.) For number fields $L$, the exponent of $\Mat_m(D)$
in the Brauer group of $L$ is equal to the degree of $D$ \cite[Theorem 32.19]{Rein}.
\end{proof}

Let us consider, now, the simpler question of which algebras $A$ appear as simple
components of $\Q[H]$ or what division algebras $D$ appear in such an $A \cong \Mat_m(D)$
regardless of $m$.
Notice that, by using $\Sym(\ell+1)$ and its standard representation, given any algebra
$A = \Mat_m(D)$ which is a simple part of $\Q[H]$, we can also obtain $\Mat_{\ell m}(D)$
as a simple part of $\Q[H \times \Sym(\ell+1)]$. One can show that this increases the requisite number of
generators of the finite group by at most $1$ except when $H$ is trivial. For $D=L$ a field, the
algebra $\Mat_m(D)$ is a simple component of $\Q[H]$ for some $H$ precisely when $L$
is a subfield of a cyclotomic field (See e.g. \cite[Theorem 3.2]{Moll}). This leads to the proof of Theorem \ref{theorem:examplesofquotients2}.

\begin{proof}[Proof of Theorem \ref{theorem:examplesofquotients2}]
We use Theorem \ref{theorem:maintheoremprocedural} and follow a similar program as for Theorem \ref{theorem:examplesofquotients}.
From the above discussion, it follows that for some $s_L \in \N$, there is a finite group $H$ such that 
$\Mat_{s_L}(L)$ is a simple component of $\Q[H]$. Consequently, using arguments similar to those for
Theorem \ref{theorem:examplesofquotients}, $\Mat_{m s_L}(L)$ is a simple part of $\Q[H \times \Sym(m+1)]$.
Let $N = d(H \times \Sym(m+1)) + 1$.

Suppose $L$ is a totally real field. By Proposition \ref{prop:typefromrealrep}, $\tau$ is of first kind and $K = L$.
By the same proposition, since $\Mat_{m s_L}(L) \otimes_K \R = \Mat_{m s_L}(K) \otimes_K \R =\Mat_{m s_L}(\R)$, the involution $\tau$ is
of orthogonal type. Arguing as for example (b) of Theorem \ref{theorem:examplesofquotients}, we find a virtual
epimorphism of $\Mod(\Sigma_g)$ onto $\Sp(2 m s_L (g-1), \mc O)$ where $\mc O$ is the ring of integers
in $L$.

Suppose $L$ is a totally imaginary field. By Proposition \ref{prop:typefromrealrep}, $\tau$ is of second kind.
Arguing as for example (c) of Theorem \ref{theorem:examplesofquotients}, we find a virtual
epimorphism of $\Mod(\Sigma_g)$ onto $\SU(m s_L (g-1), m s_L (g-1), \mc O)$ where $\mc O$ is the ring of integers
in $L$.
\end{proof}

If we ask only what division algebras $D$ appear regardless of $m$, then we can appeal
to results on the Schur subgroup, $S(L)$, of the Brauer group, $B(L)$, for a finite abelian extension
$L$ of $\Q$. The elements of the Brauer group are equivalence classes of central simple
$L$-algebras where $A_1$ and $A_2$ are equivalent if they are isomorphic,
respectively, to $\Mat_{m_1}(D)$ and $\Mat_{m_2}(D)$ for some division algebra $D$.
The group operation is $\otimes_L$. The Schur group consists of those classes
$[A]$ where $A$ is a simple component of some $L[H]$.
In the case where $L$ is a finite abelian extension of $\Q$, the Schur subgroup is also
the set of those classes $[A]$ where $A$ is a simple component of some $\Q[H]$.
(This seems to be well-known among experts, but is also a consequence of the stronger
\cite[Theorem 3.2]{Moll}.)

We mention some of the easier-to-state theorems in the literature.
One basic question that can be answered is what possible degree $d$ of a division algebra $D$ is possible.
I.e. for what $d$ is $\dim_L(D) = d^2$ for some division algebra $D$ in the Schur group of $L$? It follows
from \cite{BenaScha} that, for $L$ containing a primitive $d$th root of unity,
there are infinitely many elements $[D]$ in $S(L)$ where $D$ has degree $d$. In particular, in
combination with our theorems, this implies that for each of these infinitely many
division algebras $D$, some $m$ and sufficiently large $g$ (depending on $D$), there is a
virtual epimorphism of $\Mod(\Sigma_g)$ onto the arithmetic group
$\mc G^1(\frak O)$, i.e. the elements of reduced norm $!$ in $\Aut_{\frak O}(\frak O^{2g-2}, \langle-,-\rangle)$ 
where $\frak O$ is an order in $\Mat_m(D)$.

In the opposite direction,
a result due to Schacher and Fein (independently) tells us that for infinitely many 
division algebras $D$ are unattainable regardless of $m$ \cite{Scha}. 
In the special case of $L = \Q$, it has been shown that $S(\Q)$ and $B(\Q)$ are equal when restricting
to classes $[D]$ where the degree of $D$ is $2$ \cite{Bena, Fiel}. Some other papers
characterize $S(L)$ for certain subclasses of fields in cyclotomic fields (e.g.\cite{BenaScha, Janu}).
We refrain from going into further detail and refer the interested reader to the literature.

\bibliographystyle{plainnat}
\bibliography{biblio}
\end{document}